\def\R{\mathbb{R}}
\def\Z{\mathbb{Z}}
\def\P{\mathbb{P}}
\def\E{\mathbb{E}}
\def\cB{\mathcal{B}}
\def\N{\mathcal{N}}
\def\cF{\mathcal{F}}
\def\cG{\mathcal{G}}
\def\cM{\mathcal{M}}
\def\cP{\mathcal{P}}
\def\b{\mathbf{b}}
\def\e{\mathbf{e}}
\def\u{\mathbf{u}}
\def\v{\mathbf{v}}
\def\x{\mathbf{x}}
\def\y{\mathbf{y}}
\def\z{\mathbf{z}}
\def\D{\mathbf{D}}
\def\I{\mathbf{I}}
\def\V{\mathbf{V}}
\def\X{\mathbf{X}}
\def\bC{\mathbf{C}}
\def\bR{\mathbf{R}}
\def\sP{\mathsf{P}}
\def\sQ{\mathsf{Q}}
\def\btheta{\boldsymbol{\theta}}
\def\boldeta{\boldsymbol{\eta}}
\def\bvarphi{\boldsymbol{\varphi}}
\def\bLambda{\boldsymbol{\Lambda}}
\def\bSigma{\boldsymbol{\Sigma}}
\def\eps{\varepsilon}
\def\beps{\boldsymbol{\varepsilon}}
\def\d{\mathrm{d}}
\def\cA{\mathcal{A}}
\def\ball{\mathcal{B}}
\def\event{\mathcal{E}}
\def\LSI{\mathrm{LSI}}
\def\L{\mathrm{L}}
\def\1{\mathbf{1}}
\def\tti{\mathrm{tti}}
\def\init{\mathrm{init}}
\def\dist{\operatorname{dist}}
\newcommand{\floor}[1]{\left\lfloor #1 \right\rfloor}
\newcommand{\pnorm}[2]{\lVert #1\rVert_{#2}}
\DeclareMathOperator{\Var}{Var}
\DeclareMathOperator{\Cov}{Cov}
\DeclareMathOperator{\Ent}{Ent}
\DeclareMathOperator{\diag}{diag}
\DeclareMathOperator{\sign}{sign}
\DeclareMathOperator{\Tr}{Tr}
\DeclareMathOperator{\DKL}{D_{\mathrm{KL}}}
\DeclareMathOperator{\mse}{mse}
\DeclareMathOperator{\ymse}{ymse}
\DeclareMathOperator{\MSE}{MSE}
\DeclareMathOperator{\YMSE}{YMSE}
\DeclareMathOperator{\Crit}{Crit}
\DeclareMathOperator{\op}{op}
\def\op{\mathrm{op}}
\def\FI{\mathrm{FI}}
\def\grad{\mathrm{grad}}
\newtheorem{theorem}{Theorem}[section]
\newtheorem{lemma}[theorem]{Lemma}
\newtheorem{proposition}[theorem]{Proposition}
\newtheorem{corollary}[theorem]{Corollary}
\theoremstyle{definition}
\newtheorem{definition}[theorem]{Definition}
\newtheorem{assumption}[theorem]{Assumption}
\newtheorem{remark}[theorem]{Remark}
\newtheorem{example}[theorem]{Example}
\NewDocumentCommand{\xincludegraphics}{O{}m}
{
	\tl_clear:N \l_miguel_label_tl
	\clist_clear:N \l_miguel_label_clist
	\keys_set:nn { miguel/label } { #1 }
	\tl_if_empty:NTF \l_miguel_label_tl
	{
		\miguel_includegraphics:Vn \l_miguel_label_clist { #2 }
	}
	{
		\hbox_set:Nn \l_miguel_label_image_box
		{
			\miguel_includegraphics:Vn \l_miguel_label_clist { #2 }
		}
		\hbox_set:Nn \l_miguel_label_box
		{
			\skip_horizontal:n { -15pt }
			\fcolorbox{white}{white}{\footnotesize \tl_use:N \l_miguel_label_tl}
		}
		\leavevmode
		\box_use:N \l_miguel_label_image_box
		\skip_horizontal:n { -\box_wd:N \l_miguel_label_image_box }
		\hbox_overlap_right:n
		{
			\box_move_up:nn
			{
				\box_ht:N \l_miguel_label_image_box - 
				\box_ht:N \l_miguel_label_box - -3pt
			}
			{ \box_use_drop:N \l_miguel_label_box }
		}
		\skip_horizontal:n { \box_wd:N \l_miguel_label_image_box }
	}
}
\begin{document}

\title{Dynamical mean-field analysis of adaptive Langevin diffusions:
Replica-symmetric fixed point and empirical Bayes}

\author{Zhou Fan\thanks{Department of Statistics and Data Science, Yale University}, Justin Ko\thanks{Department of Statistics and Actuarial Science, University of Waterloo}, Bruno Loureiro\thanks{Departement d'Informatique,  École Normale Supérieure, PSL \& CNRS}, Yue M.\ Lu\thanks{Departments of Electrical Engineering and Applied Mathematics, Harvard University}, Yandi Shen\thanks{Department of Statistics and Data Science, Carnegie Mellon University}}

\date{}

\maketitle

\begin{abstract}
In many applications of statistical estimation via sampling,
one may wish to sample from a high-dimensional target distribution
that is adaptively evolving to the samples already seen. We study an example
of such dynamics, given by a Langevin diffusion for posterior sampling in a
Bayesian linear regression model with i.i.d.\ regression design,
whose prior continuously adapts to the
Langevin trajectory via a maximum marginal-likelihood scheme. Results of
dynamical mean-field theory (DMFT) developed in our companion paper establish a
precise high-dimensional asymptotic limit for the joint evolution of the prior
parameter and law of the Langevin sample. In this work, we carry out an analysis
of the equations that describe this DMFT limit, under conditions of
approximate time-translation-invariance which include, in particular, settings
where the posterior law satisfies a log-Sobolev inequality. In such
settings, we show that this adaptive Langevin trajectory converges on a
dimension-independent time horizon to an equilibrium state that is
characterized by a system of scalar fixed-point equations, and the associated
prior parameter converges to a critical point of a replica-symmetric limit for
the model free energy. As a by-product of our analyses, we obtain a new
dynamical proof that this replica-symmetric limit for the free energy is exact, 
in models having a possibly misspecified prior and where a log-Sobolev
inequality holds for the posterior law.
\end{abstract}

\setcounter{tocdepth}{3}
\tableofcontents

\section{Introduction}

Parameter estimation via Monte Carlo sampling is a common paradigm in statistical learning, arising for example in stochastic implementations of Expectation-Maximization estimation in latent variable models
\cite{wei1990monte,levine2001implementations}, and contrastive-divergence \cite{hinton2002training} and diffusion-based
learning \cite{sohl2015deep,song2019generative,ho2020denoising,song2020score}
of generative models for data. In these applications, one wishes to learn a
parameter using Monte Carlo
samples from an associated distribution on a high-dimensional space. Monte Carlo
methods whose target distribution continuously adapts to the learned
parameter are natural for such tasks, and we refer to
\cite{kuntz2023particle,akyildiz2023interacting,fan2023gradient,sharrock2024tuning,marion2024implicit}
for several recent proposals of this form.

The goal of our current work is to study the learning dynamics in a particular
(classical) instance of this paradigm, namely the estimation of the
distribution of regression coefficients in a high-dimensional regression model
\cite{mcculloch1994maximum,mcculloch1997maximum}.
We will focus on the linear model
\[\y=\X\btheta^*+\beps\]
with a latent and high-dimensional coefficient vector
$\btheta^* \in \R^d$, whose coordinates have an unknown ``prior'' distribution
$g_*$. Estimation of this prior distribution is a classical example
of empirical Bayes inference \cite{Robbins56,efron2012large}, and arises ubiquitously in genetic
association analyses where $g_*$ represents the distribution of
genetic effect sizes in linear mixed models for complex traits
\cite{meuwissen2001prediction,yang2011gcta,loh2015efficient,moser2015simultaneous,lloyd2019improved,ge2019polygenic,spence2022flexible,morgante2023flexible}. Two
recent works \cite{mukherjee2023mean,fan2023gradient} have established the statistical consistency of
nonparametric maximum marginal-likelihood estimators of $g_*$ 
in settings of high-dimensional
regression designs $\X \in \R^{n \times d}$, as $n,d \to \infty$.
However, direct computation of this maximum marginal-likelihood estimate is
intractable for general regression designs, motivating approaches based on
approximate posterior inference schemes.

We will investigate in this work a parametric analogue of a learning procedure proposed in \cite{fan2023gradient}, modeling the prior distribution via a parametric model $g(\,\cdot\,,\alpha)$
and applying an adaptive diffusion
to estimate the parameter
$\alpha \in \R^K$. This procedure will take the form of a Langevin diffusion 
\begin{equation}\label{eq:langevin_intro}
\d \btheta^t=\nabla_{\btheta} \log \sP_{g(\cdot,\widehat
\alpha^t)}(\btheta^t \mid \X,\y)\d t+\sqrt{2}\,\d \b^t
\end{equation}
for sampling from the posterior distribution
$\sP_{g(\cdot,\widehat \alpha^t)}(\btheta \mid \X,\y)$ of the regression
coefficients, under a prior law $g(\,\cdot\,,\widehat \alpha^t)$ whose
parameter evolves according to a coupled continuous-time dynamics
\begin{equation}\label{eq:alpha_intro}
\d\widehat \alpha^t=\cG\Big(\widehat\alpha^t,\frac{1}{d}\sum_{j=1}^d \delta_{\theta_j^t}\Big)\d
t.
\end{equation}
Here $\cG(\cdot)$ is a map that implements gradient-based maximum marginal-likelihood learning of $\alpha$ via the empirical distribution of coordinates of $\btheta^t$, and we defer a discussion of this motivation to Section~\ref{sec:results}.
The procedure may be understood as an approximation to an idealized dynamics
\begin{equation}\label{eq:alpha_intro_ideal}
\d\alpha^t=\cG(\alpha^t,\sP(\theta^t))\d t
\end{equation}
where $\sP(\theta^t)$ denotes the average law of the coordinates
$\theta_1^t,\ldots,\theta_d^t$. For these idealized dynamics, the
analyses of \cite{fan2023gradient} may be adapted to show that the
prior parameter $\alpha^t$
converges to a fixed point of the marginal log-likelihood,
under certain conditions for the noise and regression design.
Related results have also been shown recently
in more general latent variable models
in \cite{de2021efficient,kuntz2023particle,akyildiz2023interacting,marion2024implicit}, which, in addition, provide convergence guarantees for particle
approximations of the McKean-Vlasov type
$\d\alpha^t=\cG(\alpha^t,\frac{1}{dM}\sum_{j=1}^d \sum_{m=1}^M
\delta_{\theta_j^{m,t}})\d t$, having $M$ parallel sampling chains
$\{\btheta^{1,t}\}_{t \geq 0},\ldots,\{\btheta^{M,t}\}_{t \geq 0}$
for the latent variable $\btheta\in \R^d$,
in the limit $M \to \infty$.

The aforementioned results are not fully satisfactory in our context of a
high-dimensional regression model, and leave open the following two interesting
questions about the original dynamics
(\ref{eq:langevin_intro}--\ref{eq:alpha_intro}):
\begin{enumerate}
\item \emph{Single chain propagation-of-chaos.} In the limit of increasing
dimensions $d \to \infty$, are the idealized dynamics
(\ref{eq:alpha_intro_ideal}) well-approximated by (\ref{eq:alpha_intro}) using
just a single Langevin chain $\{\btheta^t\}_{t \geq 0}$ in $\R^d$?
\item \emph{Characterization of fixed points.} Can the fixed points
$\widehat\alpha$ of (\ref{eq:alpha_intro}) be explicitly characterized?
Does (\ref{eq:alpha_intro}) exhibit dimension-free
convergence to these fixed points, and in what settings is the fixed point
representing the maximum marginal-likelihood estimator of $\alpha$ unique?
\end{enumerate}
The purpose of our work is to provide answers to these questions in the context 
of an i.i.d.\ regression design.
Question 1 is addressed in our companion paper \cite{paper1}, which build upon
the recent results of \cite{celentano2021high,gerbelot2024rigorous} to formalize a dynamical mean-field theory
(DMFT) approximation of (\ref{eq:alpha_intro}) by (\ref{eq:alpha_intro_ideal})
over dimension-independent time horizons $t \in [0,T]$, for a general class of
such adaptive Langevin dynamics procedures. Our current paper addresses Question
2 by carrying out an analysis of the resulting DMFT system, under an assumption
of a uniform log-Sobolev inequality for the posterior law.

\subsection{Summary of results}

Our main results provide an analysis of the DMFT equations that approximate
the empirical Bayes Langevin dynamics
(\ref{eq:langevin_intro}--\ref{eq:alpha_intro}) in the high-dimensional limit
as $n,d \to \infty$ proportionally.
En route to this analysis, we obtain also new results for the DMFT
approximation of the standard non-adaptive Langevin diffusion
(\ref{eq:langevin_intro}) with a fixed prior
$g(\cdot) \equiv g(\,\cdot\,,\alpha)$. We summarize these results as follows:

\begin{enumerate}
\item In the setting of a non-adaptive Langevin diffusion,
we formalize a condition of \emph{approximate time-translation-invariance} (TTI) for
the DMFT system. We perform an analysis of the
dynamical fixed-point equations for the DMFT correlation and response functions
under this condition, and show that they
recover the static fixed-point equations for the free energy and
posterior mean-squared-error predicted by a replica-symmetric ansatz \cite{guo2005randomly,kabashima2008inference}.
\item We show that a log-Sobolev inequality (LSI) for the posterior law provides
a sufficient condition to guarantee the above approximate-TTI property
for the DMFT system, and we discuss several settings of log-concavity,
high noise, or large sample size where such an LSI holds.
As a consequence, we obtain a new dynamical proof of the
validity of the replica-symmetric predictions for the free energy and MSE in the Bayesian linear model with a possibly
misspecified prior law, under such an LSI condition.
\item When the LSI holds
uniformly over the posterior laws corresponding to the deterministic DMFT
trajectory of $\{\alpha^t\}_{t \geq 0}$, we show that the empirical Bayes
estimate $\widehat\alpha^t$ converges on a dimension-free time horizon to a
critical point $\alpha^\infty$ of the replica-symmetric limit for the free
energy. This is explicitly characterized by a system of scalar fixed-point
equations, and we discuss examples of models where this critical point
may or may not be unique.
\end{enumerate}

We present and discuss these results and examples
in further detail in Section \ref{sec:results}.

\subsection{Further related literature}
Approximating the dynamical behavior of many degrees-of-freedom by an effective single-particle problem interacting self-consistently with its environment is an old idea in the statistical physics literature. Relevant to our work is the development of this idea in the context of disordered systems, and in particular the study of high-dimensional Langevin dynamics of soft-spin variants of the
Sherrington-Kirkpatrick model
\cite{sompolinsky1981dynamic,sompolinsky1982relaxational} and the spherical
p-spin model \cite{kirkpatrick1987dynamics, crisanti1993spherical, cugliandolo1993analytical, cugliandolo1994out}.
Mathematical proofs of these approximations were first shown for such
models in the works of
\cite{arous1995large,arous1997symmetric,guionnet1997averaged} using large deviations techniques,
and more recently in generalized linear models close to our setting
by \cite{celentano2021high,gerbelot2024rigorous}
using different methods around Approximate Message Passing algorithms and iterative
Gaussian conditioning. In recent years, DMFT analyses have been applied to
study Langevin dynamics and gradient-based optimization in
many statistical models and applications, including
Gaussian mixture classification \cite{mignacco2020dynamical}, 
matrix and tensor PCA \cite{mannelli2019passed,sarao2020marvels,liang2023high},
phase retrieval and generalized linear models
\cite{mignacco2021stochasticity,han2024gradient}, and learning in perceptron
and neural network models \cite{agoritsas2018out,bordelon2022self,sclocchi2022high, kamali2023stochastic, kamali2023dynamical, dandi2024benefits,bordelon2024infinite,bordelon2024dynamical,montanari2025dynamical}. These
analyses have uncovered surprising phenomena about the efficacy of
gradient-based methods and relationships to landscape complexity for
high-dimensional non-convex problems \cite{mannelli2019passed}.

Understanding the long-time behavior of DMFT systems, in particular in
low-temperature regimes characterized by
aging or metastability, has been a primary goal in
both the physics and mathematics literature since the
original inception of these methods (see \cite{altieri2020dynamical, Crisanti2023} and references within for a review). Mathematically rigorous analyses of
long-time dynamics have been obtained previously for spherical 2-spin models in
\cite{arous2001aging} and related statistical models in
\cite{bodin2021rank,liang2023high} by leveraging the rotational invariance of
these models. However, such
analyses of DMFT are (to our knowledge) quite rare in more general settings.
Our work takes a step towards filling this gap, by providing a rigorous
analysis of the DMFT approximation to Langevin dynamics in a more general model
without a rotationally invariant prior, in settings where approximate-TTI holds.

As a by-product of our analyses, we obtain a new proof of a replica formula
\cite{tanaka2002statistical} for the free energy and posterior MSE in the
Bayesian linear model. This proof is different
from several existing proofs of this result
\cite{montanari2006analysis,reeves2019replica,barbier2020mutual,barbier2019adaptive,barbier2019optimal}
and from the Gaussian interpolation methods of Guerra-Talagrand
\cite{guerra2003broken,talagrand2006parisi}, and is based instead on deducing a
static fixed-point equation from the dynamical fixed-point equations of DMFT. Our current result
is specific to a high-temperature regime where a LSI holds for the posterior
law, but it applies to models where the prior law is misspecified
\cite{guo2005randomly,kabashima2008inference,takahashi2022macroscopic}. In this
misspecified context, the closest mathematical result of which we are aware is
\cite{barbier2021performance} which proved the replica-symmetric predictions
in a setting where the posterior is log-concave. A complete large
deviations analysis of the free energy in a related rank-one matrix
estimation model with misspecified prior and noise was carried out in
\cite{guionnet2025estimating}, showing that in
general the asymptotic free energy is characterized by a Parisi-type
variational problem whose solution may not be replica-symmetric. Our results imply for the linear model that this solution must be replica-symmetric under our assumed condition of a LSI for the posterior law.

In the context of adaptive empirical Bayes Langevin dynamics, our results
complement the previous analyses of \cite{fan2023gradient} for more general
regression designs, and of \cite{kuntz2023particle,marion2024implicit} in 
general latent variable models. We deduce a dimension-free convergence rate,
in contrast to the results of \cite{fan2023gradient} that established
convergence (for a nonparametric variant of this algorithm) 
on a time horizon growing linearly with $n,d$, and without employing a
time-dependent and decaying learning rate as in \cite{marion2024implicit}.
Under the additional mean-field structure of our current model, we are
able to establish convergence of a single-chain implementation of the
empirical Bayes Langevin dynamics using (\ref{eq:alpha_intro}), rather than
for an idealized dynamics as studied in
\cite{fan2023gradient,marion2024implicit} or for an implementation using $M$
parallel chains as studied in \cite{kuntz2023particle}. We are also able to
give an explicit characterization and analysis of the fixed points to which
the dynamics of $\{\widehat\alpha^t\}_{t \geq 0}$ may converge.

\subsubsection*{Acknowledgments}

This research was initiated during the ``Huddle on Learning and Inference from
Structured Data'' at ICTP Trieste in 2023. We'd like to thank the huddle
organiers Jean Barbier, Manuel S\'aenz, Subhabrata Sen, and Pragya Sur for
their hospitality and many helpful discussions. We are very grateful to
Andrea Montanari who suggested to us the approach to prove
Theorem \ref{thm:dmft_equilibrium}. We'd like to thank also Pierfrancesco Urbani, Francesca Mignacco and Emanuele Troiani for useful discussions. Z. Fan was supported in part by NSF DMS--2142476 and a Sloan Research Fellowship. J. Ko was supported in part by the Natural Sciences and Engineering Research Council of Canada (NSERC), the Canada Research Chairs programme, and the Ontario Research Fund [RGPIN-2020-04597, DGECR-2020-00199]. B. Loureiro was supported by the French government, managed by the National Research Agency (ANR), under the France 2030 program with the reference ``ANR-23-IACL-0008'' and the Choose France - CNRS AI Rising Talents program. Y. M. Lu was supported in part by the Harvard FAS Dean's Competitive Fund for Promising Scholarship and by a Harvard College Professorship.

\subsubsection*{Notational conventions}

In the context of the posterior law $\sP_g(\btheta \mid \X,\y)$ for a given
prior $g(\cdot)$, we will write 
\[\langle f(\btheta) \rangle=\E[f(\btheta) \mid \X,\y]\]
for the posterior expectation conditioning on the ``quenched'' variables
$\X,\y$. In the context of Langevin dynamics, we will write
similarly
\[\langle f(\btheta^t) \rangle=\E[f(\btheta^t) \mid \X,\y]\]
also for an expectation conditioning on $\X,\y$.
In some arguments it is convenient to consider the expectation also conditioned
on the initial condition $\btheta^0$, and we will denote this by
\[\langle f(\btheta^t) \rangle_\x=\E[f(\btheta^t) \mid \X,\y,\,\btheta^0=\x].\]
We reserve $\E$ and $\P$ for the full expectation and probability also over
$\X,\y,\btheta^*,\beps$.

Constants $C,C',c,c'>0$ throughout are independent of the dimensions $n,d$.
For any random variable $\xi$ in a complete and separable normed vector space $(\cM,\|\cdot\|)$,
we will use $\sP(\xi)$ to denote its law. $\cP_2(\cM)$ is the space of
probability distributions $\sP$ on $(\cM,\|\cdot\|)$ such that
$\E_{\xi \sim \sP} \|\xi\|^2<\infty$, and
$W_1(\cdot)$ and $W_2(\cdot)$ denote
the Wasserstein-1 and Wasserstein-2 metrics on $\cP_2(\cM)$.

For $f:\R^d \to \R$, $\nabla f \in \R^d$ is its gradient, $\nabla^2 f \in \R^{d \times d}$ its Hessian, and $\nabla^3 f \in \R^{d \times d \times d}$ the symmetric tensor of its $3^\text{rd}$-order partial derivatives. For $f:\R \times \R^K \to \R$,
$\partial_\theta f(\theta,\alpha)$ and $\nabla_\alpha f(\theta,\alpha)$ denote its partial derivatives 
with respect to $\theta \in \R$ and $\alpha \in \R^K$. $\|\cdot\|_2$ is the Euclidean norm for vectors and vectorized Euclidean norm
for matrices and tensors. $\Tr M$ and $\|M\|_\op$ are the matrix
trace and Euclidean operator norm.
$C([0,T],\R^d)$ is the space of continuous functions
$f:[0,T] \to \R^d$ equipped with the norm of uniform convergence
$\|f\|_\infty=\sup_{t \in [0,T]} \|f(t)\|_2$. 
$C^k(\R^d,\R^m)$ is the space of functions $f:\R^d \to \R^m$
that are $k$-times continuously-differentiable. For two probability densities $p,q$ on $\R^d$,
$\DKL(p\|q)=\int q(\log q-\log p)$ is the Kullback-Leibler divergence. For a
scalar random variable $X$, $\Var X=\E X^2-(\E X)^2$ and
$\Ent X=\E X \log X-\E X \log \E X$. For a random vector $X \in \R^k$, $\Cov X=\E XX^\top-(\E X)(\E X)^\top \in \R^{k \times k}$.

\section{Model and main results}\label{sec:results}

\subsection{Bayesian linear model and adaptive Langevin dynamics}

We study a linear model
\begin{equation}\label{eq:linearmodel}
\y=\X\btheta^* + \beps \in \R^n
\end{equation}
with random effects $\btheta^* \in \R^d$.
Modeling $\theta_1^*,\ldots,\theta_d^* \overset{iid}{\sim}
g$ for a prior density $g(\cdot)$ on the real line and modeling
$\beps \sim \N(0,\sigma^2\I)$ as Gaussian noise,
Bayesian inference for $\btheta^*$ is based upon the posterior density
\begin{equation}\label{eq:posterior}
\sP_g(\btheta \mid \X,\y)
=\frac{1}{\sP_g(\y \mid \X)}
\frac{1}{(2\pi\sigma^2)^{n/2}}
\exp\bigg({-}\frac{1}{2\sigma^2}\|\y-\X\btheta\|_2^2\bigg)
\prod_{j=1}^d g(\theta_j).
\end{equation}
Here $\sP_g(\y \mid \X)$ is the marginal likelihood of $\y$ (i.e.\ model
evidence or partition function), given by
\begin{equation}\label{eq:marginallikelihood}
\sP_g(\y \mid \X)=\int \frac{1}{(2\pi\sigma^2)^{n/2}}
\exp\bigg({-}\frac{1}{2\sigma^2}\|\y-\X\btheta\|_2^2\bigg)
\prod_{j=1}^d g(\theta_j)\d \theta_j.
\end{equation}

We will study (overdamped) Langevin dynamics for sampling from the posterior
density (\ref{eq:posterior}) in two settings, the first in which the
prior law $g(\cdot)$ is fixed but may be misspecified, and the second in which
this prior law may adapt to the Langevin
trajectory to implement empirical Bayes learning from the observed
data $(\X,\y)$. In the former setting, we consider the Langevin dynamics
\begin{equation}\label{eq:langevinfixedprior}
\d\btheta^t=\nabla_{\btheta}\bigg({-}\frac{1}{2\sigma^2}\|\y-\X\btheta^t\|_2^2
+\sum_{j=1}^d \log g(\theta_j^t)\bigg)\d t+\sqrt{2}\,\d\b^t
\end{equation}
where $\{\b^t\}_{t \geq 0}$ is a standard Brownian motion on $\R^d$. In the
latter setting, we will model the prior via a parametric model
\begin{equation}\label{eq:gmodel}
\big\{g(\;\cdot\;,\alpha):\alpha \in \R^K\big\}
\end{equation}
and consider the empirical Bayes Langevin dynamics
\begin{align}
\d\btheta^t&=\nabla_{\btheta}\bigg({-}\frac{1}{2\sigma^2}\|\y-\X\btheta^t\|_2^2
+\sum_{j=1}^d \log g(\theta_j^t,\widehat\alpha^t)\bigg)\d t+\sqrt{2}\,\d\b^t
\label{eq:langevin_sde}\\
\d\widehat\alpha^t&=\nabla_{\alpha} \bigg(\frac{1}{d}\sum_{j=1}^d \log
g(\theta_j^t,\widehat \alpha^t)- R(\widehat \alpha^t)\bigg)
\d t.\label{eq:gflow}
\end{align}
The equation (\ref{eq:gflow}) describes a continuous-time evolution of the
prior parameter $\alpha \in \R^K$ that is coupled to the Langevin diffusion
(\ref{eq:langevin_sde}) of the posterior sample,
and $R:\R^K \to \R$ is a possible smooth regularizer.
(In this work, we will be interested mostly in the behavior of
these dynamics when $R(\alpha) \equiv 0$, and we introduce $R(\alpha)$ 
for theoretical purposes to confine the dynamics of $\widehat\alpha^t$ in
certain examples.)

To motivate the dynamics (\ref{eq:langevin_sde}--\ref{eq:gflow}) as a procedure
that implements maximum marginal-likelihood learning of $\alpha \in \R^K$, we
may consider the free energy (i.e.\ negative marginal log-likelihood)
\begin{equation}\label{eq:hatF}
\widehat F(\alpha)={-}\frac{1}{d}\log \sP_{g(\cdot,\alpha)}(\y \mid \X)
\end{equation}
as a function of the prior parameter $\alpha \in \R^K$. By the Gibbs
variational principle (c.f.\ \cite[Proposition 4.7]{polyanskiy2025information}),
\begin{equation}\label{eq:gibbsvariational}
\widehat F(\alpha)=\inf_{q \in \cP_*(\R^d)} V(q,\alpha)
\end{equation}
where $\cP_*(\R^d)$ is the space of all probability densities on $\R^d$, and
\begin{equation}\label{eq:gibbsenergy}
V(q,\alpha)=\frac{1}{d}\int \Big(\frac{1}{2\sigma^2}\|\y-\X\btheta\|^2
-\sum_{j=1}^d \log g(\theta_j,\alpha)+\log q(\btheta)\Big)q(\btheta)\d\btheta
+\frac{n}{2d}\log 2\pi\sigma^2
\end{equation}
is the Gibbs free energy corresponding to the prior
$g(\cdot)=g(\,\cdot\,,\alpha)$. We propose to implement
maximum-likelihood learning of
$\alpha \in \R^K$ by minimizing the regularized Gibbs free energy
$V(q,\alpha)+R(\alpha)$ jointly over $(q,\alpha)$, via a gradient flow in the
Wasserstein-2 geometry for $q \in \cP_*(\R^d)$ and the standard Euclidean
geometry for $\alpha \in \R^K$. The resulting gradient flow equations
take the form
\begin{align}
\frac{\d}{\d t}q_t&={-}d \cdot \grad_q^{W_2} V(q_t,\alpha^t)
:=\nabla_{\btheta} \cdot \bigg[q_t(\btheta)
\nabla_{\btheta}\bigg(\frac{1}{2\sigma^2}\|\y-\X\btheta\|_2^2
-\sum_{j=1}^d \log g(\theta_j,\alpha^t)\bigg)\bigg]+\Tr \nabla_{\btheta}^2
q_t(\btheta),\label{eq:ideal_qflow}\\
\frac{\d}{\d t}\alpha^t&=
{-}\nabla_\alpha[V(q_t,\alpha^t)+R(\alpha^t)]
=\nabla_\alpha\bigg(\int \frac{1}{d}\sum_{j=1}^d \log
g(\theta_j,\alpha^t)\,q_t(\btheta)\d\btheta-R(\alpha^t)\bigg).\label{eq:ideal_alphaflow}
\end{align}
In (\ref{eq:ideal_qflow}), we identify $\grad_q^{W_2} V(q,\alpha)$ with
the Fokker-Planck equation for the density evolution of $\btheta^t$
under the Langevin diffusion (\ref{eq:langevinfixedprior}) with prior law
$g(\cdot)=g(\,\cdot\,,\alpha)$, via its variational interpretation put forth
in \cite{jordan1998variational}. Then (\ref{eq:langevin_sde}--\ref{eq:gflow})
may be understood as a particle implementation of
(\ref{eq:ideal_qflow}--\ref{eq:ideal_alphaflow}) that
uses a single Langevin trajectory $\{\btheta^t\}_{t \geq 0}$ to simulate
the dynamics of $q_t$ in (\ref{eq:ideal_qflow}), and that uses the
empirical distribution
$\frac{1}{d}\sum_{j=1}^d \delta_{\theta_j^t}$ to approximate
the expectation over $\btheta \sim q_t$ in the dynamics of $\alpha^t$ in
(\ref{eq:ideal_alphaflow}). An algorithm similar to
(\ref{eq:langevin_sde}--\ref{eq:gflow}) was introduced in
\cite{fan2023gradient}, with some additional reparametrization ideas to allow
for nonparametric modeling of the prior $g(\cdot)$. Here,
to simplify technical considerations, we restrict our study to parametric
prior models of the form (\ref{eq:gmodel}).

\subsection{DMFT equations}

The empirical Bayes Langevin diffusion (\ref{eq:langevin_sde}--\ref{eq:gflow})
is an example of a general class of adaptive Langevin diffusions that we
study in our companion work \cite{paper1}. In particular, the gradient
equation (\ref{eq:gflow}) for $\widehat\alpha^t$ is a function of the
empirical distribution of coordinates $\btheta^t$,
\[\d\widehat\alpha^t=\cG\Big(\widehat\alpha^t,\frac{1}{d}\sum_{j=1}^d
\delta_{\theta_j^t}\Big)\d t\]
where $\cG:\R^K \times \cP_2(\R) \to \R^K$ is the gradient map
\[\cG(\alpha,\sP)=\E_{\theta \sim \sP}[\nabla_\alpha \log g(\theta,\alpha)]
-\nabla R(\alpha).\]
Our analyses will rely on a system of dynamical mean-field theory (DMFT)
equations, formalized in \cite{paper1} and building upon the results of
\cite{celentano2021high,gerbelot2024rigorous}, that describes a
deterministic evolution of a prior parameter $\alpha^t \in \R^K$ and a
univariate law $\sP(\theta^t) \in \cP_2(\R)$ that approximate
$(\widehat\alpha^t,\frac{1}{d}\sum_{j=1}^d
\delta_{\theta^t_j})$ in the large system limit $n,d \to \infty$.
This approximation will hold under the following assumptions, which
we assume throughout this work.

\begin{assumption}[Linear model and initial conditions]\label{assump:model}
\phantom{}
\begin{enumerate}[(a)]
\item (Asymptotic scaling)
$\lim_{n,d \to \infty} \frac{n}{d}=\delta \in (0,\infty)$.
\item (Random design)
$\X=(x_{ij}) \in \R^{n \times d}$ has independent entries satisfying $\E[x_{ij}]=0$,
$\E[x_{ij}^2]=\frac{1}{d}$, and $\|\sqrt{d}x_{ij}\|_{\psi_2} \leq C$ for a constant $C>0$, where $\|\cdot\|_{\psi_2}$ is the sub-Gaussian norm.
\item (Bayesian linear model) $\btheta^*,\beps$ are independent of each other and of $\X$, and $\y=\X\btheta^*+\beps$.
The entries of $\btheta^*,\beps$ are distributed as
\begin{equation}\label{eq:thetaepsdistribution}
\theta_1^*,\ldots,\theta_d^* \overset{iid}{\sim} g_*,
\qquad \eps_1,\ldots,\eps_n \overset{iid}{\sim} \N(0,\sigma^2)
\end{equation}
for some $\sigma^2>0$ and probability density $g_*$ 
(both fixed and independent of $n,d$), where $g_*$ satisfies the log-Sobolev inequality
\begin{equation}\label{eq:LSIprior}
\Ent_{\theta^* \sim g_*}[f(\theta^*)^2] \leq C_\LSI\,\,
\E_{\theta^* \sim g_*} [(f'(\theta^*))^2]
\text{ for all } f \in C^1(\R).
\end{equation}
\item (Initial conditions) The initialization $\btheta^0$ is independent of $\X,\btheta^*,\beps$, and
\begin{equation}\label{eq:theta0distribution}
\theta^0_1,\ldots,\theta^0_d \overset{iid}{\sim} g_0
\end{equation}
for some probability density $g_0$ (fixed and independent of $n,d$) with finite entropy $\int g_0\log g_0$ and finite moment-generating-function in a neighborhood of 0. The initialization
$\widehat \alpha^0$ satisfies
$\lim_{n,d \to \infty} \widehat \alpha^0=\alpha^0$ a.s.\ for a
deterministic parameter $\alpha^0 \in \R^K$.
\end{enumerate}
\end{assumption}

\begin{assumption}[Prior model and regularizer]\label{assump:prior}
\phantom{}
\begin{enumerate}[(a)]
\item In the context of a fixed prior, $g(\theta)$ is strictly positive and
thrice continuously-differentiable, and
$(\log g)'''(\theta)$ is uniformly H\"older continuous over $\theta \in \R$.
For a constant $C>0$ and all $\theta \in \R$,
\[|(\log g)'(\theta)| \leq C(1+|\theta|),
\qquad |(\log g)''(\theta)|\leq C,
\qquad |(\log g)'''(\theta)|\leq C,\]
and for some constants $r_0,c_0>0$,
\[{-}(\log g)''(\theta) \geq c_0 \text{ for all } |\theta| \geq
r_0.\]
\item In the context of an adaptive prior, $g(\theta,\alpha)$ is strictly positive, $R(\alpha)$ is nonnegative, and both are
thrice continuously-differentiable.
For a constant $C>0$ and all $(\theta,\alpha) \in \R \times \R^K$,
\begin{equation}\label{eq:logggradientbound}
\begin{aligned}
\|\nabla_{(\theta,\alpha)} \log g(\theta,\alpha)\|_2 &\leq
C(1+|\theta|+\|\alpha\|_2),\\
\|\nabla R(\alpha)\|_2 &\leq C(1+\|\alpha\|_2).
\end{aligned}
\end{equation}
Furthermore, for each compact subset $S \subset \R^K$,
$\theta \mapsto \nabla_{(\theta,\alpha)}^3 \log g(\theta,\alpha)$ is H\"older-continuous uniformly over $(\theta,\alpha) \in \R \times S$, and for some constants
$C(S),r_0(S),c_0(S)>0$,
\begin{equation}\label{eq:localconditions}
\begin{gathered}
\|\nabla_{(\theta,\alpha)}^2 \log g(\theta,\alpha)\|_2 \leq C(S),
\;
\|\nabla_{(\theta,\alpha)}^3 \log g(\theta,\alpha)\|_2 \leq C(S)
\text{ for all } (\theta,\alpha) \in \R \times S,\\
{-}\partial_\theta^2 \log g(\theta,\alpha) \geq c_0(S)
\text{ for all } |\theta| \geq r_0(S) \text{ and } \alpha \in S,\\
\|\nabla^2 R(\alpha)\|_2 \leq C(S),\;
\|\nabla^3 R(\alpha)\|_2 \leq C(S) \text{ for all } \alpha \in S.
\end{gathered}
\end{equation}
\end{enumerate}
\end{assumption}

In particular, Assumption \ref{assump:prior}(b) requires that
$g(\cdot)\equiv g(\,\cdot\,,\alpha)$ satisfies Assumption \ref{assump:prior}(a) for each fixed
prior parameter $\alpha \in \R^K$.
We assume the LSI condition
(\ref{eq:LSIprior}) for the true prior $g_*$ to ensure concentration of the free
energy (c.f.\ Proposition \ref{prop:mseconcentration}), and we clarify that the
conditions of Assumption \ref{assump:prior}(a) imply that the modeled prior
$g(\cdot)$ must also satisfy an LSI of the form (\ref{eq:LSIprior}), as 
reviewed in Lemma \ref{lemma:univariateLSI}.

Under the above Assumptions \ref{assump:model} and \ref{assump:prior}(b), the
DMFT limit for (\ref{eq:langevin_sde}--\ref{eq:gflow})
is described by the following construction: Let
\begin{equation}\label{eq:DMFTscalars}
\theta^* \sim g_*, \qquad \theta^0 \sim g_0,
\qquad \eps \sim \N(0,\sigma^2)
\end{equation}
denote independent scalar variables with the distributions
(\ref{eq:thetaepsdistribution}) and (\ref{eq:theta0distribution}), and let
$\delta=\lim \frac{n}{d}$
be as in Assumption \ref{assump:model}. Let
$\{b^t\}_{t \geq 0}$, $\{u^t\}_{t \geq 0}$, and $(w^*,\{w^t\}_{t \geq 0})$
be centered univariate Gaussian processes independent of each other and of
$(\theta^*,\theta^0,\eps)$, where $\{b^t\}_{t \geq 0}$ is a standard Brownian
motion on $\R$, and $\{u^t\}_{t \geq 0}$ and $(w^*,\{w^t\}_{t \geq 0})$
have covariance kernels
\begin{equation}\label{def:dmft_covuw}
\E[u^t u^s]=C_\eta(t,s),
\qquad \E[w^t w^s]=C_\theta(t,s),
\qquad \E[w^t w^*]=C_\theta(t,*),
\qquad \E[(w^*)^2]=C_\theta(*,*)
\end{equation}
defined self-consistently in (\ref{def:covarianceresponse}) below.
We consider a system of stochastic differential equations
\begin{align}
\d\theta^t&=\Big[{-}\frac{\delta}{\sigma^2}(\theta^t-\theta^*) +
\partial_\theta \log g(\theta^t,\alpha^t)
+\int_0^t R_\eta(t, s)(\theta^{s} - \theta^\ast)\d s + u^t\Big]\d t + \sqrt{2}\,\d b^t\label{def:dmft_langevin_cont_theta}\\
\d\Big(\frac{\partial\theta^t}{\partial u^s}\Big) &=
\bigg[{-}\bigg(\frac{\delta}{\sigma^2} - \partial_\theta^2 \log
g(\theta^t,\alpha^t)\bigg)\frac{\partial \theta^{t}}{\partial u^s} + \int_s^{t}
R_\eta(t,s')\frac{\partial \theta^{s'}}{\partial u^s}\d s'\bigg]\d t
\label{def:response_theta}
\end{align}
for univariate processes $\{\theta^t\}_{t \geq 0}$ and
$\{\frac{\partial \theta^t}{\partial u^s}\}_{t \geq s \geq 0}$
adapted to the filtration
$\cF_t^\theta:=\cF(\{b^s\}_{s \leq t},\{u^s\}_{s \leq t},\theta^*,\theta^0)$,
with the initial conditions
\[\theta^t|_{t=0}=\theta^0,
\qquad \frac{\partial\theta^t}{\partial u^s}\bigg|_{t=s}=1.\]
These are driven by a deterministic and continuous
$\R^K$-valued process $\{\alpha^t\}_{t \geq 0}$ representing the asymptotic limit of $\{\widehat\alpha^t\}_{t \geq 0}$.
We consider likewise univariate processes $\{\eta^t\}_{t \geq 0}$
and $\{\frac{\partial \eta^t}{\partial w^s}\}_{t \geq s \geq 0}$ defined by
\begin{align}
\eta^t &= -\frac{1}{\sigma^2}\int_0^t R_\theta(t,s)\big(\eta^s + w^* -
\eps\big)\d s - w^t\label{def:dmft_langevin_cont_eta}\\
\frac{\partial \eta^t}{\partial w^s} &= {-}\frac{1}{\sigma^2}\Big[\int_{s}^{t}
R_\theta(t,s')\frac{\partial \eta^{s'}}{\partial w^s}\d s'-R_\theta(t,s)\Big]\label{def:response_g}
\end{align}
adapted to the filtration $\cF_t^\eta:=\cF(\{w^s\}_{s \leq t},w^*,\eps)$.
The deterministic process $\{\alpha^t\}_{t \geq 0}$ above is defined self-consistently by
\begin{equation}
\frac{\d}{\d t}\alpha^t = \cG(\alpha^t,\sP(\theta^t)),
\qquad \cG(\alpha,\sP)=\E_{\theta \sim \sP}[\nabla_\alpha \log g(\theta,\alpha)]-\nabla R(\alpha)
\label{def:dmft_langevin_alpha}
\end{equation}
with initial condition
$\alpha^t|_{t=0}=\alpha^0$
given in Assumption \ref{assump:prior},
where $\sP(\theta^t)$ is the law of $\theta^t$ in (\ref{def:dmft_langevin_cont_theta}).
The covariance and response functions $C_\theta,C_\eta,R_\theta,R_\eta$
are also defined for all $t \geq s \geq 0$ self-consistently via the above processes by
\begin{equation}\label{def:covarianceresponse}
\begin{gathered}
C_\theta(t,s)=\E[\theta^t \theta^s],
\quad C_\theta(t,*)=\E[\theta^t \theta^*],
\quad C_\theta(*,*)=\E[(\theta^*)^2],\\
C_\eta(t,s)=\frac{\delta}{\sigma^4}\E[(\eta^t + w^*-\eps)(\eta^s + w^*-\eps)]\\
R_\theta(t,s)=\E\Big[\frac{\partial \theta^t}{\partial u^s}\Big],
\quad R_\eta(t,s)=\frac{\delta}{\sigma^2}\E\Big[
\frac{\partial \eta^t}{\partial w^s}\Big].
\end{gathered}
\end{equation}

This DMFT system (\ref{def:dmft_covuw}--\ref{def:covarianceresponse})
describes the $n,d \to \infty$ limit of the empirical Bayes Langevin dynamics 
(\ref{eq:langevin_sde}--\ref{eq:gflow}).
In the setting of a fixed prior $g(\cdot) \equiv g(\,\cdot\,,\alpha^0)$,
the DMFT limit for the standard Langevin diffusion (\ref{eq:langevinfixedprior})
is the same, upon replacing $\cG(\alpha,\sP)$ in
(\ref{def:dmft_langevin_alpha}) by $\cG(\alpha,\sP)=0$ so that
$\alpha^t=\alpha^0$ for all $t \geq 0$.

Fixing any time horizon $T>0$, let us set
\[\eta^*={-}w^*\]
and denote by
\[\sP(\theta^*,\{\theta^t\}_{t \in [0,T]}) \in \cP_2(\R \times C([0,T],\R)),
\quad \sP(\eta^*,\eps,\{\eta^t\}_{t \in [0,T]}) \in \cP_2(\R \times \R \times
C([0,T],\R))\]
the joint laws of sample paths $(\theta^*,\{\theta^t\}_{t \in [0,T]})$
and $(\eta^*,\eps,\{\eta^t\}_{t \in [0,T]})$ in this DMFT system.
We write $\theta_j^*,\theta_j^t,\eps_i,\eta_i^*,\eta_i^t$ for the coordinates of
$\btheta^*,\btheta^t,\beps,\boldeta^*=\X\btheta^*,\boldeta^t=\X\btheta^t$, and
$\overset{W_2}{\to}$ for Wasserstein-2 convergence in the spaces
$\cP_2(\R \times C([0,T],\R))$ and $\cP_2(\R \times \R \times C([0,T],\R))$
as $n,d \to \infty$. The main result we will use from our companion work
\cite{paper1} is summarized in the following theorem.

\begin{theorem}\label{thm:dmft_approx}
\begin{enumerate}[(a)]
\item Suppose Assumptions \ref{assump:model} and \ref{assump:prior}(a)
hold, and identify $g(\cdot) \equiv g(\,\cdot\,,\alpha^0)$.
Let $\{\btheta^t\}_{t \geq 0}$ be the solution to the dynamics
(\ref{eq:langevinfixedprior}) with fixed prior $g(\cdot)$,
and denote $\boldeta^*=\X\btheta^*$ and $\boldeta^t=\X\btheta^t$.
Then for each fixed $T \geq 0$, there exists a solution up to time $T$ of the DMFT system
(\ref{def:dmft_covuw}--\ref{def:covarianceresponse}) with 
(\ref{def:dmft_langevin_alpha}) replaced by $\cG(\alpha,\sP)=0$,
such that almost surely as $n,d \to \infty$,
\begin{equation}\label{eq:dmftW2convergence}
\frac{1}{d}\sum_{j=1}^d \delta_{\theta_j^*,\{\theta_j^t\}_{t \in [0,T]}}
\overset{W_2}{\to} \sP(\theta^*,\{\theta^t\}_{t \in [0,T]}),
\quad \frac{1}{n}\sum_{i=1}^n \delta_{\eta_i^*,\eps_i,\{\eta_i^t\}_{t \in [0,T]}}
\overset{W_2}{\to} \sP(\eta^*,\eps,\{\eta^t\}_{t \in [0,T]}).
\end{equation}

\item Suppose Assumptions \ref{assump:model} and \ref{assump:prior}(b) hold, 
and let $\{\btheta^t,\widehat\alpha^t\}_{t \geq 0}$ be the solution to the empirical
Bayes Langevin dynamics
(\ref{eq:langevin_sde}--\ref{eq:gflow}).
Then for each fixed $T>0$, there exists a solution up to time $T$ of the DMFT system
(\ref{def:dmft_covuw}--\ref{def:covarianceresponse}) such that
almost surely as $n,d \to \infty$,
(\ref{eq:dmftW2convergence}) holds and also
\[\{\widehat \alpha^t\}_{t \in [0,T]} \to \{\alpha^t\}_{t \in [0,T]}
\text{ in } C([0,T],\R^K).\]
\end{enumerate}
\end{theorem}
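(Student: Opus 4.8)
The plan is to obtain the DMFT limit by discretizing the Langevin diffusion in time and recognizing the resulting recursion as a memoryful Approximate Message Passing (AMP) iteration driven by the disorder $\X$, so that the rigorous state-evolution machinery of \cite{celentano2021high,gerbelot2024rigorous} can be applied; part~(a) is the special case $\cG\equiv0$ of part~(b), so I would prove both at once. Fix a horizon $T>0$ and a mesh $h=T/N$, and replace (\ref{eq:langevin_sde})--(\ref{eq:gflow}) by their Euler--Maruyama discretization
\begin{gather*}
\btheta^{(k+1)} = \btheta^{(k)} + h\Big({-}\tfrac{1}{\sigma^2}\X^\top(\X\btheta^{(k)}-\y)+\partial_\theta\log g(\btheta^{(k)},\widehat\alpha^{(k)})\Big)+\sqrt{2h}\,\bxi^{(k)},\\
\widehat\alpha^{(k+1)}=\widehat\alpha^{(k)}+h\,\cG\big(\widehat\alpha^{(k)},\tfrac1d\textstyle\sum_{j=1}^d\delta_{\theta_j^{(k)}}\big),
\end{gather*}
with $\bxi^{(k)}\sim\N(0,\I)$ i.i.d.\ and $\partial_\theta\log g(\cdot,\alpha)$ applied coordinatewise. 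Writing $\r^{(k)}=\y-\X\btheta^{(k)}=\beps+\X(\btheta^*-\btheta^{(k)})$, this is a two-layer iteration that alternately applies $\X^\top$ and $\X$ to coordinatewise-separable nonlinearities of all earlier iterates; the Gaussian initialization and the mean-zero, variance-$1/d$, sub-Gaussian design of Assumption \ref{assump:model} place it within the scope of long-memory AMP / general first-order method state evolution.

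The core step is the Gaussian conditioning argument. Conditioning on the $\sigma$-algebra generated by $\btheta^{(0)},\dots,\btheta^{(k)}$ and $\r^{(0)},\dots,\r^{(k)}$, the design $\X$ acts on the orthogonal complement of the span of the earlier iterates like a fresh Gaussian matrix, so that $\X\btheta^{(k+1)}$ and $\X^\top\r^{(k+1)}$ are, up to errors that are $o(1)$ in $n,d$, jointly Gaussian with covariance given by the empirical second moments of the earlier $\btheta$- and $\r$-iterates, plus a linear ``Onsager'' memory term whose coefficients are empirical averages of the derivatives of the layer nonlinearities — namely $\partial_\theta^2\log g(\theta_j^{(\ell)},\widehat\alpha^{(\ell)})$ for the $\btheta$-layer and the constant $1/\sigma^2$ for the $\r$-layer. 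Propagating this induction over $k=0,\dots,N$ yields a closed discrete state-evolution system for the empirical covariance arrays $(C_\theta(kh,\ell h))$, $(C_\eta(kh,\ell h))$ and response arrays $(R_\theta(kh,\ell h))$, $(R_\eta(kh,\ell h))$, and shows that the joint empirical law of the coordinate trajectories converges in $W_2$ to the law prescribed by the time-discretized DMFT. Sending $h\to0$ after $n,d\to\infty$ and invoking a Gr\"onwall estimate based on the regularity bounds of Assumption \ref{assump:prior} identifies this limit with the continuous-time integro-differential system (\ref{def:dmft_covuw})--(\ref{def:covarianceresponse}); its well-posedness (existence of a solution up to $T$) would follow from a Picard iteration for the kernels $C,R$ in $C([0,T]^2)$, using the a priori bound $\sup_{t\le T}\E[(\theta^t)^2]<\infty$ supplied by the confinement condition ${-}(\log g)''(\theta)\ge c_0$ for $|\theta|\ge r_0$.

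For part~(b), the added ingredient is the coupled drift (\ref{eq:gflow}) for $\widehat\alpha^t$. Since $\cG(\alpha,\cdot)$ and $\sP\mapsto\E_{\theta\sim\sP}[\nabla_\alpha\log g(\theta,\alpha)]$ are Lipschitz in $W_2$ uniformly for $\alpha$ in compacts (by (\ref{eq:logggradientbound}) and Assumption \ref{assump:prior}(b)), one runs the state-evolution recursion and the $\widehat\alpha$-recursion jointly: the $W_2$-convergence of $\tfrac1d\sum_{j=1}^d\delta_{\theta_j^{(k)}}$ to $\sP(\theta^{kh})$ feeds into a discrete Gr\"onwall bound on $\|\widehat\alpha^{(k)}-\alpha^{kh}\|_2$ giving $\widehat\alpha^{(k)}\to\alpha^{kh}$, and the gradient bound (\ref{eq:logggradientbound}) yields equicontinuity of $\{\widehat\alpha^t\}_{t\in[0,T]}$, upgrading this to uniform convergence on $[0,T]$. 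Throughout, $\widehat\alpha^{(k)}$ is treated as a slowly-varying parameter entering the separable nonlinearity $\partial_\theta\log g(\cdot,\widehat\alpha^{(k)})$ in the $\btheta$-layer, which is admissible because it is measurable with respect to the conditioning $\sigma$-algebra at step $k$.

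The main obstacle I anticipate is the simultaneous control of two limits: the $o(1)$ Gaussian-conditioning errors, which a priori deteriorate with the number of steps $N=T/h$, and the Euler discretization error, which a priori depends on the modulus of continuity of the Brownian path, so that the interchange ``$n,d\to\infty$ then $h\to0$'' can be executed with error bounds uniform on $[0,T]$. This is the technical heart of the companion analysis and is where the sub-Gaussianity in Assumption \ref{assump:model} and the third-order H\"older regularity in Assumption \ref{assump:prior} are genuinely used; keeping the memory terms under control additionally requires that $R_\theta,R_\eta$ remain integrable on $[0,T]^2$, which is again ensured by the dissipativity of $\log g$. A secondary difficulty specific to part~(b) is that existence of the self-consistent DMFT fixed point and convergence of $\{\widehat\alpha^t\}$ are entangled and must be obtained together, via a joint contraction argument on the pair $\big((C,R),\{\alpha^t\}\big)$.
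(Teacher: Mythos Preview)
Your proposal sketches the underlying machinery---discretize, recognize as a long-memory AMP/general first-order method, apply the Gaussian-conditioning state evolution of \cite{celentano2021high,gerbelot2024rigorous}, then send $h\to0$---which is indeed the strategy behind the companion paper \cite{paper1}. The paper, however, does not redo any of this: it simply invokes \cite[Theorem~2.5]{paper1} as a black box. Part~(a) is immediate from that theorem with $s(\theta,\alpha)=(\log g)'(\theta)$ and $\cG\equiv0$; part~(b) under a \emph{global} version of Assumption~\ref{assump:prior}(b) is likewise immediate. So what you describe is one level deeper than what the paper actually proves, and you correctly flag the interchange-of-limits issue as the real technical obstacle---but that obstacle is resolved in \cite{paper1}, not here.

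The one piece of original work in the paper's proof that you do not cover is the reduction from the \emph{local} conditions of Assumption~\ref{assump:prior}(b) (where the bounds on $\nabla^2\log g$, $\nabla^3\log g$, and the convexity-at-infinity constant $c_0$ all depend on a compact set $S\subset\R^K$) to the \emph{global} conditions required by \cite[Theorem~2.5]{paper1}. You invoke Lipschitz estimates ``uniformly for $\alpha$ in compacts'' without establishing that $\{\widehat\alpha^t\}_{t\in[0,T]}$ stays in a compact set. The paper handles this by a truncation argument: using only the linear-growth bounds (\ref{eq:logggradientbound}), a Gr\"onwall estimate on the high-probability event $\{\|\X\|_\op,\,d^{-1/2}\|\y\|_2,\,d^{-1/2}\|\btheta^0\|_2,\,\|\widehat\alpha^0\|_2,\,\sup_{t\le T}d^{-1/2}\|\b^t\|_2\le C_0\}$ gives $\sup_{t\le T}(d^{-1/2}\|\btheta^t\|_2+\|\widehat\alpha^t\|_2)<M$; one then replaces $(g,R)$ by $(g_M,R_M)$ agreeing on $\{\|\alpha\|_2\le M\}$ but satisfying the global assumption, applies \cite{paper1} to the truncated dynamics, and observes that the truncated and original dynamics coincide on this event and that the limiting DMFT trajectory $\{\alpha_M^t\}$ also stays in $\{\|\alpha\|_2\le M\}$, so the DMFT systems agree.
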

Both parts of this theorem follow from \cite[Theorem 2.5]{paper1}, and we explain the details of the reduction to \cite[Theorem 2.5]{paper1} in Appendix \ref{appendix:globalconditions}.
The above solutions to the DMFT systems are unique in certain domains of
exponential growth for $\{\alpha^t\}$ and for the correlation and response functions, and we refer
readers to \cite[Theorem 2.4]{paper1} for details of this uniqueness claim.

For our analyses of dynamics with fixed prior $g(\cdot)$
in the setting of Theorem \ref{thm:dmft_approx}(a), we will require a second
result from \cite{paper1} that gives an interpretation for the DMFT response
functions $R_\theta(t,s)$ and $R_\eta(t,s)$ as coordinate averages of
single-particle responses in the Langevin diffusion
(\ref{eq:langevinfixedprior}). We defer a statement of this result to
Section \ref{sec:langevinproperties}.

\subsection{Replica-symmetric characterization of equilibrium for a fixed prior}

This and the next section describe the main results of our current paper.
We discuss results pertaining to the dynamics
(\ref{eq:langevinfixedprior}) with a fixed prior $g(\cdot)$ in this section,
and results pertaining to the empirical Bayes dynamics
(\ref{eq:langevin_sde}--\ref{eq:gflow}) in Section \ref{subsec:adaptive} to
follow.

\subsubsection{Approximately-TTI DMFT systems}

We first introduce a set of conditions for the correlation and
response functions of the DMFT system that characterize an approximate
time-translation-invariance (TTI) property. Under these conditions, we
establish convergence of the joint
law of $(\theta^*,\theta^t)$ in the DMFT equations to a replica-symmetric
fixed point as $t \to \infty$.

\begin{definition}\label{def:regular}
In the setting of a fixed prior $g(\cdot)$ [i.e.\ with
$\cG(\alpha,\sP)=0$ in (\ref{def:dmft_langevin_alpha})],
the solution of the DMFT system
(\ref{def:dmft_covuw}--\ref{def:covarianceresponse}) is
\emph{approximately-TTI} if it
satisfies the following conditions:
\begin{enumerate}
\item There exists a scalar value $c_\theta(*) \in \R$ and
functions $c_\theta^\tti,c_\eta^\tti:[0,\infty)
\to \R$ such that, for some $\eps:[0,\infty) \to [0,\infty)$
satisfying $\lim_{s \to \infty} \eps(s)=0$ and for all $t \geq s \geq 0$,
\begin{align}
|C_\theta(t,s)-c_\theta^\tti(t-s)|
&\leq \eps(s),\label{eq:Cthetaapproxinvariant}\\
|C_\eta(t,s)-c_\eta^\tti(t-s)| &\leq
\eps(s),\label{eq:Cetaapproxinvariant}\\
|C_\theta(s,*)-c_\theta(*)| &\leq \eps(s).\label{eq:Cthetastarlim}
\end{align}
Furthermore, there exist values $c_\theta^\tti(\infty),c_\eta^\tti(\infty) \geq 0$ and
finite positive measures $\mu_\theta,\mu_\eta$
supported on $[\iota,\infty)$ for some $\iota>0$ (strictly) such that
\begin{equation}\label{eq:cttiforms}
c_\theta^\tti(\tau)=c_\theta^\tti(\infty)+\int_\iota^\infty e^{-a\tau}\d\mu_\theta(a),
\qquad c_\eta^\tti(\tau)=c_\eta^\tti(\infty)+\int_\iota^\infty
e^{-a\tau}\d\mu_\eta(a).
\end{equation}
\item There exist functions
$r_\theta^\tti,r_\eta^\tti:[0,\infty) \to \R$ such that, for some $\eps:[0,\infty) \to [0,\infty)$
satisfying $\lim_{t \to \infty} \eps(t)=0$,
\begin{align}
\int_0^t |R_\theta(t,s)-r_\theta^\tti(t-s)|\d s &\leq
\eps(t),\label{eq:Rthetaapproxinvariant}\\
\int_0^t |R_\eta(t,s)-r_\eta^\tti(t-s)|\d s &\leq \eps(t).\label{eq:Retaapproxinvariant}
\end{align}
Furthermore, $r_\theta^\tti,r_\eta^\tti,c_\theta^\tti,c_\eta^\tti$ satisfy
the fluctuation-dissipation relations
\begin{equation}\label{eq:crttiFDT}
r_\theta^\tti(\tau) = {-}{c_\theta^\tti}'(\tau),
\qquad r_\eta^\tti(\tau) = {-}{c_\eta^\tti}'(\tau).
\end{equation}
\end{enumerate}
\end{definition}

We show that if the DMFT system is approximately-TTI
in the above sense, then its $t \to \infty$ limit is
characterized by a system of ``static'' scalar fixed-point equations. To
describe this characterization, consider a scalar Gaussian convolution model
\begin{equation}\label{eq:scalarchannel}
y=\theta^*+z \in \R.
\end{equation}
Let
\begin{equation}\label{eq:scalarchannel_gibbsavg}
\sP_{g,\omega}(\theta \mid y)=\frac{1}{\sP_{g,\omega}(y)}
\sqrt{\frac{\omega}{2\pi}}
\exp\Big({-}\frac{\omega}{2}(y-\theta)^2\Big)g(\theta)
\end{equation}
be the posterior distribution of $\theta$ in this model, assuming a prior law
$\theta \sim g(\cdot)$ and independent Gaussian noise $z \sim
\N(0,\omega^{-1})$, where
\begin{equation}\label{eq:scalarchannelmarginal}
\sP_{g,\omega}(y)=\int \sqrt{\frac{\omega}{2\pi}}
\exp\Big({-}\frac{\omega}{2}(y-\theta)^2\Big)g(\theta)\d\theta
\end{equation}
denotes the marginal density of $y$ under these assumptions. Let the
true model be $y=\theta^*+z$ with $\theta^* \sim g_*$
and independent noise $z \sim \N(0,\omega_*^{-1})$, and denote by
\begin{equation}\label{eq:thetalimitlaw}
\sP_{g_*,\omega_*;g,\omega}(\theta^*,\theta)
\end{equation}
the joint law of the true parameter $\theta^*$ and a posterior sample
$\theta$ under the generating process
\begin{equation}\label{eq:scalarchannelprocess}
\theta^* \sim g_*,\,z \sim \N(0,\omega_*^{-1}) \text{ (independent)}
\quad \Rightarrow \quad
y=\theta^*+z
\quad \Rightarrow \quad \theta \mid y \sim \sP_{g,\omega}(\,\cdot\,\mid y)
\end{equation}
(where $\theta \mid y$ is defined with misspecified prior law $g(\cdot)$ and
misspecified noise variance $\omega^{-1}$).
We write $\langle f(\theta) \rangle_{g,\omega}$ for the posterior average with
respect to $\sP_{g,\omega}(\,\cdot\, \mid y)$ depending implicitly on $y$,
and $\E_{g_*,\omega_*} f(y)$ for the expectation under the true model
$y=\theta^*+z$. Thus, an expectation over the joint law 
$\sP_{g_*,\omega_*;g,\omega}$ in (\ref{eq:thetalimitlaw}) takes the form
\[\E_{(\theta^*,\theta) \sim \sP_{g_*,\omega_*;g,\omega}} f(\theta^*,\theta)
=\E_{g_*,\omega_*} \langle f(\theta^*,\theta) \rangle_{g,\omega}.\]

\begin{theorem}\label{thm:dmft_equilibrium}
Suppose Assumptions \ref{assump:model} and \ref{assump:prior}(a) hold.
Consider the Langevin diffusion (\ref{eq:langevinfixedprior}) with a
fixed prior $g(\cdot)$, and suppose
that the corresponding solution of the DMFT system in
Theorem \ref{thm:dmft_approx}(a) is approximately-TTI.
Define, from the quantities of Definition \ref{def:regular},
\begin{equation}\label{eq:mmse}
\begin{gathered}
\mse=c_\theta^\tti(0)-c_\theta^\tti(\infty),
\qquad \mse_*=\E[{\theta^*}^2]-2c_\theta(*)+c_\theta^\tti(\infty),\\
\ymse=\frac{\sigma^4}{\delta}\big(c_\eta^{\tti}(0)-c_\eta^{\tti}(\infty)\big),
\qquad
\ymse_*=\frac{\sigma^4}{\delta}\big(2c_\eta^\tti(0)-c_\eta^{\tti}(\infty)\big)-\sigma^2.
\end{gathered}
\end{equation}
Then there are unique values $\omega,\omega_*>0$ (given $\mse,\mse_*$) for which
$\mse,\mse_*,\omega,\omega_*$ satisfy the fixed-point equations
\begin{equation}\label{eq:static_fixedpoint}
\begin{gathered}
\omega=\delta(\sigma^2+\mse)^{-1}, \qquad
\omega_*=\delta(\sigma^2+\mse_*)^{-1},\\
\mse=\E_{g_*,\omega_*}[\langle (\theta-\langle \theta \rangle_{g,\omega})^2
\rangle_{g,\omega}],
\qquad \mse_*=\E_{g_*,\omega_*}[(\theta^*-\langle \theta
\rangle_{g,\omega})^2].
\end{gathered}
\end{equation}
The quantities $\ymse,\ymse_*$ are related to these fixed points by
\begin{equation}\label{eq:ymmseomega}
\ymse=\sigma^2\Big(1-\frac{\omega\sigma^2}{\delta}\Big), \qquad
\ymse_*=\sigma^2+\frac{\omega\sigma^4}{\delta}\Big(\frac{\omega}{\omega_*}-2\Big).
\end{equation}
Furthermore, letting $\sP(\theta^*,\theta^t)$ be the joint law of
$(\theta^*,\theta^t)$ in the DMFT system, as $t \to \infty$,
\begin{equation}\label{eq:dmftthetaconvergence}
\sP(\theta^*,\theta^t) \overset{W_2}{\to}
\sP_{g_*,\omega_*;g,\omega}.
\end{equation}
\end{theorem}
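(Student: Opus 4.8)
The plan is to take the $t,s\to\infty$ limit of the dynamical fixed-point equations that the DMFT correlation and response functions satisfy, and show that under the approximate-TTI assumption these collapse to the static system \eqref{eq:static_fixedpoint}–\eqref{eq:ymmseomega}, simultaneously identifying the limiting law of $(\theta^*,\theta^t)$ as the scalar-channel posterior law $\sP_{g_*,\omega_*;g,\omega}$.

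\medskip
\noindent\textbf{Step 1 (Reduce the $\eta$-side to an explicit effective channel).}
The equations \eqref{def:dmft_langevin_cont_eta}–\eqref{def:response_g} for $\{\eta^t\}$ are linear in $\eta$ given the Gaussian processes, so I would first solve them explicitly: writing $\nu^t=\eta^t+w^*-\eps$, the relation \eqref{def:dmft_langevin_cont_eta} becomes a linear Volterra equation driven by $-w^t+w^*-\eps$ with kernel $\sigma^{-2}R_\theta$. Using the approximate-TTI forms of $R_\theta$ and $C_\theta$ together with the Laplace-transform representation \eqref{eq:cttiforms}, I would pass to the $t\to\infty$ limit to obtain that the stationary law of $\nu^t$ is Gaussian with a variance expressible through $c_\theta^\tti(0),c_\theta^\tti(\infty),c_\theta(*),\E[{\theta^*}^2],\sigma^2$. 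Matching this against the definitions of $C_\eta$ in \eqref{def:covarianceresponse} and of $\ymse,\ymse_*$ in \eqref{eq:mmse} yields, after algebra, the relation between $(\ymse,\ymse_*)$ and $(\mse,\mse_*)$ claimed in \eqref{eq:ymmseomega}, and in particular pins down the effective "signal strength" $\omega=R_\eta$-type constant; here the key identity is that the stationary integrated response $\int_0^\infty r_\eta^\tti(\tau)\d\tau$ equals $\delta(\sigma^2+\mse)^{-1}=\omega$ via the fluctuation–dissipation relation \eqref{eq:crttiFDT} evaluated through $c_\eta^\tti(0)-c_\eta^\tti(\infty)=\int r_\eta^\tti$.

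\medskip
\noindent\textbf{Step 2 (Identify the stationary $\theta$-SDE with the scalar-channel posterior).}
Feeding the $\eta$-side output back into \eqref{def:dmft_langevin_cont_theta} through \eqref{def:covarianceresponse}, the drift term $\int_0^t R_\eta(t,s)(\theta^s-\theta^*)\d s+u^t$ becomes, in the approximate-TTI regime and as $t\to\infty$, a convolution with kernel $r_\eta^\tti$ plus a stationary Gaussian process $u^t$ whose spectral density is tied to $C_\eta$. I would then invoke a fluctuation–dissipation / detailed-balance argument: a stationary Langevin diffusion with drift $-\nabla\Phi+$ (memory kernel obeying FDT) + (colored noise whose covariance matches the kernel) has as its stationary measure the Gibbs measure $\propto e^{-\Phi}$. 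Concretely, I expect the stationary one-time law of $\theta^t$, conditioned on $\theta^*$, to be exactly $\sP_{g,\omega}(\cdot\mid y)$ with the scalar channel $y=\theta^*+z$, $z\sim\N(0,\omega_*^{-1})$, where $\omega=\delta(\sigma^2+\mse)^{-1}$ comes from $\int r_\eta^\tti$ (Step 1) and $\omega_*^{-1}$ from the variance of $u^t$ integrated against the FDT kernel. This is the crux: showing that the non-Markovian stationary DMFT dynamics for $\theta$ has the same one-time marginal as a genuine Markovian Langevin diffusion on $\R$ with potential $\frac{\omega}{2}(y-\theta)^2-\log g(\theta)$. Having this, the $W_2$ convergence \eqref{eq:dmftthetaconvergence} follows from the definition of approximate-TTI (which controls $C_\theta(t,s),C_\theta(s,*)$ up to $\eps(s)\to0$) together with uniform integrability supplied by Assumption~\ref{assump:prior}(a) (the confinement $-(\log g)''\ge c_0$ outside a compact set gives uniform second-moment and LSI bounds, hence tightness and convergence of second moments, upgrading weak convergence to $W_2$).

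\medskip
\noindent\textbf{Step 3 (Close the static system and prove uniqueness of $\omega,\omega_*$).}
Once \eqref{eq:dmftthetaconvergence} is established, the definitions $\mse=c_\theta^\tti(0)-c_\theta^\tti(\infty)$ and $\mse_*=\E[{\theta^*}^2]-2c_\theta(*)+c_\theta^\tti(\infty)$ translate directly into the posterior-variance and reconstruction-error expectations in \eqref{eq:static_fixedpoint}: $c_\theta^\tti(0)=\E_{g_*,\omega_*}\langle\theta^2\rangle_{g,\omega}$, $c_\theta^\tti(\infty)=\E_{g_*,\omega_*}\langle\theta\rangle_{g,\omega}^2$ (the $\tau\to\infty$ decorrelation of $\theta^t,\theta^s$ becoming independence given $y$), and $c_\theta(*)=\E_{g_*,\omega_*}[\theta^*\langle\theta\rangle_{g,\omega}]$. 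Combined with $\omega=\delta(\sigma^2+\mse)^{-1}$, $\omega_*=\delta(\sigma^2+\mse_*)^{-1}$ from Steps 1–2, this is exactly \eqref{eq:static_fixedpoint}. For uniqueness of $(\omega,\omega_*)$ given $(\mse,\mse_*)$: the first two equations of \eqref{eq:static_fixedpoint} express $\omega,\omega_*$ as explicit decreasing functions of $\mse,\mse_*$, so once $\mse,\mse_*$ are fixed (as they are, being read off from the given approximately-TTI solution) the values $\omega,\omega_*>0$ are literally determined, not merely unique as solutions of a fixed-point map — I would simply state this.

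\medskip
\noindent\textbf{Main obstacle.}
The hard part is Step 2 — rigorously justifying that the stationary, memory-laden DMFT dynamics for $\theta$ has the same one-time marginal as the finite-dimensional scalar-channel posterior. This requires (i) carefully taking $t\to\infty$ in the Volterra-type integral equations with only the \emph{approximate} TTI control $\eps(s)\to0$ rather than exact stationarity, (ii) verifying that the limiting memory kernel and colored-noise covariance satisfy the FDT relation \eqref{eq:crttiFDT} in a form strong enough to invoke the equilibrium characterization, and (iii) controlling the nonlinear term $\partial_\theta\log g(\theta^t,\alpha^t)$ uniformly in $t$ — here Assumption~\ref{assump:prior}(a)'s global bounds on $(\log g)',(\log g)'',(\log g)'''$ and the confinement condition are exactly what is needed to get uniform-in-$t$ moment bounds and to rule out escape to infinity. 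The representation \eqref{eq:cttiforms} of $c_\theta^\tti,c_\eta^\tti$ as (constant $+$) Laplace transforms of positive measures supported away from $0$ is the technical device that makes the limit of the Volterra resolvents tractable and guarantees the relevant integrals $\int_0^\infty r^\tti(\tau)\d\tau$ converge.
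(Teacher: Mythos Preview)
Your high-level architecture (solve the linear $\eta$-side, identify the $\theta$-side stationary law with the scalar-channel posterior, then close the system) matches the paper's, and you correctly flag Step~2 as the crux. However, the proposal has a genuine gap there: you say you would ``invoke a fluctuation--dissipation / detailed-balance argument'' to conclude that the non-Markovian stationary $\theta$-dynamics has the scalar-channel posterior as its one-time marginal, but you do not supply a mechanism for making this rigorous. The DMFT $\theta$-equation \eqref{def:dmft_langevin_cont_theta} is a generalized Langevin equation with memory kernel $R_\eta(t,s)$ that is only \emph{approximately} TTI and colored noise $u^t$ whose covariance is only approximately of the FDT form; there is no off-the-shelf theorem that delivers the Gibbs marginal in this setting, and you do not indicate how to control the approximation errors over the infinite time horizon needed to reach stationarity.

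The paper's solution to exactly this obstacle is a concrete construction absent from your outline: discretize the Laplace measure $\mu_\eta$ in the representation \eqref{eq:cttiforms} to obtain finite-rank kernels $R_\eta^{(M)}, C_\eta^{(M)}$, and observe that the resulting non-Markovian $\theta$-dynamics is the $\theta$-marginal of a \emph{Markovian} Langevin diffusion for $(\theta,x_1,\ldots,x_M)\in\R^{M+1}$ with an explicit Hamiltonian whose $\theta$-marginal stationary law is precisely $\sP_{g,\omega^{(M)}}(\cdot\mid y)$. The delicate point is then to prevent the discretization error $\eps(M)$ from compounding exponentially when comparing the true DMFT process to this auxiliary one; the paper achieves this via an adaptation of Eberle's sticky/reflection coupling to the non-Markovian setting (Lemma~\ref{lem:auxiliary_V}), which is the technical heart of the argument and has no analogue in your proposal. (A minor algebraic point: your claimed identity $\int_0^\infty r_\eta^\tti(\tau)\d\tau=\omega$ is off --- by FDT and \eqref{eq:ymmseomega} this integral equals $c_\eta^\tti(0)-c_\eta^\tti(\infty)=\delta/\sigma^2-\omega$, not $\omega$ itself.)
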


\begin{remark}\label{remark:replica}
Let $\langle f(\btheta) \rangle$
and $\langle f(\btheta,\btheta') \rangle$ 
denote the expectation over independent samples
$\btheta,\btheta' \sim \sP_g(\cdot \mid \X,\y)$ from the posterior law
(\ref{eq:posterior}) with a
fixed prior $g(\cdot)$. Then the asymptotic overlaps
\[\lim_{n,d \to \infty} d^{-1}\langle \btheta^\top \btheta \rangle,
\qquad \lim_{n,d \to \infty} d^{-1}\langle \btheta^\top \btheta' \rangle,
\qquad \lim_{n,d \to \infty}
d^{-1}\langle \btheta^\top \btheta^* \rangle\]
are predicted in the DMFT system, respectively, by
\[c_\theta^\tti(0)=\lim_{t \to \infty} C_\theta(t,t), \qquad
c_\theta^\tti(\infty)=\lim_{t,\tau \to \infty} C_\theta(t,t+\tau),
\qquad c_\theta(*)=\lim_{t \to \infty} C_\theta(t,*).\]
Thus $\mse$ and $\mse_*$ as defined in (\ref{eq:mmse}) represent the
DMFT predictions for
\[\lim_{n,d \to \infty}
d^{-1}\langle \|\btheta-\langle \btheta \rangle\|_2^2 \rangle, \qquad
\lim_{n,d \to \infty} d^{-1}\|\btheta^*-\langle \btheta \rangle\|_2^2.\]
Similarly, one may check that $\ymse$ and $\ymse_*$ as defined in
(\ref{eq:mmse}) represent the DMFT predictions for
\[\lim_{n,d \to \infty}
n^{-1}\langle \|\X\btheta-\X\langle \btheta \rangle\|_2^2 \rangle, \qquad
\lim_{n,d \to \infty} n^{-1}\|\X\btheta^*-\X\langle \btheta \rangle\|_2^2.\]
These fixed-point equations (\ref{eq:static_fixedpoint}) that characterize
$\mse$ and $\mse_*$ coincide with those derived via the replica method (with
misspecified prior) under a replica-symmetric ansatz, c.f.\
\cite{guo2005randomly,kabashima2008inference,takahashi2022macroscopic}.
\end{remark}

We clarify that Theorem \ref{thm:dmft_equilibrium} does not claim that the
joint solution $(\mse,\mse_*,\omega,\omega_*)$ of the fixed-point equations
(\ref{eq:static_fixedpoint}) is unique. In settings with multiple
such fixed points, the theorem pertains to the specific choice of this fixed
point that arises from the $t \to \infty$ limit of the DMFT dynamics.

\subsubsection{Asymptotic MSE and free energy under a posterior LSI}

To motivate Definition \ref{def:regular}, it is illustrative to consider the
example of a fixed Gaussian prior $g(\cdot)$, where the Langevin
diffusion for $\btheta^t$ is a linear Ornstein-Uhlenbeck process.
Then $C_\theta,C_\eta,R_\theta,R_\eta$ of the DMFT system may be computed
explicitly, as we show in Appendix \ref{appendix:gaussian}, and it is
directly checked from their explicit forms that the DMFT system is indeed
approximately-TTI.

Generalizing this Gaussian prior example, we consider a setting
where the posterior distribution (\ref{eq:posterior})
satisfies a log-Sobolev inequality.

\begin{assumption}\label{assump:LSI}
There exists a constant $C_\LSI>0$ and a $\X$-dependent event
$\event(\X)$ holding almost surely for all large $n,d$, for which
\begin{enumerate}[(a)]
\item (LSI for posterior) On $\event(\X)$, for all $\y \in \R^n$,
the posterior distribution $\sP_g(\btheta \mid \X,\y)$ satisfies
\begin{equation}\label{eq:LSI}
\Ent[f(\btheta)^2 \mid \X,\y] \leq C_\LSI\,
\,\E[\|\nabla f(\btheta)\|_2^2 \mid \X,\y] \text{ for all } f \in C^1(\R^d).
\end{equation}
\item (LSI for larger noise) On $\event(\X)$, for every noise variance
$\tilde \sigma^2 \in [\sigma^2,\infty)$, (\ref{eq:LSI}) holds 
also for the posterior law $\sP_g(\btheta \mid \X,\y)$ defined
with $\tilde \sigma^2$ in place of $\sigma^2$
(with a uniform constant $C_\text{LSI}>0$ for all $\tilde\sigma^2 \geq
\sigma^2$).
\end{enumerate}
\end{assumption}

For clarity of interpretation,
we list in the following proposition three
concrete settings in which these LSI conditions hold by currently known techniques.
A proof of Proposition \ref{prop:LSI} is given in Appendix \ref{appendix:LSI}.

\begin{proposition}\label{prop:LSI}
Suppose $\X$ satisfies Assumption \ref{assump:model}(a--b), and
$g(\cdot)$ satisfies Assumption \ref{assump:prior}(a).
Let $C,r_0,c_0>0$ be the constants of Assumption \ref{assump:prior}(a), and
define
\[C_0=\frac{2.01}{c_0}\exp\Big(\frac{8r_0^2(c_0+C)^2}{\pi c_0}\Big).\]
Suppose, in addition, that at least one of the following conditions hold:
\begin{enumerate}[(a)]
\item (global log-concavity)
${-}(\log g)''(\theta) \geq c_0$ for all $\theta \in \R$, or
\item (high noise)
$\sigma^2>C_0(4\sqrt{\delta}\,\1\{\delta>1\}+(\sqrt{\delta}+1)^2\1\{\delta \leq
1\})$, or
\item (large sample size) $\delta>1$ and
$(\sqrt{\delta}-1)^2>4C_0C\sqrt{\delta}$.
\end{enumerate}
Then Assumption \ref{assump:LSI} holds for a constant
$C_\LSI>0$ depending only on $\delta,C,r_0,c_0$.
\end{proposition}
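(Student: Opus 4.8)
The plan is to reduce the posterior log-Sobolev inequality to the one-dimensional LSI for the prior $g$ (which holds under Assumption~\ref{assump:prior}(a) by Lemma~\ref{lemma:univariateLSI}, with the explicit constant $C_0$), tensorization, the Bakry--\'Emery criterion, and a quantitative perturbation lemma for LSI under convex perturbations with operator-norm-bounded Hessian. Write the posterior potential as $U(\btheta)=\tfrac{1}{2\sigma^2}\|\y-\X\btheta\|_2^2+\sum_{j=1}^d V_g(\theta_j)$ with $V_g:=-\log g$, so that $\nabla^2 U(\btheta)=\tfrac{1}{\sigma^2}\X^\top\X+\diag(V_g''(\theta_j))_{j\le d}$, where $V_g''\le C$ everywhere and $V_g''\ge c_0$ on $\{|\theta|\ge r_0\}$. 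Case~(a) is then immediate: global log-concavity gives $V_g''\ge c_0$ everywhere, so $\nabla^2 U(\btheta)\succeq \tfrac{1}{\sigma^2}\X^\top\X+c_0\I\succeq c_0\I$ for every $\X,\y$, and Bakry--\'Emery yields $(\ref{eq:LSI})$ with $C_\LSI=2/c_0$; replacing $\sigma^2$ by $\tilde\sigma^2\ge\sigma^2$ only shrinks $\tfrac{1}{\tilde\sigma^2}\X^\top\X$, so Assumption~\ref{assump:LSI}(b) holds with the same constant and $\event(\X)$ the whole space.

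For cases~(b) and~(c), I would take $\event(\X)$ to be the event on which $\|\X\|_\op\le(\sqrt\delta+1)+\epsilon_n$ and, when $\delta>1$, also $\sigma_{\min}(\X)\ge(\sqrt\delta-1)-\epsilon_n$, for a sequence $\epsilon_n\to0$; since $\sqrt d\,x_{ij}$ is sub-Gaussian with unit variance, this holds a.s.\ for all large $n,d$ by standard nonasymptotic extreme-singular-value bounds. On $\event(\X)$ I would first identify a good reference measure. Tensorizing the one-dimensional LSI of Lemma~\ref{lemma:univariateLSI} shows $g^{\otimes d}$ satisfies $(\ref{eq:LSI})$ with the dimension-free constant $C_0$. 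When $\delta>1$, split $\tfrac{1}{2\sigma^2}\|\y-\X\btheta\|_2^2=\tfrac{(\sqrt\delta-1)^2}{2\sigma^2}\|\btheta\|_2^2+W(\btheta)$, where $W$ is convex on $\event(\X)$ (as $\X^\top\X\succeq(\sqrt\delta-1)^2\I$) with $\|\nabla^2 W\|_\op\le \tfrac{4\sqrt\delta}{\sigma^2}$, and absorb the \emph{separable} quadratic into $\sum_j V_g(\theta_j)$; reusing the strongly-convex/bounded decomposition $V_g=V_{\mathrm{conv}}+V_{\mathrm{bdd}}$ underlying Lemma~\ref{lemma:univariateLSI} (with $V_{\mathrm{conv}}$ being $c_0$-strongly convex and $\operatorname{osc}(V_{\mathrm{bdd}})$ controlled), the added quadratic only increases the curvature of $V_{\mathrm{conv}}$ and leaves $V_{\mathrm{bdd}}$ unchanged, so Bakry--\'Emery plus Holley--Stroock plus tensorization give that this modified prior product satisfies $(\ref{eq:LSI})$ with constant at most $\min\{C_0,\ C_0c_0\sigma^2/(\sqrt\delta-1)^2\}$. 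When $\delta\le1$ no quadratic is peeled off, the reference measure is $g^{\otimes d}$, and $W:=\tfrac{1}{2\sigma^2}\|\y-\X\btheta\|_2^2$ has $\|\nabla^2 W\|_\op\le(\sqrt\delta+1)^2/\sigma^2$ on $\event(\X)$.

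In the second step the posterior equals the reference measure reweighted by the convex potential $W$, and I would apply the perturbation lemma, which preserves LSI provided $\|\nabla^2 W\|_\op$ times the reference LSI constant is strictly below $1$. This product is $C_0(\sqrt\delta+1)^2/\sigma^2$ when $\delta\le1$, and, when $\delta>1$, at most $\tfrac{4\sqrt\delta}{\sigma^2}\cdot\min\{C_0,C_0c_0\sigma^2/(\sqrt\delta-1)^2\}\le 4C_0c_0\sqrt\delta/(\sqrt\delta-1)^2\le 4C_0C\sqrt\delta/(\sqrt\delta-1)^2$; hypotheses~(b) and~(c) are precisely the statements that these quantities are $<1$ (the latter $\sigma^2$-freely, using $c_0\le C$), so the lemma yields $(\ref{eq:LSI})$ for the posterior with a dimension-free $C_\LSI$. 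Finally, replacing $\sigma^2$ by $\tilde\sigma^2\ge\sigma^2$ only shrinks $\|\nabla^2 W\|_\op$ — and, when $\delta>1$, shrinks it by the same factor that enlarges the modified-prior LSI constant — so every inequality above persists and Assumption~\ref{assump:LSI}(b) follows.

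I expect the main obstacle to be the quantitative perturbation lemma together with the constant bookkeeping: making precise that an LSI$(c)$ measure reweighted by $e^{-W}$ with $W$ convex and $\|\nabla^2 W\|_\op\,c<1$ again satisfies an LSI with an explicit constant, and then tracking constants so that the final $C_\LSI$ depends only on the stated parameters — in particular using the sharp curvature bound $c_0+(\sqrt\delta-1)^2/\sigma^2$ (rather than the cruder $(\sqrt\delta-1)^2/\sigma^2$) for the modified prior in case~(c), and noting that in case~(b) the constant may also depend on the margin by which $\sigma^2$ exceeds its threshold. The random-matrix inputs are classical, and the reduction of the one-dimensional prior LSI to Assumption~\ref{assump:prior}(a) is exactly Lemma~\ref{lemma:univariateLSI}.
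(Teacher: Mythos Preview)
Your case~(a) is correct and identical to the paper's.

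For cases~(b) and~(c), the ``perturbation lemma'' you invoke --- that a product reference measure satisfying $\mathrm{LSI}(c)$, reweighted by $e^{-W}$ with $W$ convex and $c\|\nabla^2 W\|_\op<1$, again satisfies an LSI --- is not a result you can cite off the shelf; supplying it is the heart of the argument, and you correctly flag it as the obstacle but do not indicate how to prove it. The paper does so via the Bauerschmidt--Bodineau Gaussian decomposition: since $W$ here is quadratic, one writes $e^{-\frac{1}{2\sigma^2}\btheta^\top(\X^\top\X-\kappa\I)\btheta}$ as a Gaussian convolution in an auxiliary variable $\bvarphi$, obtaining a mixture representation $\sP_g(\btheta\mid\X,\y)=\int q_{\bvarphi}(\btheta)\,\mu(\bvarphi)\,d\bvarphi$ with $q_{\bvarphi}$ a product of univariate tilted priors of the form~(\ref{eq:univariatetilt}). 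The entropy decomposes as $\E_{\bvarphi\sim\mu}\Ent_{q_{\bvarphi}}[f^2]+\Ent_{\mu}[\E_{q_{\bvarphi}}f^2]$; the first term is handled by Lemma~\ref{lemma:univariateLSI} and tensorization, the second by Bakry--\'Emery on $\mu$ (strictly log-concave precisely when your smallness condition holds) together with a covariance estimate from~\cite{ledoux2001logarithmic}. The factor of two between the proposition's $C_0$ (coefficient $2.01/c_0$) and the LSI constant of Lemma~\ref{lemma:univariateLSI} (coefficient $4/c_0$) --- which you conflate when claiming $g^{\otimes d}$ has LSI constant $C_0$ --- is absorbed here because the log-concavity of $\mu$ only needs the Poincar\'e bound $\Var_{q_{\varphi_j}}[\theta_j]\le C_\LSI/2$.

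Your case~(c) route (peel a separable quadratic, absorb it into the prior to get a $\sigma^2$-free smallness condition) differs from the paper's, which instead splits into two regimes: for small $\sigma^2$ the posterior is globally strongly log-concave on $\event(\X)$ since $\sigma^{-2}\X^\top\X\succeq (C+\eps)\I$ dominates $\diag V_g''$, handled directly by Bakry--\'Emery; for larger $\sigma^2$ condition~(b) applies, and condition~(c) ensures these regimes overlap. Your approach is arguably cleaner for the $\tilde\sigma^2$-uniformity in Assumption~\ref{assump:LSI}(b), but it still requires the Gaussian-decomposition argument above for the residual quadratic~$W$.
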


Under the posterior LSI condition of Assumption \ref{assump:LSI}(a),
we verify that the solution of the DMFT system must be approximately-TTI
in the sense of Definition \ref{def:regular}. 

\begin{theorem}\label{thm:fixedalpha_dynamics}
Consider the dynamics (\ref{eq:langevinfixedprior}) with a fixed prior
$g(\cdot)$, and suppose Assumptions \ref{assump:model}, \ref{assump:prior}(a),
and \ref{assump:LSI}(a) hold. Then the DMFT system
given by Theorem \ref{thm:dmft_approx}(a) is approximately-TTI, where
the statements of Definition \ref{def:regular} hold with $\eps(t)=Ce^{-ct}$ and
some constants $C,c>0$.
\end{theorem}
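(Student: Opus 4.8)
The plan is to exploit the posterior LSI of Assumption~\ref{assump:LSI}(a) to show that the single-particle DMFT dynamics for $\theta^t$ forgets its initial condition exponentially fast, and then to bootstrap this to the claimed approximate-TTI bounds on $C_\theta, C_\eta, R_\theta, R_\eta$. The key structural fact is that the finite-$d$ Langevin diffusion \eqref{eq:langevinfixedprior} has generator whose invariant measure is the posterior \eqref{eq:posterior}; under the LSI \eqref{eq:LSI} with constant $C_\LSI$, this diffusion mixes to equilibrium in $\chi^2$ (hence $W_2$, via a Talagrand-type inequality implied by LSI) at rate $e^{-2t/C_\LSI}$, uniformly over $\y$ on the event $\event(\X)$. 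Passing this through the DMFT approximation of Theorem~\ref{thm:dmft_approx}(a) --- which identifies coordinate averages of the finite-$d$ system with expectations in the DMFT system --- I would conclude that the law $\sP(\theta^t)$ and the joint laws $\sP(\theta^*,\theta^t)$, $\sP(\theta^t,\theta^{t+\tau})$ converge and that the convergence is exponential with rate $c = 1/C_\LSI$ (up to constants). The first step, then, is to make precise that exponential ergodicity of the finite-$d$ posterior Langevin diffusion, transferred through \cite{paper1}, yields: there is a limiting law $\sP^\infty$ such that $W_2(\sP(\theta^t),\sP^\infty) \leq Ce^{-ct}$, and moreover the two-time joint law $\sP(\theta^s,\theta^t)$ is, for $s$ large, within $Ce^{-cs}$ (in $W_2$) of a law depending only on $t-s$ --- this is exactly \eqref{eq:Cthetaapproxinvariant} once one reads off second moments, with $c_\theta(*) = \lim_t \E[\theta^t\theta^*]$ giving \eqref{eq:Cthetastarlim}.

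Next I would establish the representation \eqref{eq:cttiforms} of $c_\theta^\tti$ and $c_\eta^\tti$ as Laplace transforms of finite positive measures supported away from $0$. The natural route is the fluctuation-dissipation structure: in an equilibrium (reversible) diffusion, the two-time correlation $c_\theta^\tti(\tau) - c_\theta^\tti(\infty)$ is a stationary autocovariance of the centered process, which by the spectral theorem for the (self-adjoint, negative) generator $\mathcal{L}$ of the limiting single-site dynamics is $\int_0^\infty e^{-a\tau}\d\mu_\theta(a)$ for a spectral measure $\mu_\theta$; the LSI forces a spectral gap $\iota = 2/C_\LSI > 0$, so $\mu_\theta$ is supported on $[\iota,\infty)$, and finiteness/total mass is controlled by $c_\theta^\tti(0) - c_\theta^\tti(\infty) = \mse < \infty$. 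The same argument applied to the conjugate process $\eta^t$ (whose stationary law is governed by the companion channel in \eqref{def:dmft_langevin_cont_eta}) gives $\mu_\eta$ on $[\iota,\infty)$. Part~2 of Definition~\ref{def:regular} --- the $L^1$-in-$s$ approximate invariance of $R_\theta, R_\eta$ and the FDT relations \eqref{eq:crttiFDT} --- would then follow by differentiating the correlation representations: for an equilibrium reversible diffusion, $R_\theta(t,s) = -\partial_s C_\theta(t,s)$ holds exactly in the stationary state, so $r_\theta^\tti(\tau) = -{c_\theta^\tti}{}'(\tau) = \int_\iota^\infty a\,e^{-a\tau}\d\mu_\theta(a)$, and the approach to this stationary FDT at rate $e^{-cs}$ gives \eqref{eq:Rthetaapproxinvariant}; the integrability over $s \in [0,t]$ is automatic once the integrand is $\leq C e^{-cs}$ after the transient. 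I would invoke here the response-function interpretation promised at the end of Section~\ref{sec:results} (deferred to Section~\ref{sec:langevinproperties}) to justify $R_\theta = -\partial_s C_\theta$ at the DMFT level.

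I expect the main obstacle to be \emph{uniformity}: the LSI in Assumption~\ref{assump:LSI}(a) is for the $d$-dimensional posterior, but approximate-TTI is a statement about the DMFT limit, and one must ensure the exponential mixing rate does not degrade as $d\to\infty$ nor depend on which $\y$ is realized. Concretely, the diffusion \eqref{eq:langevinfixedprior} is not started at stationarity (it starts from $g_0^{\otimes d}$), so I need a quantitative bound on $W_2$ between the time-$t$ law and the posterior that is uniform in $d$ --- this is where LSI with a $d$-independent constant is essential, combined with a uniform second-moment bound on $\btheta^0$ and on the posterior (the latter following from the confinement hypothesis ${-}(\log g)'' \geq c_0$ for $|\theta| \geq r_0$ in Assumption~\ref{assump:prior}(a)). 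A further delicate point is that the LSI is only assumed for the \emph{true} noise variance $\sigma^2$ in part~(a) of Assumption~\ref{assump:LSI}, whereas the effective single-site channel in the DMFT has an effective precision $\omega$ that is a priori unknown; I would need to check that the relevant single-site generator inherits a spectral gap either directly from the $d$-dimensional LSI via the DMFT correspondence, or via a tensorization/perturbation argument, being careful that the self-consistent kernels $R_\eta, R_\theta$ appearing in \eqref{def:dmft_langevin_cont_theta} do not spoil reversibility of the limiting stationary dynamics. Handling this coupling between the forward and backward processes --- i.e.\ showing the \emph{joint} $(\theta^t,\eta^t)$ system equilibrates, not just each marginal --- is the technical crux, and I would organize the argument as a fixed-point/contraction statement on the space of candidate TTI kernels, using the LSI-derived gap to get the contraction constant strictly below~$1$.
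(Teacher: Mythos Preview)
Your high-level strategy --- use the LSI to get dimension-free exponential mixing of the Langevin diffusion, transfer this through the DMFT approximation, and read off the TTI properties --- is correct and is what the paper does. However, the \emph{order of operations} you propose is inverted relative to the paper, and this inversion creates a genuine gap. You want to pass to the DMFT limit first and then run a spectral/LSI argument on ``the limiting single-site dynamics.'' But the single-particle DMFT process \eqref{def:dmft_langevin_cont_theta} is \emph{non-Markovian}: it carries a memory kernel $R_\eta$ and a non-white Gaussian forcing $u^t$. There is no self-adjoint generator for this process, so your proposed spectral-theorem derivation of \eqref{eq:cttiforms}, your application of a single-site LSI/Poincar\'e gap, and your equilibrium FDT $R_\theta(t,s) = -\partial_s C_\theta(t,s)$ at the DMFT level all lack a well-defined object to act on. You correctly flag this (``being careful that the self-consistent kernels $R_\eta, R_\theta$ \ldots\ do not spoil reversibility''), but the fixed-point/contraction scheme you sketch as a workaround is vague and does not obviously close.

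The paper avoids this entirely by doing \emph{all} the hard analysis at finite $d$, where the diffusion \eqref{eq:langevinfixedprior} \emph{is} Markovian and reversible with respect to the posterior $q$. Concretely: one defines finite-$d$ correlation and response matrices $\bC_\theta, \bR_\theta, \bC_\eta, \bR_\eta$ and their stationary-start analogues $\bC_\theta^\infty(\tau), \bR_\theta^\infty(\tau)$; the LSI gives $W_2$-contraction (Lemma~\ref{lem:W2contraction}), from which $|\Tr\bC_\theta(s+\tau,s)-\Tr\bC_\theta^\infty(\tau)| \leq Cd e^{-cs}$ follows by coupling arguments (Lemma~\ref{lemma:TTI}). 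The Laplace-transform form \eqref{eq:cttiforms} comes from the spectral decomposition of the \emph{$d$-dimensional} generator $-\mathcal{L}$ on $L^2(q)$ --- the spectral gap $\iota$ is exactly the Poincar\'e constant from the LSI --- applied to $d^{-1}\Tr\bC_\theta^\infty(\tau)$, followed by a tightness/subsequence argument on the resulting spectral measures $\mu_{\theta,d}$ as $d\to\infty$. For the response functions, you need an independent handle on $\nabla P_t f$: the paper uses the Bismut-Elworthy-Li representation (Lemma~\ref{lemma:BEL}) to control $\Tr\bR_\theta$ directly (Lemmas~\ref{lemma:TTI} and \ref{lem:CRexpdecay}), and the FDT $\partial_\tau \Tr\bC_\theta^\infty(\tau) = -\Tr\bR_\theta^\infty(\tau)$ is applied at finite $d$ to the stationary-start quantities, with convergence of derivatives following from convexity of the Laplace transform. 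Only after all of this does one invoke Theorem~\ref{thm:dmft_response} to identify the $n,d\to\infty$ limits of the normalized traces with the DMFT functions $C_\theta, R_\theta, C_\eta, R_\eta$.
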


As a consequence, we obtain the following corollary showing that the
asymptotic free energy and mean-squared-errors associated to the posterior
distribution $\sP_g(\btheta \mid \X,\y)$ in the linear model
(with a possibly misspecified prior) are given by their replica-symmetric
predictions, and furthermore the joint empirical distribution of coordinates of
$\btheta^*$ and a posterior sample $\btheta \sim \sP_g(\cdot \mid \X,\y)$
converges to the preceding law $\sP_{g_*,\omega_*;g,\omega}$ in the scalar
Gaussian convolution model.
(Our analysis for the free energy uses an I-MMSE relation, for
which we require the posterior LSI condition of
Assumption \ref{assump:LSI}(b) for an extended range of noise variances.)

\begin{corollary}\label{cor:fixedalpha_dynamics}
Suppose Assumptions \ref{assump:model}, \ref{assump:prior}(a), and
\ref{assump:LSI}(a) hold
for dynamics (\ref{eq:langevinfixedprior}) with a fixed prior $g(\cdot)$.
Let $\sP_g(\y \mid \X)$ be the marginal likelihood of $\y$ in
(\ref{eq:marginallikelihood}), let
$\langle f(\btheta) \rangle$ denote the posterior expectation
under $\sP_g(\btheta \mid \X,\y)$, and define
\begin{equation}\label{eq:MMSE}
\begin{gathered}
\MSE=d^{-1}\langle \|\btheta-\langle \btheta \rangle\|_2^2
\rangle,
\qquad \MSE_*=d^{-1}\|\btheta^*-\langle \btheta \rangle\|_2^2\\
\YMSE=n^{-1}\langle \|\X\btheta-\langle \X\btheta \rangle\|_2^2 \rangle,
\qquad \YMSE_*=n^{-1}\|\X\btheta^*-\langle \X\btheta \rangle\|_2^2.
\end{gathered}
\end{equation}
Let $\mse,\mse_*,\omega,\omega_*,\ymse,\ymse_*$ be as defined by
(\ref{eq:mmse}--\ref{eq:static_fixedpoint}) for the corresponding
(approximately-TTI) DMFT system, let
$\sP_{g,\omega}(y)$ be the marginal density of $y$ in
(\ref{eq:scalarchannelmarginal}), and let $\E_{g_*,\omega_*}$ denote the
expectation over $y=\theta^*+z$ in (\ref{eq:scalarchannel}) with
$\theta^* \sim g_*$ and $z \sim \N(0,\omega_*^{-1})$ .

\begin{enumerate}[(a)]
\item Almost surely,
\[\lim_{n,d \to \infty} \MSE=\mse, \qquad \lim_{n,d \to \infty}
\MSE_*=\mse_*,\]
\[\lim_{n,d \to \infty} \YMSE=\ymse, \qquad \lim_{n,d \to \infty}
\YMSE_*=\ymse_*,\]
\begin{equation}\label{eq:fixedalpha_equilibriumlaw}
\lim_{n,d \to \infty} \bigg\langle
W_2\bigg(\frac{1}{d}\sum_{j=1}^d \delta_{(\theta_j^*,\theta_j)}
,\sP_{g_*,\omega_*;g,\omega}\bigg)^2\bigg\rangle=0.
\end{equation}
\item If furthermore Assumption \ref{assump:LSI}(b) holds, then almost surely,
\[\lim_{n,d \to \infty}
\frac{1}{d}\log \sP_g(\y \mid \X)=\E_{g_*,\omega_*} \log \sP_{g,\omega}(y)
+\frac{1}{2}\left(\delta+\log \frac{2\pi}{\omega}
-\delta \log \frac{2\pi \delta}{\omega}
+(1-\delta)\frac{\omega}{\omega_*}
+\omega\sigma^2\Big(\frac{\omega}{\omega_*}-2\Big)\right).\]
\end{enumerate}
\end{corollary}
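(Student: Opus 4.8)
The plan is to deduce the corollary from Theorem~\ref{thm:fixedalpha_dynamics} (which gives approximate-TTI) together with Theorem~\ref{thm:dmft_equilibrium} (which identifies the $t\to\infty$ limit) and the DMFT approximation of Theorem~\ref{thm:dmft_approx}(a), supplemented by an equilibration argument for the Langevin diffusion under the posterior LSI. For part~(a), the key point is an interchange of the two limits $n,d\to\infty$ and $t\to\infty$. First I would note that, by Theorem~\ref{thm:dmft_approx}(a), for each fixed $t$ the empirical law $\frac1d\sum_j\delta_{(\theta_j^*,\theta_j^t)}$ converges in $W_2$ to $\sP(\theta^*,\theta^t)$, and by Theorems~\ref{thm:fixedalpha_dynamics} and~\ref{thm:dmft_equilibrium} the latter converges as $t\to\infty$ to $\sP_{g_*,\omega_*;g,\omega}$. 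To pass to the posterior law $\sP_g(\btheta\mid\X,\y)$ (rather than the time-$t$ Langevin marginal), I would invoke the posterior LSI of Assumption~\ref{assump:LSI}(a): on the event $\event(\X)$ the LSI gives exponential convergence of the Langevin law to the posterior in $W_2$ (via the standard Otto–Villani / Bakry–Émery transport-entropy and Talagrand-inequality chain), with a dimension-free rate. Quantitatively, $W_2\big(\sP_g(\cdot\mid\X,\y),\,\mathrm{Law}(\btheta^t\mid\X,\y)\big)\le Ce^{-ct}\sqrt{\mathrm{Ent}(\mathrm{Law}(\btheta^0)\,\|\,\sP_g(\cdot\mid\X,\y))}$, and the initial relative entropy is $O(d)$ by Assumption~\ref{assump:model}(d), so the per-coordinate $d^{-1}W_2^2$ distance is $\le Ce^{-ct}$ uniformly in $n,d$ on $\event(\X)$. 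Combining this with the triangle inequality and a diagonal choice $t=t(n,d)\to\infty$ slowly yields (\ref{eq:fixedalpha_equilibriumlaw}); the convergence of $\MSE,\MSE_*$ then follows since $(\theta^*,\theta)\mapsto(\theta-\langle\theta\rangle)^2$ and $(\theta^*-\langle\theta\rangle)^2$ are quadratic and $W_2$-convergence with uniform second moments controls quadratic test functions. For $\YMSE,\YMSE_*$, I would use the second $W_2$-convergence in (\ref{eq:dmftW2convergence}) for the $\eta$-coordinates together with the identities $\YMSE=n^{-1}\langle\|\X\btheta-\X\langle\btheta\rangle\|_2^2\rangle$ and the relation $\eta^*=-w^*$, reading off the limits from $C_\eta$ via (\ref{eq:mmse}); the relations (\ref{eq:ymmseomega}) connecting $\ymse,\ymse_*$ to $\omega,\omega_*$ are pure algebra once the fixed-point equations (\ref{eq:static_fixedpoint}) hold, and I would verify them by substituting $\omega=\delta(\sigma^2+\mse)^{-1}$ and manipulating.

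For part~(b), the plan is an I-MMSE / interpolation argument over the noise variance. Introduce a scalar parameter $\tilde\sigma^2\in[\sigma^2,\infty)$ and consider the free energy $\widehat F_{\tilde\sigma^2}=-d^{-1}\log\sP_g^{(\tilde\sigma^2)}(\y\mid\X)$ as a function of $\tilde\sigma^2$ (equivalently of the inverse noise precision). Differentiating in $\tilde\sigma^2$ and using the standard I-MMSE identity, $\frac{d}{d\tilde\sigma^2}\widehat F_{\tilde\sigma^2}$ is expressible in terms of $n^{-1}$ times the posterior residual mean-squared-error $\langle\|\y-\X\btheta\|_2^2\rangle$, which by part~(a) (applied at each $\tilde\sigma^2$, legitimately because Assumption~\ref{assump:LSI}(b) gives the LSI uniformly over $\tilde\sigma^2\ge\sigma^2$, hence the whole analysis including approximate-TTI and the fixed-point characterization applies) converges to an explicit function of $\tilde\sigma^2$ through the fixed-point quantities $\omega(\tilde\sigma^2),\omega_*(\tilde\sigma^2),\ymse(\tilde\sigma^2)$. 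Then I would integrate this derivative from $\tilde\sigma^2$ up to $\tilde\sigma^2=\infty$, where the free energy has an explicit limiting value (the prior becomes uninformative relative to the likelihood, or more precisely the Gaussian term dominates and $\widehat F_{\tilde\sigma^2}$ has a computable large-$\tilde\sigma^2$ asymptotic); the boundary term plus the integral of the limiting derivative produces the claimed closed-form expression. A uniform-integrability / domination argument (using the quadratic growth controls in Assumption~\ref{assump:prior}(a) and the sub-Gaussian design) justifies interchanging the $\tilde\sigma^2$-integral with the $n,d\to\infty$ limit, and concentration of $\widehat F_{\tilde\sigma^2}$ around its mean (Proposition~\ref{prop:mseconcentration}, available since $g_*$ satisfies an LSI) upgrades convergence in expectation to almost-sure convergence. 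Finally I would simplify the resulting integral: substituting the explicit form $\ymse(\tilde\sigma^2)=\tilde\sigma^2(1-\omega\tilde\sigma^2/\delta)$ and the fixed-point relations, the $\tilde\sigma^2$-integral telescopes into elementary terms ($\log$ of $\omega$, linear-in-$\delta$ terms, and the ratio $\omega/\omega_*$), matching the right-hand side displayed in the corollary, with the $\E_{g_*,\omega_*}\log\sP_{g,\omega}(y)$ piece emerging as the "scalar channel" contribution at the endpoint of the interpolation.

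The main obstacle I anticipate is the limit interchange in part~(b): the I-MMSE derivative involves the posterior second moment $n^{-1}\langle\|\y-\X\btheta\|_2^2\rangle$, and to integrate its $n,d\to\infty$ limit against $d\tilde\sigma^2$ over the unbounded range $[\sigma^2,\infty)$ one needs a uniform (in $n,d$ and in $\tilde\sigma^2$) integrable bound; establishing this requires care because the fixed-point quantities and the LSI constant must be controlled uniformly as $\tilde\sigma^2\to\infty$, and because the residual MSE must be shown to have the right integrable tail in $\tilde\sigma^2$ (it should behave like $\tilde\sigma^2\cdot\delta$ for large $\tilde\sigma^2$, matching the trivial Gaussian free energy, so the subtracted derivative is $O(\tilde\sigma^{-2})$-integrable). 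A secondary technical point is the diagonal extraction $t=t(n,d)\to\infty$ in part~(a): one must check that the DMFT convergence rate in Theorem~\ref{thm:dmft_approx}(a), while only stated for fixed $T$, degrades slowly enough in $T$ that some $t(n,d)\to\infty$ works — alternatively, and more robustly, I would avoid this by using the LSI equilibration bound to replace the Langevin marginal by the posterior at a large but \emph{fixed} $t$, incurring error $Ce^{-ct}$, then sending $n,d\to\infty$ at that fixed $t$, then $t\to\infty$, which only uses Theorem~\ref{thm:dmft_approx}(a) at fixed horizons.
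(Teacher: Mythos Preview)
Your strategy for (\ref{eq:fixedalpha_equilibriumlaw}) --- LSI-based equilibration of $q_t$ to the posterior in $W_2$ with a dimension-free rate, combined with the DMFT convergence at fixed $t$ and then $t\to\infty$ via Theorem~\ref{thm:dmft_equilibrium} --- matches the paper's proof exactly, and your observation that fixed $t$ followed by $t\to\infty$ is the robust way to do the limit interchange is precisely what the paper does. Your plan for part~(b) via an I-MMSE interpolation in the noise level is also the paper's approach; the paper uses the mismatched-channel identity $\frac{\d}{\d s}(\DKL[s]+I[s])=\tfrac{n}{2}\E[\YMSE_*\mid\X]$ (integrating $s=\sigma^{-2}$ down to $0$), invokes part~(a) pointwise in $s$ under Assumption~\ref{assump:LSI}(b), justifies the limit/integral swap by a uniform bound of the form (\ref{eq:posteriormeansqsigmasq}), and then checks $\frac{\d}{\d s}f(\omega(s),\omega_*(s),s)=\tfrac{\delta}{2}\ymse_*(s)$ using the stationarity $\partial_\omega f=\partial_{\omega_*}f=0$ at the fixed point together with Lipschitz continuity of $s\mapsto(\omega(s),\omega_*(s))$ from Lemma~\ref{lemma:mmse_lipschitz} (absolute continuity is needed to apply the chain rule). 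Your anticipation of the integrability obstacle is on target.

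There is, however, a genuine gap in how you propose to obtain the four $\MSE$-type limits in part~(a). You write that these ``follow since $(\theta^*,\theta)\mapsto(\theta-\langle\theta\rangle)^2$ \ldots\ are quadratic and $W_2$-convergence \ldots\ controls quadratic test functions.'' But $\langle\btheta\rangle$ is the posterior mean in the full linear model, not a function of the single coordinate pair $(\theta_j^*,\theta_j)$; likewise $\MSE=d^{-1}(\langle\|\btheta\|_2^2\rangle-\|\langle\btheta\rangle\|_2^2)$ involves $\|\langle\btheta\rangle\|_2^2$, which cannot be read off from the empirical law of a \emph{single} posterior sample (it would require two independent replicas, and (\ref{eq:fixedalpha_equilibriumlaw}) only gives one). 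The same issue arises for $\MSE_*,\YMSE,\YMSE_*$, and $\YMSE_*$ has the additional wrinkle that it is not a posterior average at all and does not self-average without a separate concentration input.

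The paper avoids this by not going through (\ref{eq:fixedalpha_equilibriumlaw}) for the $\MSE$ quantities. Instead, these limits are byproducts of the proof of Theorem~\ref{thm:fixedalpha_dynamics} (specifically Lemma~\ref{lem:fixedalpha_C}): the quantities $d^{-1}\|\langle\btheta\rangle\|_2^2$, $d^{-1}\langle\btheta\rangle^\top\btheta^*$, $d^{-1}\langle\|\btheta\|_2^2\rangle$ and their $\eta$-analogues are identified there with $c_{\theta,d}$, the mass of the spectral measure $\mu_{\theta,d}$, and the corresponding $\eta$-quantities, all of which are shown to converge directly (to $c_\theta^\tti(\infty)$, $c_\theta(*)$, $c_\theta^\tti(0)$, etc.). Then $\MSE=m_{\theta,d}=\mu_{\theta,d}([\iota,\infty))\to c_\theta^\tti(0)-c_\theta^\tti(\infty)=\mse$ and similarly for $\MSE_*,\YMSE$. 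For $\YMSE_*$ the paper in addition uses a Gaussian integration-by-parts identity (with respect to $\beps$) to rewrite $\E[\YMSE_*\mid\X]$ in terms of $\E[\YMSE\mid\X]$ and $\E[n^{-1}\|\X\langle\btheta\rangle-\y\|_2^2\mid\X]$, and then invokes the concentration of $\YMSE_*$ from Proposition~\ref{prop:mseconcentration} to pass from the conditional expectation to the almost-sure statement.
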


As discussed in Remark \ref{remark:replica}, the fixed-point equations
characterizing these limits of the mean-squared-error quantities
$\MSE,\MSE_*,\YMSE,\YMSE_*$ are those derived via
the replica method under an assumption of replica symmetry.
One may verify that the limit of the free energy in part (b)
agrees also with the replica prediction that was computed in
\cite[Eq.\ (20)]{kabashima2008inference}.

The proof of Theorem \ref{thm:dmft_equilibrium} is given in Section
\ref{sec:equilibrium}, and the proofs of Theorem \ref{thm:fixedalpha_dynamics}
and Corollary \ref{cor:fixedalpha_dynamics} are given in Section
\ref{sec:fixedprior}.

\subsection{Convergence of empirical Bayes Langevin dynamics}\label{subsec:adaptive}

We now discuss results pertaining to the empirical Bayes Langevin
dynamics (\ref{eq:langevin_sde}--\ref{eq:gflow}) with a data-adaptive evolution
of the prior law.

\subsubsection{A general condition for dimension-free convergence}

We impose the following strengthening of Assumption
\ref{assump:LSI}, ensuring that $\{\alpha^t\}_{t \geq 0}$ of the DMFT solution
remains confined to a bounded domain where the posterior
log-Sobolev conditions of Assumption \ref{assump:LSI} hold uniformly. 

\begin{assumption}\label{assump:compactalpha}
Let $\{\alpha^t\}_{t \geq 0}$ be the $\alpha$-component of the DMFT system.
There exists a compact subset $S \subset \R^K$ such that
\[\alpha^t \in S \text{ for all } t \geq 0.\]
Furthermore, there exists
a (bounded) open neighborhood $O \supset S$ and an $\X$-dependent event
$\event(\X)$ on which the statements of Assumption \ref{assump:LSI}(a--b)
hold with a uniform constant $C_\text{LSI}>0$ for every prior
$g \in \{g(\,\cdot\,,\alpha):\alpha \in O\}$.
\end{assumption}

Under this condition, we will show dimension-free convergence of
the prior parameter $\{\alpha^t\}_{t \geq 0}$ to a fixed point of the
replica-symmetric free energy. To state this result,
let us recall the free energy
\[\widehat F(\alpha)={-}\frac{1}{d}\log \sP_{g(\cdot,\alpha)}(\y \mid \X)\]
of the linear model from (\ref{eq:hatF}), and denote by
\begin{equation}\label{eq:limitF}
F(\alpha)={-}\E_{g_*,\omega_*} \log \sP_{g(\cdot,\alpha),\omega}(Y)
-\frac{1}{2}\left(\delta+\log \frac{2\pi}{\omega}
-\delta \log \frac{2\pi\delta}{\omega}
+(1-\delta)\frac{\omega}{\omega_*}
+\omega\sigma^2\Big(\frac{\omega}{\omega_*}-2\Big)\right)
\end{equation}
its asymptotic limit prescribed by
Corollary \ref{cor:fixedalpha_dynamics}, both viewed as a function of $\alpha
\in O \subset \R^K$. Here, the fixed points
$(\omega,\omega_*) \equiv (\omega(\alpha),\omega_*(\alpha))$ 
implicitly depend on $\alpha$ and are well-defined
by Theorem \ref{thm:fixedalpha_dynamics} for all $\alpha \in O$.
Recalling the law $\sP_{g_*,\omega_*;g,\omega}(\theta^*,\theta)$
from (\ref{eq:thetalimitlaw}), let us abbreviate this law
with $g \equiv g(\cdot,\alpha)$ and fixed points
$(\omega(\alpha),\omega_*(\alpha))$ as
\begin{equation}\label{eq:limitlawalpha}
\sP_\alpha \equiv \sP_{g_*,\omega_*(\alpha);g(\cdot,\alpha),\omega(\alpha)}.
\end{equation}
We write $\theta \sim \sP_\alpha$ as shorthand for the $\theta$-marginal of
$(\theta^*,\theta) \sim \sP_\alpha$. We write also
$\langle \cdot \rangle_\alpha$ for the expectation under the posterior law
$\sP_{g(\cdot,\alpha)}(\btheta \mid \X,\y)$ in the linear model.
The following lemma strengthens Corollary \ref{cor:fixedalpha_dynamics}(b)
to convergence of $F(\alpha)$ and its gradient, uniformly over the compact
subset $S \subset O$ containing $\{\alpha^t\}_{t \geq 0}$, and shows also that
a true prior parameter $\alpha^* \in O$ must be a global minimizer of
$F(\alpha)$.

\begin{lemma}\label{lemma:gradalphaF}
Suppose Assumptions \ref{assump:model}, \ref{assump:prior}(b),
and \ref{assump:compactalpha} hold, and let $S \subset O \subset \R^K$
be the domains of Assumption \ref{assump:compactalpha}. Then
\begin{enumerate}[(a)]
\item $\widehat F(\alpha)$ and $F(\alpha)$ are continuously differentiable
on $O$ with gradients
\begin{equation}\label{eq:gradF}
\nabla \widehat F(\alpha)={-}\bigg\langle
\frac{1}{d}\sum_{j=1}^d \nabla_\alpha \log g(\theta_j,\alpha)
\bigg\rangle_\alpha,
\qquad \nabla F(\alpha)={-}\E_{\theta \sim \sP_\alpha}
[\nabla_\alpha \log g(\theta,\alpha)].
\end{equation}
\item Almost surely
\[\lim_{n,d \to \infty} \sup_{\alpha \in S}
|\widehat F(\alpha)-F(\alpha)|=0,
\qquad \lim_{n,d \to \infty} \sup_{\alpha \in S}
\|\nabla \widehat F(\alpha)-\nabla F(\alpha)\|_2=0.\]
\item If $g_*(\cdot)=g(\,\cdot\,,\alpha^*)$ for some $\alpha^* \in O$,
then $F(\alpha^*)=\inf_{\alpha \in O} F(\alpha)$. 
\end{enumerate}
\end{lemma}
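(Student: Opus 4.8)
\textbf{Proof plan for Lemma~\ref{lemma:gradalphaF}.}

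\medskip

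The plan is to prove the three parts in order, treating (a) and (b) as the analytic backbone and (c) as a clean consequence of convexity of relative entropy in disguise.

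\emph{Part (a).} For $\widehat F(\alpha)$, I would differentiate under the integral sign in the definition $\widehat F(\alpha)=-\frac1d\log\sP_{g(\cdot,\alpha)}(\y\mid\X)$, using that $\sP_{g(\cdot,\alpha)}(\y\mid\X)=\int \frac{1}{(2\pi\sigma^2)^{n/2}}e^{-\|\y-\X\btheta\|_2^2/2\sigma^2}\prod_j g(\theta_j,\alpha)\,\d\btheta$. The derivative $\nabla_\alpha$ hits $\prod_j g(\theta_j,\alpha)$, producing $\sum_j \nabla_\alpha\log g(\theta_j,\alpha)$ inside the integral, and normalizing by the marginal likelihood turns the integral into the posterior expectation $\langle\cdot\rangle_\alpha$; this gives the stated formula. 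Continuity of $\nabla\widehat F$ on $O$ follows from the $C^3$ assumptions in Assumption~\ref{assump:prior}(b) together with the gradient bound \eqref{eq:logggradientbound}, which justifies dominated convergence and differentiation under the integral (the sub-Gaussian design and the confining condition $-\partial_\theta^2\log g\ge c_0(S)$ for large $|\theta|$ give enough integrability). For $\nabla F(\alpha)$, the subtlety is that $\omega=\omega(\alpha)$ and $\omega_*=\omega_*(\alpha)$ also depend on $\alpha$ through the fixed-point equations \eqref{eq:static_fixedpoint}. So one differentiates $F(\alpha)=-\E_{g_*,\omega_*}\log\sP_{g(\cdot,\alpha),\omega}(Y)-(\text{explicit function of }\omega,\omega_*)$ by the chain rule, picking up an explicit-$\alpha$ term plus terms proportional to $\partial_\alpha\omega$ and $\partial_\alpha\omega_*$. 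The key claim is that all the $\partial_\alpha\omega,\partial_\alpha\omega_*$ contributions cancel — i.e.\ $F$ is stationary in $(\omega,\omega_*)$ at the fixed point. This is the ``free-energy'' structure: the right-hand side of \eqref{eq:limitF} is (up to sign) a variational functional whose stationarity conditions in $(\omega,\omega_*)$ are exactly \eqref{eq:static_fixedpoint} together with $\mse=\E_{g_*,\omega_*}\langle(\theta-\langle\theta\rangle)^2\rangle$, $\mse_*=\E_{g_*,\omega_*}(\theta^*-\langle\theta\rangle)^2$. I would verify this by a direct computation, using the scalar I-MMSE identity $\partial_\omega \log\sP_{g,\omega}(y)$-type formulas (the same ones underlying Corollary~\ref{cor:fixedalpha_dynamics}(b)) to evaluate $\partial_\omega \E_{g_*,\omega_*}\log\sP_{g(\cdot,\alpha),\omega}(Y)$ and $\partial_{\omega_*}$ of the same. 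Once the envelope-type cancellation is in place, the remaining explicit-$\alpha$ derivative is precisely $-\E_{\theta\sim\sP_\alpha}[\nabla_\alpha\log g(\theta,\alpha)]$, and its continuity on $O$ follows from continuity of $\alpha\mapsto(\omega(\alpha),\omega_*(\alpha))$ (from the implicit function theorem applied to the uniquely-solvable system \eqref{eq:static_fixedpoint}, using uniqueness of $(\omega,\omega_*)$ given $(\mse,\mse_*)$ from Theorem~\ref{thm:dmft_equilibrium}) plus the $C^3$ regularity of $g$.

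\emph{Part (b).} Uniform convergence of $\widehat F$ to $F$ and of $\nabla\widehat F$ to $\nabla F$ on the compact set $S$. For each fixed $\alpha\in S$, the pointwise convergence $\widehat F(\alpha)\to F(\alpha)$ is exactly Corollary~\ref{cor:fixedalpha_dynamics}(b) (applied with $g\equiv g(\cdot,\alpha)$, valid because Assumption~\ref{assump:compactalpha} supplies the posterior LSI uniformly over $O$), and $\nabla\widehat F(\alpha)\to\nabla F(\alpha)$ follows from Corollary~\ref{cor:fixedalpha_dynamics}(a): the Wasserstein-$2$ convergence of $\frac1d\sum_j\delta_{(\theta_j^*,\theta_j)}$ under $\langle\cdot\rangle$ to $\sP_\alpha$, combined with $\nabla_\alpha\log g(\theta,\alpha)$ being (locally, on $S$) Lipschitz in $\theta$ with at-most-linear growth, upgrades to convergence of the $W_2$-continuous linear statistic $\langle\frac1d\sum_j\nabla_\alpha\log g(\theta_j,\alpha)\rangle\to\E_{\theta\sim\sP_\alpha}\nabla_\alpha\log g(\theta,\alpha)$. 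To pass from pointwise to uniform convergence on the compact $S$, I would establish equicontinuity (in $\alpha$) of the families $\{\widehat F\}$ and $\{\nabla\widehat F\}$ uniformly in $n,d$: this follows from uniform-in-$(n,d)$ bounds on $\nabla^2\widehat F(\alpha)$ over $S$, which one gets by differentiating \eqref{eq:gradF} once more and bounding the resulting posterior variances/covariances of $\frac1d\sum_j\nabla_\alpha\log g$ and $\frac1d\sum_j\nabla_\alpha^2\log g$ using the uniform LSI of Assumption~\ref{assump:compactalpha} (a Poincaré/LSI-based variance bound of order $O(1/d)$ for such additive functionals, together with the $C^3$ bounds \eqref{eq:localconditions}). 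Pointwise convergence plus uniform equicontinuity on a compact set then yields uniform convergence; the same argument with one more derivative handles $\nabla\widehat F\to\nabla F$. I expect this equicontinuity/Hessian-bound step to be the main obstacle, as it requires the LSI-based concentration to be applied to the $\alpha$-derivatives of the log-likelihood and tracked uniformly over $S$.

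\emph{Part (c).} Suppose $g_*=g(\cdot,\alpha^*)$ with $\alpha^*\in O$. Then the model is well-specified at $\alpha^*$, so the Bayes-optimal (matched) scalar channel has $\omega(\alpha^*)=\omega_*(\alpha^*)$: one checks that with $g\equiv g(\cdot,\alpha^*)$ and $\omega=\omega_*$ the two MSE equations in \eqref{eq:static_fixedpoint} become identical (the posterior-spread MSE equals the estimation MSE under matched prior and noise, by the Nishimori/tower property), and the solution is the Bayes-optimal fixed point. I would then show $F(\alpha^*)\le F(\alpha)$ for every $\alpha\in O$ directly at the level of the scalar channel: writing $F(\alpha)=-\E_{g_*,\omega_*(\alpha)}\log\sP_{g(\cdot,\alpha),\omega(\alpha)}(Y)+(\text{explicit})$, the natural route is to identify the right-hand side of \eqref{eq:limitF} with the value of the replica-symmetric variational functional and invoke that, at well-specified $\alpha^*$, this value equals the true (matched) free energy, which by the Gibbs variational principle / nonnegativity of KL-divergence lower-bounds the free energy at any mismatched $\alpha$. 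Concretely: the finite-dimensional free energy satisfies $\widehat F_{g(\cdot,\alpha^*)}(\y\mid\X)\le \widehat F_{g(\cdot,\alpha)}(\y\mid\X)+\frac1d\DKL(\sP_{g(\cdot,\alpha^*)}(\cdot\mid\X,\y)\,\|\,\sP_{g(\cdot,\alpha)}(\cdot\mid\X,\y))$ is the wrong direction, so instead I would use that $\alpha^*$ is the marginal-likelihood maximizer \emph{in expectation} — i.e.\ $\E[\widehat F(\alpha^*)]\le \E[\widehat F(\alpha)]$ holds because $-\frac1d\E\log\sP_{g(\cdot,\alpha)}(\y\mid\X)$ is minimized over $\alpha$ at the true data-generating parameter $\alpha^*$ by Gibbs' inequality (the expected negative log-likelihood is minimized at the truth, as $\E[\log\sP_{g(\cdot,\alpha)}(\y\mid\X)]-\E[\log\sP_{g(\cdot,\alpha^*)}(\y\mid\X)]=-\DKL(\sP_{g(\cdot,\alpha^*)}(\y\mid\X)\,\|\,\sP_{g(\cdot,\alpha)}(\y\mid\X))\le 0$, interpreting these as densities of $\y$ given $\X$). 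Taking $n,d\to\infty$ and using part (b) (pointwise suffices here), $\widehat F(\alpha)\to F(\alpha)$ and $\widehat F(\alpha^*)\to F(\alpha^*)$ almost surely, while the inequality $\E\widehat F(\alpha^*)\le\E\widehat F(\alpha)$ together with a uniform-integrability argument (the LSI-based concentration of $\widehat F$ around its mean, giving convergence of means as well) transfers to $F(\alpha^*)\le F(\alpha)$ for all $\alpha\in O$. This gives $F(\alpha^*)=\inf_{\alpha\in O}F(\alpha)$ as claimed.
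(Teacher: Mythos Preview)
Your proposal is correct and follows essentially the same approach as the paper: differentiation under the integral for $\nabla\widehat F$, the envelope-theorem cancellation at the fixed point $(\omega(\alpha),\omega_*(\alpha))$ for $\nabla F$, pointwise convergence via Corollary~\ref{cor:fixedalpha_dynamics} plus uniform equicontinuity from LSI-based Hessian bounds for part~(b), and the KL-divergence/Gibbs inequality argument $\E[\widehat F(\alpha)\mid\X]-\E[\widehat F(\alpha^*)\mid\X]=d^{-1}\DKL\ge 0$ combined with concentration of $\widehat F$ for part~(c).

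The one place where the paper differs slightly is in justifying regularity of $\alpha\mapsto(\omega(\alpha),\omega_*(\alpha))$: you invoke the implicit function theorem, but this would require checking nonsingularity of the Jacobian of the fixed-point map~\eqref{eq:static_fixedpoint}, which is not obvious in general (uniqueness of $(\omega,\omega_*)$ given $(\mse,\mse_*)$ is trivial but does not help). The paper instead proves directly (Lemma~\ref{lemma:mmse_lipschitz}) that $\mse(\alpha),\mse_*(\alpha)$ are Lipschitz in $\alpha$ via a Wasserstein-2 stability bound $W_2(q_\alpha,q_{\tilde\alpha})^2\le Cd\|\alpha-\tilde\alpha\|_2^2$ derived from the uniform LSI, which suffices for the chain-rule computation without any nondegeneracy hypothesis.
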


We now show that under the uniform LSI condition of Assumption
\ref{assump:compactalpha},
the DMFT solution $\{\alpha^t\}_{t \geq 0}$ converges as $t \to \infty$ to
a critical point $\alpha^\infty$ of the asymptotic free energy
$F(\alpha)$ (with possible additional regularization by $R(\alpha)$).
Consequently, for a dimension-independent
time horizon $T>0$ and large system sizes $n,d$, the learned prior parameter
$\widehat \alpha^T$ will be close to $\alpha^\infty$, and the Langevin sample
$\widehat \btheta^T$ will have entrywise statistics close to those in the
scalar Gaussian convolution model described by
Theorem \ref{thm:dmft_equilibrium} for the limiting prior
$g(\cdot)=g(\,\cdot\,,\alpha^\infty)$.

\begin{theorem}\label{thm:adaptivealpha_dynamics}
Suppose Assumptions \ref{assump:model}, \ref{assump:prior}(b),
and \ref{assump:compactalpha} hold.
Let $O \subset \R^K$ be as in Assumption \ref{assump:compactalpha},
define $F(\alpha)$ for $\alpha \in O$ by (\ref{eq:limitF}), and denote 
\[\Crit=\{\alpha \in S:\nabla F(\alpha)+\nabla R(\alpha)=0\}.\]
Consider the empirical Bayes Langevin dynamics
(\ref{eq:langevin_sde}--\ref{eq:gflow}), and let $\{\alpha^t\}_{t \geq 0}$ be
the deterministic approximation of $\{\widehat\alpha^t\}_{t \geq 0}$ in the
solution of the DMFT system in Theorem \ref{thm:dmft_approx}(b). Then
$\{\alpha^t\}_{t \geq 0}$ satisfies
\[\lim_{t \to \infty} \dist(\alpha^t,\Crit)=0.\]
In particular, if all points of $\Crit$ are isolated, then
there exists a limit
\begin{equation}\label{eq:alphatconverges}
\alpha^\infty=\lim_{t \to \infty} \alpha^t \in \Crit.
\end{equation}

Consequently, for any $\eps>0$, there exists a time horizon
$T:=T(\eps)>0$ independent of $n,d$ such that for any fixed $t>T(\eps)$,
the solution $\{(\btheta^t,\widehat\alpha^t)\}_{t \geq 0}$ of
(\ref{eq:langevin_sde}--\ref{eq:gflow}) satisfies almost surely
\begin{equation}\label{eq:posteriorconverges}
\limsup_{n,d \to \infty} \|\widehat \alpha^t-\alpha^\infty\|_2<\eps,
\qquad \limsup_{n,d \to \infty}
W_2\bigg(\frac{1}{d}\sum_{j=1}^d \delta_{(\theta_j^*,\theta_j^t)}
,\sP_{\alpha^\infty}\bigg)<\eps.
\end{equation}
\end{theorem}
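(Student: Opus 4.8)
\textbf{Proof proposal for Theorem \ref{thm:adaptivealpha_dynamics}.}

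The plan is to analyze the gradient-flow structure of $\{\alpha^t\}_{t \geq 0}$ using $F(\alpha) + R(\alpha)$ as a Lyapunov function, and then combine this with the equilibrium result for fixed priors (Theorem \ref{thm:dmft_equilibrium}) to control the law $\sP(\theta^t)$ that drives the DMFT $\alpha$-dynamics. First I would establish the key identification: the DMFT dynamics (\ref{def:dmft_langevin_alpha}) reads $\frac{\d}{\d t}\alpha^t = \cG(\alpha^t,\sP(\theta^t)) = \E_{\theta \sim \sP(\theta^t)}[\nabla_\alpha \log g(\theta,\alpha^t)] - \nabla R(\alpha^t)$, where $\sP(\theta^t)$ is the marginal law of $\theta^t$ in the DMFT SDE (\ref{def:dmft_langevin_cont_theta}) with a \emph{time-varying} prior parameter $\alpha^t$. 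The central difficulty is that $\sP(\theta^t)$ is the law along an \emph{adaptive} trajectory, not along a fixed-prior Langevin diffusion, so Theorem \ref{thm:dmft_equilibrium} does not apply directly. I would resolve this by a two-timescale / asymptotic-autonomy argument: since $\alpha^t \in S$ with $\dot\alpha^t$ bounded (using Assumption \ref{assump:prior}(b) growth bounds and compactness of $S$), and since the uniform posterior LSI of Assumption \ref{assump:compactalpha} forces exponentially fast relaxation of the $\theta$-marginal toward the instantaneous equilibrium, one shows that along any divergent time sequence $t_k \to \infty$ with $\alpha^{t_k} \to \bar\alpha$, the law $\sP(\theta^{t_k})$ converges in $W_2$ to the $\theta$-marginal of $\sP_{\bar\alpha}$ from (\ref{eq:limitlawalpha}). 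This is the step I expect to be the main obstacle: it requires quantifying that the DMFT correlation/response functions become approximately-TTI \emph{uniformly} as $\alpha^t$ varies slowly within $O$, i.e.\ an adaptive analogue of Theorem \ref{thm:fixedalpha_dynamics}, and then passing the limit through the self-consistent fixed-point system. One natural route is a Grönwall-type stability estimate: the contraction induced by the uniform LSI (curvature-type dissipation at rate $\asymp 1/C_{\mathrm{LSI}}$) dominates the perturbation caused by the drift of $\alpha^t$, so the gap between $\sP(\theta^t)$ and $\sP_{\alpha^t}$ decays and stays small once $\|\dot\alpha^t\|$ is small.

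Granting this, the convergence $\lim_{t\to\infty}\dist(\alpha^t,\Crit)=0$ follows from a LaSalle-type invariance argument. By Lemma \ref{lemma:gradalphaF}(a), $\nabla F(\alpha) = -\E_{\theta \sim \sP_\alpha}[\nabla_\alpha \log g(\theta,\alpha)]$, so the asymptotic-autonomy step gives $\frac{\d}{\d t}\alpha^t = -\nabla F(\alpha^t) - \nabla R(\alpha^t) + o(1)$ as $t \to \infty$. Hence $\frac{\d}{\d t}[F(\alpha^t)+R(\alpha^t)] = -\|\nabla F(\alpha^t)+\nabla R(\alpha^t)\|_2^2 + o(1)$. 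Since $F+R$ is continuous on the compact set $S$ (using Lemma \ref{lemma:gradalphaF}(a) and nonnegativity of $R$), it is bounded below; integrating the energy identity forces $\nabla F(\alpha^t)+\nabla R(\alpha^t) \to 0$, which is exactly $\dist(\alpha^t,\Crit) \to 0$ by continuity of $\nabla(F+R)$ and compactness of $S$. If $\Crit$ consists of isolated points, then since $\{\alpha^t\}$ is a continuous trajectory in the compact set $S$ accumulating only on the discrete set $\Crit$, it cannot oscillate between distinct critical points (the velocity $\dot\alpha^t \to 0$ prevents transit across the positive-distance gaps between them), so a unique limit $\alpha^\infty \in \Crit$ exists, giving (\ref{eq:alphatconverges}).

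For the final consequence (\ref{eq:posteriorconverges}): fix $\eps>0$. By the $t\to\infty$ convergence just established, choose $T=T(\eps)$ so that for all $t>T$, $\|\alpha^t - \alpha^\infty\|_2 < \eps/2$ and (by the asymptotic-autonomy step together with Theorem \ref{thm:dmft_equilibrium} applied at the limiting prior $g(\cdot,\alpha^\infty)$, using continuity of $\alpha \mapsto \sP_\alpha$ in $W_2$ which follows from the uniqueness of the fixed points $(\omega(\alpha),\omega_*(\alpha))$ on $O$) one has $W_2(\sP(\theta^*,\theta^t),\sP_{\alpha^\infty}) < \eps/2$ in the DMFT system. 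Then for each fixed such $t$, Theorem \ref{thm:dmft_approx}(b) gives almost surely $\widehat\alpha^t \to \alpha^t$ and $\frac{1}{d}\sum_j \delta_{(\theta_j^*,\theta_j^t)} \overset{W_2}{\to} \sP(\theta^*,\theta^t)$ as $n,d\to\infty$; combining via the triangle inequality for $W_2$ and for $\|\cdot\|_2$ yields $\limsup_{n,d\to\infty}\|\widehat\alpha^t-\alpha^\infty\|_2 < \eps$ and $\limsup_{n,d\to\infty} W_2\big(\frac{1}{d}\sum_j \delta_{(\theta_j^*,\theta_j^t)},\sP_{\alpha^\infty}\big) < \eps$, as claimed. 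The only genuinely new analytic input beyond the cited results is the uniform/adaptive relaxation estimate in the first paragraph; everything else is gradient-flow bookkeeping plus the transfer from DMFT to finite dimensions via Theorem \ref{thm:dmft_approx}(b).
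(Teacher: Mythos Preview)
Your proposal has the right high-level architecture (Lyapunov descent plus transfer via Theorem \ref{thm:dmft_approx}(b)), but the step you flag as ``the main obstacle'' is a genuine gap, and the argument as written is circular. You use $F(\alpha)+R(\alpha)$ as a Lyapunov function along the DMFT trajectory, computing $\tfrac{\d}{\d t}[F(\alpha^t)+R(\alpha^t)] = -\|\nabla F(\alpha^t)+\nabla R(\alpha^t)\|_2^2 + o(1)$; but this identity presupposes $\cG(\alpha^t,\sP(\theta^t)) = -\nabla F(\alpha^t)-\nabla R(\alpha^t)+o(1)$, i.e.\ $\sP(\theta^t)\approx \sP_{\alpha^t}$, which is exactly the asymptotic-autonomy claim you still need to prove. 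Your proposed mechanism for that claim --- LSI-induced contraction dominating the $\dot\alpha^t$ perturbation --- does not apply directly, because the LSI (\ref{eq:LSI}) is for the finite-dimensional posterior $\sP_g(\btheta\mid\X,\y)$, not for the non-Markovian DMFT $\theta$-equation (\ref{def:dmft_langevin_cont_theta}) with its memory kernel $R_\eta$. There is no dissipation inequality available at the level of the DMFT SDE itself; even the fixed-prior approximate-TTI result (Theorem \ref{thm:fixedalpha_dynamics}) is proved by descending to finite $n,d$ and invoking the LSI there. An ``adaptive analogue of Theorem \ref{thm:fixedalpha_dynamics}'' would therefore require the same finite-dimensional detour, at which point your route collapses into the paper's.

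The paper avoids the circularity by taking the \emph{finite-dimensional Gibbs free energy} $V(q_t,\alpha^t)+R(\alpha^t)$ (with $q_t$ the law of an auxiliary diffusion $\tilde\btheta^t$ driven by the deterministic $\alpha^t$) as the Lyapunov function. Its time derivative (\ref{eq:Gibbstimederiv}) produces two dissipative terms: the relative Fisher information $d^{-1}\FI_t$, which via the posterior LSI controls $W_2(\bar\sP_t,\bar\sP_{\alpha^t})^2$ directly, and the squared $\alpha$-gradient $\|\E_{\bar\sP_t}\nabla_\alpha\log g - \nabla R\|^2$. Combining them with $C^{-1}(a-b)^2 + b^2 \geq c_0 a^2$ yields $\tfrac{\d}{\d t}[V(q_t,\alpha^t)+R(\alpha^t)] \leq -c_0\|\nabla\widehat F(\alpha^t)+\nabla R(\alpha^t)\|^2 + \Delta_t$ with $\Delta_t\to 0$; integrating and passing $n,d\to\infty$ using Lemma \ref{lemma:gradalphaF}(b) gives $\int_0^\infty\|\nabla F(\alpha^t)+\nabla R(\alpha^t)\|_2^2\,\d t<\infty$. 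The point is that the Fisher-information term supplies the missing ``$\sP(\theta^t)\approx\sP_{\alpha^t}$'' control for free, as part of the same descent inequality --- no separate two-timescale argument is needed. For the second claim in (\ref{eq:posteriorconverges}), the paper similarly extracts $\int_0^\infty W_2(\sP(\theta^*,\theta^t),\sP_{\alpha^t})^2\,\d t<\infty$ from the integrated Fisher bound, then upgrades to pointwise convergence via a Lipschitz-in-$t$ estimate, rather than invoking Theorem \ref{thm:dmft_equilibrium} along the adaptive trajectory.
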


The proof of Theorem \ref{thm:adaptivealpha_dynamics} is given in Section
\ref{sec:adaptiveprior}.

Supposing that $g_*(\cdot)=g(\,\cdot\,,\alpha^*)$ for a true prior
parameter $\alpha^* \in O$, in settings where $R(\alpha)=0$ and the critical
point $\alpha^\infty \in \Crit$ of $F(\alpha)$
is unique, Lemma \ref{lemma:gradalphaF}(c)
ensures that $\alpha^\infty=\alpha^*$, and Theorem
\ref{thm:adaptivealpha_dynamics} then provides a guarantee for estimation of
this true prior parameter as $n,d \to \infty$.
In general, $F(\alpha)$ may have multiple critical points.
Theorem \ref{thm:adaptivealpha_dynamics} ensures convergence to a
point $\alpha^\infty \in \Crit$ that is specified deterministically by
the initial conditions of Assumption \ref{assump:model}(d), and successful
learning of $\alpha^*$ may require multiple initializations from different
starting values of $\widehat\alpha^0$. We discuss both
types of settings in the following examples.

\subsubsection{Examples}\label{sec:examples}

We develop some further implications of Theorem \ref{thm:adaptivealpha_dynamics}
in a few specific examples of parametric models for
$g(\,\cdot\,,\alpha)$. We explore also via numerical simulation
the convergence of $(\btheta^t,\widehat\alpha^t)$, the landscape of the
replica-symmetric free energy $F(\alpha)$, and the nature of its critical point
set $\Crit$ in a few settings where a posterior log-Sobolev inequality may not hold.

\begin{example}\label{ex:gaussianmean}
Consider the Gaussian prior
\[g(\theta,\alpha)=\sqrt{\frac{\omega_0}{2\pi}}\exp\Big({-}\frac{\omega_0}{2}(\theta-\alpha)^2\Big)\]
with varying mean $\alpha \in \R$ and a fixed and known
prior variance $\omega_0^{-1}$, and suppose $g_*(\theta)=g(\theta,\alpha^*)$.
Consider the empirical Bayes dynamics driven by
\[\cG(\alpha,\sP)=\E_{\theta \sim \sP}[\partial_\alpha \log g(\theta,\alpha)]\]
in (\ref{def:dmft_langevin_alpha}), with no regularizer (i.e.\ $R(\alpha) \equiv
0$).

We verify in Section \ref{sec:exampledetails} that
Assumptions \ref{assump:prior}(b) and \ref{assump:compactalpha} hold for this
example, for a subset $O \subset \R$ containing $\alpha^*$.
The posterior mean in the Gaussian convolution model (\ref{eq:scalarchannel})
is given explicitly by
\[\langle \theta \rangle_{g(\cdot,\alpha),\omega}
=\frac{\omega_0}{\omega_0+\omega}\alpha
+\frac{\omega}{\omega_0+\omega}y.\]
Then the condition $\alpha \in \Crit$ is
$0=\nabla F(\alpha)=\E_{\theta \sim \sP_\alpha}[\omega_0(\alpha-\theta)]$, i.e.\
\[\alpha=\E_{\theta \sim \sP_\alpha}[\theta]
=\E_{g_*,\omega_*}[\langle \theta \rangle_{g(\cdot,\alpha),\omega}]
=\E_{g_*,\omega}\Big[\frac{\omega_0}{\omega_0+\omega}\alpha
+\frac{\omega}{\omega_0+\omega}y\Big]
=\frac{\omega_0}{\omega_0+\omega}\alpha
+\frac{\omega}{\omega_0+\omega}\alpha^*,\]
so $\Crit$ consists of the unique critical point $\alpha^*$.
Theorem \ref{thm:adaptivealpha_dynamics} then holds with
$\alpha^\infty=\alpha^*$, i.e.\ over a dimension-independent time horizon,
$\widehat\alpha^t$ converges to $\alpha^*$ (in the limit $n,d \to \infty$
followed by $t \to \infty$ as described in
Theorem \ref{thm:adaptivealpha_dynamics}), and the empirical distribution
of coordinates of the
Langevin sample $\btheta^t$ converges to that of the posterior distribution
for the true prior $\N(\alpha^*,\omega_0^{-1})$. \qed
\end{example}

\begin{example}\label{ex:logconcave}
Consider more generally a log-concave location prior
\[g(\theta,\alpha)=\exp\big({-}f(\theta-\alpha)\big)\]
where $\alpha \in \R$ and $f:\R \to \R$ is a fixed strongly convex function,
such that $f$ is thrice continuously-differentiable with H\"older-continuous
third derivative, and
\[f'(0)=0, \qquad C \geq f''(x) \geq c_0, \qquad |f'''(x)| \leq C\]
for some constants $C,c_0>0$ and all $x \in \R$.
Suppose again $g_*(\theta)=g(\theta,\alpha^*)$, and consider the empirical
Bayes dynamics driven by
\[\cG(\alpha,\sP)=\E_{\theta \sim \sP}[\partial_\alpha \log g(\theta,\alpha)]\]
with no regularizer.

We verify in Section \ref{sec:exampledetails} that
Assumptions \ref{assump:prior}(b) and
\ref{assump:compactalpha} hold for this example, for a subset
$O \subset \R$ containing $\alpha^*$. Furthermore, we show in
Section \ref{sec:exampledetails} via an adaptation of the Brascamp-Lieb
argument of \cite[Theorem 3]{kuntz2023particle} that $F(\alpha)$ must be
strongly convex on $O$.
Hence, $\Crit$ consists again of the unique critical point
$\alpha=\alpha^*$,
and Theorem \ref{thm:adaptivealpha_dynamics} holds for $\alpha^\infty=\alpha^*$.
\qed
\end{example}

We next consider two canonical examples where the prior $g(\theta,\alpha)$ is a
Gaussian mixture model that is not log-concave in $\theta$, and where the
landscape of $F(\alpha)$ is also not necessarily convex in $\alpha$.
We will check the uniform
log-Sobolev condition of Assumption \ref{assump:compactalpha} and also
characterize analytically the landscape of the free energy $F(\alpha)$
for sufficiently large $\delta=\lim \frac{n}{d}$,
and explore by simulation the learning dynamics and free energy landscape
for some smaller values of $\delta$.

The sub-level sets of $F(\alpha)$ may not be bounded in these examples.
To confine $\{\alpha^t\}_{t \geq 0}$ to a bounded subset of $\R^K$,
we introduce an additional regularizer:
Fix a radius $D>0$, and let $\ball(D)=\{\alpha \in \R^K:\|\alpha\|_2<D\}$ be
the open ball of radius $D$. For a smooth function $r:[0,\infty) \to
[0,\infty)$ having bounded derivatives of all orders and satisfying
\begin{equation}\label{eq:ralpha}
r(x)=0 \text{ for all } x \in [0,D], 
\quad r(x) \geq x-D \text{ for all } x \geq D+1,
\quad r'(x)>0 \text{ for all } x>D,
\end{equation}
we fix the regularizer $R:\R^K \to \R$ as
\begin{equation}\label{eq:Ralpha}
R(\alpha)=r(\|\alpha\|_2).
\end{equation}
Note that $R(\alpha)=0$ for all $\alpha \in \ball(D)$, so
adding such a regularizer does not change the critical points
$\alpha \in \Crit \cap\,\ball(D)$. We show in Proposition
\ref{prop:alphaconfined} of Section \ref{sec:exampledetails} that
adding such a regularizer indeed confines the dynamics of
$\{\alpha^t\}_{t \geq 0}$ to a bounded domain.

We will study analytically a large-$\delta$ limit under a reparametrization of
the noise variance $\sigma^2$ by $s^2=\sigma^2/\delta$, corresponding to a
rescaling of the regression design $\X$ to have entries of variance
$1/n$ and a rescaling of the noise $\beps$ to have entries $\N(0,s^2)$.
The setting $\delta \to \infty$ with fixed $s^2>0$ is a limiting regime in which
each coordinate of the posterior distribution of
$\btheta$ does not contract around its mode,
the Bayes-optimal mean-squared-error for estimating
$\btheta$ remains bounded away from 0, and
the landscape of $F(\alpha)$ approaches (up to an additive
constant) the log-likelihood landscape in the scalar Gaussian
convolution model $y=\theta+z$ where $\theta \sim g(\cdot,\alpha)$ and
$z \sim \N(0,s^2)$. We denote by
\begin{equation}\label{eq:Galpha}
G_{s^2}(\alpha)={-}\E_{g_*,s^{-2}}[\log \sP_{g(\cdot,\alpha),s^{-2}}(y)]
\end{equation}
the negative population log-likelihood in this model as a function of the prior
parameter $\alpha$, when the true distribution of $y$ is given by
$y=\theta^*+z$ with $\theta^* \sim g_*$.

\begin{proposition}\label{prop:largedelta}
Suppose Assumptions \ref{assump:model} and \ref{assump:prior}(b) hold,
and the regularizer $R(\alpha)$ is given by
(\ref{eq:ralpha}--\ref{eq:Ralpha})
with $\alpha^0 \in \ball(D)$. Fix $s^2=\sigma^2/\delta$, and define
\[\Crit_G=\{\alpha \in \ball(D):\nabla G_{s^2}(\alpha)=0\}.\]
Then, for any $s^2>0$, there exists a constant $\delta_0:=\delta_0(s^2)>0$
and a function $\iota:[\delta_0,\infty) \to (0,\infty)$ with $\lim_{\delta \to
\infty} \iota(\delta)=0$ such that if $\delta>\delta_0$, then
Assumption \ref{assump:compactalpha} holds. Furthermore,
\begin{enumerate}
\item Each point of $\Crit \cap \,\ball(D)$ belongs to a ball of radius
$\iota(\delta)$ around some point of $\Crit_G$.
\item For each point $\alpha \in \Crit_G$ where $\nabla^2 G_{s^2}(\alpha)$ is
non-singular, there is exactly one point of $\Crit$ in the ball of radius
$\iota(\delta)$ around $\alpha$.
\end{enumerate}
In particular, if $g_*=g(\,\cdot\,,\alpha^*)$ for some $\alpha^* \in
\ball(D)$, and if $\alpha^*$ is the unique point of $\Crit_G$ and
$\nabla^2 G_{s^2}(\alpha^*)$ is non-singular, then $\alpha^*$ is also the
unique point of $\Crit \cap \,\ball(D)$.
\end{proposition}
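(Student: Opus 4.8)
The plan is to show that in the regime $\delta\to\infty$ with $s^2=\sigma^2/\delta$ fixed, the replica‑symmetric free energy $F(\alpha)$ of (\ref{eq:limitF}) differs from the scalar‑channel population log‑likelihood $G_{s^2}(\alpha)$ of (\ref{eq:Galpha}) by an $\alpha$‑independent constant plus an error $E_\delta$ with $\|E_\delta\|_{C^2(O)}\to0$; the statements relating $\Crit$ and $\Crit_G$ are then soft consequences via a quantitative inverse function theorem and Brouwer degree. Separately, Assumption \ref{assump:compactalpha} is verified directly: its confinement clause is Proposition \ref{prop:alphaconfined} (applicable since $\alpha^0\in\ball(D)$), and its uniform log‑Sobolev clause follows from Proposition \ref{prop:LSI}(b): with $\sigma^2=\delta s^2$, the high‑noise condition $\sigma^2>C_0\,(4\sqrt\delta\,\1\{\delta>1\}+(\sqrt\delta+1)^2\1\{\delta\le1\})$ holds once $\delta>16C_0^2/s^4$, where $C_0$ is built from the uniform constants $C(\overline O),r_0(\overline O),c_0(\overline O)$ of Assumption \ref{assump:prior}(b) on the compact set $\overline O$; this makes $C_\LSI$ (and the event $\event(\X)$) uniform over $\{g(\cdot,\alpha):\alpha\in O\}$ and forces a first requirement $\delta_0(s^2)\ge16C_0^2/s^4$.

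The technical core is the control of the static fixed point and the $C^2$ convergence of $F$. With $\sigma^2=\delta s^2$, the first two relations of (\ref{eq:static_fixedpoint}) read $\omega=(s^2+\mse/\delta)^{-1}$, $\omega_*=(s^2+\mse_*/\delta)^{-1}$. The scalar‑channel posterior $\sP_{g(\cdot,\alpha),\omega}(\cdot\mid y)$ is $(\omega+c_0(\overline O))$‑strongly log‑concave outside $[-r_0(\overline O),r_0(\overline O)]$ and a bounded perturbation of log‑concave inside, so standard estimates give $\mse(\alpha),\mse_*(\alpha)\le C$ uniformly over $\alpha\in O$ and $\delta\ge\delta_0$, hence $0\le s^{-2}-\omega(\alpha)\le C/\delta$ and likewise for $\omega_*$. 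Because $\partial\omega/\partial\mse$ and $\partial\omega_*/\partial\mse_*$ are $O(1/\delta)$, the Jacobian in $(\mse,\mse_*)$ of the fixed‑point map given by the last two relations of (\ref{eq:static_fixedpoint}) converges to the identity, so an implicit function theorem argument — using the smoothness and uniform bounds of the scalar‑channel functionals in $(\omega,\omega_*,\alpha)$ afforded by Assumption \ref{assump:prior}(b) — shows $\alpha\mapsto(\omega(\alpha),\omega_*(\alpha))$ is $C^2$ on $O$ with $\alpha$‑derivatives bounded uniformly in $\delta$ and converging to $(s^{-2},s^{-2})$ in $C^2(O)$ at rate $O(1/\delta)$. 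Substituting this into the gradient formula $\nabla F(\alpha)=-\E_{\theta\sim\sP_\alpha}[\nabla_\alpha\log g(\theta,\alpha)]$ of Lemma \ref{lemma:gradalphaF}(a): since $\sP_\alpha\to\sP_{g_*,s^{-2};g(\cdot,\alpha),s^{-2}}$, a short identity obtained by differentiating $\log\sP_{g(\cdot,\alpha),s^{-2}}(y)$ under the integral identifies the limit with $\nabla G_{s^2}(\alpha)$; differentiating once more and using $\|\nabla_\alpha(\omega,\omega_*)\|=O(1/\delta)$ gives $\nabla F\to\nabla G_{s^2}$ and $\nabla^2F\to\nabla^2G_{s^2}$ uniformly on $O$. (The same follows by plugging the fixed‑point expansion directly into (\ref{eq:limitF}) and checking that the $\alpha$‑dependent order‑$\delta^0$ terms in the bracket cancel: the $\mse(\alpha)/s^2$ contributions of $-\tfrac{\delta}{2}\log\tfrac{2\pi\delta}{\omega}$, $\tfrac{1-\delta}{2}\tfrac{\omega}{\omega_*}$ and $\tfrac12\omega\sigma^2(\tfrac{\omega}{\omega_*}-2)$ cancel exactly, leaving $O(1/\delta)$.) Taking $O$ connected, uniform smallness of $\nabla(F-G_{s^2})$ forces $F-G_{s^2}$ to be uniformly within $o(1)$ of a constant $c_\delta$, giving $F(\alpha)=G_{s^2}(\alpha)+c_\delta+E_\delta(\alpha)$ with $\epsilon_\delta:=\|E_\delta\|_{C^2(O)}\to0$. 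I expect this step — upgrading the $C^1$ control of the static fixed point implicit in Lemma \ref{lemma:gradalphaF} to $C^2$ control that is uniform in $\delta$, resting on the ``super‑stability'' of the fixed point and on the precise cancellation above — to be the main obstacle.

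Finally, the conclusions follow from $C^2$ closeness. On $\ball(D)$ one has $R\equiv0$, so $\Crit\cap\ball(D)=\{\alpha\in\ball(D):\nabla F(\alpha)=0\}$ and there $\nabla F=\nabla G_{s^2}+\nabla E_\delta$. For part 1: any $\alpha\in\Crit\cap\ball(D)$ has $\|\nabla G_{s^2}(\alpha)\|\le\epsilon_\delta$, and by continuity and compactness the quantity $\iota(\delta):=\sup\{\dist(\beta,\Crit_G):\beta\in\ball(D),\ \|\nabla G_{s^2}(\beta)\|\le\epsilon_\delta\}$ tends to $0$ as $\epsilon_\delta\to0$ (the boundary of $\ball(D)$ being harmless since the confined dynamics stay in a compact subset of $\ball(D)$), whence $\dist(\alpha,\Crit_G)\le\iota(\delta)$. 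For part 2: at $\alpha_0\in\Crit_G$ with $\nabla^2G_{s^2}(\alpha_0)$ non‑singular, fix $\rho>0$ with $\overline{\ball(\alpha_0,\rho)}\subset\ball(D)$, on which $\nabla^2G_{s^2}$ stays close to $\nabla^2G_{s^2}(\alpha_0)$ (hence invertible with a uniform bound) and which contains no other zero of $\nabla G_{s^2}$; then $\|\nabla G_{s^2}\|\ge c\rho$ on $\partial\ball(\alpha_0,\rho)$, so once $\epsilon_\delta<c\rho$ the Brouwer degree of $\nabla F$ on $\ball(\alpha_0,\rho)$ equals $\sign\det\nabla^2G_{s^2}(\alpha_0)\neq0$, giving a zero of $\nabla F$ there, which a Banach fixed‑point argument for $\beta\mapsto\beta-[\nabla^2G_{s^2}(\alpha_0)]^{-1}\nabla F(\beta)$ (a contraction once $\epsilon_\delta$ is small, using $\|\nabla^2E_\delta\|\le\epsilon_\delta$) shows is unique there and lies within $C\epsilon_\delta/\lambda_{\min}(\nabla^2G_{s^2}(\alpha_0))$ of $\alpha_0$; enlarging $\iota(\delta)$ to dominate this distance (routine when $\Crit_G$ is finite, or with a common lower bound on $\lambda_{\min}$) yields exactly one point of $\Crit$ in $\ball(\alpha_0,\iota(\delta))$. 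For the final assertion, if $g_*=g(\cdot,\alpha^*)$ with $\alpha^*\in\ball(D)$ the unique point of $\Crit_G$ and $\nabla^2G_{s^2}(\alpha^*)$ non‑singular, then Lemma \ref{lemma:gradalphaF}(c) makes $\alpha^*$ a global minimizer of $F$ on $O$, so $\alpha^*\in\Crit\cap\ball(D)$; part 1 confines $\Crit\cap\ball(D)$ to $\ball(\alpha^*,\iota(\delta))$ and part 2 leaves exactly one point there, which must therefore be $\alpha^*$, so $\Crit\cap\ball(D)=\{\alpha^*\}$.
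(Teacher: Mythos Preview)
Your proposal is correct and follows essentially the same route as the paper: confine $\{\alpha^t\}$ via Proposition \ref{prop:alphaconfined}, verify the uniform LSI via the high-noise case of Proposition \ref{prop:LSI}(b), apply the implicit function theorem to the fixed-point map to show $(\omega(\alpha),\omega_*(\alpha))\to(s^{-2},s^{-2})$ with vanishing $\alpha$-derivatives, deduce $\nabla F\to\nabla G_{s^2}$ and $\nabla^2 F\to\nabla^2 G_{s^2}$ uniformly on $O$, and conclude by a degree argument. The one place where the paper is lighter than you anticipate: it only establishes (and only needs) $C^1$ regularity of $(\omega(\alpha),\omega_*(\alpha))$, since $\nabla^2 F$ is obtained by differentiating the explicit formula $\nabla F(\alpha)=-\E_{g_*,\omega_*(\alpha)}\langle\nabla_\alpha\log g(\theta,\alpha)\rangle_{g(\cdot,\alpha),\omega(\alpha)}$ via the chain rule, which requires only $\nabla_\alpha\omega,\nabla_\alpha\omega_*=o_\delta(1)$ and not their second derivatives---so the ``main obstacle'' you flag is softer than you expect.
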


\begin{figure}[t]
\minipage{0.33\columnwidth}
\xincludegraphics[width=0.85\textwidth,label=(a)]{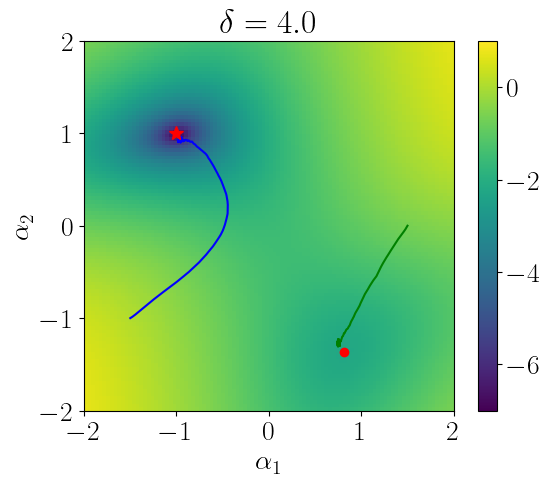}\\
\xincludegraphics[width=0.85\textwidth,label=(d)]{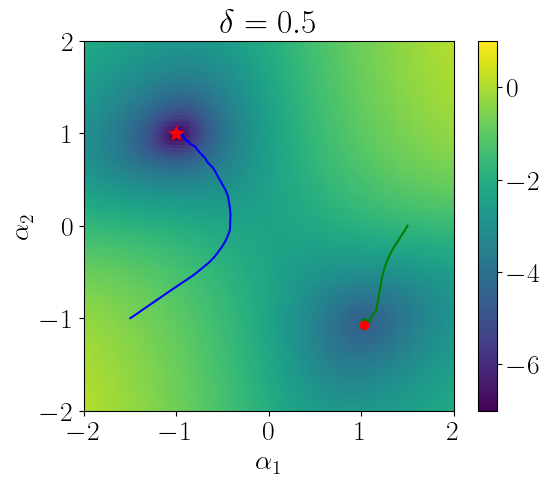}
\endminipage
\minipage{0.33\columnwidth}
\xincludegraphics[width=0.85\textwidth,label=(b)]{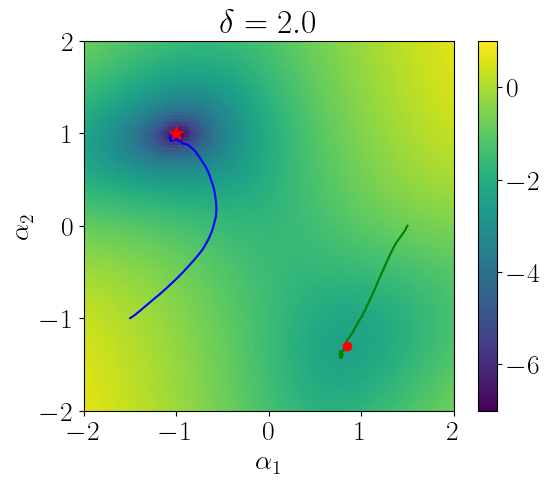}\\
\xincludegraphics[width=0.85\textwidth,label=(e)]{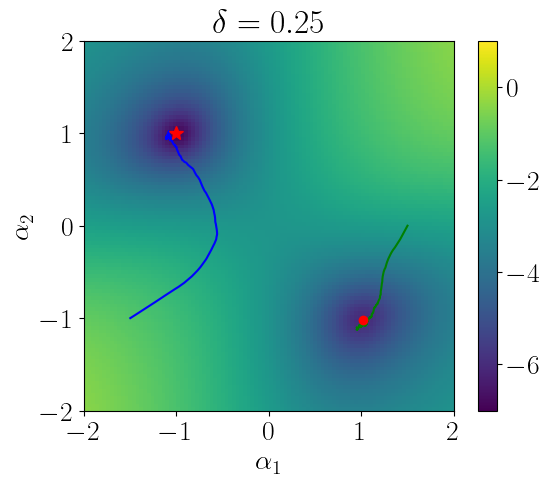}
\endminipage
\minipage{0.33\columnwidth}
\xincludegraphics[width=0.85\textwidth,label=(c)]{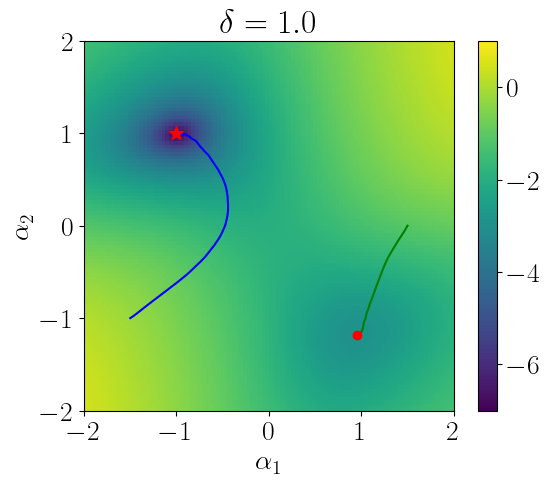}\\
\xincludegraphics[width=0.9\textwidth,label=(f)]{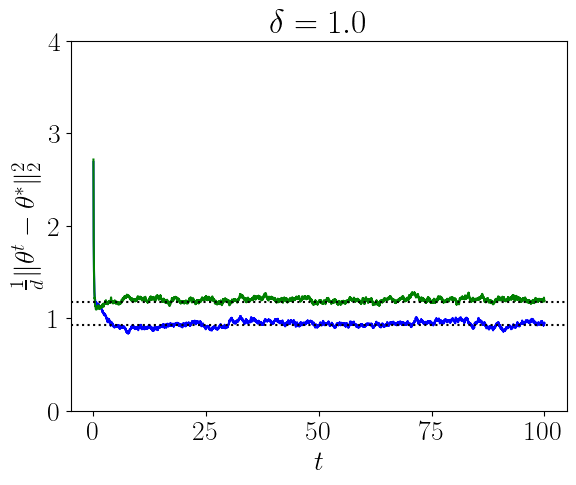}
\endminipage
\caption{Simulations for the Gaussian mixture prior model
$\frac{1}{2}\N(\alpha_1,1)+\frac{1}{2}\N(\alpha_2,0.25)$ of Example
\ref{ex:gaussianmeanmixture}, with true mixture means $\alpha^*=(1,-1)$ and linear model
noise variance $\sigma^2=\delta s^2$ for $s=0.5$.
Empirical Bayes Langevin dynamics is run for a single 
instance $(\X,\y)$ with 
$\max(n,d)=5000$, initialization
$\theta_j^0 \overset{iid}{\sim} \N(0,1)$, and an Euler-Maruyama
discretization of the dynamics.
(a--e) Landscape of the replica-symmetric
free energy $F(\alpha)$ plotted (for visual clarity) as
$\log(F(\alpha)-F(\alpha^*)+10^{-3})$, for $\delta \in
\{4,2,1,0.5,0.25\}$. Two stable fixed points of
$0=\nabla F(\alpha)$ are depicted in red, with star indicating the true parameter $\alpha^*=(-1,1)$ and circle indicating a second fixed point
$\alpha^\dagger$ near $(1,-1)$. Sample paths $\{\widehat \alpha^t\}_{t \geq 0}$
from two different initial states $\widehat\alpha^0$ are shown in blue and green.
(f) Mean-squared-error
$\frac{1}{d}\|\btheta^t-\btheta^*\|_2^2$ across iterations for these same
two initial states, at $\delta=1$. The
predicted value for a posterior
sample $\btheta \sim \sP_{g(\cdot,\alpha)}( \cdot \mid \X,\y)$ is
$\frac{1}{d}\|\btheta-\btheta^*\|_2^2 \approx \mse(\alpha)+\mse_*(\alpha)$, depicted by dashed lines for
$\alpha \in \{\alpha^\dagger,\alpha^*\}$.}
\label{fig:meanmixture}
\end{figure}

\begin{example}\label{ex:gaussianmeanmixture}
Consider a $K$-component Gaussian mixture prior
\[g(\theta,\alpha)=\sum_{k=1}^K p_k
\sqrt{\frac{\omega_k}{2\pi}}\exp\Big({-}\frac{\omega_k}{2}(\theta-\alpha_k)^2
\Big)\]
with fixed mixture weights $p_1,\ldots,p_K$ and variances
$\omega_1^{-1},\ldots,\omega_K^{-1}$, parametrized by the mixture means
$\alpha \in \R^K$. Let us suppose that
$g_*(\theta)=g(\theta,\alpha^*)$ for some $\alpha^* \in \R^K$, and the variances
$\omega_1^{-1},\ldots,\omega_K^{-1}$ are distinct.
We consider the empirical Bayes dynamics driven by
\[\cG(\alpha,\sP)=\E_{\theta \sim \sP}[\nabla_\alpha \log g(\theta,\alpha)]
-\nabla R(\alpha),\]
where $R(\alpha)$ is a regularizer of the form
(\ref{eq:ralpha}--\ref{eq:Ralpha}) for which $\alpha^0,\alpha^* \in \ball(D)$.

We verify in Section \ref{sec:exampledetails} that
Assumption \ref{assump:prior}(b) holds.
Then, for fixed $s^2>0$ and all sufficiently large $\delta$, Proposition \ref{prop:largedelta}
ensures that the confinement and log-Sobolev conditions of
Assumption \ref{assump:compactalpha} also hold, and the
proposition further
establishes a 1-to-1 correspondence between the critical points of $F$ and the (non-singular) critical points of $G_{s^2}(\alpha)$ in $\ball(D)$.
We note that, here, $G_{s^2}(\alpha)$ is the negative population
log-likelihood in the Gaussian mixture model
\begin{equation}\label{eq:marginalmeanmixture}
\sP_{g(\cdot,\alpha),s^{-2}}(y)
=\sum_{k=1}^K p_k \cdot \frac{1}{\sqrt{2\pi(\omega_k^{-1}+s^2)}}
\exp\Big({-}\frac{1}{2(\omega_k^{-1}+s^2)}(y-\alpha_k)^2 \Big)
\end{equation}
having the same mixture means $\alpha \in \R^K$ as the prior, and elevated
mixture variances $\omega_k^{-1}+s^2$. The optimization landscape of
$G_{s^2}(\alpha)$ is well-studied in the literature, see e.g.\
\cite{xu2016global,jin2016local,katsevich2023likelihood,chen2024local}, and in general
$G_{s^2}(\alpha)$ may have local minimizers in $\ball(D)$ that are
different from $\alpha^*$. In such settings, Proposition \ref{prop:largedelta}
implies that $\Crit$ must also have critical points different from $\alpha^*$
for large $\delta$.

We depict in Figure \ref{fig:meanmixture} a simulation of the landscape of
$F(\alpha)$ and the dynamics (\ref{eq:langevin_sde}--\ref{eq:gflow})
across a range of values $\delta \in [0.25,4]$, in a simple setting of
$\frac{1}{2}\N(\alpha_1,1)+\frac{1}{2}\N(\alpha_2,0.25)$
with $K=2$ mixture components and true mixture means $\alpha^*=(-1,1)$.
The Almeida-Thouless condition for stability of
the replica-symmetric phase was computed in \cite[Eq.\
(25)]{kabashima2008inference} to be (in our notation)
\begin{equation}\label{eq:AT}
1-\frac{\omega^2}{\delta}\E_{g_*,\omega_*}[\Var_{g,\omega}[\theta]^2]
\geq 0
\end{equation}
where $g(\cdot)=g(\cdot,\alpha)$ and
$(\omega,\omega_*)=(\omega(\alpha),\omega_*(\alpha))$.
We have verified that this condition holds at each tested $\delta>0$ and parameter $\alpha \in \R^K$
depicted in Figure \ref{fig:meanmixture}, and thus we conjecture that the
depicted replica-symmetric free energy function $F(\alpha)$
is indeed the correct asymptotic limit of
$-\frac{1}{d}\log \sP_{g(\cdot,\alpha)}(\y \mid \X)$ as $n,d \to \infty$ (even in settings where our assumption of a log-Sobolev inequality for the posterior law may not hold).
We observe, not only for large $\delta$ but across a range of
values $\delta \in [0.25,4]$, that the landscape $F(\alpha)$ has two local
minimizers, one fixed at the true parameter $\alpha^*=(-1,1)$ and a
second minimizer $\alpha^\dagger$ whose location depends on $\delta$. As
$\delta$ decreases, this second minimizer approaches $(1,-1)$ --- characterizing
a prior law with mixture means matching those of $g_*=g(\,\cdot\,,\alpha^*)$
but with the mixture variances reversed --- and the free energy difference
$F(\alpha^\dagger)-F(\alpha^*)$ approaches 0, indicating that it becomes
increasingly difficult to distinguish $\alpha^\dagger$ from the true parameter
$\alpha^*$. The dynamics $\{\widehat \alpha^t\}_{t \geq 0}$ follow a smooth
trajectory to one of $\alpha^\dagger$ or $\alpha^*$, depending on the initial
state $\widehat\alpha^0$.
\end{example}

\begin{figure}[t]
\minipage{0.33\columnwidth}
\xincludegraphics[width=0.85\textwidth,label=(a)]{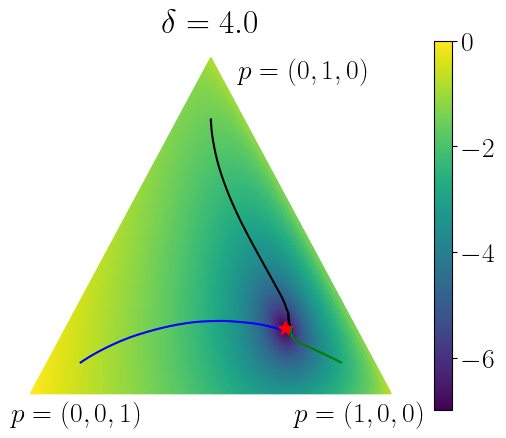}\\
\xincludegraphics[width=0.85\textwidth,label=(d)]{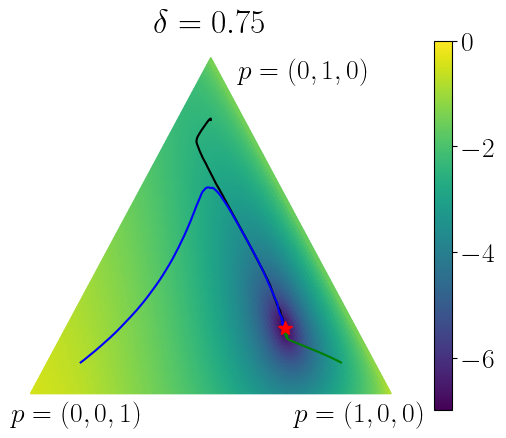}
\endminipage
\minipage{0.33\columnwidth}
\xincludegraphics[width=0.85\textwidth,label=(b)]{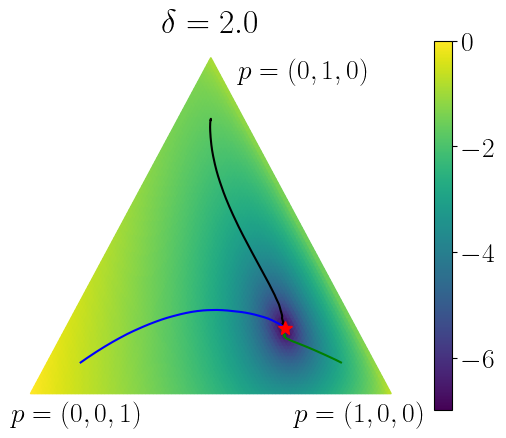}\\
\xincludegraphics[width=0.85\textwidth,label=(e)]{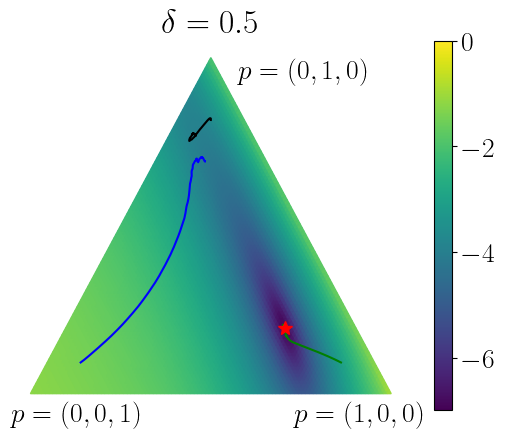}
\endminipage
\minipage{0.33\columnwidth}
\xincludegraphics[width=0.85\textwidth,label=(c)]{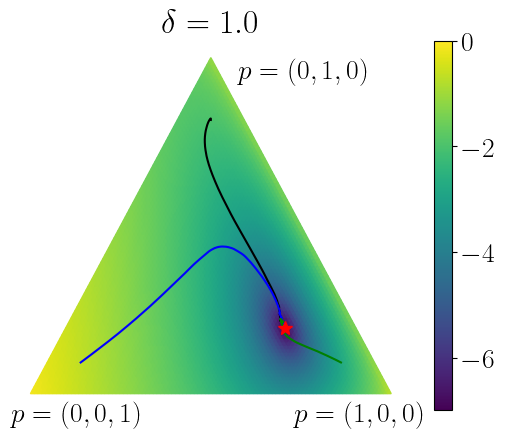}\\
\xincludegraphics[width=0.9\textwidth,label=(f)]{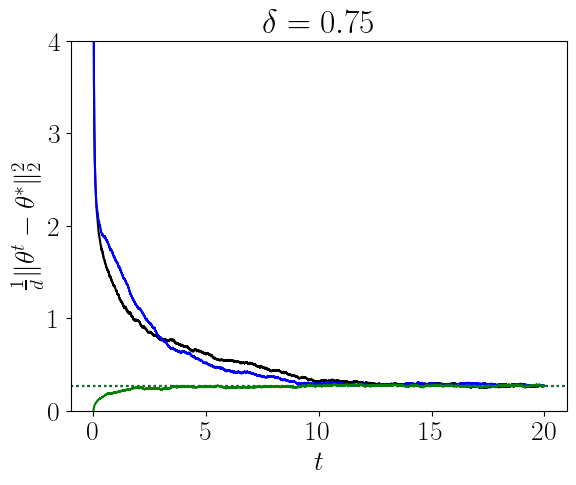}
\endminipage
\caption{Simulations for the Gaussian mixture prior model
$p_1(\alpha)\N(0,0.04)+p_2(\alpha)\N(0,1)+p_3(\alpha)\N(0,25)$ of Example
\ref{ex:gaussianweightmixture}, with true weights
$p(\alpha^*)=(0.6,0.2,0.2)$ and
linear model noise variance $\sigma^2=\delta s^2$ for $s=0.2$.
Empirical Bayes Langevin dynamics are run for two initializations
$\widehat\alpha^0$
with random $\theta_j^0 \overset{iid}{\sim} \N(0,1)$ (black and blue),
and an initialization $\widehat\alpha^0$ near $\alpha^*$
with ground truth $\theta_j^0=\theta_j^*$ (green). The remaining setup is
the same as in Figure \ref{fig:meanmixture}.
(a--e) Landscape of the replica-symmetric
free energy $F(\alpha)$ for $\delta \in \{4,2,1,0.75,0.5\}$,
plotted as
$\log(F(\alpha)-F(\alpha^*)+10^{-3})$ in the coordinates $p(\alpha)$ on the simplex. The unique critical point $p(\alpha^*)$ is
depicted as the red star. Sample paths of $\{p(\widehat \alpha^t)\}_{t \geq 0}$
are shown in green, black, and
blue. (f) Mean-squared-error
$\frac{1}{d}\|\btheta^t-\btheta^*\|_2^2$ across iterations for these same
three initial states, at $\delta=0.75$. The
predicted value of $\mse(\alpha^*)+\mse_*(\alpha^*)$ for a posterior sample is depicted by the
dashed line.}
\label{fig:weightmixture}
\end{figure}

\begin{example}\label{ex:gaussianweightmixture}
Consider a $K+1$-component Gaussian mixture prior
\[g(\theta,\alpha)=\sum_{k=0}^K p_k(\alpha)
\sqrt{\frac{\omega_k}{2\pi}}\exp\Big({-}\frac{\omega_k}{2}(\theta-\mu_k)^2
\Big), \qquad
p_k(\alpha)=\frac{e^{\alpha_k}}{e^{\alpha_0}+\ldots
+e^{\alpha_K}}\]
with fixed means $\mu_0,\ldots,\mu_K$ and variances
$\omega_0^{-1},\ldots,\omega_K^{-1}$, parametrized instead by the mixture
weights $p_k(\alpha)=e^{\alpha_k}/(e^{\alpha_0}+\ldots+e^{\alpha_K})$.
Let us suppose that $g_*(\theta)=g(\theta,\alpha^*)$ for some $\alpha^* \in
\R^{K+1}$, and the parameter pairs $(\mu_0,\omega_0),\ldots,(\mu_K,\omega_K)$
are distinct. We again consider the dynamics driven by
\[\cG(\alpha,\sP)=\E_{\theta \sim \sP}[\nabla_\alpha \log g(\theta,\alpha)]
-\nabla R(\alpha),\]
where $R(\alpha)$ is a regularizer of the form
(\ref{eq:ralpha}--\ref{eq:Ralpha}) such that $\alpha^0,\alpha^* \in \ball(D)$.
This parametrization is overparametrized by a single
parameter --- however, defining the $K$-dimensional linear
subspace $E=\{\alpha \in \R^{K+1}:\alpha_0+\ldots+\alpha_K=0\}$,
a direct calculation (c.f.\ Section \ref{sec:exampledetails}) verifies that
$\nabla_\alpha \log g(\theta,\alpha) \in E$ and $\nabla R(\alpha) \in E$ if $\alpha \in E$. Thus, initializing $\widehat \alpha^0 \in E$ ensures
$\widehat \alpha^t \in E$ for all $t \geq 0$, and we may apply our preceding results upon
identifying $E$ isometrically with $\R^K$.

We verify in Section \ref{sec:exampledetails} that
Assumption \ref{assump:prior}(b) holds.
Then again for fixed $s^2>0$ and all large $\delta$, Proposition \ref{prop:largedelta}
ensures that Assumption \ref{assump:compactalpha} also holds, and there is a 1-to-1 correspondence between the critical points of $F$ and $G_{s^2}(\alpha)$ on $\ball(D)$. Here, $G_{s^2}(\alpha)$
is the negative population log-likelihood of the Gaussian mixture model
\begin{equation}\label{eq:marginalweightmixture}
\sP_{g(\cdot,\alpha),s^{-2}}(y)
=\sum_{k=0}^K p_k(\alpha) \sqrt{\frac{1}{2\pi(\omega_k^{-1}+s^2)}}
\exp\Big({-}\frac{1}{2(\omega_k^{-1}+s^2)}(y-\mu_k)^2\Big).
\end{equation}
Letting
$S=\{(p_0,\ldots,p_K):p_0+\ldots+p_K=1,\,p_0,\ldots,p_K>0\}$ be the open
probability simplex, the mapping $\alpha \in E \mapsto
p(\alpha) \in S$ is a 1-to-1 smooth parametrization with smooth inverse,
and the function $G_{s^2}$ is strictly convex
in the parametrization by $(p_0,\ldots,p_K) \in S$. Thus $p^*=p(\alpha^*) \in S$
is the unique critical point where $\nabla_p G_{s^2}=0$, and the Hessian
$\nabla_p^2 G_{s^2}$ is nonsingular at $p^*$. This implies that
$\alpha^* \in E$ is also the unique critical point where
$\nabla_\alpha G_{s^2}=0$, and $\nabla_\alpha^2 G_{s^2}$ is also non-singular at
$\alpha^*$. So for large $\delta$, Proposition \ref{prop:largedelta} ensures
that $\alpha^*$ must be the unique point of $\Crit \cap\,\ball(D)$.

Figure \ref{fig:weightmixture} depicts the simulated landscape of $F(\alpha)$
and dynamics (\ref{eq:langevin_sde}--\ref{eq:gflow}) in a
scaled-mixture-of-normals model
$p_1(\alpha)\N(0,0.04)+p_2(\alpha)\N(0,1)+p_3(\alpha)\N(0,25)$
with all components having mean 0, across a range of values $\delta \in
[0.5,4]$, and with true weights
$p(\alpha^*)=(0.6,0.2,0.2)$. (We have again verified that the
Almeida-Thouless stability condition (\ref{eq:AT}) holds at each depicted $\delta>0$ and
parameter value $\alpha \in \R^K$ in these figures.)
We observe for all tested values $\delta \in [0.5,4]$ that
$\alpha^*$ is the unique local minimizer and critical point of $F(\alpha)$.
However, as $\delta$ decreases, the landscape of $F(\alpha)$
flattens around $\alpha^*$ along a direction representing a family of priors
$g(\,\cdot\,,\alpha)$ having the same first two moments as $g(\,\cdot\,,\alpha^*)$, reflecting that the
problem of learning $g(\,\cdot\,,\alpha^*)$ beyond its second
moment becomes increasingly ill-conditioned. The
learned parameter $\{\widehat\alpha^t\}_{t \geq 0}$ successfully converges to $\alpha^*$ from several different initial states $\widehat\alpha^0$
when $\delta \geq 0.75$, with mixing of Langevin dynamics becoming increasingly slower as $\delta$ decreases. For $\delta=0.5$, the learned parameter $\{\widehat\alpha^t\}_{t \geq 0}$ fails to converge to $\alpha^*$ under the tested time horizon
from random initializations of
$\btheta^0$, but does converge to
$\alpha^*$ under a ground-truth initialization
$\btheta^0=\btheta^*$ and $\widehat\alpha^0$ close to $\alpha^*$.
\end{example}

\section{Analysis of approximately-TTI DMFT systems}\label{sec:equilibrium}

In this section, we prove Theorem \ref{thm:dmft_equilibrium} on the equilibrium 
properties of the solution to the DMFT equations under an assumption of
approximate time-translation-invariance (from an out-of-equilibrium
initialization). We assume throughout this section that
Assumptions \ref{assump:model} and \ref{assump:prior}(a) hold,
and that the solution to the DMFT system in Theorem \ref{thm:dmft_approx}(a)
approximating the dynamics (\ref{eq:langevinfixedprior}) with the fixed prior
$g(\cdot)$ is approximately-TTI in the sense of Definition
\ref{def:regular}. We denote by $\{\theta^t\}_{t \geq 0}$,
$\{\eta^t\}_{t \geq 0}$, and $C_\theta,C_\eta,R_\theta,R_\eta$ the components
of this DMFT solution.

\subsection{Analysis of $\theta$-equation}

We first derive, from analysis of the evolution
(\ref{def:dmft_langevin_cont_theta}) for $\{\theta^t\}_{t \geq 0}$,
a representation of
$c_\theta^\tti(0),c_\theta^\tti(\infty),c_\theta(*)$ in terms of
$c_\eta^\tti(0),c_\eta^\tti(\infty)$, assuming a condition
$c_\eta^\tti(0)-c_\eta^\tti(\infty)<\delta/\sigma^2$ which ensures long-time
stability of $\{\theta^t\}_{t \geq 0}$ under
(\ref{def:dmft_langevin_cont_theta}). This condition
will be checked in our subsequent analysis of the evolution of $\{\eta^t\}_{t
\geq 0}$.

\begin{lemma}\label{lem:theta_replica_eq}
Suppose $c_\eta^\tti(0)-c_\eta^\tti(\infty)<\delta/\sigma^2$.
Set $\omega=\delta/\sigma^2-(c_\eta^{\tti}(0)-c_\eta^\tti(\infty))$
and $\omega_\ast=\omega^2/c_\eta^\tti(\infty)$. Then
\begin{align}\label{eq:theta_replica}
c_\theta^{\tti}(0) = \E_{g_\ast,\omega_*} \langle
\theta^2\rangle_{g,\omega},\quad c_\theta^\tti(\infty) =  \E_{g_\ast, \omega_*}
\langle \theta\rangle_{g,\omega}^2, \quad c_\theta(\ast) = \E_{g_\ast,\omega_*}[\langle \theta\rangle_{g,\omega}\theta^\ast].
\end{align}
\end{lemma}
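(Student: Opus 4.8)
The plan is to analyze the long-time behavior of the SDE (\ref{def:dmft_langevin_cont_theta}) for $\{\theta^t\}_{t\ge0}$ under the approximate-TTI assumptions of Definition \ref{def:regular}, showing that in the limit $t\to\infty$ the single-site process $\theta^t$ converges in $W_2$ to the posterior law $\sP_{g,\omega}(\,\cdot\mid y)$ of the scalar Gaussian convolution model (\ref{eq:scalarchannel}), with the stated effective signal-to-noise parameters $\omega,\omega_*$. First I would substitute the approximately-TTI forms into the drift of (\ref{def:dmft_langevin_cont_theta}). The memory integral $\int_0^t R_\eta(t,s)(\theta^s-\theta^*)\,\d s$ splits into a ``friction'' piece and a ``noise/field'' piece: writing $R_\eta(t,s)\approx r_\eta^\tti(t-s)$ and using the fluctuation-dissipation relation $r_\eta^\tti=-{c_\eta^\tti}'$, one integrates by parts in $s$ to convert the convolution against $\theta^s$ into a term $-(c_\eta^\tti(0)-c_\eta^\tti(\infty))\theta^t$ plus lower-order (boundary and transient) contributions as $t\to\infty$; combined with the $-\tfrac{\delta}{\sigma^2}\theta^t$ term this produces the effective restoring coefficient $\omega=\tfrac{\delta}{\sigma^2}-(c_\eta^\tti(0)-c_\eta^\tti(\infty))$, which is strictly positive precisely by the hypothesis $c_\eta^\tti(0)-c_\eta^\tti(\infty)<\delta/\sigma^2$.

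Next I would identify the effective random field. The $\theta^*$-dependent part of the memory term contributes $\big(\tfrac{\delta}{\sigma^2}-(c_\eta^\tti(0)-c_\eta^\tti(\infty))\big)\theta^*=\omega\theta^*$ in the long-time limit (by the same integration-by-parts, since $\theta^*$ is constant in time), while the Gaussian forcing $u^t$ with covariance $C_\eta(t,s)\approx c_\eta^\tti(t-s)$ decomposes, using the representation (\ref{eq:cttiforms}) of $c_\eta^\tti$ as $c_\eta^\tti(\infty)$ plus a superposition of decaying exponentials, into a static Gaussian component of variance $c_\eta^\tti(\infty)$ plus a stationary mean-zero component. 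The strictly positive spectral gap $\iota$ in (\ref{eq:cttiforms}) ensures the stationary component equilibrates on an $O(1)$ timescale. Consequently, as $t\to\infty$, $\theta^t$ behaves like the stationary solution of an Ornstein--Uhlenbeck-type SDE $\d\theta=[-\omega(\theta-\theta^*)+\partial_\theta\log g(\theta,\alpha^0)+\sqrt{\omega\,c_\eta^\tti(\infty)}\,\xi+\text{(fast mixing noise)}]\d t+\sqrt2\,\d b$, whose invariant measure is exactly proportional to $\exp(-\tfrac\omega2(\theta-y)^2)g(\theta,\alpha^0)$ after absorbing the Gaussian field into a random mean $y=\theta^*+z$. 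Matching the variance of $z$ to $c_\eta^\tti(\infty)/\omega$ and computing gives $z\sim\N(0,\omega_*^{-1})$ with $\omega_*=\omega^2/c_\eta^\tti(\infty)$, which is the scalar channel (\ref{eq:scalarchannelprocess}) with prior $g$ and noise precision $\omega$, truth $g_*$ and noise precision $\omega_*$. The three identities in (\ref{eq:theta_replica}) then follow by reading off $c_\theta^\tti(0)=\lim_t\E[(\theta^t)^2]$, $c_\theta^\tti(\infty)=\lim_{t,\tau}\E[\theta^t\theta^{t+\tau}]$ (which factorizes through the conditional mean $\langle\theta\rangle_{g,\omega}$ given $y$, since over a long time gap $\theta^t$ and $\theta^{t+\tau}$ become conditionally independent given the quenched field), and $c_\theta(*)=\lim_t\E[\theta^t\theta^*]$, all evaluated under $\sP_{g_*,\omega_*;g,\omega}$.

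I expect the main obstacle to be making the heuristic ``$t\to\infty$ freezing'' of the memory kernel and the quenched Gaussian field rigorous --- in particular, controlling the error terms from replacing $R_\eta(t,s),C_\eta(t,s)$ by their TTI surrogates uniformly over the relevant time windows, and showing that the transient (non-stationary) part of the solution decays. The clean way to handle this is to compare $\{\theta^t\}$ on a time window $[t_0,t_0+\Delta]$ against the genuinely stationary process solving the limiting SDE with frozen kernel $r_\eta^\tti$ and frozen field, using a synchronous (reflection-free) coupling and a Grönwall estimate: the effective restoring coefficient $\omega>0$ gives a contraction, the approximate-TTI bounds (\ref{eq:Cetaapproxinvariant}), (\ref{eq:Retaapproxinvariant}) control the drift discrepancy by $\eps(t_0)\to0$, and the exponential tail in (\ref{eq:cttiforms}) controls the field discrepancy. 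The posterior LSI for the one-dimensional limiting Gibbs measure (a consequence of Assumption \ref{assump:prior}(a), via Lemma \ref{lemma:univariateLSI}) then gives exponential convergence of the frozen-kernel process to its invariant law, closing the argument. A secondary technical point is justifying the factorization $c_\theta^\tti(\infty)=\E[\langle\theta\rangle_{g,\omega}^2]$, i.e.\ that widely-separated times decorrelate to the square of the conditional mean rather than merely to $(\E\theta)^2$; this follows because the only ``slow'' (non-decaying) randomness shared between $\theta^t$ and $\theta^{t+\tau}$ as $\tau\to\infty$ is the quenched pair $(\theta^*,z)$, equivalently $y$, so conditionally on $y$ the two become independent draws from $\sP_{g,\omega}(\,\cdot\mid y)$.
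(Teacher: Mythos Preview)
Your heuristic identification of the effective parameters $\omega,\omega_*$ is correct and matches the physics, but the proposed proof has two genuine gaps.

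First, the integration-by-parts step does not reduce the memory term to a local friction ``plus lower-order contributions.'' Writing $r_\eta^\tti(t-s)=-\frac{\d}{\d s}c_\eta^\tti(t-s)$ and integrating by parts gives
\[
\int_0^t r_\eta^\tti(t-s)\theta^s\,\d s
=-(c_\eta^\tti(0)-c_\eta^\tti(\infty))\theta^t
+\int_0^t\big(c_\eta^\tti(t-s)-c_\eta^\tti(\infty)\big)\,\d\theta^s
+\text{(boundary)},
\]
and the stochastic integral on the right is $O(1)$, not lower order: there is no timescale separation between the memory decay ($\sim 1/\iota$) and the $O(1)$ fluctuations of the diffusion $\theta^s$. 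So the ``limiting SDE'' you write down, with drift $-\omega(\theta-\theta^*)+\partial_\theta\log g(\theta)+\text{(static field)}+\text{(fast noise)}$, is not an approximation to the DMFT process in any pathwise or $L^2$ sense. What is true is that the generalized Langevin equation with memory kernel $r_\eta^\tti$ and colored noise of covariance $c_\eta^\tti$ satisfying FDT has the Gibbs law $\propto e^{-\frac{\omega}{2}(\theta-y)^2}g(\theta)$ as its stationary measure --- but this is precisely the nontrivial fact to be proved, and it cannot be read off from a Markovian reduction.

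Second, and relatedly, your proposed comparison argument does not close. A synchronous coupling does \emph{not} contract under the drift $-\omega\theta+(\log g)'(\theta)$ once $g$ is non-log-concave (Assumption~\ref{assump:prior}(a) allows this), so ``$\omega>0$ gives a contraction'' is false. More seriously, the ``frozen-kernel process'' you compare against is still non-Markovian, so there is no Fokker--Planck semigroup to which the one-dimensional LSI of Lemma~\ref{lemma:univariateLSI} applies; LSI of the target law does not by itself yield convergence of a non-Markovian dynamics toward it.

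The paper resolves both issues by a Markovian embedding: it discretizes $\mu_\eta$ onto $M$ atoms (Lemmas~\ref{lem:GP_couple}--\ref{lem:auxiliary_V}) so that the resulting GLE admits an exact representation as a Markov diffusion of $(\theta,x_1,\dots,x_M)\in\R^{M+1}$ with explicit Hamiltonian (\ref{eq:thetaauxHamiltonian}). The $\theta$-marginal of the stationary law of this enlarged system is then computed directly and seen to equal $\sP_{g,\omega^{(M)}}(\,\cdot\mid y)$ with $\omega^{(M)}\to\omega$ (Lemmas~\ref{lem:aux_converge}--\ref{lem:aux_station_approx}). Because the convergence time of the $(M{+}1)$-dimensional diffusion may grow with $M$, the comparison between the DMFT process and the auxiliary one must \emph{not} compound exponentially over the window length; the paper achieves this via a sticky/reflection coupling adapted to the non-Markovian setting (Lemma~\ref{lem:auxiliary_V}), exploiting the convexity-at-infinity of $-\log g$ rather than global strong convexity. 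This Markovian-embedding-plus-reflection-coupling combination is the missing ingredient in your sketch.
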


The main idea of the proof is to apply the explicit form of $c_\theta^\tti$
in (\ref{eq:cttiforms}) together with its fluctuation dissipation relation
with $r_\theta^\tti$ in (\ref{eq:crttiFDT}) to
approximate $C_\theta,R_\theta$ at large times by correlation and response
functions $C_\theta^{(M)},R_\theta^{(M)}$ that admit
an interpretation as the effect of marginalization over auxiliary variables
$(x_1^t,\ldots,x_M^t)$ in a \emph{Markovian} joint evolution of
$(\theta^t,x_1^t,\ldots,x_M^t)$ conditional on $\theta^*$.
In contrast to the original high-dimensional
dynamics of $\{\btheta^t\}_{t \geq 0}$ in $\R^d$, here $M$ does not depend on
$(n,d)$, and the dynamics of $\{x_1^t,\ldots,x_M^t\}$ will be decoupled
given $\{\theta^t\}_{t \geq 0}$. This decoupling allows us to provide a
simple explicit form for the $\theta$-marginal of the
stationary distribution of $(\theta,x_1,\ldots,x_M)$ conditional on $\theta^*$,
which in the limit $M \to \infty$ will match the conditional distribution
$\theta \mid \theta^*$ under the limit law
$\sP_{g_*,\omega_*;g,\omega}(\theta,\theta^*)$.

To implement this idea, we will exhibit a coupling of the processes
$\{\theta^t\}_{t \geq 0}$ driven by $C_\theta,R_\theta$ and
$\{\theta_{M,T_0}^t\}_{t \geq 0}$ driven by $C_\theta^{(M)},R_\theta^{(M)}$ from
time $T_0$ onwards, and then analyze the
convergence of $\{\theta_{M,T_0}^t\}_{t \geq 0}$ under the equivalent Markovian
representation of its dynamics. The main technical
challenge is to ensure either that the discretization error $\eps(M)$
obtained by approximating $C_\theta,R_\theta$ by $C_\theta^{(M)},R_\theta^{(M)}$
does not compound exponentially over time, or that the convergence time of
$\{\theta^t_{M,T_0}\}_{t \geq 0}$ in the equivalent Markovian dynamics is independent
of the approximation dimension $M$. We will take the first approach here, by
adapting ideas around sticky and reflection couplings developed in
\cite{eberle2016reflection,eberle2019sticky} to a setting of non-Markovian DMFT 
dynamics for $\{\theta^t\}_{t \geq 0}$ and $\{\theta_{M,T_0}^t\}_{t \geq 0}$.

\subsubsection{Comparison with an auxiliary process}

Let us fix a positive integer $M$ and define two sequences $\{a_m\}_{m=0}^M$
and $\{c_m\}_{m=1}^M$ by
\begin{align}\label{eq:a_grid}
a_m = \iota + \frac{m}{\sqrt{M}} \text{ for } m=0,\ldots,M,
\qquad
\frac{c_m^2}{a_m}=\mu_\eta([a_{m-1}, a_m)) \text{ for } m=1,\ldots,M
\end{align}
where $\mu_\eta$ is given in Definition \ref{def:regular}. We set
\[R_\eta^{(M)}(\tau)=\sum_{m=1}^M c_m^2 e^{-a_m \tau},
\qquad C_\eta^{(M)}(t,s)=\sum_{m=1}^M
\frac{c_m^2}{a_m}(e^{-a_m|t-s|}-e^{-a_m(t+s)})+c_\eta^\tti(\infty).\]
A direct calculation of the covariance shows that
\begin{equation}\label{eq:CetaMinterp}
C_\eta^{(M)}(t,s)=\E[u_M^t u_M^s] \quad \text{ for } \quad
u_M^t=z+\sum_{m=1}^M c_m\int_0^t e^{-a_m(t-s)}\sqrt{2}\,\d b_m^s,
\end{equation}
where $z \sim \N(0,c_\eta^\tti(\infty))$ and
$\{b_1^t\}_{t \geq 0},\ldots,\{b_M^t\}_{t \geq 0}$ are 
standard Brownian motions independent of each other and of $z$. In particular,
$C_\eta^{(M)}(t,s)$ is a positive-semidefinite covariance kernel on
$[0,\infty)$.

For convenience, let us set
\[U(\theta,\theta^\ast)=-\frac{\delta}{\sigma^2}(\theta-\theta^\ast) +
(\log g)'(\theta),\]
so the DMFT equation (\ref{def:dmft_langevin_cont_theta}) reads
 \begin{align}\label{eq:dmft_theta}
 \d\theta^t = \Big[U(\theta^t,\theta^\ast) + \int_0^t R_\eta(t,s)(\theta^s -
\theta^\ast) \d s + u^t\Big]\d t + \sqrt{2}\,\d b^t.
 \end{align}
Let $\{u_M^t\}_{t \geq 0}$ be a centered Gaussian process 
with covariance kernel $C_\eta^{(M)}$, defined in the probability space of
$\{u^t,\theta^t\}_{t \geq 0}$ and independent of $\theta^*$.
Fixing a time $T_0>0$, let $\{\tilde b^t\}_{t \geq T_0}$ be a standard
Brownian motion initialized at $\tilde b^{T_0}=0$,
independent of $\{u^t\}_{t \geq 0}$, $\theta^*$, and $\{\theta^t\}_{t \in
[0,T]}$. We consider
an auxiliary process $\{\theta_{M,T_0}^t\}_{t \geq 0}$ defined by
\begin{equation}\label{eq:dmft_aux_theta}
\begin{aligned}
\theta_{M,T_0}^t&=\theta^t \text{ for } t \in [0,T_0],\\
\d\theta_{M,T_0}^t&=\Big[U(\theta_{M,T_0}^t,\theta^\ast)
+\int_0^t R_\eta^{(M)}(t-s)(\theta_{M,T_0}^s-\theta^\ast)
\d s + u_M^t\Big]\d t + \sqrt{2}\,\d \tilde b^t \text{ for } t \geq T_0.
\end{aligned}
\end{equation}
We proceed to construct a coupling of
$\{u_M^t\}_{t \geq 0}$ with $\{u^t\}_{t \geq 0}$ and 
of $\{\tilde b^t\}_{t \geq T_0}$ with $\{b^t-b^{T_0}\}_{t \geq T_0}$
defining the DMFT solution $\{\theta^t\}_{t \geq 0}$, to yield a coupling of
$\{\theta_{M,T_0}^t\}_{t \geq T_0}$ with $\{\theta^t\}_{t \geq T_0}$.

\begin{lemma}\label{lem:GP_couple}
For any $M,T_0,T>0$, there exists a coupling of
$\{u_M^t\}_{t \geq 0}$ and $\{u^t\}_{t \geq 0}$ such that
\begin{align*}
\sup_{t\in[T_0,T_0+T]} \E(u_M^t - u^t)^2 \leq \eps(M)+\sqrt{T}\,\eps(T_0),
\end{align*}
where $\eps(M)$ does not depend on $T_0,T$ and
$\eps(T_0)$ does not depend on $M,T$, and
$\lim_{M \to \infty} \eps(M)=0$ and $\lim_{T_0 \to \infty} \eps(T_0)=0$.
\end{lemma}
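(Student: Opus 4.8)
The plan is to couple the two Gaussian processes $\{u^t\}$ and $\{u_M^t\}$ by driving them with the same Brownian noise. Recall that $C_\eta(t,s)$ approximates the TTI profile $c_\eta^\tti(\tau) = c_\eta^\tti(\infty) + \int_\iota^\infty e^{-a\tau}\,\d\mu_\eta(a)$ up to an error $\eps(s)$ by the approximate-TTI condition~(\ref{eq:Cetaapproxinvariant}), and that $C_\eta^{(M)}(t,s)$ is \emph{exactly} the covariance of the Ornstein--Uhlenbeck-type process $u_M^t = z + \sum_{m=1}^M c_m \int_0^t e^{-a_m(t-s')}\sqrt{2}\,\d b_m^{s'}$ of~(\ref{eq:CetaMinterp}), whose stationary/shift-invariant part is $\sum_m \frac{c_m^2}{a_m} e^{-a_m|t-s|} + c_\eta^\tti(\infty)$, a Riemann-type discretization of $c_\eta^\tti(\tau)$ on the grid $\{a_m\}$ of mesh $1/\sqrt{M}$ via the identification $\frac{c_m^2}{a_m} = \mu_\eta([a_{m-1},a_m))$. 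First I would record the purely deterministic bound $\sup_{t\ge s\ge 0}|c_\eta^\tti(t-s) - (\text{shift-invariant part of }C_\eta^{(M)})| \le \eps_0(M)$ with $\eps_0(M)\to 0$: on the grid interval $[a_{m-1},a_m)$ one replaces $e^{-a\tau}$ by $e^{-a_m\tau}$, incurring an error $\le \tau \cdot \tfrac{1}{\sqrt M} e^{-a_{m-1}\tau} \cdot \mu_\eta([a_{m-1},a_m))$, and summing over $m$ and using $\sup_{\tau\ge 0}\tau e^{-\iota\tau/2} \le C/\iota$ together with finiteness of $\mu_\eta$ gives $\eps_0(M) = O(1/(\iota^2\sqrt M))\cdot\mu_\eta([\iota,\infty))$; the boundary transient terms $e^{-a_m(t+s)}$ are uniformly small in the same way once $s$ is bounded away from $0$, and are in any case absorbed. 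Combining with~(\ref{eq:Cetaapproxinvariant}) yields $\sup_{t,s\ge T_0}|C_\eta(t,s) - C_\eta^{(M)}(t,s)| \le \eps_0(M) + C\,\eps(T_0)$.

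Next I would construct the coupling itself. The cleanest route is a synchronous (common-noise) coupling on $[T_0, T_0+T]$: represent both processes on $[T_0,\infty)$ via a Karhunen--Loève / white-noise integral representation driven by the \emph{same} underlying space-time white noise, matched so that $\E(u_M^t - u^t)^2$ is controlled by the $L^2$-difference of the integrands, which in turn is governed by the discrepancy of the covariance kernels. Concretely, since both $\{u^t\}_{t\ge T_0}$ and $\{u_M^t\}_{t\ge T_0}$ are centered Gaussian, for each fixed pair there is an optimal $W_2$ coupling of the two (one-dimensional) marginals $(u^t, u_M^t)$, and more usefully a coupling of the whole paths: writing $K, K_M$ for the covariance operators on $L^2([T_0,T_0+T])$, one can take $u_M = K_M^{1/2} K^{-1/2} u$ (regularizing if needed), giving $\E\|u_M - u\|_{L^2}^2 \le \|K_M^{1/2} - K^{1/2}\|_{\mathrm{HS}}^2 \le \|K_M - K\|_{\mathrm{HS}} \cdot (\text{trace bound})$ by operator-monotonicity of the square root; then $\sup_t \E(u_M^t - u^t)^2$ is bounded via a Sobolev-type embedding using the a.e.\ differentiability of the sample paths (both processes have bounded drift-like structure). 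A bound on $\|K_M - K\|_{\mathrm{HS}}$ of the form $T \cdot \sup_{t,s}|C_\eta - C_\eta^{(M)}| \le T(\eps_0(M) + C\eps(T_0))$ then gives $\sup_t \E(u_M^t-u^t)^2 \lesssim \sqrt{T}\big(\eps_0(M)+\eps(T_0)\big)^{1/2}$ — which is slightly weaker in form than the claimed $\eps(M) + \sqrt{T}\,\eps(T_0)$, so I would instead push the argument through a more hands-on construction (below) that separates the two error sources additively.

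A cleaner construction that yields the stated additive form: split $u^t = \tilde u^t + (u^t - \tilde u^t)$, where $\tilde u^t$ is the \emph{exact} TTI Gaussian process with stationary covariance $c_\eta^\tti(t-s)$ (plus the constant $c_\eta^\tti(\infty)$). By~(\ref{eq:Cetaapproxinvariant}), $\sup_{t\ge T_0}\E(u^t - \tilde u^t)^2 \le C\,\eps(T_0)$ after an optimal Gaussian coupling of $u$ and $\tilde u$ (here the $\sqrt T$ enters because we control the kernel difference on a $T$-length window and pass to pointwise control; one absorbs this into $\sqrt T\,\eps(T_0)$). Then couple $\tilde u$ to $u_M$: both have \emph{time-invariant} spectral representations — $\tilde u^t$ has spectral measure built from $\mu_\eta$ on $[\iota,\infty)$ and $u_M^t$ from its atomic discretization — so one can drive both by the same Brownian motions $\{b_m^{s'}\}$, $z$, setting $\tilde u^t = z + \sum_m c_m \int e^{-a_m(t-s')}\sqrt 2\,\d b_m^{s'} + (\text{tail on } [a_{m-1},a_m) \text{ with the true kernel})$, so that $\tilde u^t - u_M^t$ is a sum of mean-zero OU-type increments whose $L^2$ norm is bounded by $\eps_0(M)$ uniformly in $t$ — this is where the $\eps(M) := \eps_0(M)$ term, independent of $T_0$ and $T$, comes from. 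Triangle inequality then gives $\sup_{t\in[T_0,T_0+T]}\E(u_M^t - u^t)^2 \le \eps(M) + \sqrt T\,\eps(T_0)$. \textbf{The main obstacle} is the second split — coupling the exact-TTI process $\tilde u$ to its discretization $u_M$ \emph{uniformly in $t$} with an error independent of $T$: a naive synchronous coupling controls the covariance difference but the pointwise variance of the difference process can in principle drift; the fix is that both are genuinely stationary (once the boundary transients, which decay like $e^{-\iota(t+T_0)}$ and are $\le \eps_0(M)$ for $t\ge T_0$, are accounted for), so the difference process is itself stationary with variance $= c_\eta^\tti(0) - C_\eta^{(M)}(t,t)\big|_{\text{stat}} \le \eps_0(M)$ by the construction of the $c_m$, giving the desired $M$-only, $T$-free bound.
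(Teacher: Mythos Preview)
Your two-step strategy---split through an intermediate exactly-TTI process $\tilde u$, with the $u\leftrightarrow\tilde u$ step contributing $\sqrt{T}\,\eps(T_0)$ via a covariance-kernel coupling and the $\tilde u\leftrightarrow u_M$ step contributing a $T$-free $\eps(M)$ via a common-noise coupling---is essentially the paper's. For the first step the paper invokes a general coupling lemma (Lemma~\ref{lem:couple_general_cov}: discretize time on a mesh $\gamma$, couple the finite-dimensional marginals via matrix square roots, optimize $\gamma$), which produces exactly the $\sqrt{T\eps}$ scaling you anticipate.

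The gap is in your second step. Your closing claim that under the synchronous coupling ``the difference process is itself stationary with variance $= c_\eta^\tti(0) - C_\eta^{(M)}(t,t)\big|_{\text{stat}}$'' does not hold: $\Var(X-Y)=\Var X-\Var Y$ only when $Y$ is the orthogonal projection of $X$, and your sketched construction ``$\tilde u^t = u_M^t + (\text{tail with the true kernel})$'' does not furnish this---indeed it is not well-defined as written, since on each interval $[a_{m-1},a_m)$ the two processes use \emph{different} exponential kernels $e^{-a(t-s)}$ versus $e^{-a_m(t-s)}$, not merely different weights. The paper avoids building any continuous-spectrum representation of $\tilde u$ by a nested-refinement trick: couple $u_M$ to $u_{M'}$ for $M'>M$ with $\sqrt{M'}\in\sqrt{M}\cdot\Z$ so the grids nest, and realize each coarse Brownian motion as the explicit weighted average $b_j^t=\sum_{i\in I_j}\tilde c_i\tilde b_i^t/(\sum_{i\in I_j}\tilde c_i^2)^{1/2}$ of the fine ones. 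Then $\E(u_M^t-u_{M'}^t)^2$ is bounded by direct computation (a tail term plus the kernel-replacement and normalization errors labeled I and II in the paper), yielding $\eps(M)$ uniformly in $t\ge 0$ and in $M'$; only afterwards does one send $M'\to\infty$ via Lemma~\ref{lem:couple_general_cov} to reach the limit process with kernel $C_\eta^{(\infty)}$.
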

\begin{proof}
Define the covariance kernel $C_\eta^{(\infty)}(t,s)=\int_{\iota}^\infty
\big(e^{-a|t-s|} - e^{-a(t+s)}\big) \mu_\eta(\d a) + c_\eta^\tti(\infty)$
representing the $M \to \infty$ limit of (\ref{eq:CetaMinterp}). We
will couple Gaussian processes with covariance kernels $(C_\eta^{(M)},
C_\eta^{(\infty)})$ and with $(C_\eta^{(\infty)},C_\eta)$ respectively.

\textit{Coupling of $(C_\eta^{(M)}, C_\eta^{(\infty)})$.} Let $M'>M$ be
any positive integer for which $\sqrt{M'}$ is an integer multiple of
$\sqrt{M}$, and let $\{\tilde{a}_m\}_{m=0}^{M'}$ and
$\{\tilde{c}_m\}_{m=1}^{M'}$ be the sequences as defined above with $M'$ in
place of $M$. Note then that the grid points $\{a_j\}_{j=0}^M$
are a subset of the grid points $\{\tilde a_i\}_{i=0}^{M'}$. Let
\begin{align}\label{eq:GP_M_prime}
u_{M'}^t=z+\sum_{i=1}^{M'} \tilde{c}_i\int_0^t e^{-\tilde{a}_i(t-s)}\sqrt{2}\,\d
\tilde b^s_i
\end{align}
where $z \sim \N(0,c_\eta^\tti(\infty))$ and $\{\tilde{b}^t_1\}_{t\geq 0},
\ldots\{\tilde b^t_{M'}\}_{t \geq 0}$ are standard Brownian motions independent
of each other and of $z$. Then (\ref{eq:CetaMinterp}) shows that
$\{u_{M'}^t\}_{t \geq 0}$ has covariance $C_\eta^{(M')}$.
Now, for each $j=1,\ldots,M$, let
\[I_j=\{i:a_{j-1}<\tilde a_i \leq a_j\},
\qquad b^t_j = \sum_{i \in I_j}
\tilde{c}_i \tilde{b}^t_i\bigg/\sqrt{\sum_{i \in I_j} \tilde{c}_i^2}\]
and set
\begin{align*}
u_M^t=z+\sum_{j=1}^M c_j \int_0^t e^{-a_j(t-s)} \sqrt{2}\,\d b^s_j.
\end{align*}
Then $\{b^t_1\}_{t \geq 0},\ldots,\{b^t_M\}_{t \geq 0}$ are standard
Brownian motions independent of each other and of $z$, so
(\ref{eq:CetaMinterp}) shows also that
$\{u^t\}_{t \geq 0}$ is a Gaussian process with covariance $C_\eta^{(M)}$.

We may now bound
\begin{align*}
\E[(u_M^t - u_{M'}^t)^2] &\leq 4\,\E\Big[\Big(\sum_{i:\tilde{a}_i>a_M}
\tilde{c}_i\int_0^t e^{-\tilde{a}_i(t-s)}\d \tilde{b}^s_i\Big)^2\Big]\\
&\hspace{1in} + 4\,
\E\Big[\Big(\sum_{j=1}^M \sum_{i\in I_j} \tilde{c}_i \int_0^t
e^{-\tilde{a}_i(t-s)}\d \tilde{b}^i_s - \sum_{j=1}^M c_j \int_0^t e^{-a_j(t-s)}
\d b^s_j\Big)^2\Big].
\end{align*}
Since $a_M=\iota+\sqrt{M}$, the first term equals $\sum_{i: \tilde{a}_i >\iota+ \sqrt{M}}
\tilde{c}_i^2/\tilde{a}_i=\sum_{i:\tilde a_i>\iota+\sqrt{M}}
\mu_\eta([\tilde a_{i-1},\tilde a_i))$,
which is at most some $\eps_1(M)$ satisfying $\lim_{M \to \infty}
\eps_1(M)=0$, by finiteness of the measure $\mu_\eta$. The second term is
bounded as
\begin{align*}
&\E\Big[\Big(\sum_{j=1}^M \sum_{i\in I_j} \tilde{c}_i \int_0^t
e^{-\tilde{a}_i(t-s)}\d \tilde{b}^s_i - \sum_{j=1}^M c_j \int_0^t e^{-a_j(t-s)}
\d b^s_j\Big)^2\Big]\\
&= \E\Big[\Big(\sum_{j=1}^M \sum_{i\in I_j} \int_0^t \Big(\tilde{c}_i
e^{-\tilde{a}_i(t-s)} - \frac{c_j\tilde{c}_i}{\sqrt{\sum_{\ell\in I_j}
\tilde{c}_\ell^2}}e^{-a_j(t-s)}\Big) \d \tilde{b}^s_i\Big)^2\Big]\\
&= \sum_{j=1}^M\sum_{i\in I_j} \int_0^t \Big(\tilde{c}_i e^{-\tilde{a}_i s} -
\frac{c_j\tilde{c}_i}{\sqrt{\sum_{\ell\in I_j}
\tilde{c}_\ell^2}}e^{-a_j s}\Big)^2 \d s \leq 2(\mathrm{I}+\mathrm{II}),
\end{align*} 
where
\begin{align*}
\mathrm{I} =  \sum_{j=1}^M\sum_{i\in I_j} \int_0^t \tilde{c}_i^2
\big(e^{-\tilde{a}_is} - e^{-a_js}\big)^2 \d s, \qquad \mathrm{II} = \sum_{j=1}^M\sum_{i\in I_j} \int_0^t \tilde{c}_i^2\Big(1-\frac{c_j}{\sqrt{\sum_{\ell\in I_j} \tilde{c}_\ell^2}}\Big)^2e^{-2a_j s}\d s. 
\end{align*}
Let $\Delta=1/\sqrt{M}$ be the spacing of $\{a_j\}_{j=0}^M$. Then,
since $|\tilde a_i-a_j| \leq \Delta$ and $\tilde a_i \leq a_j$
for all $i \in I_j$,
\begin{align*}
\mathrm{I} &\leq \sum_{j=1}^M \sum_{i\in I_j} \int_0^t \tilde{c}_i^2
e^{-2a_{j-1}s}s^2 \Delta^2 \d s \leq \sum_{j=1}^M  \int_0^t \Big(\sum_{i\in I_j}
\frac{\tilde{c}_i^2}{\tilde{a}_i}\Big) a_j e^{-2a_{j-1}s}s^2\Delta^2 \d s
\stackrel{(*)}{=} \sum_{j=1}^M c_j^2\int_0^t e^{-2a_{j-1}s}s^2\Delta^2\d s
\end{align*}
where we use $\sum_{i\in I_j} \tilde{c}_i^2/\tilde{a}_i=
\sum_{i \in I_j} \mu_\eta([\tilde a_{i-1},\tilde a_i))
=\mu_\eta([a_{j-1},a_j))=c_j^2/a_j$ in $(\ast)$. Evaluating this integral, for
an absolute constant $C>0$,
\begin{align*}
\mathrm{I} &\leq C\Delta^2 \sum_{j=1}^M \frac{c_j^2}{a_{j-1}^3} \leq \frac{C\Delta^2}{\iota^2} \sum_{j=1}^M \frac{c_j^2}{a_j}
\leq \frac{C\Delta^2}{\iota^2}\mu_\eta([\iota,\infty)) \leq
\eps_2(M)
\end{align*}
where $\lim_{M \to \infty} \eps_2(M)=0$. For $\mathrm{II}$,
since $c_j^2 = \sum_{\ell\in I_j} \frac{a_j}{\tilde{a}_\ell} \tilde{c}_\ell^2$, we have $|c_j^2 - \sum_{\ell\in I_j} \tilde{c}_\ell^2| \leq \Delta \sum_{\ell\in I_j} \tilde{c}_\ell^2/\tilde{a}_\ell = \Delta c_j^2/a_j$, and hence
\begin{align*}
\mathrm{II} \leq \sum_{j=1}^M \sum_{i\in I_j} \frac{\tilde{c}_i^2}{2a_j}
\frac{(c_j-\sqrt{\sum_{\ell\in I_j} \tilde{c}_\ell^2})^2}{\sum_{\ell\in I_j}
\tilde{c}_\ell^2} \leq \sum_{j=1}^M \frac{(c_j^2 - \sum_{\ell\in I_j}
\tilde{c}_\ell^2)^2}{2a_j c_j^2} \leq \frac{\Delta^2}{2} \sum_{j=1}^M
\frac{c_j^2}{a_j^3} \leq \frac{\Delta^2}{2\iota^2} \sum_{j=1}^M
\frac{c_j^2}{a_j} \leq \eps_3(M)
\end{align*}
where $\lim_{M \to \infty} \eps_3(M)=0$. 
In summary, we have shown that $\sup_{t\geq 0}\E[(u_M^t - u_{M'}^t)^2] \leq
\eps(M)$ for some $\eps(M)\rightarrow 0$ as $M\rightarrow\infty$.

Now note that for any fixed $T_0$ and $T$,
$\{u_{M'}^t\}_{t\in [T_0, T_0+T]}$ has covariance kernel
$C_\eta^{(M')}$ that converges uniformly to $C_\eta^{(\infty)}$ over
$[T_0,T_0+T]$ as $M'\rightarrow\infty$. It is direct to check from its
definitions that $C_\eta^{(\infty)}$ satisfies the condition
(\ref{eq:GP_regularity}) of Lemma \ref{lem:couple_general_cov}.
So by Lemma \ref{lem:couple_general_cov}, there exists a
coupling of $\{u_{M'}^t\}_{t \in [T_0,T_0+T]}$ and a Gaussian process
$\{u_\infty^t\}_{t \in
[T_0,T_0+T]}$ with covariance $\{C_\eta^{(\infty)}(t,s)\}_{s,t \in [T_0,T_0+T]}$
such that $\sup_{t\in [T_0,T_0 + T]} \E (u_{M'}^t - u_\infty^t)^2 \rightarrow 0$
as $M' \rightarrow\infty$. Combining this with the above
bound $\sup_{t\geq 0}\E[(u_M^t - u_{M'}^t)^2] \leq \eps(M)$ and taking $M' \to
\infty$ shows
$\sup_{t\in [T_0,T_0+T]} \E(u_M^t - u_\infty^t)^2 \leq \eps(M)$.

\textit{Coupling of $(C_\eta^{(\infty)},C_\eta)$.} By the approximation
(\ref{eq:Cetaapproxinvariant}) for $C_\eta$ in Definition \ref{def:regular},
we have for any $t \geq s \geq 0$ that
\[|C_\eta(t,s)-C_\eta^{(\infty)}(t,s)|
\leq \eps(s)+\int_\iota^\infty e^{-a(t+s)}\d\mu_\eta(a),\]
so there exists a (different) function $\eps(T_0)$ with $\lim_{T_0 \to \infty}
\eps(T_0)=0$ such that
\[\sup_{s,t \in [T_0,T_0+T]} |C_\eta(t,s)-C_\eta^{(\infty)}(t,s)|
\leq \eps(T_0).\]
Here $C_\eta^{(\infty)}$
satisfies (\ref{eq:GP_regularity}) for a constant $C_0>0$
depending only on $\mu_\eta$,
so by Lemma \ref{lem:couple_general_cov}, there exists a coupling 
of $\{u_\infty^t\}_{t \in [T_0,T_0+T]}$ with $\{u^t\}_{t \in [T_0,T_0+T]}$,
the latter having covariance $\{C_\eta(t,s)\}_{s,t \in [T_0,T_0+T]}$, for which
$\sup_{t\in [T_0,T_0+T]} \E(u_\infty^t-u^t)^2 \leq
C(\sqrt{T\,\eps(T_0)}+\eps(T_0))$
for a constant $C>0$.

Combining these two couplings yields a coupling
of $\{u_M^t\}_{t \in [T_0,T_0+T]}$ with $\{u^t\}_{t \in [T_0,T_0+T]}$ such that
$\sup_{t\in [T_0,T_0+T]} \E(u_M^t-u^t)^2 \leq
\eps(M)+C(\sqrt{T\,\eps(T_0)}+\eps(T_0))$,
and extending this arbitrarily to a full coupling of $\{u_M^t\}_{t \geq 0}$
and $\{u^t\}_{t \geq 0}$ and adjusting the value of $\eps(T_0)$ shows the lemma.
\end{proof}

\begin{lemma}\label{lem:auxiliary_V}
Suppose $c_\eta^\tti(0)-c_\eta^\tti(\infty)<\delta/\sigma^2$. Then
for any $M,T_0,T>0$, there exists a coupling of 
the processes $\{\theta^t\}_{t \geq 0}$ and $\{\theta_{M,T_0}^t\}_{t \geq 0}$
defined by (\ref{eq:dmft_theta}) and (\ref{eq:dmft_aux_theta}) such that
\begin{align*}
 \sup_{t\in[0,T_0+T]}\E|\theta^t-\theta_{M,T_0}^t| \leq
\eps(M)+\sqrt{T}\,\eps(T_0),
 \end{align*}
 where $\eps(M)$ does not depend on $T_0,T$ and
$\eps(T_0)$ does not depend on $M,T$, and
$\lim_{M \to \infty} \eps(M)=0$ and $\lim_{T_0 \to \infty} \eps(T_0)=0$.
\end{lemma}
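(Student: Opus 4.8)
The plan is to set up a coupling between $\{\theta^t\}_{t \geq 0}$ solving the original DMFT equation \eqref{eq:dmft_theta} and the auxiliary process $\{\theta_{M,T_0}^t\}_{t \geq 0}$ solving \eqref{eq:dmft_aux_theta}, both from time $T_0$ onwards (they agree on $[0,T_0]$ by construction), and to control the expected $L^1$ difference via a Gronwall-type argument combined with the contractive drift provided by the assumption $c_\eta^\tti(0)-c_\eta^\tti(\infty)<\delta/\sigma^2$. Concretely, I would first invoke Lemma \ref{lem:GP_couple} to couple the Gaussian noise processes $\{u_M^t\}$ and $\{u^t\}$ so that $\sup_{t \in [T_0,T_0+T]}\E(u_M^t-u^t)^2 \leq \eps(M)+\sqrt{T}\,\eps(T_0)$, and couple the Brownian motions by taking $\tilde b^t = b^t - b^{T_0}$ so that the martingale parts cancel exactly. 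Then the difference $D^t := \theta^t - \theta_{M,T_0}^t$ satisfies an ODE (no stochastic term) whose drift has three contributions: the local drift difference $U(\theta^t,\theta^*)-U(\theta_{M,T_0}^t,\theta^*)$, the memory-integral difference, and the noise difference $u^t - u_M^t$.

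\textbf{Handling the local drift.} The key structural input is that $U(\theta,\theta^*) = -\tfrac{\delta}{\sigma^2}(\theta-\theta^*)+(\log g)'(\theta)$ has a derivative $-\tfrac{\delta}{\sigma^2}+(\log g)''(\theta)$ which, by Assumption \ref{assump:prior}(a) (the bounds $|(\log g)''| \leq C$ and $-(\log g)'' \geq c_0$ for $|\theta| \geq r_0$), is bounded above by $-\tfrac{\delta}{\sigma^2}+C$ everywhere and is $\leq -c_0$ outside a compact set. Since $C$ may exceed $\delta/\sigma^2$ on the bad region $|\theta|\leq r_0$, the drift is only \emph{one-sided Lipschitz with a positive constant on a compact set}, so a naive Gronwall bound compounds exponentially. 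This is exactly the regime handled by the reflection/sticky coupling technique of \cite{eberle2016reflection,eberle2019sticky}: I would introduce a concave distance function $\rho(|D|)$ (bounded above and below by multiples of $|D|$, with $\rho'' \leq 0$ and $\rho'(0^+)$ finite), chosen so that $\d\,\E[\rho(|D^t|)]/\d t \leq -\kappa\,\E[\rho(|D^t|)] + (\text{error terms})$ for some $\kappa>0$; the concavity absorbs the non-contractive behavior on the compact region because Brownian fluctuations push the pair together there, while the confining drift $-c_0$ dominates at large $|D|$. Actually, since our processes share the \emph{same} Brownian motion (synchronous coupling), there is no reflection term — instead the argument is the deterministic "one-sided Lipschitz plus confinement" version, and the concave Lyapunov function $\rho$ trades the positive drift constant on $\{|\theta|,|\theta_{M,T_0}^t| \leq r_0\}$ against contraction outside, giving a net dissipation rate $\kappa>0$. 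I would need to verify that the combined process $(\theta^t,\theta_{M,T_0}^t)$ spends enough time, in an averaged sense, in the good region; this follows from the a priori moment bounds on both processes (from \cite{paper1} and from the explicit stable form of the auxiliary dynamics).

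\textbf{Handling the memory term and conclusion.} The memory-integral difference splits as $\int_0^t [R_\eta(t,s)-R_\eta^{(M)}(t-s)](\theta^s-\theta^*)\,\d s + \int_0^t R_\eta^{(M)}(t-s)(\theta^s - \theta_{M,T_0}^s)\,\d s$. The first piece is controlled by the approximate-TTI estimate \eqref{eq:Retaapproxinvariant} together with the fact that $R_\eta^{(M)}(\tau)=\sum_m c_m^2 e^{-a_m\tau}$ approximates $r_\eta^\tti(\tau)$ in $L^1$ with error $\to 0$ as $M\to\infty$ (by the same Riemann-sum argument as in Lemma \ref{lem:GP_couple}), plus the a priori bound on $\E|\theta^s-\theta^*|$; this yields a term of size $\eps(M)+\eps(T_0)$ times a constant (using $\int_0^\infty R_\eta^{(M)} < \delta/\sigma^2 - 0$ bounded, since $\int_\iota^\infty a^{-1}\d\mu_\eta = c_\eta^\tti(0)-c_\eta^\tti(\infty)$). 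The second piece is $\int_0^t R_\eta^{(M)}(t-s)\,|D^s|\,\d s$, a convolution against an $L^1$-bounded nonnegative kernel; since $\|R_\eta^{(M)}\|_{L^1} = c_\eta^\tti(0)-c_\eta^\tti(\infty)+o(1) < \delta/\sigma^2$ strictly, and this quantity is precisely the "memory feedback strength" that must be dominated by the drift contraction $\kappa$, the closed Gronwall-type inequality $\E[\rho(|D^t|)] \leq -\kappa \int + \|R_\eta^{(M)}\|_1\cdot\sup_{s\leq t}\E[\rho(|D^s|)] + (\text{additive }\eps)$ has a stable solution. \textbf{The main obstacle} is engineering the Lyapunov function $\rho$ and the constant $\kappa$ so that $\kappa$ genuinely exceeds the memory strength $\|R_\eta^{(M)}\|_{L^1}$ — i.e., verifying that the "reaction" constant $C$ from the Hessian bound on the bad region, once softened by concavity of $\rho$, leaves a contraction margin larger than $c_\eta^\tti(0)-c_\eta^\tti(\infty)$; this requires a careful quantitative version of the Eberle-type estimate adapted to the non-Markovian setting, tracking how the memory integral enters the drift of $\rho(|D^t|)$. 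Once that inequality is closed, a Gronwall/fixed-point argument over $\sup_{t\in[T_0,T_0+T]}\E|D^t|$ gives the stated bound $\eps(M)+\sqrt{T}\,\eps(T_0)$, with the $\sqrt{T}$ originating from the $\sqrt{T}\,\eps(T_0)$ term already present in Lemma \ref{lem:GP_couple}.
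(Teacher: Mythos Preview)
Your proposal has a genuine gap in the coupling mechanism. You propose synchronous coupling of the Brownian motions ($\tilde b^t=b^t-b^{T_0}$), correctly observe that the difference $D^t=\theta^t-\theta_{M,T_0}^t$ then satisfies an ODE with no stochastic term, and then invoke a ``deterministic one-sided Lipschitz plus confinement'' version of the Eberle concave-distance argument. But the Eberle mechanism does not work deterministically: its entire point is that applying It\^o's formula to a concave $f$ produces a term $f''(\,\cdot\,)\cdot(\text{diffusion coefficient})^2$, and this \emph{negative} contribution is what absorbs the positive drift $f'(r)\kappa_-(r)r$ on the non-contractive region $\{|D|<R_0\}$ where $(\log g)''$ may exceed $\delta/\sigma^2$. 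With synchronous coupling the diffusion of $|D^t|$ vanishes, there is no $f''$ term, and the concave reparametrization buys nothing---you are left with $\tfrac{\d}{\d t}f(|D^t|)=f'(|D^t|)\sign(D^t)(\text{drift diff})$, which can be strictly positive whenever both processes sit in the bad region. Your fallback (``verify the pair spends enough time in the good region'') is not a proof and does not close.

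The paper instead uses a genuine reflection/sticky coupling of the Brownian motions (following \cite{eberle2019sticky}): for $t\geq T_0$ it sets $\d\theta^t$ and $\d\theta_{M,T_0}^t$ with opposite signs on the $\d b^t$ term when $|\xi^t|\geq\eps$, transitioning smoothly to synchronous as $|\xi^t|\to 0$. This makes $\d|\xi^t|$ carry a diffusion $2\sqrt{2}h(|\xi^t|)\d b^t$, so that It\^o on a concave $f$ produces the needed $4f''(r^t)h(|\xi^t|)^2$ term. The memory kernel is not handled by your proposed Gronwall-on-$\sup$ either; instead the paper introduces an augmented variable $r^t=|\xi^t|+\int_0^t A(t-s)|\xi^s|\,\d s$ with $A$ solving $A'+R_\eta^{(M)}=-c_0A$ and $A(0)=\delta/\sigma^2-c_0$. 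The hypothesis $c_\eta^\tti(0)-c_\eta^\tti(\infty)<\delta/\sigma^2$ is used precisely to ensure $A(\tau)\geq 0$ for all $\tau$, which converts the nonlocal memory feedback into the clean local dissipation $-c_0f'(r^t)r^t$. Your identification of the memory strength with $\|R_\eta^{(M)}\|_{L^1}$ is morally right, but the contraction rate you would need to dominate it is not available under synchronous coupling.
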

\begin{proof}
To ease notation, let us write $\tilde \theta^t=\theta_{M,T_0}^t$ and $\tilde
u^t=u_M^t$. We couple $\{u^t\}_{t \geq 0}$ and $\{\tilde u^t\}_{t \geq 0}$ 
according to Lemma \ref{lem:GP_couple}.
By definition, $\{\theta^t\}_{t \in [0,T_0]}$ and
$\{\tilde \theta^t\}_{t \in [0,T_0]}$ coincide up to time $T_0$.

To construct the coupling of $\theta^t$ and $\tilde \theta^t$ for times
$t \in [T_0,T_0+T]$, we adapt the ideas of
\cite{eberle2016reflection,eberle2019sticky}:
Fix some $\eps>0$, and let $h:[0,\infty) \to [0,1]$ be a function
such that $h(0)=0$, $h(x)>0$ for $x>0$, $h(x)=1$ for $x \geq \eps$,
and both $x \mapsto h(x)$ and $x \mapsto \sqrt{1-h(x)^2}$ are Lipschitz.  Let
$\{b^t\}_{t \geq T_0}$ and
$\{\tilde b^t\}_{t \geq T_0}$ be two standard Brownian motions initialized at
$b^{T_0}=\tilde b^{T_0}=0$, independent of each other and of
$\{u^t\}_{t \geq 0}$, $\{\tilde u^t\}_{t \geq 0}$, $\theta^*$, and
$\{\theta^t\}_{t \in [0,T_0]}$. We define a coupling of
$\{\theta^t\}_{t \geq T_0}$ and $\{\tilde \theta^t\}_{t \geq T_0}$ by 
the joint evolutions, for $t \geq T_0$,
\begin{align*}
\d\theta^t&=\Big[U(\theta^t,\theta^*)+\int_0^t R_\eta(t,s)(\theta^s-\theta^*)\d
s+u^t\Big]\d t+h(|\theta^t-\tilde \theta^t|)\sqrt{2}\,\d b^t
+\sqrt{2(1-h(|\theta^t-\tilde \theta^t|)^2)}\,\d \tilde b^t,\\
\d\tilde \theta^t&=\Big[U(\tilde \theta^t,\theta^*)+\int_0^t
R_\eta^{(M)}(t-s)(\tilde \theta^s-\theta^*)\d s+\tilde u^t\Big]\d
t-h(|\theta^t-\tilde \theta^t|)\sqrt{2}\,\d b^t
+\sqrt{2(1-h(|\theta^t-\tilde \theta^t|)^2)}\,\d \tilde b^t.
\end{align*}
Thus the coupling of the Brownian motions defining these processes
is by reflection at times $t \geq T_0$ where $|\theta^t-\tilde \theta^t|
\geq \eps$, and it transitions to a synchronous coupling as $|\theta^t-\tilde
\theta^t| \to 0$. 
L\'evy's characterization of Brownian motion shows that the resulting
marginal laws of $\{\theta^t\}_{t \geq T_0}$ and $\{\tilde \theta^t\}_{t \geq
T_0}$ indeed coincide with those of (\ref{eq:dmft_theta}) and
(\ref{eq:dmft_aux_theta}).

Let us write as shorthand
 \begin{align*}
\xi^t&=\theta^t-\tilde\theta^t\\
v^t&=U(\theta^t,\theta^\ast) + \int_0^t R_\eta(t,s)(\theta^s - \theta^\ast) \d s + u^t\\
\tilde v^t&=U(\tilde \theta^t,\theta^\ast) + \int_0^t
R_\eta^{(M)}(t-s)(\tilde \theta^s-\theta^\ast) \d s + \tilde u^t.
 \end{align*}
We derive a SDE for $|\xi^t|$ that is analogous to \cite[Eq.\
(66)]{eberle2019sticky}:
For any $t \geq T_0$, since $\d\xi^t=(v^t-\tilde v^t)\d t+2\sqrt{2}h(|\xi^t|)\d b^t$,
It\^o's formula yields
\[\d(\xi^t)^2=2\xi^t[(v^t-\tilde v^t)\d t
+2\sqrt{2}h(|\xi^t|)\d b^t]+8h(|\xi^t|)^2\d t.\]
For a small constant $\beta>0$,
let $S_\beta:[0,\infty) \to [0,\infty)$ be a twice continuously-differentiable
approximation to the square root,
satisfying $S_\beta(x)=\sqrt{x}$ for $x \geq \beta$,
$\sup_{0 \leq x \leq \beta} |S_\beta(x)| \leq C$,
$\sup_{0 \leq x \leq \beta} |S_\beta'(x)| \leq C\beta^{-1/2}$,
and $\sup_{0 \leq x \leq \beta} |S_\beta''(x)| \leq C\beta^{-3/2}$
for a universal constant $C>0$. (A specific construction is given
in \cite[Eq.\ (68)]{eberle2019sticky}.) Then again by It\^o's formula,
for any $t \geq T_0$,
\begin{equation}\label{eq:approximatesqrt}
\d S_\beta((\xi^t)^2)
=S_\beta'((\xi^t)^2)\Big[2\xi^t(v^t-\tilde v^t)\d t
+4\sqrt{2}\,\xi^t h(|\xi^t|)\d b^t+8h(|\xi^t|)^2\d t\Big]
+16S_\beta''((\xi^t)^2)(\xi^t)^2 h(|\xi^t|)^2\,\d t.
\end{equation}
We may take the limit $\beta \to 0$ via a dominated convergence argument:
Applying $S_\beta'(x)=x^{-1/2}/2$ for $x \geq \beta$ and
the bound $|S_\beta'(x)| \leq C\beta^{-1/2}$ for $x<\beta$,
we have $|S_\beta'((\xi^t)^2)\xi^t(v^t-\tilde v^t)|
\leq \max(C,1/2)|v^t-\tilde v^t|$. Since $v^t-\tilde v^t$ is continuous and
hence integrable over $[T_0,t]$, by dominated convergence
\[\lim_{\beta \to 0}
\int_{T_0}^t S_\beta'((\xi^t)^2)\xi^t(v^t-\tilde v^t)\d t
=\int_{T_0}^t \lim_{\beta \to 0}
S_\beta'((\xi^t)^2)\xi^t(v^t-\tilde v^t)\d t
=\int_{T_0}^t \frac{\sign(\xi^t)}{2}(v^t-\tilde v^t)\d t.\]
Applying the Lipschitz bound $h(|\xi^t|) \leq h(0)+C|\xi^t|=C|\xi^t|$ and
a similar dominated convergence argument for the other terms of
(\ref{eq:approximatesqrt}), we obtain in the limit $\beta \to 0$
that for $t \geq T_0$,
\[\d|\xi^t|=\sign(\xi^t)(v^t-\tilde v^t)\d t
+2\sqrt{2}\sign(\xi^t)h(|\xi^t|)\d b^t\]
which is the analogue of \cite[Eq.\ (66)]{eberle2019sticky}. (There is no
term corresponding to a local time of $\xi^t$ at 0
that would instead arise under a pure reflection coupling.)

Now let $A:[0,\infty) \rightarrow [0,\infty)$ be any continuously-differentiable
function, and let $f:[0,\infty) \to [0,\infty)$ be any
continuously-differentiable function with absolutely
continuous first derivative (for which It\^o's formula applies,
c.f.\ \cite[Theorem 71]{protter2005stochastic}),
and satisfying $f'(r) \in [0,1]$ and $f''(r) \leq 0$ for all $r \geq 0$. Set
\[r^t=|\xi^t|+\int_0^t A(t-s) |\xi^s|\d s.\]
Then $\d r^t=\d |\xi^t|+[A(0)|\xi^t|+\int_0^t A'(t-s)|\xi^s|\d s]\,\d t$.
Applying It\^o's formula and taking expectations gives, for $t \geq T_0$,
\begin{equation}\label{eq:dfrt}
\frac{\d}{\d t} \E f(r^t)
=\E\Big[f'(r^t)\Big(\sign(\xi^t)(v^t-\tilde v^t)+A(0)|\xi^t|+\int_0^t
A'(t-s)|\xi^s|\d s\Big) + 4f''(r^t)h(|\xi^t|)^2\Big].
\end{equation}
Let us define $\kappa:[0,\infty) \to \R$ by
\begin{equation}\label{eq:kappadef}
\kappa(r)=\inf\Big\{\frac{{-}(\log g)'(x)+(\log
g)'(y)}{x-y}:|x-y|=r\Big\}
\end{equation}
so that $[{-}(\log g)'(\theta^t)+(\log g')(\tilde \theta^t)]/\xi^t \geq
\kappa(|\xi^t|)$. Let us set also
 \begin{align}\label{eq:Delta_t}
 \Delta_t =  \int_0^t \Big(\big|R_\eta(t,s) -
R_\eta^{(M)}(t-s)\big| \cdot \E|\theta^s-\theta^*|\Big)\d s +\E|u^t - \tilde u^t|.
 \end{align}
Then, under our assumption $f'(r) \in [0,1]$, we have the bound
 \begin{align*}
&\E\Big[f'(r^t)\sign(\xi^t)(v^t-\tilde v^t)\Big]\\
&=\E\Big[f'(r^t)\sign(\xi^t)\Big({-}\frac{\delta}{\sigma^2}\xi^t-
\big({-}(\log g)'(\theta^t)+(\log g)'(\tilde \theta^t)\big) \\
&\hspace{0.5in} + \int_0^t
R_\eta^{(M)}(t-s)(\theta^s-\tilde \theta^s)\d s
+\int_0^t \big(R_\eta(t,s) -
R_\eta^{(M)}(t-s)\big)(\theta^s-\theta^*)\d s + (u^t - \tilde u^t)\Big)\Big]\\
 &\leq -\frac{\delta}{\sigma^2}\E[f'(r^t)|\xi^t|] -
\E[f'(r^t)\kappa(|\xi^t|)|\xi^t|
] + \E\Big[f'(r^t)\int_0^t R_\eta^{(M)}(t-s)|\xi^s|\d s\Big] + \Delta_t.
 \end{align*}
Applying this to (\ref{eq:dfrt}), for all $t \geq T_0$,
\begin{align}
\frac{\d}{\d t} \E f(r^t)
&\leq \E\Big[{-}\Big(\frac{\delta}{\sigma^2}-A(0)\Big)f'(r^t)|\xi^t|
-f'(r^t)\kappa(|\xi^t|)|\xi^t|+4f''(r^t)h(|\xi^t|)^2\Big] \notag\\
&\hspace{1in}
+\E\Big[f'(r^t)\int_0^t \big(A'(t-s) + R_\eta^{(M)}(t-s)\big)|\xi^s|\d s\Big]
+ \Delta_t.\label{eq:dEfrt}
\end{align}

Let us now choose the functions $A(\cdot)$ and $f(\cdot)$.
For some small enough $c_0 \in (0,\iota)$, let
\[A(0)=\frac{\delta}{\sigma^2}-c_0, \qquad A(\tau)=A(0)e^{-c_0\tau}
-\int_0^\tau e^{-c_0(\tau-s)}\Big(\sum_{m=1}^M c_m^2 e^{-a_m s}\Big)\d s.\]
This choice of $A(\tau)$ satisfies
$A'(\tau)=-c_0A(\tau)-\sum_{m=1}^M c_m^2 e^{-a_m \tau}$, i.e.\
\begin{equation}\label{eq:Arelation}
A'(\tau)+R_\eta^{(M)}(\tau)=-c_0A(\tau).
\end{equation}
We will require that $A(\tau) \geq 0$ for all $\tau \geq 0$.
To check this condition, observe that explicitly evaluating the integral
defining $A(\tau)$ yields
\[e^{c_0\tau}A(\tau)=A(0)-\sum_{m=1}^M
\frac{c_m^2}{a_m-c_0}\Big(1-e^{-(a_m-c_0)\tau}\Big)
\geq \frac{\delta}{\sigma^2}-c_0-\sum_{m=1}^M \frac{c_m^2}{a_m-c_0}
\geq \frac{\delta}{\sigma^2}-c_0-\sum_{m=1}^M \frac{c_m^2}{a_m} \cdot
\frac{\iota}{\iota-c_0},\]
the last inequality using $a_m \geq \iota \geq c_0$. Further bounding
$\sum_{m=1}^M c_m^2/a_m=\sum_{m=1}^M \mu_\eta([a_{m-1},a_m))
\leq \mu_\eta([\iota,\infty))=c_\eta^\tti(0)-c_\eta^\tti(\infty)$, this shows
$e^{c_0\tau}A(\tau) \geq \frac{\delta}{\sigma^2}-c_0-\frac{\iota}{\iota-c_0}
(c_\eta^\tti(0)-c_\eta^\tti(\infty))$.
Then by the given assumption that
$c_\eta^\tti(0)-c_\eta^\tti(\infty)<\delta/\sigma^2$, we obtain $A(\tau) \geq 0$
for a sufficiently small choice of $c_0 \in (0,\iota)$ and all $\tau \geq 0$,
as desired. Applying (\ref{eq:Arelation}) and $A(0)=\delta/\sigma^2-c_0$ into
(\ref{eq:dEfrt}), and recalling the definition
$r^t=|\xi^t|+\int_0^t A(t-s)|\xi^s|\d s$, we get for all $t \geq T_0$ that
\begin{equation}\label{eq:dEfrtbound}
\frac{\d}{\d t} \E f(r^t)\leq \E\Big[\underbrace{{-}c_0f'(r^t)r^t-f'(r^t)
\kappa(|\xi^t|)|\xi^t| + 4f''(r^t)h(|\xi^t|)^2}_{:=F(r^t,\xi^t)}\Big] + \Delta_t.
\end{equation}

We next proceed to bound the above quantity $\E[F(r^t,\xi^t)]$.
Observe that under the convexity-at-infinity condition for ${-}\log g(\theta)$
in Assumption \ref{assump:prior}(a), there must exist
constants $R_0,\kappa_0>0$ for which
\begin{equation}\label{eq:kappaconditions}
\kappa(r) \geq {-}\kappa_0 \text{ for all } r \geq 0,
\qquad \kappa(r) \geq 0 \text{ for all } r \geq R_0.
\end{equation}
Let us denote $\kappa_-(r)=\max(-\kappa(r),0)$.
Then ${-}\kappa(r)r \leq \kappa_-(r)r$ and
$\kappa_-(r)r \in [0,\kappa_0R_0]$ for all $r \geq 0$.
Recall the constant $\eps>0$ for which $h(x)=1$ when $x \geq \eps$,
and define $K:(\eps,\infty) \to [0,\kappa_0R_0]$ by
\[K(r)=\sup_{t \geq T_0} \E\Big[\kappa_-(|\xi^t|)|\xi^t| \;\Big|\;
r^t=r,\,|\xi^t|>\eps\Big].\]
Then define $f:\R \to \R$ by
\[f(0)=0, \qquad f'(r)=\exp\Big({-}\frac{1}{4}\int_0^{\max(r,2\kappa_0R_0/c_0)}
K(s)ds\Big) \text{ for } r \geq 0.\]
Note that $f'(r)$ is absolutely continuous as required, with
$f'(r) \in [c_1,1]$ for all $r \geq 0$ where
$c_1=\exp({-}\frac{\kappa_0^2R_0^2}{2c_0})$, and
$f''(r)={-}\frac{1}{4}K(r)f'(r)\1\{r<2\kappa_0R_0/c_0\} \leq 0$.
By these definitions, for any $r>\eps$ and $t \geq T_0$, we have
\begin{align*}
\E[F(r^t,\xi^t) \mid r^t=r,|\xi^t|>\eps]
&\leq \E\Big[{-}c_0f'(r^t)r^t+f'(r^t)\kappa_-(|\xi^t|)|\xi^t|
+4f''(r^t)h(|\xi^t|)^2 \;\Big|\; r^t=r,|\xi^t|>\eps\Big]\\
&\leq \E\Big[{-}c_0f'(r)r+f'(r)K(r)+4f''(r)
\;\Big|\; r^t=r,|\xi^t|>\eps\Big].
\end{align*}
When $r \geq 2\kappa_0R_0/c_0$ we may apply $f''(r)=0$
and $K(r) \leq \kappa_0R_0 \leq c_0r/2$, to bound this above by $-(c_0/2)f'(r)r$.
When $r \in (\eps,2\kappa_0R_0/c_0)$ we may instead apply
$f''(r)={-}\frac{1}{4}K(r)f'(r)$ to see that this equals $-c_0f'(r)r$.
Thus for all $r>\eps$ and $t \geq T_0$,
\[\E[F(r^t,\xi^t) \mid r^t=r,|\xi^t|>\eps]
\leq {-}(c_0/2)f'(r)r.\]
For any $r \geq 0$, on the event $|\xi^t| \leq \eps$ (which occurs with
probability 1 when $r^t \leq \eps$ since $A(t) \geq 0$),
let us use $f''(r^t)h(|\xi^t|)^2 \leq 0$ and
${-}f'(r^t)\kappa(|\xi^t|)|\xi^t| \leq \eps\kappa_0$ to bound
\[\E[F(r^t,\xi^t) \mid r^t=r,|\xi^t| \leq \eps]
\leq {-}c_0f'(r)r+\eps \kappa_0.\]
Combining these cases and taking the full expectation over
$\1\{|\xi^t|>\eps\}$ and over $r^t$, we get for all $t \geq T_0$ that
\[\E[F(r^t,\xi^t)] \leq {-}(c_0/2)\E[f'(r^t)r^t]+\eps \kappa_0.\]
Applying $f'(r^t) \geq c_1$ and $r^t \geq f(r^t)$ and putting this bound into
(\ref{eq:dEfrtbound}), for all $t \in [T_0,T_0+T]$,
\[\frac{\d}{\d t} \E f(r^t)
\leq {-}(c_0c_1/2)\E f(r^t)+\eps\kappa_0+\max_{t \in [T_0,T_0+T]} \Delta_t.\]
Since $f(r^{T_0})=f(0)=0$, this differential inequality yields for
all $t \in [T_0,T_0+T]$,
\[\E f(r^t) \leq \Big(\eps\kappa_0+\max_{t \in [T_0,T_0+T]} \Delta_t\Big)
\frac{1-e^{-(c_0c_1/2)(t-T_0)}}{c_0c_1/2}
\leq \frac{2}{c_0c_1}\Big(\eps\kappa_0+\max_{t \in [T_0,T_0+T]} \Delta_t\Big).\]
Since also $r^t \leq c_1f(r^t)$ from the lower bound $f'(r) \geq c_1$, this
gives $\E r^t \leq (2/c_0)(\eps \kappa_0+\max_{t \in [T_0,T_0+T]} \Delta_t)$.
Applying that $A(t) \geq 0$ for all $t \geq 0$, we have
$|\xi^t| \leq r^t$, so this gives finally
\[\max_{t \in [T_0,T_0+T]}
\E|\theta^t-\tilde \theta^t|=\max_{t \in [T_0,T_0+T]}
\E|\xi^t| \leq (2/c_0)\Big(\eps\kappa_0+\max_{t \in [T_0,T_0+T]} \Delta_t\Big).\]

We may choose $\eps$ such that
$\eps\kappa_0 \leq \max_{t \in [T_0,T_0+T]} \Delta_t$. Thus,
to conclude the proof, it remains to show 
\begin{align}\label{eq:Delta_bound}
\sup_{t \in [T_0,T_0+T]} \Delta_t \leq \eps(M)+\sqrt{T}\,\eps(T_0).
\end{align}
We note that under Definition \ref{def:regular},
$\E|\theta^t-\theta^\ast| \leq [\E(\theta^t-\theta^\ast)^2]^{1/2}
=(C_\theta(t,t)-2C_\theta(t,*)+\E{\theta^*}^2)^{1/2} \leq C$
for a constant $C>0$ and all $t \geq 0$. Then for the first term of $\Delta_t$,
by property (\ref{eq:Retaapproxinvariant}) of Definition \ref{def:regular},
\begin{align*}
&\int_0^t |R_\eta(t,s)-R_\eta^{(M)}(t-s)|
\cdot \E|\theta^s-\theta^\ast|\,\d s\\
 &\leq C\int_0^t |R_\eta(t,s) -
r^{\tti}_\eta(t-s)|\d s + C\int_0^t |r_\eta^{\tti}(t-s)-R_\eta^{(M)}(t-s)|\d s\\
&\leq C\eps(t) + \int_0^t |r_\eta^{\tti}(t-s) - R_\eta^{(M)}(t-s)|\d s
\end{align*}
where here $\lim_{t \to \infty} \eps(t)=0$.
Recalling the sequences $\{a_m\}_{m=0}^M,\{c_m\}_{m=1}^M$ defining
$R_\eta^{(M)}$,
\begin{align*}
r_\eta^{\tti}(\tau)-R_\eta^{(M)}(\tau) &= \int_\iota^\infty
ae^{-a\tau}\mu_\eta(\d a) - \sum_{m=1}^M c_m^2e^{-a_m\tau}\\
&= \sum_{m=1}^M \int_{a_{m-1}}^{a_m} (ae^{-a\tau} - a_me^{-a_m\tau})\mu_\eta(\d a) + \int_{a:a>a_M} ae^{-a\tau} \mu_\eta(\d a),
\end{align*}
hence using the fact that $h(a) = ae^{-a\tau}$ satisfies $|h'(a)| \leq
2e^{-a\tau/2}$ and $|a_m - a_{m-1}|=1/\sqrt{M}$,
\begin{align*}
\int_0^t \big|r_\eta^{\tti}(\tau) - R_\eta^{(M)}(\tau)\big|\d\tau &\leq
\sum_{m=1}^M\int_{a_{m-1}}^{a_m} \Big(\int_0^t |ae^{-a\tau} - a_me^{-a_m\tau}|\d \tau\Big) \mu_\eta(\d a) + \int_{a:a>a_M} \Big(\int_0^t ae^{-a\tau}\d\tau\Big) \mu_\eta(\d a)\\
&\leq \sum_{m=1}^M\int_{a_{m-1}}^{a_m} \Big(\int_0^t
(2/\sqrt{M})e^{-a_{m-1}\tau/2}\d \tau\Big) \mu_\eta(\d a) + \mu_\eta([a_M,\infty))\\
&\leq \frac{4}{\sqrt{M}}\sum_{m=1}^M \int_{a_{m-1}}^{a_m} \frac{1}{a_{m-1}}
\mu_{\eta}(\d a) + \mu_\eta([a_M,\infty)) \\
&\leq \frac{4}{\iota\sqrt{M}}
\mu_\eta([\iota,\sqrt{M})) + \mu_\eta([\sqrt{M},\infty)) \leq \eps(M),
\end{align*}
where $\lim_{M \to \infty} \eps(M)=0$. This bounds the first term of
$\Delta_t$ by $\eps(M)+Ct \cdot \eps(t)$. Bounding also the second term
$\E|u^t-\tilde u^t|$ of $\Delta_t$ by Lemma \ref{lem:GP_couple}, we have
\[\sup_{t \in [T_0,T_0+T]} \Delta_t
\leq \eps(M)+\sqrt{T}\,\eps(T_0)+\sup_{t \in [T_0,T_0+T]}
Ct \cdot \eps(t)\]
which implies (\ref{eq:Delta_bound}) upon adjusting $\eps(T_0)$. This
completes the proof.
\end{proof}

Adapting part of the previous argument, we record here a uniform bound on
$\E(\theta^t)^4$ for the solution $\{\theta^t\}_{t \geq 0}$ of the DMFT
equation.

\begin{lemma}\label{lem:dmft_theta_uniform}
Suppose $c_\eta^\tti(0)-c_\eta^\tti(\infty)<\delta/\sigma^2$. Then
$\sup_{t\geq 0} \E (\theta^t)^4 \leq C$ 
and $\sup_{t \geq 0} \E (\theta_{M,T_0}^t)^4 \leq C$ for a constant $C>0$ and
all $M,T_0>0$.
 \end{lemma}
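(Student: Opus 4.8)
The plan is to prove the stronger bound $\sup_{t\ge 0}\E(\theta^t-\theta^\ast)^4<\infty$, from which the claim follows at once since $\E(\theta^t)^4\le 8\,\E(\theta^t-\theta^\ast)^4+8\,\E(\theta^\ast)^4$ and $\E(\theta^\ast)^4<\infty$ (as $g_\ast$ satisfies the LSI (\ref{eq:LSIprior}) and hence has finite moments of all orders); the same reduction will give the bound for $\{\theta^t_{M,T_0}\}$. Writing $n(t)=\E(\theta^t-\theta^\ast)^4$, the idea is to derive a Volterra--Gronwall differential inequality for $n$ in which the destabilizing contribution of the memory term is \emph{strictly} dominated by the contraction coming from the drift $U$, precisely because $c_\eta^\tti(0)-c_\eta^\tti(\infty)<\delta/\sigma^2$, and then to close it by a running-maximum argument.

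First I would record the routine preliminaries: $n(0)<\infty$ by Assumption~\ref{assump:model}(c--d), and a standard localized Gronwall estimate (using $|(\log g)'(\theta)|\le C(1+|\theta|)$, $\int_0^t|R_\eta(t,s)|\,\d s\le C(T)$, and $\E(u^t)^4=3\,C_\eta(t,t)^2\le C(T)$ on $[0,T]$) gives $\sup_{t\in[0,T]}n(t)<\infty$ for every finite $T$. This justifies applying It\^o's formula to $(\theta^t-\theta^\ast)^4$, with the usual localization, and taking expectations, so that $n\in C^1$ with
\[
\frac{\d}{\d t}n(t)=4\,\E\big[(\theta^t-\theta^\ast)^3\,U(\theta^t,\theta^\ast)\big]
+4\,\E\Big[(\theta^t-\theta^\ast)^3\!\int_0^t R_\eta(t,s)(\theta^s-\theta^\ast)\,\d s\Big]
+4\,\E\big[(\theta^t-\theta^\ast)^3u^t\big]+12\,\E(\theta^t-\theta^\ast)^2.
\]
I would then bound the four terms. (i) The convexity-at-infinity condition $-(\log g)''(\theta)\ge c_0$ on $|\theta|\ge r_0$ forces $(\theta-\theta^\ast)^3(\log g)'(\theta)\le C(1+|\theta^\ast|^4)$ pointwise --- the product is $\le0$ once $|\theta|\ge r_0+|\theta^\ast|$, and is bounded by a degree-$4$ polynomial in $|\theta^\ast|$ otherwise --- so, together with the exact term $-\tfrac{4\delta}{\sigma^2}(\theta-\theta^\ast)^4$, the drift term is at most $-(4\delta/\sigma^2)n(t)+C$. (ii) By Young's inequality $4|a|^3|b|\le 3|a|^4+|b|^4$ and $\int_0^t|R_\eta(t,s)|\,\d s\le \bar\rho(t):=\rho_\infty+\eps(t)$ --- where $\rho_\infty:=c_\eta^\tti(0)-c_\eta^\tti(\infty)$, using $|R_\eta(t,s)|\le r_\eta^\tti(t-s)+|R_\eta(t,s)-r_\eta^\tti(t-s)|$, $\int_0^\infty r_\eta^\tti=\rho_\infty$, and (\ref{eq:Retaapproxinvariant}) --- the memory term is at most $3\bar\rho(t)\,n(t)+\int_0^t|R_\eta(t,s)|n(s)\,\d s$. (iii)--(iv) By H\"older, $\sup_t C_\eta(t,t)<\infty$, and Young, the forcing and It\^o-correction terms are each at most $\tfrac12\kappa_0\,n(t)+C(\kappa_0)$ for any $\kappa_0>0$. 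Collecting, $\tfrac{\d}{\d t}n(t)\le\big(-4\delta/\sigma^2+3\bar\rho(t)+\kappa_0\big)n(t)+C+\int_0^t|R_\eta(t,s)|n(s)\,\d s$.

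To close this, set $\gamma:=\delta/\sigma^2-\rho_\infty>0$, take $\kappa_0=\gamma$, and fix $T_\ast$ with $\eps(t)\le\gamma/4$ for $t\ge T_\ast$. Since $n$ is continuous, for any $t$ let $t^\ast$ be the largest point in $[0,t]$ at which the running maximum $\sup_{s\le t}n(s)$ is attained; then $n(s)\le n(t^\ast)$ for all $s\le t^\ast$ and, since $n\in C^1$, $\tfrac{\d}{\d t}n(t^\ast)\ge0$. If $t^\ast\ge T_\ast$, then $\int_0^{t^\ast}|R_\eta(t^\ast,s)|n(s)\,\d s\le\bar\rho(t^\ast)n(t^\ast)$, so the inequality at $t^\ast$ gives $0\le\big(-4\delta/\sigma^2+4\bar\rho(t^\ast)+\kappa_0\big)n(t^\ast)+C\le(-3\gamma+4\eps(t^\ast))n(t^\ast)+C\le-2\gamma\,n(t^\ast)+C$, hence $n(t^\ast)\le C/(2\gamma)$; if $t^\ast<T_\ast$ then $\sup_{s\le t}n(s)\le\sup_{[0,T_\ast]}n$. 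Either way $\sup_{s\le t}n(s)\le\max\{\sup_{[0,T_\ast]}n,\,C/(2\gamma)\}$, and $t$ being arbitrary proves the bound for $\{\theta^t\}$. The identical argument applies to $\{\theta_{M,T_0}^t\}$ with $R_\eta$ replaced by $R_\eta^{(M)}$: this process agrees with $\{\theta^t\}$ on $[0,T_0]$, and for $t\ge T_0$ one has the \emph{exact} bounds $\int_0^t R_\eta^{(M)}(t-s)\,\d s\le\sum_{m=1}^M c_m^2/a_m=\mu_\eta([\iota,\iota+\sqrt M))\le\rho_\infty$ and $C_\eta^{(M)}(t,t)\le c_\eta^\tti(0)$ for all $t$, with no $\eps(t)$ error, so the resulting constant is uniform in $M$ and $T_0$.

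The main obstacle is the memory term, which a priori could amplify $n(t)$ (and $\sup_{s\le t}n(s)$) faster than the drift contracts it. What makes the argument work is that passing to the fourth power scales \emph{both} the linear-drift contraction and the memory amplification by the same factor $4$, so the needed inequality reduces to $4\rho_\infty<4\delta/\sigma^2$, i.e.\ exactly the hypothesis $c_\eta^\tti(0)-c_\eta^\tti(\infty)<\delta/\sigma^2$; the remaining effort is bookkeeping to absorb the non-exact time-translation-invariance of $R_\eta$ (the $\eps(t)$ terms) and to make the running-maximum argument rigorous given only an a priori local-in-time bound. Equivalently, one could absorb the memory kernel using a resolvent kernel $A$ solving $A'=-c_0A-r_\eta^\tti$ with $A\ge0$ (whose nonnegativity again uses $c_\eta^\tti(0)-c_\eta^\tti(\infty)<\delta/\sigma^2$), exactly as in the proof of Lemma~\ref{lem:auxiliary_V}.
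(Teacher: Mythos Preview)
Your proof is correct, and the running-maximum argument you use to close the Volterra inequality is a genuinely different route from the paper's. The paper instead reuses the resolvent-kernel machinery from Lemma~\ref{lem:auxiliary_V}: it introduces a smooth approximation $f$ of $|\cdot|$, sets $r^t=f(\bar\theta^t)+\int_0^t A(t-s)f(\bar\theta^s)\,\d s$ with $A$ solving $A'+r_\eta^\tti=-c_0A$ (exactly the alternative you mention at the end), applies It\^o to $(r^t)^4$, and obtains a scalar ODE inequality for $\E(r^t)^4$ in which the memory term has already been absorbed. Your approach is more elementary---no auxiliary process, no smoothing of $|\cdot|$---and the running-maximum step is a clean way to handle the $\int_0^t |R_\eta(t,s)|\,n(s)\,\d s$ term without a resolvent. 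The paper's approach has the advantage of packaging the closure in a differential inequality of standard form, and of reusing a construction already set up for Lemma~\ref{lem:auxiliary_V}; yours has the advantage of being self-contained and slightly shorter.

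Two minor remarks. First, your pointwise claim ``$(\theta-\theta^\ast)^3(\log g)'(\theta)\le 0$ once $|\theta|\ge r_0+|\theta^\ast|$'' is not quite right with the constant $r_0$ from Assumption~\ref{assump:prior}(a): one needs $|\theta|\ge R_g+|\theta^\ast|$ for the threshold $R_g$ beyond which $(\log g)'$ has the correct sign, which exists by the convexity-at-infinity but may exceed $r_0$. This is cosmetic and does not affect the bound. Second, in the running-maximum step you should also record the trivial case $t^\ast=0$, where $n'(t^\ast)\ge 0$ need not hold but $n(t^\ast)=n(0)$ is a fixed constant; again harmless.
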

 \begin{proof}
We prove the statement for $\{\theta^t\}_{t \geq 0}$.
Let $A:[0,\infty) \rightarrow [0,\infty)$ be defined by
\[A(0)=\frac{\delta}{\sigma^2}-c_0,\qquad
A(\tau)=A(0)e^{-c_0\tau}-\int_0^t e^{-c_0(\tau-s)}r^\tti_\eta(s)\d s\]
for a small enough constant $c_0 \in (0,\iota)$. Here, by the conditions of
Definition \ref{def:regular},
$r_\eta^\tti(s)={-}{c_\eta^\tti}'(s)=\int_\iota^\infty ae^{-as}\d\mu_\eta(a)$,
and the same argument as in the preceding proof verifies that $\inf_{\tau \in
[0,\infty)} A(\tau)$ is bounded below by a positive constant
for a sufficiently small choice of $c_0 \in (0,\iota)$ and all $\tau \geq 0$.

Let $f:\R \to [0,\infty)$ be a smooth approximation to the absolute value,
satisfying $f(x)=|x|$ for all $|x| \geq 1$, and
$1+f'(x) \cdot x \geq f(x) \geq |x|$, $|f'(x)| \leq 1$, and $|f''(x)| \leq C$
for all $x \in \R$ and an absolute constant $C>0$.
Let $\bar{\theta}^t = \theta^t - \theta^\ast$, and set
$r^t = f(\bar{\theta}^t)+\int_0^t A(t-s)f(\bar{\theta}^s)\d s$.
Then by the DMFT equation (\ref{def:dmft_langevin_cont_theta}) and It\^o's
formula,
\begin{align*}
 \d \bar{\theta}^t&= \Big[{-}\frac{\delta}{\sigma^2} \bar{\theta}^t + (\log
g)'(\theta^t) + \int_0^t R_\eta(t,s)\bar{\theta}^s \d s + u^t\Big]\d t +
\sqrt{2}\,\d b^t,\\
\d r^t &= f'(\bar{\theta}^t)\d\bar\theta^t
+f''(\bar\theta^t)\d t+\Big[A(0)f(\bar{\theta}^t) + \int_0^t
A'(t-s)f(\bar{\theta}^s)\d s\Big]\d t,\\
\d (r^t)^4 &= 4(r^t)^3\d r^t+12(r^t)^2f'(\bar\theta^t)^2\d t.
 \end{align*}
Applying $r^t \geq 0$ and
the bounds $f'(\bar\theta^t)\bar\theta^t \geq f(\bar\theta^t)-1$,
$|f'(\bar\theta^t)| \leq 1$, $|\bar\theta^s| \leq f(\bar\theta^s)$, and
$|f''(\bar\theta^t)| \leq C$ from the definition of $f(\cdot)$, this gives
\begin{align*}
\frac{\d}{\d t}\E(r^t)^4 &\leq \E\Bigg[4(r^t)^3
\Bigg({-}\frac{\delta}{\sigma^2}[f(\bar\theta^t)-1]
+f'(\bar\theta^t)(\log g)'(\theta^t)+\int_0^t |R_\eta(t,s)|f(\bar\theta^s)\d s
+|u^t|+C \\
&\hspace{1in}+A(0)f(\bar\theta^t)+\int_0^t A'(t-s)f(\bar\theta^s)\d s\Bigg)
+12(r^t)^2\Bigg].
\end{align*}
Then, using $A(0)=\delta/\sigma^2-c_0$
and $A'(t-s)+r_\eta^\tti(t-s)={-}c_0A(t-s)$ from the definition of $A(\cdot)$,
\[\frac{\d}{\d t}\E(r^t)^4 \leq \E\Bigg[4(r^t)^3
\Bigg({-}c_0r^t+\frac{\delta}{\sigma^2}
+f'(\bar\theta^t)(\log g)'(\theta^t)+\int_0^t |R_\eta(t,s)-r_\eta^\tti(t-s)|
f(\bar\theta^s)\d s +|u^t|+C\Bigg)+12(r^t)^2\Bigg].\]
When $|\bar\theta^t| \geq 1$, we must have
$f'(\bar\theta^t)=\sign(\bar\theta^t)=|\bar\theta^t|/(\theta^t-\theta^*)$. 
Recalling the function $\kappa(r)$ from (\ref{eq:kappadef}), let us bound in
this case
\[f'(\bar\theta^t)[(\log g)'(\theta^t)-(\log g)'(\theta^*)]
=-|\bar\theta^t| \cdot \frac{-(\log g)'(\theta^t)+(\log
g)'(\theta^*)}{\theta^t-\theta^*}
\leq -\kappa(|\bar\theta^t|)|\bar\theta^t| \leq \kappa_0R_0,\]
where $\kappa_0R_0$ is the deterministic upper bound for ${-}\kappa(r)r$.
For $|\bar\theta^t| \leq 1$, let us apply instead the Lipschitz bound
$|f'(\bar\theta^t)[(\log g)'(\theta^t)-(\log g)'(\theta^*)]|
\leq L$ where $L$ is the Lipschitz constant of $(\log g)'$ under Assumption
\ref{assump:prior}(a). 
We also apply $|R_\eta(t,s)-r_\eta^\tti(t-s)| \leq \eps(t)$ from Definition
\ref{def:regular}, and $\int_0^t f(\bar \theta^s)\d s
\leq r^t/\inf_{\tau \in [0,\infty)} A(\tau)$ by the definition of $r^t$,
where we recall that
$\inf_{\tau \in [0,\infty)} A(\tau)$ is bounded below by a positive constant.
Thus, for some constant $C'>0$, this yields
 \begin{align*}
\frac{\d}{\d t}\E (r^t)^4 \leq {-}4c_0 \E (r^t)^4+C'\,\E\Big[\eps(t)(r^t)^4
+(r^t)^3(1+|(\log g)'(\theta^*)|+|u^t|)+(r^t)^2\Big].
\end{align*}
Since $u^t$ is a centered Gaussian variable,
we note that $\E(u^t)^4=3[\E(u^t)^2]^2=3C(t,t)^2$ which
is bounded uniformly for all $t \geq 0$
under Definition \ref{def:regular}. Also $\E[|(\log g)'(\theta^*)|^4]$ is finite
by the Lipschitz continuity of $(\log g)'$ and finiteness of moments
of $\theta^*$ under Assumption \ref{assump:model}. Then by H\"older's inequality,
$\E[(r^t)^3(1+|(\log g)'(\theta^*)|+|u^t|)+(r^t)^2] \leq C(\E[(r^t)^4]^{3/4}
+\E[(r^t)^4]^{1/2})$ for some $C>0$. Thus, for some $C,T,R>0$ sufficiently large
depending on $C',c_0$, the above implies
  \begin{align*}
  \frac{\d}{\d t} \E (r^t)^4 \leq C\,\E(r^t)^4,
\qquad \frac{\d}{\d t} \E (r^t)^4 \leq {-}4c_0\E(r^t)^4+c_0\E(r^t)^4<0
\text{ whenever } t \geq T \text{ and } \E(r^t)^4 \geq R.
  \end{align*}
This implies that $\sup_{t\geq 0} \E (r^t)^4$ is bounded by a constant depending
only on $C,T,R$. Then $\sup_{t \geq 0} \E (\theta^t)^4$ is also bounded since
$\theta^t=\bar\theta^t+\theta^*$ and
$|\bar\theta^t| \leq f(\bar\theta^t) \leq r^t$.

The argument to bound $\E (\theta_{M,T_0})^4$ is the same upon replacing
$R_\eta(t,s)$ and $u^t$ by $R_\eta^{(M)}(t,s)$ and $u_M^t$ for $s,t \geq T_0$,
and we omit the details for brevity.
 \end{proof}

\subsubsection{Convergence of the auxiliary process}

Extending the definition (\ref{eq:thetalimitlaw}) of
$\sP_{g_*,\omega_*;g,\omega}$, let
$\sP_{g_\ast,\omega_*;g,\omega}^{\otimes 2}$
denote the law of a triple $(\theta^*,\theta,\theta')$ where $\theta^*,\theta$
are generated according to (\ref{eq:scalarchannelprocess}) defining
$\sP_{g_*,\omega_*;g,\omega}$
and $\theta'$ is a second independent copy of $\theta$ drawn from the posterior
measure conditional on $y$.

\begin{lemma}\label{lem:aux_converge}
Suppose $c_\eta^\tti(0)-c_\eta^\tti(\infty)<\delta/\sigma^2$.
Fix any $M,T_0>0$, set $\omega^{(M)} = \delta/\sigma^2 -
\sum_{m=1}^M c_m^2/a_m$ and
$\omega_\ast^{(M)} = (\omega^{(M)})^2/c_\eta^\tti(\infty)$,
and let $\{\theta_{M,T_0}^t\}_{t \geq 0}$ be the
process (\ref{eq:dmft_aux_theta}). Then for any $T,T'>0$,
\begin{align*}
W_1\Big(\sP(\theta^\ast,\theta_{M,T_0}^{T_0+T},\theta_{M,T_0}^{T_0+T+T'}),
\sP_{g_\ast,\omega_\ast^{(M)};g,\omega^{(M)}}^{\otimes 2}\Big)
\leq C\sqrt{M}(e^{-cT}+e^{-cT'})
\end{align*}
for some constants $C,c>0$ not depending on $M,T_0,T,T'$.
\end{lemma}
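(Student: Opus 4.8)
The plan is to realise $\{\theta_{M,T_0}^t\}_{t \geq T_0}$ as the $\theta$-coordinate of an honest Markov diffusion on $\R^{M+1}$, read off its reversible invariant law explicitly, and then conclude from an exponential contraction estimate for this diffusion whose \emph{rate} does not depend on $M$. Using the representation $u_M^t = z + \sum_{m=1}^M c_m\int_0^t e^{-a_m(t-s)}\sqrt2\,\d b_m^s$ of (\ref{eq:CetaMinterp}) and introducing, for $m=1,\dots,M$, the auxiliary variables $x_m^t = c_m^2\int_0^t e^{-a_m(t-s)}(\theta_{M,T_0}^s-\theta^*)\,\d s + c_m\int_0^t e^{-a_m(t-s)}\sqrt2\,\d b_m^s$, one checks directly that for $t \geq T_0$ the process $(\theta_{M,T_0}^t, x_1^t,\dots,x_M^t)$ solves the time-homogeneous system
\begin{align*}
\d\theta_{M,T_0}^t &= \Big[U(\theta_{M,T_0}^t,\theta^*) + z + \textstyle\sum_{m=1}^M x_m^t\Big]\d t + \sqrt2\,\d\tilde b^t,\\
\d x_m^t &= \big[{-}a_m x_m^t + c_m^2(\theta_{M,T_0}^t-\theta^*)\big]\d t + c_m\sqrt2\,\d b_m^t,
\end{align*}
conditionally on $(\theta^*,z)$; the memory integral $\int_0^t R_\eta^{(M)}(t-s)(\theta_{M,T_0}^s-\theta^*)\,\d s$ has been absorbed into the new coordinates, and the $x_m$'s are conditionally decoupled given $\{\theta^t\}$.

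Second, I would verify that this diffusion is reversible with respect to $\pi_{\theta^*,z}(\d\theta,\d\bm x) \propto e^{-\Phi(\theta,\bm x)}$, where, writing $\omega^{(M)} = \delta/\sigma^2 - \sum_m c_m^2/a_m$ (which is $>0$, since $\sum_m c_m^2/a_m = \mu_\eta([\iota,\iota+\sqrt M)) \leq c_\eta^\tti(0)-c_\eta^\tti(\infty) < \delta/\sigma^2$),
\[\Phi(\theta,\bm x) = \frac{\omega^{(M)}}{2}(\theta-\theta^*)^2 - \log g(\theta) - z\theta + \sum_{m=1}^M \frac{a_m}{2c_m^2}\Big(x_m - \frac{c_m^2}{a_m}(\theta-\theta^*)\Big)^2 .\]
This is just the detailed-balance identity $b = -D\nabla\Phi$ for the drift $b$ and the diagonal half-diffusion matrix $D=\diag(1,c_1^2,\dots,c_M^2)$; integrability of $e^{-\Phi}$ in $\theta$ uses $\omega^{(M)}>0$ together with the convexity-at-infinity of ${-}\log g$ from Assumption \ref{assump:prior}(a). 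Integrating out the Gaussian coordinates $\bm x$ shows the $\theta$-marginal of $\pi_{\theta^*,z}$ is $\propto g(\theta)\exp\big({-}\tfrac{\omega^{(M)}}{2}(\theta-\theta^*)^2 + z\theta\big) = \sP_{g,\omega^{(M)}}(\theta\mid y)$ with $y = \theta^* + z/\omega^{(M)}$; since $\theta^*\sim g_*$ and $z/\omega^{(M)}\sim\N(0,1/\omega_*^{(M)})$ are independent, $(\theta^*,y)$ follows the generating process (\ref{eq:scalarchannelprocess}) defining $\sP_{g_*,\omega_*^{(M)};g,\omega^{(M)}}$, and two conditionally independent copies of the diffusion started from $\pi_{\theta^*,z}$ produce, jointly with $\theta^*$, a sample from $\sP_{g_*,\omega_*^{(M)};g,\omega^{(M)}}^{\otimes2}$.

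Third comes the core step, an $M$-independent contraction for the diffusion. Given $(\theta^*,z)$, I would couple two copies $(\theta^t,\bm x^t)$, $(\tilde\theta^t,\tilde{\bm x}^t)$ by the reflection/sticky coupling of Lemma \ref{lem:auxiliary_V} on the $\theta$-Brownian motions and a synchronous coupling on the $x_m$-Brownian motions. Then $\xi^t=\theta^t-\tilde\theta^t$ and $\zeta_m^t=x_m^t-\tilde x_m^t$ satisfy $\frac{\d}{\d t}|\zeta_m^t| \leq {-}a_m|\zeta_m^t| + c_m^2|\xi^t|$ and, exactly as in Lemma \ref{lem:auxiliary_V}, a drift bound for $|\xi^t|$ of the form ${-}(\delta/\sigma^2)|\xi^t| - \kappa(|\xi^t|)|\xi^t| + \sum_m|\zeta_m^t|$. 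Running the Lyapunov functional $r^t = |\xi^t| + \sum_m w_m|\zeta_m^t|$ with weights $w_m = 1/(a_m-\gamma)$ for a small $\gamma\in(0,\iota)$, composed with the concave function $f$ of Lemma \ref{lem:auxiliary_V} to neutralise the ${-}\kappa(|\xi^t|)|\xi^t|$ term, the crucial observation is that $\sum_m w_m c_m^2 = \sum_m \tfrac{a_m}{a_m-\gamma}\mu_\eta([a_{m-1},a_m)) \leq (1+\tfrac{\gamma}{\iota-\gamma})(c_\eta^\tti(0)-c_\eta^\tti(\infty))$, which is $<\delta/\sigma^2$ for $\gamma$ small, with a gap bounded below uniformly in $M$ by the hypothesis $c_\eta^\tti(0)-c_\eta^\tti(\infty)<\delta/\sigma^2$. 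Hence the feedback of the $x$-coordinates cannot destabilise $\xi$, so $\frac{\d}{\d t}\E f(r^t) \leq {-}c\,\E f(r^t)$ for some $c>0$ independent of $M,T_0$. Together with the moment bounds $\E(\theta^{T_0})^4\leq C$ (Lemma \ref{lem:dmft_theta_uniform}) and $\E(x_m^{T_0})^2, \E_{\pi_{\theta^*,z}}x_m^2 \lesssim c_m^2/a_m = \mu_\eta([a_{m-1},a_m))$ — whence $\E r^{T_0}\leq C\sqrt M$ by Cauchy--Schwarz over the $M$ coordinates — this gives $\E|\theta^t-\tilde\theta^t| \leq C\sqrt M\, e^{-c(t-T_0)}$ whenever both copies start from states with coordinatewise second moments $\lesssim1$.

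Finally, I would assemble the bound conditionally on $(\theta^*,z)$ and average. Coupling $\{\theta_{M,T_0}^t\}$ with a stationary copy $\{\hat\theta^t\}$ (started from $\pi_{\theta^*,z}$ at time $T_0$) controls $\E|\theta_{M,T_0}^{T_0+T} - \hat\theta^{T_0+T}|$ and $\E|\theta_{M,T_0}^{T_0+T+T'} - \hat\theta^{T_0+T+T'}|$ by $C\sqrt M e^{-cT}$; coupling $\hat\theta$ on $[T_0+T,\infty)$ with a fresh, conditionally independent stationary copy $\{\check\theta^t\}$ started from $\pi_{\theta^*,z}$ at time $T_0+T$ controls $\E|\hat\theta^{T_0+T+T'} - \check\theta^{T_0+T+T'}|$ by $C\sqrt M e^{-cT'}$, while $(\theta^*,\hat\theta^{T_0+T},\check\theta^{T_0+T+T'}) \sim \sP_{g_*,\omega_*^{(M)};g,\omega^{(M)}}^{\otimes2}$ by the second step. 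The triangle inequality for $W_1$ then yields the claimed bound $C\sqrt M(e^{-cT}+e^{-cT'})$. I expect the only genuine obstacle to be the $M$-independence of the contraction rate in the third step; the weighted Lyapunov functional above, exploiting the strict inequality $c_\eta^\tti(0)-c_\eta^\tti(\infty)<\delta/\sigma^2$, is designed precisely to avoid an $M$-dependent rate while tolerating the harmless $\sqrt M$ prefactor permitted by the lemma.
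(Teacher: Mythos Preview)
Your proposal is correct and follows the paper's overall strategy: embed $\{\theta_{M,T_0}^t\}_{t\geq T_0}$ as the $\theta$-coordinate of a Markov diffusion on $\R^{M+1}$ conditional on $(\theta^*,z)$, identify the $\theta$-marginal of its stationary law as $\sP_{g,\omega^{(M)}}(\cdot\mid y)$ with $y=\theta^*+z/\omega^{(M)}$, and establish an exponential contraction with $M$-independent rate. The only substantive difference is in how the contraction is obtained. The paper scales the auxiliary variables so that the joint process is a standard Langevin diffusion $\d x^t=-\nabla H\,\d t+\sqrt{2}\,\d b^t$; it then checks that the quadratic part of $\nabla^2 H$ (the matrix $L$ with diagonal $(\delta/\sigma^2,a_1,\dots,a_M)$ and off-diagonal $-c_m$) has smallest eigenvalue bounded below uniformly in $M$ via the Schur complement $\omega^{(M)}=\delta/\sigma^2-\sum_m c_m^2/a_m>0$, and invokes \cite[Corollaries~2--3]{eberle2016reflection} directly. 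Your scaling gives a weighted Langevin with diffusion matrix $D=\diag(1,c_1^2,\dots,c_M^2)$, to which Eberle's result does not apply verbatim; you instead redo the reflection/sticky coupling argument of Lemma~\ref{lem:auxiliary_V} by hand with the weighted $\ell^1$-Lyapunov functional $r^t=|\xi^t|+\sum_m w_m|\zeta_m^t|$, and the $M$-independent rate emerges from the same Schur-complement mechanism (your bound $\sum_m w_m c_m^2<\delta/\sigma^2$). The paper's route is shorter by outsourcing to a black box; yours is self-contained and makes the parallel with Lemma~\ref{lem:auxiliary_V} explicit.
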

\begin{proof}
Let $z \sim \N(0,c_\eta^\tti(\infty))$ and let
$\{\tilde{b}^t\}_{t \geq T_0}$ and $\{b^t_m\}_{t \geq 0}$ for
$m=1,\ldots,M$ be $M+1$ standard Brownian motions. These are all independent of
each other, of $\theta^*$, and of $\{\theta^t\}_{t \in [0,T]}$.
We note that the law of $\{\theta_{M,T_0}^t\}_{t \geq 0}$ 
defined by (\ref{eq:dmft_aux_theta}) coincides with the
marginal law of $\{\theta_{M,T_0}^t\}_{t \geq 0}$ in the joint process
\begin{align}
\theta^t_{M,T_0}&=\theta^t \text{ for } t\in [0,T_0],\notag\\
\d\theta^t_{M,T_0}&=\Big[{-}\frac{\delta}{\sigma^2}(\theta^t_{M,T_0}-\theta^\ast)+(\log g)'(\theta_{M,T_0}^t)+z+\sum_{m=1}^M c_m
x^t_m\Big]\d t+\sqrt{2}\,\d \tilde{b}^t \text{ for }
t>T_0,\label{eq:thetaauxevolve}\\
\d x^t_m &= [{-}a_m x^t_m + c_m(\theta^t_{M,T_0}-\theta^\ast)]\d t
 + \sqrt{2}\,\d b^t_m \text{ for } 1\leq m\leq M,\, t \geq 0 \label{eq:vevolve}
\end{align}
with initial conditions $x_1^0=\ldots=x_M^0=0$. Indeed, 
given $\{\theta_{M,T_0}^t\}_{t \geq 0}$, the equations (\ref{eq:vevolve}) for
$\{x_m^t\}_{t \geq 0}$ are linear and have the explicit solutions
\begin{equation}\label{eq:vmexplicit}
x_m^t=c_m \int_0^t e^{-a_m(t-s)}(\theta_{M,T_0}^s-\theta^*)\d s
+\int_0^t e^{-a_m(t-s)}\sqrt{2}\,\d b_m^s.
\end{equation}
Substituting these solutions into (\ref{eq:thetaauxevolve}) gives
(\ref{eq:dmft_aux_theta}), upon identifying
$u_M^t=z+\sum_{m=1}^M c_m\int_0^t e^{-a_m(t-s)}\sqrt{2}\,\d b_m^s$.
Here $\{u_M^t\}_{t \geq 0}$ is a centered Gaussian process independent of
$\theta^*$ and $\{\tilde b^t\}_{t \geq T_0}$, with covariance kernel exactly
$C_\eta^{(M)}(t,s)$ by (\ref{eq:CetaMinterp}). Thus the marginal law of
$\{\theta_{M,T_0}^t\}_{t \geq 0}$ coincides with the definition in
(\ref{eq:dmft_aux_theta}).

For $t \geq T_0$, let $x^t=(\theta_{M,T_0}^t,x_1^t,\ldots,x_M^t)$
and $b^t=(\tilde b^t,b_1^t,\ldots,b_M^t)$.
Conditional on $\theta^*$ and $z$,
the evolution of $\{x^t\}_{t \geq T_0}$ defined by
(\ref{eq:thetaauxevolve}--\ref{eq:vevolve}) is a standard (Markovian)
Langevin diffusion given by
\[\d x^t={-}\nabla H(x^t \mid \theta^*,z)\d t+\sqrt{2}\,\d b^t\]
with Hamiltonian
\begin{align}
H(x \mid \theta^*,z)&=H(\theta,x_1,\ldots,x_M \mid \theta^*,z)\\
&=\frac{1}{2}\Big(\underbrace{\frac{\delta}{\sigma^2}-\sum_{m=1}^{M}\frac{c_m^2}{a_m}}_{=\omega^{(M)}}\Big)
(\theta-\theta^\ast)^2 - \log g(\theta) - z\,\theta+\sum_{m=1}^{M}
\frac{a_m}{2}\Big(\frac{c_m}{a_m}(\theta-\theta^\ast)-x_m\Big)^2\notag\\
&=\frac{\omega^{(M)}}{2}(\theta-\theta^*-z/\omega^{(M)})^2
-\log g(\theta)+\sum_{m=1}^{M}
\frac{a_m}{2}\Big(\frac{c_m}{a_m}(\theta-\theta^\ast)-x_m\Big)^2+\text{const.},
\label{eq:thetaauxHamiltonian}
\end{align}
for an additive constant not depending on $x=(\theta,x_1,\ldots,x_M)$.
Note that the given condition
$c_\eta^\tti(0)-c_\eta^\tti(\infty)<\delta/\sigma^2$ implies that
$\omega^{(M)}>0$ strictly.

Convergence of $\{x^t\}_{t \geq T_0}$ in Wasserstein-1 to a stationary law
then follows from the results of
\cite{eberle2016reflection}: For any $x=(\theta,x_1,\ldots,x_M)$ and
$x'=(\theta',x_1',\ldots,x_M')$, we have
\[(x-x')^\top (\nabla H(x \mid \theta^*,z)-\nabla H(x' \mid \theta^*,z))
=(\theta-\theta')({-}(\log g)'(\theta)+(\log g)'(\theta'))
+(x-x')^\top L(x-x')\]
where
\[L=\begin{pmatrix} \frac{\delta}{\sigma^2} & {-}c_1 & \ldots & {-}c_M \\
{-}c_1 & a_1 & & \\
\vdots & & \ddots & \\
{-}c_M & & & a_M \end{pmatrix}.\]
By the positivity of the Schur complement
$\omega^{(M)}=\delta/\sigma^2-\sum_{m=1}^M c_m^2/a_m
\geq \delta/\sigma^2-(c_\eta^\tti(0)-c_\eta^\tti(\infty))$
and of $a_m \geq \iota$, this
matrix $L$ is strictly positive-definite, with smallest eigenvalue bounded away
from 0 independently of $M$. Then, recalling the function
$\kappa(r)$ from (\ref{eq:kappadef}),
\[(x-x')^\top (\nabla H(x \mid \theta^*,z)-\nabla H(x' \mid \theta^*,z)) \geq
(\theta-\theta')^2\kappa(|\theta-\theta'|)+c\|x-x'\|_2^2\]
for a constant $c>0$. Recalling also
that $\kappa(r)$ is positive for all $r>R_0$
and some $R_0>0$, and considering separately the cases where $|\theta-\theta'|
\leq R_0$ and $|\theta-\theta'|>R_0$, this verifies that
\[\inf_{\|x-x'\|_2=r}
\frac{(x-x')^\top (\nabla H(x \mid \theta^*,z)-\nabla H(x' \mid
\theta^*,z))}{\|x-x'\|_2^2}>c'\]
for all $r>R_0'$ and some constants $c',R_0'>0$.
Then by \cite[Corollary 3]{eberle2016reflection}, the Langevin diffusion
$\{x^t\}_{t \geq T_0}$ has the unique stationary law
\begin{equation}\label{eq:thetaauxstationary}
\sP^\infty(x) \propto \exp(-H(x \mid \theta^*,z)).
\end{equation}
Let us write $x^\infty \sim \sP^\infty$ and
$\langle f(x^\infty) \rangle$ for a sample and Gibbs average under this
stationary law. Let us write also $W_1(\cdot)$ for the
Wasserstein-1 distance conditional on $\theta^*,z$, and
$\sP(x^{T_0+T} \mid x^{T_0}=x)$ for the conditional
law of $x^{T_0+T}$ given $\theta^*,z$ and the initial condition $x^{T_0}=x$.
Then also by \cite[Corollary 2 and 3]{eberle2016reflection},
there exist constants $C,c>0$ such that for any $T>0$,
\[W_1(\sP(x^{T_0+T} \mid x^{T_0}=x),\sP^\infty)
\leq Ce^{-cT}W_1(\delta_x,\sP^\infty)
\leq Ce^{-cT}(\|x\|_2+\langle \|x^\infty\|_2 \rangle).\]
Similarly, for any $T'>0$,
\[W_1(\sP(x^{T_0+T+T'} \mid x^{T_0+T}=x),\sP^\infty)
\leq Ce^{-cT'} (\|x\|_2+\langle \|x^\infty\|_2 \rangle).\]
Combining the two conditional couplings that attain these Wasserstein-1
bounds, and taking the average over the sample path $\{x^t\}_{t \geq T_0}$
(which we denote by $\langle f(x^t) \rangle$, still conditional on $\theta^*,z$),
\[W_1(\sP(x^{T_0+T},x^{T_0+T+T'}),(\sP^\infty)^{\otimes 2})
\leq C(e^{-cT}+e^{-cT'})(\langle\|x^{T_0}\|_2\rangle+
\langle\|x^{T_0+T}\|_2\rangle+\langle\|x^\infty\|_2\rangle)\]
where $(\sP^\infty)^{\otimes 2}$ is the law of two independent samples from
$\sP^\infty$. The explicit form (\ref{eq:vmexplicit}) for each $\{x_m^t\}_{t
\geq 0}$ implies that $\langle |x_m^t| \rangle \leq c_m\int_0^t
e^{-\iota(t-s)}(\langle |\theta_{M,T_0}^s| \rangle
+|\theta^*|)\d s+(a_m)^{-1/2}$, and hence
\[\langle \|x^t\|_2 \rangle \leq C\sqrt{M}\Big(1+|\theta^*|+
\int_0^t e^{-\iota(t-s)} \langle |\theta_{M,T_0}^s| \rangle\d s\Big)\]
for a constant $C>0$. Then, taking the full expectation over $\theta^*,z$
and applying $\E\langle |\theta_{M,T_0}^t| \rangle \leq C$ by
Lemma \ref{lem:dmft_theta_uniform}, we get $\E
\langle \|x^t\|_2 \rangle \leq C'\sqrt{M}$ for a constant $C'>0$ and all
$t \geq 0$. Then, applying this above gives
\begin{equation}\label{eq:thetaauxW1condbound}
\E W_1(\sP(x^{T_0+T},x^{T_0+T+T'}),(\sP^\infty)^{\otimes 2})
\leq C\sqrt{M}(e^{-cT}+e^{-cT'})
\end{equation}
for some (different) constants $C,c>0$.

Finally, note that the stationary law $\sP^\infty(x)$ defined by
(\ref{eq:thetaauxstationary}) with Hamiltonian (\ref{eq:thetaauxHamiltonian})
describes a joint law (conditional on $\theta^*,z$) of $(\theta,x_1,\ldots,x_M)$
where $x_m \mid \theta$ is Gaussian and independent across $m=1,\ldots,M$,
and $\theta$ has marginal law given exactly by
$\sP(\theta \mid y)$ in the Gaussian convolution
model (\ref{eq:scalarchannel}) with observation
$y=\theta^*+z/\omega^{(M)}$. Here, the noise variable $z/\omega^{(M)}$ is
Gaussian with variance
$c_\eta^\tti(\infty)/(\omega^{(M)})^2=(\omega_*^{(M)})^{-1}$,
so the joint law of $\theta^*$ and the $(\theta,\theta')$-marginals of 
the conditional law $(\sP^\infty)^{\otimes 2}$ given $(\theta^*,z)$ is precisely
$\sP_{g_*,\omega_*^{(M)};g,\omega^{(M)}}^{\otimes 2}$. Then, taking the
$(\theta,\theta')$-marginals of the coupling (conditional on $\theta^*,z$)
that attains
$W_1(\sP(x^{T_0+T},x^{T_0+T+T'}),(\sP^\infty)^{\otimes 2})$
and combining with the identity coupling of $\theta^*$, we have
\[W_1(\sP(\theta^*,\theta_{M,T_0}^{T_0+T},\theta_{M,T_0}^{T_0+T+T'}),
\sP_{g_*,\omega_*^{(M)};g,\omega^{(M)}}^{\otimes 2})
\leq W_1(\sP(x^{T_0+T},x^{T_0+T+T'}),(\sP^\infty)^{\otimes 2}).\]
Taking the full expectation over $\theta^*,z$ on both sides and applying the
bound (\ref{eq:thetaauxW1condbound}) shows the lemma.
\end{proof}

\begin{lemma}\label{lem:aux_station_approx}
Suppose $c_\eta^\tti(0)-c_\eta^\tti(\infty)<\delta/\sigma^2$. For any $M>0$,
let
\begin{align*}
\omega^{(M)}&=\delta/\sigma^2-\sum_{m=1}^M c_m^2/a_m, \quad\omega_\ast^{(M)} =
(\omega^{(M)})^2/c_\eta^\tti(\infty),\\
\omega &= \delta/\sigma^2 - (c_\eta^\tti(0)-c_\eta^\tti(\infty)),
\quad \omega_\ast = \omega^2/c_\eta^\tti(\infty).
\end{align*}
Then $\lim_{M \to \infty} W_1(\sP_{g_\ast,\omega_\ast^{(M)};
g,\omega^{(M)}}^{\otimes 2},\sP_{g_\ast,\omega_\ast; g, \omega}^{\otimes 2})=0$.
\end{lemma}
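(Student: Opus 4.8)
The plan is to reduce the lemma to two facts: that the scalar parameters converge, $(\omega^{(M)},\omega_\ast^{(M)})\to(\omega,\omega_\ast)$ as $M\to\infty$, and that the map $(\omega,\omega_\ast)\mapsto\sP_{g_\ast,\omega_\ast;g,\omega}^{\otimes 2}$ is continuous in the Wasserstein-$1$ metric at the point $(\omega,\omega_\ast)$, where $\omega>0$ by the standing assumption $c_\eta^\tti(0)-c_\eta^\tti(\infty)<\delta/\sigma^2$.

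For the first fact, recall from (\ref{eq:a_grid}) that $c_m^2/a_m=\mu_\eta([a_{m-1},a_m))$ with $a_0=\iota$ and $a_M=\iota+\sqrt M$, so $\sum_{m=1}^M c_m^2/a_m=\mu_\eta\big([\iota,\iota+\sqrt M)\big)$. Since $\mu_\eta$ is a finite measure on $[\iota,\infty)$, continuity from below gives $\sum_{m=1}^M c_m^2/a_m\uparrow\mu_\eta([\iota,\infty))$, and evaluating (\ref{eq:cttiforms}) at $\tau=0$ identifies $\mu_\eta([\iota,\infty))=c_\eta^\tti(0)-c_\eta^\tti(\infty)$. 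Hence $\omega^{(M)}=\delta/\sigma^2-\sum_{m=1}^M c_m^2/a_m$ decreases to $\delta/\sigma^2-(c_\eta^\tti(0)-c_\eta^\tti(\infty))=\omega>0$, with $\omega^{(M)}\in[\omega,\delta/\sigma^2]$ for every $M$, and therefore $\omega_\ast^{(M)}=(\omega^{(M)})^2/c_\eta^\tti(\infty)\to\omega^2/c_\eta^\tti(\infty)=\omega_\ast$. (When $c_\eta^\tti(\infty)=0$ one reads $\omega_\ast=\omega_\ast^{(M)}=+\infty$ and $\N(0,\omega_\ast^{-1})$ as $\delta_0$, and the argument below is unchanged.)

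For the second fact I would exhibit an explicit coupling. Take $\theta^\ast\sim g_\ast$ and $\zeta\sim\N(0,1)$ independent and common to both laws, and set $y^{(M)}=\theta^\ast+\zeta/\sqrt{\omega_\ast^{(M)}}$, $y=\theta^\ast+\zeta/\sqrt{\omega_\ast}$, so that $(\omega^{(M)},y^{(M)})\to(\omega,y)$ almost surely. Conditionally on $(\theta^\ast,\zeta)$, draw $(\theta,\theta^{(M)})$ and, independently, $(\theta',\theta'^{(M)})$, each from a Wasserstein-$1$ optimal coupling of $\sP_{g,\omega}(\,\cdot\mid y)$ and $\sP_{g,\omega^{(M)}}(\,\cdot\mid y^{(M)})$. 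This is a coupling of $\sP_{g_\ast,\omega_\ast;g,\omega}^{\otimes 2}$ and $\sP_{g_\ast,\omega_\ast^{(M)};g,\omega^{(M)}}^{\otimes 2}$ in which the $\theta^\ast$-coordinate is matched exactly, so
\[W_1\big(\sP_{g_\ast,\omega_\ast^{(M)};g,\omega^{(M)}}^{\otimes 2},\ \sP_{g_\ast,\omega_\ast;g,\omega}^{\otimes 2}\big)\ \leq\ 2\,\E\Big[W_1\big(\sP_{g,\omega}(\,\cdot\mid y),\ \sP_{g,\omega^{(M)}}(\,\cdot\mid y^{(M)})\big)\Big],\]
and it remains to send the right side to zero. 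Fixing a realization of $(\theta^\ast,\zeta)$, the unnormalized posterior densities $\theta\mapsto\exp\!\big({-}\tfrac{\omega^{(M)}}{2}(y^{(M)}-\theta)^2\big)g(\theta)$ are bounded by the fixed integrable function $g(\theta)$ and converge pointwise to the density with $(\omega,y)$; dominated convergence makes the normalizing constants converge to a positive limit, Scheff\'e's lemma then gives total-variation convergence of the normalized posteriors, and finiteness of $\int|\theta|\,g(\theta)\,\d\theta$ (a consequence of Assumption \ref{assump:prior}(a)) supplies the uniform integrability needed to upgrade this to $W_1\big(\sP_{g,\omega}(\,\cdot\mid y),\sP_{g,\omega^{(M)}}(\,\cdot\mid y^{(M)})\big)\to0$ for that realization.

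Finally, to pass the outer expectation through this limit via dominated convergence, I would bound $W_1\big(\sP_{g,\omega}(\,\cdot\mid y),\sP_{g,\omega^{(M)}}(\,\cdot\mid y^{(M)})\big)$ by the sum of the two posterior first moments. The convexity-at-infinity of $-\log g$ from Assumption \ref{assump:prior}(a) (the condition ${-}(\log g)''(\theta)\geq c_0$ for $|\theta|\geq r_0$) implies that $\sP_{g,\omega}(\,\cdot\mid y)$ satisfies $\E_{\sP_{g,\omega}(\cdot\mid y)}|\theta|\leq C(1+|y|)$ with $C$ uniform in $M$ and $y$: the posterior concentrates, with $y$-independent spread (say by a Holley-Stroock perturbation of a uniformly-strongly-log-concave measure), around a mean located via the stationarity identity $\E_{\sP_{g,\omega}(\cdot\mid y)}[\omega(\theta-y)-(\log g)'(\theta)]=0$ at distance $O(1+|y|)$ from the origin. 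Hence $W_1(\,\cdot\,)\leq C(1+|y|+|y^{(M)}|)\leq C'(1+|\theta^\ast|+|\zeta|)$ for $M$ large, an integrable envelope in $(\theta^\ast,\zeta)$ since $\theta^\ast$ has finite moments under (\ref{eq:LSIprior}); dominated convergence then gives $\E[W_1(\,\cdot\,)]\to0$, which completes the proof. I expect the only genuine work to be this uniform-in-$(\omega,y)$ moment/concentration estimate for the scalar-channel posterior — it rests essentially on the convexity-at-infinity of $-\log g$ — whereas the parameter convergence and the Scheff\'e argument are routine.
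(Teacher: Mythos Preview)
Your proof is correct, but it takes a different route from the paper's. The paper works through the Kantorovich--Rubinstein duality: it writes $\E_{g_*,\omega_*}\langle f(\theta^*,\theta,\theta')\rangle_{g,\omega}$ explicitly as a function of $(\omega,\omega_*)$ (parametrizing $y=\theta^*+\omega_*^{-1/2}z$ with $z\sim\N(0,1)$), differentiates in $(\omega,\omega_*)$ to obtain posterior covariances, and then uses the $1$-Lipschitz bound on $f$ together with Cauchy--Schwarz to show these derivatives are bounded uniformly over $f$. This yields local Lipschitz continuity of $(\omega,\omega_*)\mapsto\E_{g_*,\omega_*}\langle f\rangle_{g,\omega}$ uniformly over the dual class, from which $W_1\to 0$ follows directly from $(\omega^{(M)},\omega_*^{(M)})\to(\omega,\omega_*)$. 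Your approach instead builds an explicit coupling through a common $(\theta^*,\zeta)$, reduces to pointwise $W_1$-convergence of the scalar posteriors via Scheff\'e plus uniform integrability, and then closes with dominated convergence using the uniform moment bound $\E_{\sP_{g,\omega}(\cdot\mid y)}|\theta|\leq C(1+|y|)$. Both arguments ultimately rest on the same moment control for the scalar-channel posterior (which in the paper's framework is Lemma~\ref{lemma:univariateLSI}); the paper's packaging as a derivative bound is somewhat shorter and avoids the separate Scheff\'e and domination steps, while your coupling argument is more constructive and makes the dependence on Assumption~\ref{assump:prior}(a) explicit. Your identification of the posterior moment estimate as the only substantive step is accurate, and your sketch of it via the LSI/Holley--Stroock route plus the stationarity identity is sound.
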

\begin{proof}
Let $(\theta^*,\theta,\theta') \sim \sP_{g_*,\omega_*;g,\omega}^{\otimes 2}$,
i.e.\ $\theta,\theta'$ are two independent draws from the posterior law
$\sP_{g,\omega}(\theta \mid y)$ in the scalar Gaussian convolution model
(\ref{eq:scalarchannel}) where 
$y=\theta^*+{\omega^*}^{-1/2}z$ and $z \sim \N(0,1)$.
Let $\langle \cdot \rangle_{g,\omega}$
be average over $\theta,\theta'$ conditional on
$\theta^*,z$, and let $\mathcal{F}$ be the class of 1-Lipschitz functions
$f(\theta^*,\theta,\theta')$. Then, for any $f \in \mathcal{F}$,
\[\E_{g_*,\omega_*} \langle f(\theta^*,\theta,\theta') \rangle_{g,\omega}
=\E \frac{\int f(\theta^*,\theta,\theta')
\exp(-\frac{\omega}{2}[(\theta^*+{\omega^*}^{-1/2}z-\theta)^2
+(\theta^*+{\omega^*}^{-1/2}z-\theta')^2])g(\theta)g(\theta')\d(\theta,\theta')}
{\int \exp(-\frac{\omega}{2}[(\theta^*+{\omega^*}^{-1/2}z-\theta)^2
+(\theta^*+{\omega^*}^{-1/2}z-\theta')^2])g(\theta)g(\theta')\d(\theta,\theta')}\]
where $\E$ on the right side is over $\theta^* \sim g_*$ and $z \sim \N(0,1)$.
Writing $\langle \cdot \rangle$ for $\langle \cdot \rangle_{g,\omega}$ and
$\kappa_2$ for its associated posterior covariance,
the above is continuously-differentiable in $(\omega,\omega^*)$ with
\begin{align*}
\partial_\omega \E \langle f(\theta^*,\theta,\theta') \rangle&=\E
\Big[\kappa_2\Big(f(\theta^*,\theta,\theta'),{-}\tfrac{1}{2}[(\theta^*+{\omega^*}^{-1/2}z-\theta)^2
+(\theta^*+{\omega^*}^{-1/2}z-\theta')^2]\Big)\Big]\\
\partial_{\omega^*} \E \langle f(\theta^*,\theta,\theta') \rangle&=\E
\Big[\kappa_2\Big(f(\theta^*,\theta,\theta'),\tfrac{\omega z}{\omega_*^{3/2}}[(\theta^*+{\omega^*}^{-1/2}z-\theta)+(\theta^*+{\omega^*}^{-1/2}z-\theta')]\Big)\Big]
\end{align*}
By the 1-Lipschitz bound for $f$ and the identity $\Var
X=\frac{1}{2}\E[(X-X')^2]$ where $X'$ is an independent copy of $X$, we have
$\kappa_2(f(\theta^*,\theta,\theta'),f(\theta^*,\theta,\theta'))
\leq C(\kappa_2(\theta,\theta)+\kappa_2(\theta',\theta'))$ for an absolute
constant $C>0$. Then, applying Cauchy-Schwarz to $\kappa_2(\cdot)$ above,
we get that $(\omega,\omega_*) \mapsto
\E_{g_*,\omega_*} \langle f(\theta^*,\theta,\theta') \rangle_{g,\omega}$
is locally Lipschitz-continuous uniformly over $f \in \mathcal{F}$. Since
$\lim_{M \to \infty} \sum_{m=1}^M c_m^2/a_m=
\mu_\eta([\iota,\infty))=c_\eta^\tti(0)-c_\eta^\tti(\infty)$,
we have $\lim_{M \to \infty} (\omega^{(M)},\omega_*^{(M)})=(\omega,\omega_*)$.
Then this local Lipschitz continuity implies as desired
\[\lim_{M \to \infty}
W_1(\sP_{g_\ast,\omega_\ast^{(M)}; g,\omega^{(M)}}^{\otimes 2},
\sP_{g_\ast,\omega_\ast; g, \omega}^{\otimes 2})
=\lim_{M \to \infty} \sup_{f \in \mathcal{F}}
\Big|\E_{g_*,\omega_*} \langle f(\theta^*,\theta,\theta') \rangle_{g,\omega}
-\E_{g_*,\omega_*^{(M)}} \langle f(\theta^*,\theta,\theta')
\rangle_{g,\omega^{(M)}}\Big|=0.\]
\end{proof}

We now complete the proof of Lemma \ref{lem:theta_replica_eq}.

\begin{proof}[Proof of Lemma \ref{lem:theta_replica_eq}]
By Lemmas \ref{lem:auxiliary_V}, \ref{lem:aux_converge}, and
\ref{lem:aux_station_approx}, for any $M,T_0,T,T'>0$,
\begin{align*}
W_1(\sP(\theta^*,\theta^{T_0+T},\theta^{T_0+T+T'}),\sP_{g_\ast,\omega_*;g,\omega}^{\otimes
2}) \leq \eps(M)+2\sqrt{T+T'}\,\eps(T_0)+C\sqrt{M}(e^{-cT}+e^{-cT'}).
\end{align*}
Setting $T=T'=t$, choosing $T_0 \equiv T_0(t)$ so that
$\lim_{t \to \infty} T_0(t)=\infty$ and
$\lim_{t \to \infty} \sqrt{2t}\,\eps(T_0(t))=0$, and taking
$t \to \infty$ followed by $M \to \infty$, this shows
\[\lim_{t \to \infty}
W_1(\sP(\theta^*,\theta^{T_0(t)+t},\theta^{T_0(t)+2t}),\sP_{g_\ast,\omega_*;g,\omega}^{\otimes
2})=0.\]
In particular, we have the weak convergence in distribution of
$(\theta^*,\theta^{T_0(t)+t},\theta^{T_0(t)+2t})$ to
$\sP_{g_\ast,\omega_*;g,\omega}^{\otimes 2}$.
Lemma \ref{lem:dmft_theta_uniform} implies that
$(\theta^*,\theta^{T_0(t)+t},\theta^{T_0(t)+2t})$ is uniformly bounded in $L^4$
and hence uniformly integrable in $L^2$, so this implies
\begin{equation}\label{eq:twopointW2convergence}
\lim_{t \to \infty}
W_2(\sP(\theta^*,\theta^{T_0(t)+t},\theta^{T_0(t)+2t}),\sP_{g_\ast,\omega_*;g,\omega}^{\otimes
2})=0.
\end{equation}
Then, under Definition \ref{def:regular} and by definition of the law
$\sP_{g_*,\omega_*;g,\omega}^{\otimes 2}$, we have as desired
\begin{align*}
c_\theta^{\tti}(0) &= \lim_{t\rightarrow\infty} C_\theta(T_0(t)+t,T_0(t)+t)
=\lim_{t\rightarrow\infty} \E[(\theta^{T_0(t)+t})^2] = \E_{g_\ast,
\omega_\ast} \langle \theta^2\rangle_{g,\omega},\\
c_\theta^\tti(\infty) &= \lim_{t\rightarrow\infty} C_\theta(T_0(t)+t,T_0(t)+2t)
=\lim_{t\rightarrow\infty}
\E[\theta^{T_0(t)+t}\theta^{T_0(t)+2t}] =
\E_{g_\ast, \omega_\ast} \langle \theta\rangle_{g,\omega}^2,\\
c_\theta(\ast) &= \lim_{t\rightarrow\infty} C_\theta(T_0(t)+t,*)
=\lim_{t\rightarrow\infty} \E[\theta^{T_0(t)+t}\theta^\ast] =
\E_{g_\ast, \omega_\ast}[\langle \theta\rangle_{g,\omega}\theta^\ast].
\end{align*}
\end{proof}

\subsection{Analysis of $\eta$-equation}

We next derive from an analysis of the evolution
(\ref{def:dmft_langevin_cont_eta}) for $\{\eta^t\}_{t \geq 0}$
a representation of
$c_\eta^\tti(0),c_\eta^\tti(\infty)$ in terms of
$c_\theta^\tti(0),c_\theta^\tti(\infty),c_\theta(*)$.

\begin{lemma}\label{lem:eta_replica_eq}
It holds that
\begin{align}
c_\eta^{\tti}(0) &= \frac{\delta}{\sigma^4}\bigg[\frac{\E{\theta^*}^2 +
\sigma^2 + c_\theta^\tti(\infty) - 2c_\theta(\ast)}{\big(1+
\sigma^{-2}(c_\theta^{\tti}(0)-c_\theta^{\tti}(\infty))\big)^2} +
\frac{c_\theta^{\tti}(0)-c_\theta^{\tti}(\infty)}{1+\sigma^{-2}(c_\theta^{\tti}(0)-c_\theta^\tti(\infty))}\bigg],\label{eq:eta_replica_1}\\
c_\eta^\tti(\infty) &= \frac{\delta}{\sigma^4}\frac{\E{\theta^*}^2 +
\sigma^2 + c_\theta^\tti(\infty) -
2c_\theta(\ast)}{(1+\sigma^{-2}(c_\theta^{\tti}(0)-c_\theta^\tti(\infty)))^2}
\label{eq:eta_replica_2},
\end{align}
and in particular $c_\eta^\tti(0)-c_\eta^\tti(\infty)<\delta/\sigma^2$.
\end{lemma}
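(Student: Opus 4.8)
The plan is as follows. Since the $\eta$-equation (\ref{def:dmft_langevin_cont_eta}) is linear in $\{\eta^t\}$, the process $\zeta^t:=\eta^t+w^*-\eps$ solves the linear Volterra equation
\[
\zeta^t+\frac{1}{\sigma^2}\int_0^t R_\theta(t,s)\,\zeta^s\,\d s=\xi^t,
\qquad \xi^t:=w^*-\eps-w^t,
\]
whose Gaussian forcing has covariance $\E[\xi^t\xi^s]=\E[(\theta^*)^2]+\sigma^2-C_\theta(t,*)-C_\theta(s,*)+C_\theta(t,s)$, so that $C_\eta(t,s)=\tfrac{\delta}{\sigma^4}\E[\zeta^t\zeta^s]$ is an explicit bilinear functional of $R_\theta$, $C_\theta$, $C_\theta(\cdot,*)$, $\E(\theta^*)^2$ and $\sigma^2$. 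I will (i) pass as $t\to\infty$ to the stationary linear problem obtained by substituting the TTI limits $r_\theta^\tti,c_\theta^\tti,c_\theta(*)$ of Definition \ref{def:regular}, and (ii) evaluate the stationary second moments of $\zeta$ using the fluctuation–dissipation relation $r_\theta^\tti=-{c_\theta^\tti}'$.

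For step (i) I will mirror the machinery already set up for the $\theta$-equation. Discretizing $\mu_\theta$ on the grid (\ref{eq:a_grid}) gives $R_\theta^{(M)}(\tau)=\sum_{m=1}^M c_m^2e^{-a_m\tau}$ with $c_m^2/a_m=\mu_\theta([a_{m-1},a_m))$ and an associated forcing $\xi_M^t=w^*-\eps-w_M^t$, where $w_M^t$ has the corresponding covariance kernel and is coupled to $w^t$ exactly as in Lemma \ref{lem:GP_couple}. Comparing the solution $\zeta^t$ with the solution $\zeta_M^t$ of the Volterra equation driven by $R_\theta^{(M)}$ and $\xi_M$ is easier than the $\theta$-case for two reasons: the equation is linear, so a synchronous coupling and a Grönwall argument replace the reflection coupling of Lemma \ref{lem:auxiliary_V}; and the Volterra operator $I+\sigma^{-2}R_\theta^{(M)}\ast\,$ is \emph{unconditionally} stable, its symbol $1+\sigma^{-2}\sum_m c_m^2/(a_m+s)$ having real part $\ge 1$ for $\Re s\ge 0$, so its resolvent kernel lies in $L^1$ uniformly in $M$. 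This yields a uniform-in-time comparison bound $\sup_{t\in[0,T_0+T]}\E|\zeta^t-\zeta_M^t|\le\eps(M)+\sqrt{T}\,\eps(T_0)$ — using $\int_0^t|R_\theta(t,s)-R_\theta^{(M)}(t-s)|\,\d s\le\eps(t)+\eps(M)$ as in the proof of Lemma \ref{lem:auxiliary_V} — and guarantees that $\zeta_M^t$ has a stationary limit as $t\to\infty$.

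Reducing to $\zeta_M$, I introduce the auxiliary processes $p_m^t=\int_0^t e^{-a_m(t-s)}\zeta_M^s\,\d s$ and realize $w_M^t=z+\sum_m (c_m/\sqrt{a_m})y_m^t$ with $y_m$ stationary $\mathrm{OU}(a_m)$ processes and $z\sim\N(0,c_\theta^\tti(\infty))$, $\E[zw^*]=c_\theta(*)$; then $(\zeta_M^t,p_1^t,\dots,p_M^t,y_1^t,\dots,y_M^t)$ is a finite-dimensional linear Gaussian system with stable drift, converging to a stationary Gaussian law. Splitting off the frozen part, $\zeta_M^\infty=\zeta_{0,M}+\tilde\zeta_M^\infty$ with $\zeta_{0,M}=(w^*-\eps-z)\big/\big(1+\sigma^{-2}\sum_m c_m^2/a_m\big)$; since $\E[(w^*-\eps-z)^2]=\E(\theta^*)^2+\sigma^2+c_\theta^\tti(\infty)-2c_\theta(*)$ and $\sum_m c_m^2/a_m\to\mu_\theta([\iota,\infty))=c_\theta^\tti(0)-c_\theta^\tti(\infty)$, the frozen component produces exactly the right side of (\ref{eq:eta_replica_2}) in the limits $t\to\infty$, $M\to\infty$. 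For the fluctuating part I use Parseval: $\E[(\tilde\zeta_M^\infty)^2]=\int|H_M(i\lambda)|^2S_{\tilde w}^{(M)}(\lambda)\tfrac{\d\lambda}{2\pi}$ with $H_M(s)=\sigma^2/(\sigma^2+g_M(s))$, $g_M(s)=\sum_m c_m^2/(a_m+s)$, and $S_{\tilde w}^{(M)}(\lambda)=\sum_m 2c_m^2/(a_m^2+\lambda^2)$. The fluctuation–dissipation relation forces $g_M(-i\lambda)-g_M(i\lambda)=i\lambda\,S_{\tilde w}^{(M)}(\lambda)$, which makes the integrand telescope, $|H_M(i\lambda)|^2S_{\tilde w}^{(M)}(\lambda)=\sigma^2\big(H_M(i\lambda)-H_M(-i\lambda)\big)/(i\lambda)$; since $H_M-1$ is analytic and vanishes at infinity on $\Re s\ge 0$ (again because $\Re g_M\ge 0$ there), closing the contour in the right half-plane gives $\int|H_M|^2S_{\tilde w}^{(M)}\tfrac{\d\lambda}{2\pi}=\sigma^2\big(1-H_M(0)\big)=\sigma^2\sum_m c_m^2/a_m\big/\big(\sigma^2+\sum_m c_m^2/a_m\big)$. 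Letting $M\to\infty$ gives $\tfrac{\delta}{\sigma^4}\E[(\tilde\zeta^\infty)^2]=\tfrac{\delta}{\sigma^4}\cdot\tfrac{c_\theta^\tti(0)-c_\theta^\tti(\infty)}{1+\sigma^{-2}(c_\theta^\tti(0)-c_\theta^\tti(\infty))}$, which combined with the frozen part yields (\ref{eq:eta_replica_1}); subtracting (\ref{eq:eta_replica_2}) and using $c_\theta^\tti(0)\ge c_\theta^\tti(\infty)$ (as $\mu_\theta\ge 0$) gives $c_\eta^\tti(0)-c_\eta^\tti(\infty)=\tfrac{\delta}{\sigma^2}\cdot\tfrac{c_\theta^\tti(0)-c_\theta^\tti(\infty)}{\sigma^2+c_\theta^\tti(0)-c_\theta^\tti(\infty)}<\tfrac{\delta}{\sigma^2}$.

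I expect step (i) to be the main obstacle: making the passage from the approximately-TTI data to the exact stationary linear problem rigorous, in particular the uniform-in-$M$ and uniform-in-time resolvent bound for the $\eta$-Volterra operator and the resulting convergence of $C_\eta(t,s)$ along $t\to\infty$, together with the bookkeeping of initial-transient terms that decay but must be tracked. The spectral identity in step (ii) is then essentially mechanical — for finite $M$ it is a rational-function integral evaluable by residues, and the fluctuation–dissipation relation is precisely what collapses it.
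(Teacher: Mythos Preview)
Your proposal is correct and shares the paper's skeleton---discretize $\mu_\theta$, compare $\zeta^t$ to an auxiliary $\zeta_M^t$, compute stationary moments, take $M\to\infty$---but differs in execution on both steps. For the comparison, the paper does not invoke Volterra resolvent theory; it applies a plain Gr\"onwall bound yielding $\sup_{t\in[T_0,T_0+T]}\E(\zeta^t-\zeta_M^t)^2\le Ce^{CT}(\eps(M)+\sqrt{T}\,\eps(T_0))$ and absorbs the $e^{CT}$ factor by taking $M\to\infty$ \emph{first} (killing $\eps(M)$) and then choosing $T_0(t)\to\infty$ so that $e^{2Ct}\sqrt{2t}\,\eps(T_0(t))\to 0$; your Paley--Wiener route would eliminate the exponential factor but at the cost of citing an $L^1$ resolvent theorem, so the paper's approach is cruder but self-contained. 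For the stationary computation, the paper also embeds $\zeta_M$ in a finite-dimensional linear system but then writes the explicit matrix-exponential solution and reads off the two-time covariance $\E[\zeta_M^{T_0+T}\zeta_M^{T_0+T+T'}]$ directly in terms of $\Lambda=\operatorname{diag}(a_m)+cc^\top/\sigma^2$, reducing via Sherman--Morrison to $c^\top\Lambda^{-1}c=\frac{\sum_m c_m^2/a_m}{1+\sigma^{-2}\sum_m c_m^2/a_m}$, which is exactly your $\sigma^2(1-H_M(0))$; your Parseval/contour route is more illuminating in that it shows the FDT relation $r_\theta^\tti=-{c_\theta^\tti}'$ is precisely what collapses the spectral integral, whereas in the paper FDT enters only implicitly through the common $\mu_\theta$-parametrization of response and forcing covariance. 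One small bookkeeping point: for your frozen part to give $\sigma_Z^2=\E(\theta^*)^2+\sigma^2+c_\theta^\tti(\infty)-2c_\theta(*)$ you need $\E[w^*z]=c_\theta(*)$, which must be built into the coupling construction; the paper sidesteps this by working directly with the combined forcing $v^t=w^*-\eps-w^t$ and its covariance kernel.
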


The argument is similar to the analysis of $\{\theta^t\}_{t \geq 0}$, where we
may approximate the dynamics of $\{\eta^t\}_{t \geq 0}$ at large times by a
Markovian joint evolution of a system $(\eta^t,x_1^t,\ldots,x_M^t)$. Our
argument here is simpler than before,
as the dynamics of $(\eta^t,x_1^t,\ldots,x_M^t)$ will
be linear, from which we may explicitly analyze the convergence of $\eta^t$
and show that it is independent of $M$; thus we will apply a simple Gronwall
argument to bound the propagation of the discretization error $\eps(M)$ over
time.

\subsubsection{Comparison with an auxiliary process}
We again fix a positive integer $M$, and define
$\{a_m\}_{m=0}^M$ and $\{c_m\}_{m=1}^M$ by
\[a_m=\iota+\frac{m}{\sqrt{M}} \text{ for } m=0,\ldots,M,
\qquad \frac{c_m^2}{a_m}=\mu_\theta([a_{m-1}, a_m))\]
with $\mu_\theta$ now instead of $\mu_\eta$.
For convenience, let us introduce $\xi^t=\eta^t+w^\ast-\eps$
and $v^t={-}w^t+w^*-\eps$, so the
DMFT equation (\ref{def:dmft_langevin_cont_eta}) for $\{\eta^t\}_{t \geq 0}$ is
equivalently
\begin{equation}\label{eq:dmft_xit}
\xi^t=-\frac{1}{\sigma^2}\int_0^t R_\theta(t,s)\xi^s\d s+v^t.
\end{equation}
Here, $\{v^t\}_{t \geq 0}$ is a centered Gaussian process with covariance
$\E[v^tv^s]=C_\theta(t,s)-C_\theta(t,*)-C_\theta(s,*)+\E(\theta^*)^2+\sigma^2$. We set
\[R_\theta^{(M)}(\tau)=\sum_{m=1}^M c_m^2 e^{-a_m\tau},
\qquad C_\theta^{(M)}(t,s)=\sum_{m=1}^M \frac{c_m^2}{a_m}
(e^{-a_m|t-s|}-e^{-a_m(t+s)})+c_\theta^\tti(\infty)\]
and define an auxiliary process $\{\xi_{M,T_0}^t\}_{t \geq 0}$ by
\begin{align}
\xi^t_{M,T_0} &= \xi^t \text{ for } t \in [0,T_0)\notag\\
\xi^t_{M,T_0} &= -\frac{1}{\sigma^2}\int_0^t R_\theta^{(M)}(t-s)\xi_{M,T_0}^s
\d s+v_M^t \text{ for } t \geq T_0\label{eq:xiaux}
\end{align}
where $\{v_M^t\}_{t \geq 0}$ is a centered Gaussian process with covariance
$\E[v_M^tv_M^s]=C_\theta^{(M)}(t,s)-2c_\theta(*)+\E(\theta^*)^2+\sigma^2$,
defined in the probability space of $\{\xi^t\}_{t \geq 0}$.
(We check in the proof of Lemma \ref{lem:eta_aux_converge} below that this is
indeed a positive-semidefinite covariance kernel.) We note that the process
$\{\xi_{M,T_0}^t\}_{t \geq 0}$ may be discontinuous at $T_0$; this is
inconsequential for our subsequent analysis.

\begin{lemma}\label{lem:eta_aux_approx}
For any $M,T_0,T>0$, there exists a coupling of $\{\xi^t\}_{t \geq 0}$
and $\{\xi^t_{M,T_0}\}_{t \geq 0}$ such that
\[\sup_{t\in[0,T_0+T]} \E(\xi^t-\xi^t_{M,T_0})^2
\leq Ce^{CT}(\eps(M)+\sqrt{T}\,\eps(T_0))\]
where $\eps(M)$ does not depend on $T_0,T$ and
$\eps(T_0)$ does not depend on $M,T$, and
$\lim_{M \to \infty} \eps(M)=0$ and $\lim_{T_0 \to \infty} \eps(T_0)=0$.
\end{lemma}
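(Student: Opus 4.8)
The plan is to exploit the \emph{linearity} of both Volterra equations \eqref{eq:dmft_xit} and \eqref{eq:xiaux}, which makes this much simpler than the corresponding comparison for the $\theta$-equation (Lemma \ref{lem:auxiliary_V}). First I would construct the coupling of the driving Gaussian processes $\{v^t\}$ and $\{v_M^t\}$ on the window $[T_0, T_0+T]$, exactly paralleling Lemma \ref{lem:GP_couple}: one checks that $C_\theta^{(M)}(t,s) - 2c_\theta(*) + \E(\theta^*)^2 + \sigma^2$ is a valid covariance kernel (its $M\to\infty$ limit is controlled by $\mu_\theta$, and the approximation \eqref{eq:Cthetaapproxinvariant}--\eqref{eq:Cthetastarlim} gives closeness to $C_\theta(t,s)-C_\theta(t,*)-C_\theta(s,*)+\E(\theta^*)^2+\sigma^2$ on $[T_0,T_0+T]$ up to $\eps(T_0)$), and then invoke the general Gaussian coupling lemma (Lemma \ref{lem:couple_general_cov}) to obtain a coupling with $\sup_{t\in[T_0,T_0+T]}\E(v^t - v_M^t)^2 \leq \eps(M) + \sqrt{T}\,\eps(T_0)$. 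On $[0,T_0)$ set $\xi^t_{M,T_0}=\xi^t$, so the two processes agree there.

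Next I would set $\zeta^t = \xi^t - \xi^t_{M,T_0}$ and subtract the two Volterra equations. For $t \geq T_0$ this gives
\begin{align*}
\zeta^t = -\frac{1}{\sigma^2}\int_0^t R_\theta^{(M)}(t-s)\,\zeta^s\,\d s
- \frac{1}{\sigma^2}\int_0^t \big(R_\theta(t,s) - R_\theta^{(M)}(t-s)\big)\xi^s\,\d s
+ (v^t - v_M^t),
\end{align*}
while $\zeta^t = 0$ for $t < T_0$. Squaring, taking expectations, and applying Cauchy--Schwarz, the middle forcing term is controlled using $\sup_s \E(\xi^s)^2 \leq C$ (from Definition \ref{def:regular}, since $\E(\xi^s)^2 = C_\theta(s,s)-2C_\theta(s,*)+\E(\theta^*)^2+\sigma^2$ is bounded) together with $\int_0^t |R_\theta(t,s)-r_\theta^\tti(t-s)|\d s \leq \eps(t)$ from \eqref{eq:Rthetaapproxinvariant} and the estimate $\int_0^\infty |r_\theta^\tti(\tau) - R_\theta^{(M)}(\tau)|\d\tau \leq \eps(M)$, proved exactly as the analogous bound for $r_\eta^\tti$ in the proof of Lemma \ref{lem:auxiliary_V} (using $|a e^{-a\tau} - a_m e^{-a_m\tau}| \leq (2/\sqrt M)e^{-a_{m-1}\tau/2}$ and finiteness of $\mu_\theta$). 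The first term is bounded using that $R_\theta^{(M)}$ has uniformly bounded $L^1$ norm, $\int_0^\infty R_\theta^{(M)}(\tau)\d\tau = \sum_m c_m^2/a_m \leq \mu_\theta([\iota,\infty)) < \infty$: by Cauchy--Schwarz, $\E\big(\int_0^t R_\theta^{(M)}(t-s)\zeta^s\,\d s\big)^2 \leq \|R_\theta^{(M)}\|_{L^1}\int_0^t R_\theta^{(M)}(t-s)\,\E(\zeta^s)^2\,\d s \leq C\int_{T_0}^t \E(\zeta^s)^2\,\d s$. Writing $\phi(t) = \E(\zeta^t)^2$, this yields $\phi(t) \leq C\int_{T_0}^t \phi(s)\,\d s + C\big(\eps(M) + \sqrt T\,\eps(T_0)\big)$ for $t\in[T_0,T_0+T]$, with $\phi \equiv 0$ on $[0,T_0)$.

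Finally I would apply Gr\"onwall's inequality to this integral inequality, obtaining $\sup_{t\in[T_0,T_0+T]}\phi(t) \leq Ce^{CT}\big(\eps(M) + \sqrt T\,\eps(T_0)\big)$, and since $\phi = 0$ on $[0,T_0)$ this extends trivially to $\sup_{t\in[0,T_0+T]}\phi(t)$, which is the claim (after relabeling constants; the constant $C$ depends only on $\sigma^2$, $\mu_\theta([\iota,\infty))$, and the uniform bound on $\E(\xi^s)^2$). The only mildly delicate point is the $M\to\infty$ control of the kernel discrepancy $\int_0^\infty |r_\theta^\tti - R_\theta^{(M)}|$, but this is a direct transcription of the argument already carried out for $r_\eta^\tti$ in Lemma \ref{lem:auxiliary_V}, so I anticipate no real obstacle; the main care is simply to track that the Volterra kernel $R_\theta^{(M)}$ being driven on the \emph{left} is nonnegative with bounded mass, so that the linear feedback term can be absorbed into Gr\"onwall rather than causing uncontrolled growth.
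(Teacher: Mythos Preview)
Your proposal is correct and follows essentially the same approach as the paper: couple the driving Gaussian processes via Lemma \ref{lem:couple_general_cov}, subtract the two linear Volterra equations, bound the three resulting terms (feedback via $R_\theta^{(M)}$, kernel discrepancy acting on $\xi^s$, and $v^t-v_M^t$), and close with Gr\"onwall. One small slip: the formula you quote for $\E(\xi^s)^2$ is actually the covariance of $v^s$; the correct identity is $\E(\xi^s)^2=(\sigma^4/\delta)C_\eta(s,s)$, which is still uniformly bounded by Definition \ref{def:regular}, so your bound stands.
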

\begin{proof}
Applying the approximation (\ref{eq:Cthetaapproxinvariant})
and arguments analogous to Lemma \ref{lem:GP_couple}, we have that
\begin{align*}
&\sup_{s,t \in [T_0,T_0+T]} |\E[v_M^tv_M^s]-\E[v^tv^s]|\\
&\leq \sup_{s,t \in [T_0,T_0+T]} |C_\theta^{(M)}(t,s)-C_\theta(t,s)|
+|c_\theta(*)-C_\theta(t,*)|+|c_\theta(*)-C_\theta(s,*)|
\leq \eps(M)+\eps(T_0),
\end{align*}
and hence there exists a coupling of $\{v_{M,T_0}^t\}_{t \geq 0}$
and $\{v^t\}_{t \geq 0}$ such that
\[\sup_{t\in[T_0,T_0+T]}\E(v^t-v_M^t)^2 \leq \eps(M)+\sqrt{T}\,\eps(T_0).\]
We bound $\xi^t-\xi_{M,T_0}^t$ under this coupling of $\{v^t\}_{t \geq 0}$
with $\{v_M^t\}_{t \geq 0}$:
Let us write $\tilde \xi^t=\xi_{M,T_0}^t$. We have
$\xi^t=\tilde\xi^t$ for $t\in[0,T_0)$, while for $t\in[T_0,T_0+T]$, 
\begin{equation}\label{eq:xitdiffbound}
\E(\xi^t-\tilde \xi^t)^2 \leq 3\Big[\E\Big(\int_0^t R_\theta^{(M)}(t-s)|\xi^s
- \tilde{\xi}^s|\d s\Big)^2 + \E \Big(\int_0^t |R_\theta(t,s) -
R_\theta^{(M)}(t-s)||\xi^s|\d s\Big)^2 + \E (v^t - v_M^t)^2\Big].
\end{equation}
From the explicit definition of $R^{(M)}_\theta(t-s)$, the first term of
(\ref{eq:xitdiffbound}) satisfies
\begin{align*}
\E\Big(\int_0^t |R_\theta^{(M)}(t-s)||\xi^s - \tilde{\xi}^s|\d s\Big)^2 =
\E\Big(\int_{T_0}^t |R_\theta^{(M)}(t-s)||\xi^s - \tilde{\xi}^s|\d s\Big)^2 \leq C \int_{T_0}^t \E (\xi^s - \tilde{\xi}^s)^2\d s
\end{align*}
for a constant $C>0$. Following the argument used to bound
(\ref{eq:Delta_bound}), the second term of (\ref{eq:xitdiffbound}) is bounded by
$\eps(M)+C\eps(t)^2$ where $\eps(t) \to 0$ as $t \to
\infty$, while the third term is bounded by
$\eps(M)+\sqrt{T}\,\eps(T_0)$ under the above coupling. Then
by Gronwall's inequality,
\[\sup_{t\in [T_0,T_0+T]} \E(\xi^t-\tilde{\xi}^t)^2 \leq
Ce^{CT}\Big(\eps(M)+\sup_{t \in [T_0,T_0+T]} \eps(t)^2
+\sqrt{T}\,\eps(T_0)\Big),\]
which implies the lemma upon adjusting $\eps(T_0)$.
\end{proof}

\subsubsection{Convergence of the auxiliary process}

\begin{lemma}\label{lemma:sigmazpositive}
The value $\sigma_Z^2=\E {\theta^*}^2+c_\theta^\tti(\infty)
-2c_\theta(\ast)+\sigma^2$ is positive.
\end{lemma}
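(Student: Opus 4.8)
The plan is to establish the stronger bound $\sigma_Z^2 \geq \sigma^2$, which is more than enough since $\sigma^2>0$ by Assumption \ref{assump:model}(c). Writing $q=\E{\theta^*}^2+c_\theta^\tti(\infty)-2c_\theta(\ast)$, so that $\sigma_Z^2=q+\sigma^2$, it suffices to prove $q\geq 0$. The key observation is that $q$ is the limiting value of the manifestly nonnegative quantity $\E\big[(\theta^\ast-\bar\theta_K)^2\big]$, where $\bar\theta_K=\tfrac1K\sum_{i=1}^K\theta^{t_i}$ averages the DMFT process $\{\theta^t\}_{t\geq 0}$ over $K$ well-separated late times.

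First I would fix an integer $K\geq 1$ and a small $\eps>0$. Using the approximate-TTI estimates (\ref{eq:Cthetaapproxinvariant}) and (\ref{eq:Cthetastarlim}) of Definition \ref{def:regular}, together with the decay $c_\theta^\tti(\tau)\to c_\theta^\tti(\infty)$ as $\tau\to\infty$ furnished by the representation (\ref{eq:cttiforms}) (recalling $\mu_\theta$ is a finite measure on $[\iota,\infty)$), I would choose times $t_1<\dots<t_K$ all lying beyond a threshold where $\eps(\cdot)$ is below $\eps$, and with consecutive gaps beyond a threshold where $\int_\iota^\infty e^{-a\tau}\,\d\mu_\theta(a)$ is below $\eps$. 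This guarantees
\[|C_\theta(t_i,t_i)-c_\theta^\tti(0)|\leq\eps,\qquad |C_\theta(t_i,t_j)-c_\theta^\tti(\infty)|\leq 2\eps\ (i\ne j),\qquad |C_\theta(t_i,\ast)-c_\theta(\ast)|\leq\eps.\]
Since $\theta^\ast$ and the $\theta^{t_i}$ belong to $L^2$ (their second moments are the bounded quantities $C_\theta(\cdot,\cdot)$, $\E{\theta^*}^2$ from Definition \ref{def:regular} and Assumption \ref{assump:model}), expanding $\E[(\theta^\ast-\bar\theta_K)^2]\geq 0$ in terms of $C_\theta(t_i,t_j)$, $C_\theta(t_i,\ast)$ and $\E{\theta^*}^2$ — the $K$ diagonal terms contributing $c_\theta^\tti(0)/K$, the $K(K-1)$ off-diagonal terms contributing $\tfrac{K-1}{K}c_\theta^\tti(\infty)$, and the cross term contributing $-2c_\theta(\ast)$, all up to a total error at most $C\eps$ — yields
\[0\ \leq\ \E\big[(\theta^\ast-\bar\theta_K)^2\big]\ \leq\ q+\frac{c_\theta^\tti(0)-c_\theta^\tti(\infty)}{K}+C\eps\]
for an absolute constant $C$.

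Finally, for fixed $K$ I would send $\eps\to 0$ (pushing the chosen late times to infinity) to get $q+\frac{c_\theta^\tti(0)-c_\theta^\tti(\infty)}{K}\geq 0$, and then send $K\to\infty$, using that $c_\theta^\tti(0)-c_\theta^\tti(\infty)=\mu_\theta([\iota,\infty))$ is a fixed finite nonnegative number, to conclude $q\geq 0$ and hence $\sigma_Z^2\geq\sigma^2>0$. There is no genuine obstacle here; the only point requiring care is the order of the two limiting operations — $K$ must be held fixed while the late times are sent to infinity, and only afterwards may $K\to\infty$, so that the $1/K$ remainder is absorbed last. I would also note that this argument is deliberately self-contained and does not invoke Lemma \ref{lem:theta_replica_eq}, whose hypothesis $c_\eta^\tti(0)-c_\eta^\tti(\infty)<\delta/\sigma^2$ is only verified later (via Lemma \ref{lem:eta_replica_eq}, whose proof in turn uses the present positivity to define the relevant auxiliary Gaussian process).
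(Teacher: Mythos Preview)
Your argument is correct and takes a genuinely different route from the paper. The paper's proof passes back to the high-dimensional Langevin diffusion: it invokes Theorem~\ref{thm:dmft_response} to identify $C_\theta(t,s)-C_\theta(t,\ast)-C_\theta(s,\ast)+\E{\theta^*}^2$ with the $n,d\to\infty$ limit of $\E\big[d^{-1}\sum_i(\theta_i^t-\theta_i^*)(\theta_i^s-\theta_i^*)\big]$, argues this is nonnegative via a Markov-property step, and then sends $s=t/2\to\infty$. You instead stay entirely within the scalar DMFT process $\{\theta^t\}_{t\geq 0}$ and exploit the nonnegativity of $\E[(\theta^\ast-\bar\theta_K)^2]$ for an average over $K$ well-separated late times, sending $K\to\infty$ only after the times have been pushed out, so as to kill the diagonal remainder $(c_\theta^\tti(0)-c_\theta^\tti(\infty))/K$. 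Your route is more self-contained---it uses only Definition~\ref{def:regular} and makes no appeal to the finite-$d$ system or the forward reference to Theorem~\ref{thm:dmft_response}---and your remark about avoiding circularity with Lemma~\ref{lem:theta_replica_eq} is apt. The paper's route is shorter in that it needs only two times rather than $K$, but its Markov-property equality as written requires some care (it is not an immediate consequence of Markovianity alone); your averaging device is completely elementary and sidesteps that issue.
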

\begin{proof}
Let $\{\btheta^t\}_{t \geq 0}$ be the Langevin diffusion
(\ref{eq:langevinfixedprior}) for which
the DMFT system of Theorem \ref{thm:dmft_equilibrium} is the large-$(n,d)$
limit. By Theorem \ref{thm:dmft_response} to follow,
\[C_\theta(t,s)-C_\theta(t,*)-C_\theta(s,*)+\E(\theta^*)^2
=\lim_{n,d \to \infty}\E\Bigg[\frac{1}{d}\sum_{i=1}^d 
(\theta_i^t-\theta_i^*)(\theta_i^s-\theta_i^*)\Bigg]\]
Since $\{\btheta^t\}_{t \geq 0}$ is Markovian (conditional on
$\X,\y,\btheta^*$), we have for all $t \geq s$ that
\[\E\Bigg[\frac{1}{d}\sum_{i=1}^d 
(\theta_i^t-\theta_i^*)(\theta_i^s-\theta_i^*)\Bigg]
=\E\Bigg[\E\Bigg[\frac{1}{d}\sum_{i=1}^d 
(\theta_i^t-\theta_i^*)(\theta_i^s-\theta_i^*)\;\Bigg|\;
\btheta^s,\X,\y,\btheta^*\Bigg]\Bigg]
=\E\Bigg[\frac{1}{d}\sum_{i=1}^d 
(\theta_i^s-\theta_i^*)^2\Bigg] \geq 0,\]
hence $C_\theta(t,s)-C_\theta(t,*)-C_\theta(s,*)+\E(\theta^*)^2 \geq 0$.
Setting $s=t/2$ and taking the limit $t \to \infty$ under Definition
\ref{def:regular} shows $c_\theta^\tti(\infty)-2c_\theta(*)+\E {\theta^*}^2 \geq
0$, and the lemma follows.
\end{proof}

\begin{lemma}\label{lem:eta_aux_converge}
Let $c=(c_1,\ldots,c_M)$, $A = \diag(a_1,\ldots,a_M)$, $\Lambda = A +
cc^\top/\sigma^2$, and consider the 2-dimensional Gaussian law
$\N(0,\Sigma_M)$ with
\begin{align*}
\Sigma_M=\begin{pmatrix} \rho_M^2 & \kappa_M \\
\kappa_M & \rho_M^2 \end{pmatrix},
\qquad \kappa_M=\sigma_Z^2 \cdot \Big[1-c^\top\Lambda^{-1}c/\sigma^2\Big]^2,
\qquad \rho^2_M=\kappa_M+c^\top \Lambda^{-1}c,
\end{align*}
where $\sigma_Z^2=\E(\theta^*)^2 +
c_\theta^\tti(\infty)-2c_\theta(\ast)+\sigma^2$.
Then there exists an error $\eps(T)$ not depending on $T_0,M$ and satisfying
$\lim_{T \to \infty} \eps(T)=0$, such that for any $M,T_0,T,T'>0$,
\[W_2(\sP(\xi_{M,T_0}^{T_0+T},\xi_{M,T_0}^{T_0+T+T'}),\N(0,\Sigma_M)) \leq \eps(T)+\eps(T').\]
\end{lemma}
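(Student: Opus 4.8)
The plan is to lift the non-Markovian Volterra equation \eqref{eq:xiaux} for $\{\xi_{M,T_0}^t\}_{t\ge T_0}$ to a finite-dimensional linear SDE, as in the proof of Lemma~\ref{lem:aux_converge}, and to control its convergence to equilibrium with a rate independent of the approximation dimension $M$. Since $\sigma_Z^2>0$ by Lemma~\ref{lemma:sigmazpositive} and $\E[v_M^tv_M^s]=\sigma_Z^2+\sum_{m=1}^M\tfrac{c_m^2}{a_m}\big(e^{-a_m|t-s|}-e^{-a_m(t+s)}\big)$ is a sum of positive-semidefinite kernels, the covariance of $\{v_M^t\}$ is positive-semidefinite (this is the positivity check flagged above), and for $t\ge T_0$ we may realize $v_M^t=Z+\sum_{m=1}^M c_m y_m^t$ with $Z\sim\N(0,\sigma_Z^2)$ and $y_m^t=\int_0^t e^{-a_m(t-s)}\sqrt2\,\d b_m^s$, where $Z$ and $\{b_m\}$ are mutually independent and independent of $\{\xi^s\}_{s<T_0}$ (legitimate since only the marginal law of $\xi_{M,T_0}$ enters this lemma). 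Setting $x_m^t=y_m^t-\tfrac{c_m}{\sigma^2}\int_0^t e^{-a_m(t-s)}\xi_{M,T_0}^s\,\d s$ and $c=(c_1,\dots,c_M)$, a direct computation shows $\xi_{M,T_0}^t=Z+c^\top x^t$ and that $\{x^t\}_{t\ge T_0}$ solves $\d x^t=-\Lambda x^t\,\d t-\tfrac{Z}{\sigma^2}c\,\d t+\sqrt2\,\d b^t$ with $\Lambda=A+cc^\top/\sigma^2$, started from the history-determined $x^{T_0}$. As $\Lambda\succeq A\succeq\iota I$, conditionally on $Z$ this is a stable Ornstein--Uhlenbeck process with stationary law $\N(-\Lambda^{-1}cZ/\sigma^2,\Lambda^{-1})$; hence a stationary copy $\xi^{\mathrm{st},t}$ satisfies, given $Z$, $\xi^{\mathrm{st},t}\sim\N(Z(1-c^\top\Lambda^{-1}c/\sigma^2),c^\top\Lambda^{-1}c)$ with two-time cross-covariance $c^\top\Lambda^{-1}e^{-\Lambda|t-s|}c$. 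Averaging over $Z$, the two-time stationary law has mean $0$, common variance $\sigma_Z^2(1-c^\top\Lambda^{-1}c/\sigma^2)^2+c^\top\Lambda^{-1}c=\rho_M^2$, and covariance $\sigma_Z^2(1-c^\top\Lambda^{-1}c/\sigma^2)^2+c^\top\Lambda^{-1}e^{-\Lambda|t-s|}c$, which tends to $\kappa_M$ as $|t-s|\to\infty$ --- matching $\N(0,\Sigma_M)$.

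To prove the quantitative statement I would couple $\{x^t\}_{t\ge T_0}$ synchronously with a stationary copy $\{x^{\mathrm{st},t}\}$: same $Z$, same Brownian motion $b^t$, and $x^{\mathrm{st},T_0}=-\Lambda^{-1}cZ/\sigma^2+\Lambda^{-1/2}G$ for an independent $G\sim\N(0,I_M)$. Then $x^t-x^{\mathrm{st},t}=e^{-\Lambda(t-T_0)}(x^{T_0}-x^{\mathrm{st},T_0})$ and $\xi_{M,T_0}^{T_0+T}-\xi^{\mathrm{st},T_0+T}=c^\top e^{-\Lambda T}(x^{T_0}-x^{\mathrm{st},T_0})$, so everything reduces to bounding $\E[(c^\top e^{-\Lambda T}(x^{T_0}-x^{\mathrm{st},T_0}))^2]\le Ce^{-cT}$ uniformly in $M,T_0$. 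Since $\|x^{T_0}\|_2$ and $\|c\|_2$ both diverge with $M$, the naive estimate through $\|e^{-\Lambda T}\|_{\op}\le e^{-\iota T}$ is useless, and the argument rests on two $M$-uniform facts. First, the resolvent kernel of the Volterra equation with kernel $\tfrac1{\sigma^2}R_\theta^{(M)}$ equals $\varrho^{(M)}(\tau)=\tfrac1{\sigma^2}c^\top e^{-\Lambda\tau}c$ --- by Sherman--Morrison applied to $(zI+\Lambda)^{-1}$ in the Laplace domain --- and solves $\varrho^{(M)}=\tfrac1{\sigma^2}R_\theta^{(M)}-\tfrac1{\sigma^2}R_\theta^{(M)}\ast\varrho^{(M)}$; as $R_\theta^{(M)}\ge0$ and $\varrho^{(M)}=\tfrac1{\sigma^2}c^\top e^{-\Lambda\tau}c\ge0$, this gives $c^\top e^{-\Lambda\tau}c\le R_\theta^{(M)}(\tau)=c^\top e^{-A\tau}c$ for all $\tau\ge0$. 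Second, because $\mu_\theta$ is supported on $[\iota,\infty)$, $R_\theta^{(M)}(\tau)=\sum_m a_me^{-a_m\tau}\mu_\theta([a_{m-1},a_m))\le(\sup_{a\ge\iota}ae^{-a\tau})\,\mu_\theta([\iota,\infty))\le Ce^{-\iota\tau}$ for $\tau\ge2/\iota$, and $\int_0^\infty R_\theta^{(M)}=\sum_m c_m^2/a_m=\sum_m\mu_\theta([a_{m-1},a_m))\le c_\theta^\tti(0)-c_\theta^\tti(\infty)$.

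Expanding $x^{T_0}-x^{\mathrm{st},T_0}$ into four mutually independent, mean-zero pieces --- the Gaussian noise $y^{T_0}$ (with $\E[y^{T_0}(y^{T_0})^\top]\preceq A^{-1}$), the history integral with $m$-th entry $-\tfrac{c_m}{\sigma^2}\int_0^{T_0}e^{-a_m(T_0-s)}\xi^s\,\d s$, the $Z$-term $\tfrac{Z}{\sigma^2}\Lambda^{-1}c$, and the $G$-term $-\Lambda^{-1/2}G$ --- the cross terms vanish, and using $\Lambda^{-1}\preceq\iota^{-1}I$, the uniform bound $\sup_s\E(\xi^s)^2=\tfrac{\sigma^4}{\delta}\sup_s C_\eta(s,s)<\infty$ from Definition~\ref{def:regular} (which controls the history integral by $C(\|e^{-\Lambda T}c\|_2\,\|A^{-1}c\|_2)^2$), and the two facts above, each term is at most $C\,R_\theta^{(M)}(2T)$ or $C\,R_\theta^{(M)}(T)^2$, hence $\le Ce^{-2\iota T}$ for $T\ge2/\iota$ (and bounded by a constant for $T<2/\iota$); this yields $W_2^2(\sP(\xi_{M,T_0}^{T_0+T},\xi_{M,T_0}^{T_0+T+T'}),\sP(\xi^{\mathrm{st},T_0+T},\xi^{\mathrm{st},T_0+T+T'}))\le C(e^{-2\iota T}+e^{-2\iota T'})$. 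Finally, the two-time stationary law is a Gaussian mixture over $Z$ whose conditional covariances differ from those of $\N(0,\Sigma_M)$ only in the off-diagonal entry $c^\top\Lambda^{-1}e^{-\Lambda T'}c$, bounded by $\iota^{-1}R_\theta^{(M)}(T')\le Ce^{-\iota T'}$ for $T'\ge2/\iota$; the elementary $W_2$ estimate between Gaussians with uniformly bounded covariances then gives $W_2(\sP(\xi^{\mathrm{st},T_0+T},\xi^{\mathrm{st},T_0+T+T'}),\N(0,\Sigma_M))\le Ce^{-cT'}$, and the triangle inequality completes the proof with $\eps(T)=Ce^{-cT}$. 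The main obstacle is exactly this $M$-uniformity: establishing and exploiting $c^\top e^{-\Lambda\tau}c\le R_\theta^{(M)}(\tau)$, and handling the history-dependent initial state $x^{T_0}$, whose Euclidean norm diverges with $M$ while its projection $c^\top e^{-\Lambda T}x^{T_0}$ does not.
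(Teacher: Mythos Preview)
Your proof is correct and reaches the conclusion, but takes a genuinely different route from the paper. Both start from the same Markovian lift $\xi_{M,T_0}^t=Z+c^\top x^t$ with $\d x^t=-\Lambda x^t\,\d t-\tfrac{Z}{\sigma^2}c\,\d t+\sqrt{2}\,\d b^t$. From there, the paper observes that since the DMFT process $\{\xi^t\}$ is a Gaussian process (being a linear Volterra transform of $\{v^t\}$), the initial condition $x^{T_0}$ is Gaussian, and hence $(\xi_{M,T_0}^{T_0+T},\xi_{M,T_0}^{T_0+T+T'})$ is \emph{exactly} bivariate Gaussian. The paper then computes its covariance matrix in closed form from the explicit OU solution and shows directly that each entry converges to the corresponding entry of $\Sigma_M$, with error $\eps(T)+\eps(T')$. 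The $M$-uniformity is obtained from the two elementary facts $\|\Lambda^{-1/2}c\|_2^2=c^\top\Lambda^{-1}c\le\sum_m c_m^2/a_m\le\mu_\theta([\iota,\infty))$ and $\|\E[x^{T_0}(x^{T_0})^\top]\|_\op\le C$, together with $\lambda_{\min}(\Lambda)\ge\iota$; no resolvent identity or coupling is needed.

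Your approach instead couples $\{x^t\}$ synchronously with a stationary OU copy and controls $c^\top e^{-\Lambda T}(x^{T_0}-x^{\mathrm{st},T_0})$ via the resolvent-kernel inequality $c^\top e^{-\Lambda\tau}c\le c^\top e^{-A\tau}c=R_\theta^{(M)}(\tau)$, which you derive by Sherman--Morrison in the Laplace domain. This is a nice observation, and it buys you a proof that does not rely on Gaussianity of $x^{T_0}$ --- your argument would still go through if the history $\{\xi^s\}_{s<T_0}$ were merely $L^2$-bounded. The paper's route, by contrast, is shorter precisely because it exploits that Gaussianity to reduce the $W_2$ statement to a pure covariance computation. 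A minor point: you describe the two-time stationary law as a ``Gaussian mixture over $Z$'', but since $Z$ is itself Gaussian and the conditional law is affine in $Z$, this is just a single bivariate Gaussian; your covariance identification and the ensuing $W_2$ bound are nonetheless correct.
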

\begin{proof}
Let $z \sim \N(0,\sigma_Z^2)$, where $\sigma_Z^2>0$ by Lemma
\ref{lemma:sigmazpositive}, and let $\{b_m^t\}_{t \geq 0}$ for $m=1,\ldots,M$ be
standard Brownian motions. We assume these are independent of each other and of
$\{\xi^t\}_{t \in [0,T]}$. Then the law of
$\{\xi_{M,T_0}^t\}_{t \geq 0}$ coincides with
the marginal law of $\{\xi_{M,T_0}^t\}_{t \geq 0}$ in the joint process
\begin{align}
\xi_{M,T_0}^t&=\xi^t \text{ for } t \in [0,T_0)\notag\\
\xi_{M,T_0}^t&=\sum_{m=1}^M c_m x_m^t + z \text{ for } t \geq
T_0\label{eq:xiauxevolve}\\
\d x_m^t&=-[a_m x_m^t + c_m \xi_{M,T_0}^t/\sigma^2]\d t + \sqrt{2}\,\d b_m^t
\text{ for } 1 \leq m \leq M,\, t \geq 0\label{eq:xiauxxevolve}
\end{align}
with initial conditions $x_1^0=\ldots=x_M^0=0$. Indeed,
given $\{\xi_{M,T_0}^t\}_{t \geq 0}$, the equations (\ref{eq:xiauxxevolve})
for $\{x_m^t\}_{t \geq 0}$ have the explicit solutions
\begin{align*}
x_m^t=-\frac{1}{\sigma^2}\int_0^t c_me^{-a_m(t-s)} \xi_{M,T_0}^s \d s +
\int_0^t e^{-a_m(t-s)}\sqrt{2}\,\d b_m^s, 
\end{align*}
and substituting this into (\ref{eq:xiauxevolve}) gives (\ref{eq:xiaux}) upon
identifying $v_M^t=z+\int_0^t \sum_{m=1}^M c_me^{-a_m(t-s)}\sqrt{2}\,\d
b_m^s$. It is direct to check that $\{v_M^t\}_{t \geq 0}$ thus defined
has covariance $C_\theta^{(M)}(t,s)-2c_\theta(*)+\E(\theta^*)^2+\sigma^2$,
so this coincides with the law of $\{\xi_{M,T_0}^t\}_{t \geq 0}$
defined by (\ref{eq:xiaux}).

Let us denote $\tilde{\xi}^t=\xi_{M,T_0}^t$,
$x^t=(x_1^t,\ldots,x_M^t)$, and $b^t=(b_1^t,\ldots,b_M^t)$. For $t \geq T_0$,
the evolution of $(\tilde\xi^t,x^t) \in \R^{M+1}$ is a (Markovian)
Ornstein-Uhlenbeck process. Substituting (\ref{eq:xiauxevolve}) into
(\ref{eq:xiauxxevolve}), we have
\begin{align*}
\d x^t = -[\Lambda x^t + cz/\sigma^2]\d t + \sqrt{2}\,\d b^t \text{ for }
t \geq T_0
\end{align*}
where $c=(c_1,\ldots,c_M)$ and $\Lambda=A+cc^\top/\sigma^2$ with
$A=\diag(a_1,\ldots,a_M)$. This has the solution, for $t \geq T_0$,
\begin{align*}
x^t = e^{-\Lambda(t-T_0)}x^{T_0} + \frac{z}{\sigma^2}\Lambda^{-1}(e^{-\Lambda(t-T_0)} - I)c + \int_{T_0}^t
e^{-\Lambda(t-s)}\sqrt{2}\,\d b^s.
\end{align*}
Substituting back into (\ref{eq:xiauxevolve}),
\begin{equation}\label{eq:tildexiexplicit}
\tilde{\xi}^t = c^\top e^{-\Lambda(t-T_0)}x^{T_0}+z\Big[1 +
\frac{1}{\sigma^2}c^\top\Lambda^{-1}(e^{-\Lambda(t-T_0)} - I)c\Big] +
\int_{T_0}^t c^\top e^{-\Lambda(t-s)}\sqrt{2}\,\d b^s \text{ for } t \geq T_0.
\end{equation}
Here, we note that the equation (\ref{eq:dmft_xit}) implies that
$\{\xi^t\}_{t \geq 0}$ is itself a Gaussian process
(given by a linear functional of $\{v^t\}_{t \geq 0}$),
so $x^{T_0}$ with coordinates
\begin{equation}\label{eq:xT0}
x_m^{T_0}=\underbrace{-\frac{c_m}{\sigma^2}\int_0^{T_0}e^{-a_m(T_0-s)} \xi^s \d
s}_{=U_m} +
\underbrace{\int_0^{T_0} e^{-a_m(T_0-s)}\sqrt{2}\,\d b^s_m}_{=V_m}
\end{equation}
is a Gaussian vector. Consequently, the form (\ref{eq:tildexiexplicit}) shows
that for any $T,T'>0$,
$(\tilde{\xi}^{T_0+T},\tilde{\xi}^{T_0+T+T'})$ has a centered
bivariate Gaussian law. To conclude the proof of the lemma, it suffices to show
\begin{align}
|\E[(\tilde{\xi}^{T_0+T})^2]-\rho_M^2|,\,
|\E[(\tilde{\xi}^{T_0+T+T'})^2]-\rho_M^2|
 &\leq \eps(T)+\eps(T')\label{eq:xivarbound}\\
|\E[\tilde{\xi}^{T_0+T}\tilde{\xi}^{T_0+T+T'}]-\kappa_M| &\leq \eps(T)+\eps(T')
\label{eq:xicovbound}
\end{align}
for some errors $\eps(T),\eps(T')$ that hold uniformly over all $M,T_0>0$.

For (\ref{eq:xivarbound}), we may compute from the solution
(\ref{eq:tildexiexplicit}) that
\begin{align*}
\E[(\tilde{\xi}^{T_0+T})^2]
&=\underbrace{c^\top e^{-\Lambda T} \E[x^{T_0}(x^{T_0})^\top]e^{-\Lambda
T}c}_{=\mathrm{I}} + \underbrace{\sigma_Z^2
\cdot \Big[1 + \frac{1}{\sigma^2}c^\top\Lambda^{-1}(e^{-\Lambda T} -
I)c\Big]^2+c^\top  \Lambda^{-1}(I - e^{-2\Lambda T})c}_{=\mathrm{II}}.
\end{align*}
Observe that $\|\Lambda^{-1/2}c\|_2^2 \leq \sum_{m=1}^M c_m^2/a_m
=\sum_{m=1}^M \mu_\theta([a_{m-1},a_m)) \leq \mu_\theta([\iota,\infty))$.
Hence $\|\Lambda^{-1/2}c\|_2 \leq C$ for a constant $C>0$ not depending on $M$.
Since also $\lambda_{\min}(\Lambda) \geq \iota>0$, we have $\|e^{-\Lambda
T}\|_\op \leq e^{-\iota T}$, so
\[|\mathrm{II}-\rho_M^2| \leq \eps(T)\]
for an error $\eps(T)$ not depending on $M$.
To bound $\mathrm{I}$, write $x^{T_0}=U+V$ where
$U,V \in \R^M$ have the coordinates $U_m,V_m$ in (\ref{eq:xT0}).
Then, from the bound $\E[(u^\top x^{T_0})^2] \leq 2\E[(u^\top U)^2]
+2\E[(u^\top V)^2]$ for each unit vector $u \in \R^M$, we have
\[\|\E[x^{T_0}(x^{T_0})^\top]\|_\op
\leq 2\|\E[UU^\top]\|_\op+2\|\E[VV^\top]\|_\op.\]
For the second term,
$\|\E[VV^\top]\|_\op=\|\diag(a_m^{-1}(1-e^{-a_mT_0}))\|_\op \leq
\iota^{-1}$. For the first term,
\begin{align*}
\|\E[UU^\top]\|_\op \leq \E\|U\|_2^2
&=\E\sum_{m=1}^M \frac{c_m^2}{\sigma^4}\Big(\int_0^{T_0} e^{-a_m(T_0-s)}\xi^s \d s\Big)^2\\
&\leq \sum_{m=1}^M \frac{c_m^2}{\sigma^4} \int_0^{T_0} e^{-a_m(T_0-s)} \d s
\cdot \int_0^{T_0} e^{-a_m(T_0-s)}\E(\xi^s)^2 \d s.
\end{align*}
Noting that
$\E(\xi^t)^2=(\sigma^4/\delta)C_\eta(t,t) \leq C$
for all $t \geq 0$ under Definition \ref{def:regular},
this gives $\|\E[UU^\top]\|_\op \leq C'\sum_{m=1}^M c_m^2/a_m^2
\leq C'\mu_\theta([\iota,\infty))/\iota$. Combining these bounds shows
$\|\E[x^{T_0}(x^{T_0})^\top]\|_\op \leq C$ for a constant $C>0$ not depending
on $M,T_0$. Then, combining with the previous bounds $\|\Lambda^{-1/2}u\|_2
\leq C$ and $\lambda_{\min}(\Lambda) \geq \iota$,
this shows $|\mathrm{I}| \leq \eps(T)$, so
$|\E[(\tilde{\xi}^{T_0+T})^2]-\rho_M^2| \leq \eps(T)$.
The bound for $\E[(\tilde{\xi}^{T_0+T+T'})^2]$ in
(\ref{eq:xivarbound}) holds similarly.

For (\ref{eq:xicovbound}), we may compute similarly from
(\ref{eq:tildexiexplicit})
\begin{align*}
\E[(\tilde{\xi}^{T_0+T})\tilde \xi^{T_0+T+T'}]
&=u^\top e^{-\Lambda T}
\E[x^{T_0}(x^{T_0})^\top]e^{-\Lambda(T+T')}u\\
&\quad +\sigma_Z^2\cdot \Big[1 +
\frac{1}{\sigma^2}u^\top\Lambda^{-1}(e^{-\Lambda (T+T')} - I)u\Big]
\Big[1 + \frac{1}{\sigma^2}u^\top\Lambda^{-1}(e^{-\Lambda T} - I)u\Big]\\
&\quad + u^\top \Lambda^{-1}(e^{-\Lambda T'} - e^{-\Lambda(2T+T')})u,
\end{align*}
and the arguments to show (\ref{eq:xicovbound}) from this form
are the same as above.
\end{proof}

\begin{lemma}\label{lem:eta_aux_M_converge}
Consider the 2-dimensional Gaussian law $\N(0,\Sigma_\infty)$ with
\[\Sigma_\infty=\begin{pmatrix} \rho^2_\infty & \kappa_\infty \\
\kappa_\infty & \rho^2_\infty \end{pmatrix},
\quad \kappa_\infty = \frac{\E{\theta^*}^2 + \sigma^2 +
c_\theta^\tti(\infty) - 2c_\theta(\ast)}{(1 +
\sigma^{-2}(c_\theta^\tti(0)-c_\theta^\tti(\infty))^2},
\quad
\rho^2_\infty = \kappa_\infty+\frac{c_\theta^\tti(0)-c_\theta^\tti(\infty)}
{1 + \sigma^{-2}(c_\theta^\tti(0)-c_\theta^\tti(\infty))}.\]
Then $\lim_{M \to \infty} \|\Sigma_M-\Sigma_\infty\|_\op=0$.
\end{lemma}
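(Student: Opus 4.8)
The plan is to reduce this matrix statement to the convergence of a single scalar. Note that $\Sigma_M$ and $\Sigma_\infty$ each have equal diagonal entries ($\rho_M^2$, resp.\ $\rho_\infty^2$) and equal off-diagonal entries ($\kappa_M$, resp.\ $\kappa_\infty$), so that
\[
\|\Sigma_M-\Sigma_\infty\|_\op=|\rho_M^2-\rho_\infty^2|+|\kappa_M-\kappa_\infty|.
\]
Moreover, writing $q_M:=c^\top\Lambda^{-1}c$ and $S:=c_\theta^\tti(0)-c_\theta^\tti(\infty)$, all four quantities are governed by the \emph{same} algebraic formulas: $\kappa_M=\sigma_Z^2(1-\sigma^{-2}q_M)^2$ and $\rho_M^2=\kappa_M+q_M$, while a direct check shows $\kappa_\infty=\sigma_Z^2(1-\sigma^{-2}q_\infty)^2$ and $\rho_\infty^2=\kappa_\infty+q_\infty$ for $q_\infty:=S/(1+\sigma^{-2}S)$ (using $1-\sigma^{-2}q_\infty=(1+\sigma^{-2}S)^{-1}$), and $\sigma_Z^2$ is a fixed finite constant (positive by Lemma \ref{lemma:sigmazpositive}). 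Since $q\mapsto\sigma_Z^2(1-\sigma^{-2}q)^2$ and $q\mapsto\sigma_Z^2(1-\sigma^{-2}q)^2+q$ are continuous, it suffices to prove $q_M\to q_\infty$ as $M\to\infty$.

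To evaluate $q_M$ I would apply the Sherman--Morrison identity to $\Lambda=A+\sigma^{-2}cc^\top$. Since $A=\diag(a_1,\dots,a_M)$ is positive definite (all $a_m\ge\iota>0$) and $cc^\top$ is positive semidefinite, $\Lambda$ is invertible, and
\[
q_M=c^\top\Lambda^{-1}c
=c^\top A^{-1}c-\frac{\sigma^{-2}(c^\top A^{-1}c)^2}{1+\sigma^{-2}c^\top A^{-1}c}
=\frac{p_M}{1+\sigma^{-2}p_M},\qquad p_M:=c^\top A^{-1}c=\sum_{m=1}^M\frac{c_m^2}{a_m}.
\]
By the defining relation $c_m^2/a_m=\mu_\theta([a_{m-1},a_m))$ together with the grid $a_m=\iota+m/\sqrt M$, the sum telescopes over disjoint consecutive intervals: $p_M=\mu_\theta\big([a_0,a_M)\big)=\mu_\theta\big([\iota,\iota+\sqrt M)\big)$. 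Since $\mu_\theta$ is a finite positive measure supported on $[\iota,\infty)$ and $\mu_\theta([\iota,\infty))=c_\theta^\tti(0)-c_\theta^\tti(\infty)=S$ by the representation (\ref{eq:cttiforms}), continuity of measure from below gives $p_M\uparrow S$, and therefore $q_M=p_M/(1+\sigma^{-2}p_M)\to S/(1+\sigma^{-2}S)=q_\infty$.

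Plugging back in, $\kappa_M=\sigma_Z^2(1+\sigma^{-2}p_M)^{-2}\to\sigma_Z^2(1+\sigma^{-2}S)^{-2}=\kappa_\infty$ and $\rho_M^2=\kappa_M+q_M\to\kappa_\infty+q_\infty=\rho_\infty^2$, which combined with the identity for $\|\Sigma_M-\Sigma_\infty\|_\op$ above yields the claim. I do not anticipate a genuine obstacle here; the only two points of substance are the Sherman--Morrison reduction of $c^\top\Lambda^{-1}c$ to the diagonal quadratic form $c^\top A^{-1}c$, and the identification of $\lim_M\sum_m c_m^2/a_m$ with $\mu_\theta([\iota,\infty))$ via the grid's defining relation --- both of which are immediate, so the write-up should be short.
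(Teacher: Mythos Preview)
Your proof is correct and follows exactly the same approach as the paper: reduce $c^\top\Lambda^{-1}c$ to $p_M/(1+\sigma^{-2}p_M)$ via Sherman--Morrison, then use $\sum_m c_m^2/a_m=\mu_\theta([\iota,\iota+\sqrt M))\to c_\theta^\tti(0)-c_\theta^\tti(\infty)$. The paper's proof is a single sentence stating precisely these two facts; you have supplied the surrounding continuity and operator-norm details, all of which are correct (your identity $\|\Sigma_M-\Sigma_\infty\|_\op=|\rho_M^2-\rho_\infty^2|+|\kappa_M-\kappa_\infty|$ follows from $\max(|a+b|,|a-b|)=|a|+|b|$ for the $2\times 2$ symmetric difference).
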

\begin{proof}
This follows from noting that $c^\top \Lambda^{-1}c=\frac{\sum_{m=1}^M
c_m^2/a_m}{1 + \sigma^{-2}\sum_{m=1}^M c_m^2/a_m}$ via the Sherman-Morrison
identity, and
$\sum_{m=1}^M c_m^2/a_m \rightarrow \int_{\iota}^\infty \mu_\theta(\d
a)=c_\theta^\tti(0)-c_\theta^\tti(\infty)$ as $M\rightarrow\infty$. 
\end{proof}

We now complete the proof of Lemma \ref{lem:eta_replica_eq}.

\begin{proof}[Proof of Lemma \ref{lem:eta_replica_eq}]
By Lemmas \ref{lem:eta_aux_approx}, \ref{lem:eta_aux_converge}, and
\ref{lem:eta_aux_M_converge}, it holds that
\[W_2(\sP(\xi^{T_0+T},\xi^{T_0+T+T'}),\N(0,\Sigma_\infty)) \leq
Ce^{C(T+T')}(\eps(M)+\sqrt{T+T'}\,\eps(T_0))
+\eps(T)+\eps(T')+\eps(M).\]
Taking first the limit $M \to \infty$, then
choosing $T=T'=t$ and $T_0 \equiv T_0(t)$ such that $\lim_{t \to \infty}
T_0(t)=\infty$ and $\lim_{t \to \infty} e^{2Ct}\sqrt{2t}\,\eps(T_0(t))=0$
and taking $t \to \infty$, this shows
$W_2(\sP(\xi^{T_0(t)+t}, \xi^{T_0(t)+2t}),\N(0,\Sigma_\infty))
\rightarrow 0$ as $t\rightarrow \infty$. Under Definition \ref{def:regular},
this implies
\begin{align*}
\frac{\sigma^4}{\delta}c_\eta^{\tti}(0) &=
\lim_{t\rightarrow\infty} \frac{\sigma^4}{\delta}C_\eta(T_0(t)+t,T_0(t)+t)
=\lim_{t\rightarrow\infty}\E[(\xi^{T_0(t)+t})^2] = \rho^2_\infty,\\
\frac{\sigma^4}{\delta} c_\eta^\tti(\infty) &= 
\lim_{t\rightarrow\infty} \frac{\sigma^4}{\delta}C_\eta(T_0(t)+t,T_0(t)+2t)
=\lim_{t\rightarrow\infty} \E[\xi^{T_0(t)+t}\xi^{T_0(t)+2t}]=\kappa_\infty.
\end{align*}
This shows the desired forms of $c_\eta^\tti(0)$ and $c_\eta^\tti(\infty)$,
and we have also from these forms that
\[c_\eta^\tti(0)-c_\eta^\tti(\infty)=\frac{\delta}{\sigma^2}
\bigg[\frac{\sigma^{-2}(c_\theta^\tti(0)-c_\theta^\tti(\infty))}
{1+\sigma^{-2}(c_\theta^\tti(0)-c_\theta^\tti(\infty))}\bigg]<\frac{\delta}{\sigma^2}.\]
\end{proof}

\subsection{Completing the proof}

\begin{proof}[Proof of Theorem \ref{thm:dmft_equilibrium}]
By Lemmas \ref{lem:theta_replica_eq} and \ref{lem:eta_replica_eq}, we have five
equations (\ref{eq:theta_replica}), (\ref{eq:eta_replica_1}),
(\ref{eq:eta_replica_2}) for the five variables $c^{\tti}_\theta(0),
c_\theta^\tti(\infty), c_\theta(\ast), c_\eta^{\tti}(0), c_\eta^\tti(\infty)$.
Defining $\mse,\mse_*$ by (\ref{eq:mmse}), these equations show
\begin{align*}
\omega &= \frac{\delta}{\sigma^2} - (c_\eta^{\tti}(0)-c_\eta^\tti(\infty)) =
\frac{\delta}{\sigma^2 + (c_\theta^{\tti}(0)-c_\theta^\tti(\infty))} =
\frac{\delta}{\sigma^2+\mse},\\
\omega_\ast &= \frac{\omega^2}{c_\eta^\tti(\infty)} =
\frac{\delta}{\E{\theta^*}^2 + \sigma^2 + c_\theta(\infty) - 2c_\theta(\ast)} =
\frac{\delta}{\sigma^2 + \mse_\ast},
\end{align*}
as well as
\begin{align*}
\mse &= c_\theta^{\tti}(0)-c_\theta^\tti(\infty) = \E_{g_\ast,\omega_\ast}[\langle
\theta^2\rangle_{g,\omega} -  \langle \theta\rangle_{g,\omega}^2] =
\E_{g_\ast,\omega_\ast} \langle (\theta - \langle \theta
\rangle_{g,\omega})^2\rangle_{g,\omega},\\
\mse_\ast &= \E{\theta^\ast}^2 -
2\E_{g_\ast,\omega_\ast}[\theta^\ast\langle\theta\rangle_{g,\omega}] +
\E_{g_\ast,\omega_\ast} \langle\theta\rangle_{g,\omega}^2 =
\E_{g_\ast,\omega_\ast}(\theta^\ast - \langle\theta\rangle_{g,\omega})^2.
\end{align*}
This verifies that the fixed-point equations (\ref{eq:static_fixedpoint}) hold,
where it is clear that $\omega,\omega_*$ are uniquely defined from $\mse,\mse_*$
via (\ref{eq:static_fixedpoint}).
Defining $\ymse,\ymse_*$ by (\ref{eq:mmse}), we have also from the above forms
of $\omega,\omega_*$ that
\begin{align*}
\ymse&=\frac{\sigma^4}{\delta}(c_\eta^\tti(0)-c_\eta^\tti(\infty))
=\sigma^2\Big(1-\frac{\omega\sigma^2}{\delta}\Big),\\
\ymse^*&=\frac{\sigma^4}{\delta}(2c_\eta^\tti(0)-c_\eta^\tti(\infty))-\sigma^2
=\sigma^2+\frac{\omega\sigma^4}{\delta}\Big(\frac{\omega}{\omega_*}-2\Big),
\end{align*}
verifying (\ref{eq:ymmseomega}). Finally, the statement
(\ref{eq:dmftthetaconvergence})
is a consequence of (\ref{eq:twopointW2convergence}) shown in the proof of Lemma
\ref{lem:theta_replica_eq}.
\end{proof}

\section{Analysis of fixed-prior Langevin dynamics under
LSI}\label{sec:fixedprior}

In this section, we prove Theorem \ref{thm:fixedalpha_dynamics}
and Corollary \ref{cor:fixedalpha_dynamics} that verify Definition
\ref{def:regular} and deduce the replica-symmetric limits for the
Bayes-optimal mean-squared-errors and free energy, under Assumption
\ref{assump:LSI} of a log-Sobolev inequality (LSI) for the posterior law.

\subsection{Preliminaries}\label{sec:langevinproperties}

\subsubsection{Properties of Langevin dynamics}

We review in this section two general results on a Langevin diffusion
of the form
\begin{equation}\label{eq:generaldiffusion}
\d \btheta^t=\nabla U(\btheta^t)\d t+\sqrt{2}\,\d\b^t
\end{equation}
with an equilibrium measure $e^{U(\btheta)}$. The first is
a fluctuation-dissipation relation for its
correlation and response functions at equilibrium, and the second is
a Bismut-Elworthy-Li representation for the spatial derivative
of its Markov semigroup. For bounded observables, similar
fluctuation-dissipation theorems have been stated and shown
in \cite{dembo2010markovian,chen2020mathematical} and Bismut-Elworthy-Li
formulae in \cite{bismut1984large,elworthy1994formulae}. 
We give versions of these results here for a class of unbounded observables
which may have linear growth
\[\cA=\{f \in C^2(\R^d,\R): \nabla f,\nabla^2 f \text{ are globally bounded}\},\]
and a class of drift coefficients
\begin{equation}\label{eq:classB}
\cB=\{U \in C^3(\R^d,\R): \nabla^2 U,\nabla^3 U \text{ are globally
bounded and H\"older continuous}\},
\end{equation}
drawing upon some analyses of our companion work \cite[Appendix A]{paper1}.

We write
\[P_tf(\btheta)=\E[f(\btheta^t) \mid \btheta^0=\btheta],
\qquad \L f(\btheta)=\nabla U^\top \nabla f(\btheta)+\Tr \nabla^2 f(\btheta)\]
for the Markov semigroup and infinitesimal generator
associated to (\ref{eq:generaldiffusion}).
It is shown in \cite[Proposition A.2]{paper1} that
\begin{equation}\label{eq:semigroupregularity}
f \in \cA,\,U \in \cB \quad \Rightarrow \quad
\nabla P_tf(\btheta),\nabla^2 P_t f(\btheta) \text{ are uniformly bounded over }
t \in [0,T],\,\btheta \in \R^d
\end{equation}
for any fixed $T>0$. In particular, $P_tf \in \cA$ for each fixed $t>0$.

\begin{lemma}\label{lemma:langevin_fdt_equi}
Suppose $U \in \cB$, and (\ref{eq:generaldiffusion}) has the unique
stationary distribution $q(\btheta)=e^{U(\btheta)}$ with finite third moments.
Let $\{\btheta^t\}_{t \geq 0}$ be the solution to (\ref{eq:generaldiffusion})
with initial condition $\btheta^0=\x$, and let $A \in \cA$ and $B \in \cB$.

\begin{enumerate}[(a)]
\item Define the response function
$R_{AB}^\x(t,s)=P_s(\nabla B^\top \nabla P_{t-s}A)(\x)$.
Then $R_{AB}^\x(t,s)$ satisfies the following condition:
Fix any continuous bounded function $h:[0,\infty) \to \R$. For each $\eps>0$,
let $\{\btheta^{t,\eps}\}_{t \geq 0}$ denote
the solution of the perturbed dynamics
\[\d\btheta^{t,\eps}=\nabla [U(\btheta^{t,\eps})+\eps h(t)B(\btheta^{t,\eps})]\d t+\sqrt{2}\,\d\b^t\]
with the same initial condition $\btheta^{0,\eps}=\x$.
Then for any $t>0$,
\[\lim_{\eps \to 0} \frac{1}{\eps}\Big(\E[A(\btheta^{t,\eps})
\mid \btheta^{0,\eps}=\x]-\E[A(\btheta^t) \mid \btheta^0=\x]\Big)
=\int_0^t R_{AB}^\x(t,s)h(s)\d s.\]

\item Define the correlation function
$C_{AB}^\x(t,s)=\E[A(\btheta^t)B(\btheta^s) \mid \btheta^0=\x]$.
Then for any $t \geq s \geq 0$, averaging over an initial condition
$\x \sim q$ drawn from the stationary distribution,
\[\partial_t \E_{\x \sim q} C_{AB}^\x(t,s)={-}\E_{\x \sim
q} R_{AB}^\x(t,s).\]
\end{enumerate}
\end{lemma}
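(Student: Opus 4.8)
For part (a), the plan is to differentiate the perturbed semigroup in $\eps$ at $\eps=0$ using a Duhamel/variation-of-constants argument. Write $u_\eps(t)=\E[A(\btheta^{t,\eps})\mid\btheta^{0,\eps}=\x]$ and note that, under the perturbation, the time-inhomogeneous generator is $\L+\eps h(t)\,\nabla B^\top\nabla(\cdot)$ (here the extra first-order term is $\nabla(\eps h(t)B)^\top\nabla f$, and the Hessian term is unchanged). The formal Duhamel expansion gives
\[
u_\eps(t)=P_tA(\x)+\eps\int_0^t h(s)\,P_s\big(\nabla B^\top\nabla P_{t-s}A\big)(\x)\,\d s+o(\eps),
\]
and the integrand is exactly $R_{AB}^\x(t,s)h(s)$. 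The work is to justify this rigorously: one should set up the difference $u_\eps(t)-u_0(t)$, use the integral (mild) form of the perturbed SDE or of the Kolmogorov equation, and control the remainder. The regularity input that makes this clean is (\ref{eq:semigroupregularity}): since $A\in\cA$ and $U\in\cB$, the maps $\btheta\mapsto\nabla P_rA(\btheta)$ and $\nabla^2 P_rA(\btheta)$ are bounded uniformly over $r\in[0,T]$, and $B\in\cB$ has bounded $\nabla B$, so $\nabla B^\top\nabla P_{t-s}A\in\cA$, each $P_s$ of it is well-behaved, and the $s$-integral converges. A convenient way to avoid pathwise SDE estimates is to differentiate the PDE: let $v(t,\btheta)=\partial_\eps|_{\eps=0}\E[A(\btheta^{t,\eps})\mid\btheta^{0,\eps}=\btheta]$; one shows $v$ solves $\partial_t v=\L v+h(t)\nabla B^\top\nabla P_tA$ with $v(0,\cdot)=0$, and Duhamel's principle for this linear inhomogeneous equation yields $v(t,\btheta)=\int_0^t P_{t-s}\big(h(s)\nabla B^\top\nabla P_sA\big)(\btheta)\,\d s$. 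Wait — one must be careful about which semigroup factor acts on which; re-deriving carefully gives $v(t,\x)=\int_0^t h(s)\,(P_s[\nabla B^\top\nabla P_{t-s}A])(\x)\,\d s$, matching the claimed $R_{AB}^\x(t,s)$. The differentiability of $\eps\mapsto u_\eps(t)$ and the interchange of $\partial_\eps$ with expectation are justified by a standard dominated-convergence argument using moment bounds on $\btheta^{t,\eps}$ (uniform in small $\eps$, from the confinement built into $\cB$) together with the linear growth of $A$.

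For part (b), the plan is the classical equilibrium fluctuation-dissipation computation. Fix $t\ge s$. Using the Markov property and the semigroup, write $\E_{\x\sim q}C_{AB}^\x(t,s)=\E_{\x\sim q}\big[P_s\big(B\cdot P_{t-s}A\big)(\x)\big]=\int q(\btheta)\,B(\btheta)\,(P_{t-s}A)(\btheta)\,\d\btheta$, the last step by stationarity of $q$ under $P_s$. Differentiating in $t$ and using $\partial_t P_{t-s}A=\L(P_{t-s}A)$,
\[
\partial_t\,\E_{\x\sim q}C_{AB}^\x(t,s)=\int q(\btheta)\,B(\btheta)\,\L(P_{t-s}A)(\btheta)\,\d\btheta
=\int \L^*(qB)(\btheta)\,(P_{t-s}A)(\btheta)\,\d\btheta,
\]
where $\L^*$ is the formal $L^2(\d\btheta)$-adjoint. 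Since $q=e^U$ and $\L$ is the generator of the $q$-reversible diffusion, $\L^*(qf)=q\,\L f$ for smooth $f$ (integration by parts; $\L^*q=0$), so $\L^*(qB)=q\,\L B$ but more usefully we integrate by parts directly: $\int qB\,\L h\,\d\btheta=\int q\,\L B\, h\,\d\btheta$ fails unless $\L B$ is integrable against $q|P_{t-s}A|$; instead use the Dirichlet-form identity $\int qB\,\L h\,\d\btheta=-\int q\,\nabla B^\top\nabla h\,\d\btheta$ with $h=P_{t-s}A$. This gives $\partial_t\E_{\x\sim q}C_{AB}^\x(t,s)=-\int q(\btheta)\,\nabla B^\top\nabla(P_{t-s}A)(\btheta)\,\d\btheta=-\E_{\x\sim q}\big[\nabla B^\top\nabla P_{t-s}A\big](\x)$, and by stationarity of $q$ under $P_s$ this equals $-\E_{\x\sim q}\big[P_s(\nabla B^\top\nabla P_{t-s}A)\big](\x)=-\E_{\x\sim q}R_{AB}^\x(t,s)$, as claimed.

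\textbf{Main obstacle.} The genuinely delicate part is part (a): making rigorous the differentiation $\partial_\eps|_{\eps=0}$ through the expectation and the Duhamel identity for unbounded $A,B$ with only linear growth and only polynomial control of the process. The safest route is to push everything through the linear PDE rather than pathwise, invoking the semigroup-regularity estimate (\ref{eq:semigroupregularity}) from \cite[Proposition A.2]{paper1} to know $\nabla P_rA$, $\nabla^2 P_rA$ are bounded on $[0,T]$, hence $\nabla B^\top\nabla P_{t-s}A\in\cA$ uniformly, and then to bound the remainder term in the perturbation expansion by a Grönwall argument in $t$ with constants controlled by the sup-norms of $\nabla^2 P_rA$ and the $\psi_2$/moment bounds on $\btheta^{t,\eps}$. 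In part (b) the only subtlety is the integration-by-parts step, which is legitimate because $q=e^U$ decays (from the convexity-at-infinity in $\cB$, via the quadratic-lower-bound on $-U$ outside a compact set) while $B$ has at most linear gradient growth and $P_{t-s}A$ has bounded gradient — so all boundary terms vanish and the integrals are finite.
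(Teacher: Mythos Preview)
Your proposal is correct and follows essentially the same route as the paper. Part (a) is simply cited from \cite[Proposition A.1]{paper1} in the paper, and your Duhamel/PDE-differentiation sketch is the natural content of such a result; part (b) is exactly the paper's argument—reduce to $\int q\,B\,P_{t-s}A$ by stationarity, differentiate using $\partial_t P_tA=\L P_tA$, and integrate by parts (the Dirichlet-form identity)—with the same regularity inputs from (\ref{eq:semigroupregularity}). One minor correction: the decay of $q=e^U$ is not guaranteed by membership in $\cB$ alone (which only bounds $\nabla^2 U,\nabla^3 U$); the integrability you need comes instead from the assumed finite third moments of $q$ together with the at-most-linear growth of $\nabla B$ and boundedness of $\nabla P_{t-s}A$.
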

\begin{proof}
Part (a) is an application of \cite[Proposition A.1]{paper1} of our companion
paper (specialized to this setting of dynamics with a fixed and non-adaptive
prior).

For part (b), we will use also from \cite[Proposition A.2]{paper1} that
for $A \in \cA$, we have $\partial_t P_tA=\L P_t A$.
Since the dynamics (\ref{eq:langevin_fixedq})
are Markovian with stationary distribution $q(\btheta)$, we have
\[\E_{\x \sim q} C_{AB}^\x(t,s)
=\E_{\x \sim q} [\E[A(\btheta^{t-s}) \mid \btheta^0=\x]B(\x)]
=\E_{\x \sim q} (B \cdot P_{t-s}A)[\x].\]
To differentiate under the integral in $t$, note that
$\partial_t(B \cdot P_{t-s}A)=B \cdot \L P_{t-s}A$.
By the uniform boundedness of
$\nabla P_tA,\nabla^2 P_tA$ over $t \in [0,T]$, the Lipschitz-continuity of
$\nabla B,\nabla U$, and finiteness of third moments of $q$,
we have that $(B \cdot \L P_t A)[\x]$ is uniformly integrable
with respect to $\x \sim q$ over $t \in [0,T]$.
Thus dominated convergence applies to show
\begin{align*}
\partial_t \E_{\x \sim q} C_{AB}^\x(t,s)
=\partial_t \E_{\x \sim q} (B \cdot P_{t-s}A)[\x]
=\E_{\x \sim q}(B \cdot \L P_{t-s} A)[\x].
\end{align*}
On the other hand, using also that both
$\nabla B^\top \nabla P_tA$ and $B \cdot \L P_tA$ are
integrable with respect to $\x \sim q$, we have via integration-by-parts
\begin{align*}
\E_{\x \sim q} R_{AB}^\x(t,s)&=\E_{\x \sim q}(\nabla B^\top \nabla
P_{t-s}A)[\x] =
\int q(\btheta)(\nabla B^\top \nabla P_{t-s}A)[\btheta]\d\btheta\\
&=-\int B(\btheta)
\sum_{j=1}^d \partial_j [q\,\partial_j (P_{t-s} A)](\btheta)\d\btheta\\
&= -\int B(\btheta)\,\big[q\,\Tr \nabla^2(P_{t-s} A) + \nabla (P_{t-s} A)^\top
\nabla q\big](\btheta)\d\btheta\\
&= -\int q(\btheta)B(\btheta)\,\big[\Tr \nabla^2(P_{t-s} A)+\nabla (P_{t-s}
A)^\top \nabla \log q\big](\btheta)\d\btheta\\
&= -\E_{\x \sim q}(B \cdot \L P_{t-s} A)[\x].
\end{align*}
\end{proof}

\begin{lemma}\label{lemma:BEL}
Suppose $U \in \cB$, and consider the solution
$(\btheta^t,\V^t) \in \R^d \times \R^{d \times d}$ to
\begin{equation}\label{eq:matrixV_sde}
\d\btheta^t=\nabla U(\btheta^t)\d t+\sqrt{2}\,\d \b^t,
\qquad \d\V^t=[\nabla^2 U(\btheta^t)]\V^t\d t
\end{equation}
with initial condition $(\btheta^0,\V^0)=(\x,\I)$,
adapted to the canonical filtration of the Brownian motion
$\{\b^t\}_{t \geq 0}$. Then for any $f \in \cA$ and any $t>0$,
\begin{align}
\nabla P_t f(\x)&=\E[\V^{t\top}\,\nabla f(\btheta^t)
\mid (\btheta^0,\V^0)=(\x,\I)]\label{eq:BEL1}\\
&=\frac{1}{t\sqrt{2}}\E\bigg[f(\btheta^t)\int_0^t {\V^s}^\top \d\b^s
\;\bigg|\;(\btheta^0,\V^0)=(\x,\I)\bigg]
\label{eq:BEL2}
\end{align}
\end{lemma}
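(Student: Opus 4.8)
The plan is to prove \eqref{eq:BEL1} by differentiating the stochastic flow of \eqref{eq:matrixV_sde} in its initial condition, identifying $\V^t$ with the Jacobian $\partial_\x\btheta^t$, and then to derive the Bismut--Elworthy--Li identity \eqref{eq:BEL2} from \eqref{eq:BEL1} by a Cameron--Martin/Girsanov perturbation of the drift, in the spirit of \cite{bismut1984large,elworthy1994formulae}.

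For \eqref{eq:BEL1}, write $\{\btheta^{t,\x}\}_{t \geq 0}$ for the solution of \eqref{eq:matrixV_sde} started at $\btheta^0=\x$. Since $U \in \cB$, the drift $\nabla U$ is $C^2$ with globally bounded $\nabla^2 U$, so standard flow theory (see also \cite[Proposition A.2]{paper1}) shows that $\x \mapsto \btheta^{t,\x}$ is a.s.\ $C^1$ with Jacobian solving the linear matrix ODE $\d(\partial_\x\btheta^{t,\x})=\nabla^2 U(\btheta^{t,\x})\,\partial_\x\btheta^{t,\x}\,\d t$, $\partial_\x\btheta^{0,\x}=\I$; hence $\partial_\x\btheta^{t,\x}=\V^t$. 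With $L=\sup_\btheta \|\nabla^2 U(\btheta)\|_\op<\infty$, Gr\"onwall's inequality gives the deterministic bounds $\|\V^t\|_\op \leq e^{Lt}$ and $\|(\V^t)^{-1}\|_\op \leq e^{Lt}$. Because $\nabla f$ is globally bounded for $f \in \cA$, the difference quotients of $\x \mapsto f(\btheta^{t,\x})$ are uniformly dominated, so one may differentiate under the expectation to obtain $\nabla P_t f(\x)=\E[(\partial_\x\btheta^{t,\x})^\top \nabla f(\btheta^{t,\x})]=\E[{\V^t}^\top \nabla f(\btheta^t)]$, which is \eqref{eq:BEL1}.

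For \eqref{eq:BEL2}, fix $t>0$ and $v \in \R^d$, take $h(s)=s/t$ on $[0,t]$, and set $g^s=\dot h(s)\,\V^s v=t^{-1}\V^s v$, an adapted process with the deterministic bound $\|g^s\|\leq t^{-1}e^{Lt}\|v\|$. For $\eps \in \R$ let $\{\btheta^{s,\eps}\}$ solve the same SDE with drift $\nabla U+\eps g^s$, driven by the same $\{\b^s\}$ and started at $\x$. Girsanov's theorem (Novikov's condition is immediate since $g$ is bounded) gives $\E[f(\btheta^{t,\eps})]=\E[f(\btheta^t)\,Z^\eps_t]$ with $Z^\eps_t=\exp\!\big(\tfrac{\eps}{\sqrt2}\int_0^t g^s\cdot\d\b^s-\tfrac{\eps^2}{4}\int_0^t\|g^s\|^2\,\d s\big)$. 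I would then differentiate both sides at $\eps=0$. On the left, the perturbed flow is differentiable in $\eps$ with $D^t:=\partial_\eps|_{\eps=0}\btheta^{t,\eps}$ solving $\d D^t=\nabla^2 U(\btheta^t)D^t\,\d t+g^t\,\d t$, $D^0=0$, whose variation-of-parameters solution is $D^t=\V^t\int_0^t(\V^s)^{-1}g^s\,\d s=\V^t\int_0^t\dot h(s)\,v\,\d s=\V^t v$; using $\nabla f$ bounded together with uniform-in-$\eps$ $L^2$ control of $\btheta^{t,\eps}$, and then \eqref{eq:BEL1}, this yields $\partial_\eps|_{\eps=0}\E[f(\btheta^{t,\eps})]=\E[\nabla f(\btheta^t)^\top\V^t v]=v^\top\nabla P_t f(\x)$. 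On the right, $\partial_\eps|_{\eps=0}Z^\eps_t=\tfrac1{\sqrt2}\int_0^t g^s\cdot\d\b^s$, and since $\tfrac{Z^\eps_t-1}{\eps}\to\tfrac1{\sqrt2}\int_0^t g^s\cdot\d\b^s$ in $L^2$ as $\eps\to0$ while $f(\btheta^t)$ is square-integrable (linear growth of $f$ and of $\nabla U$), $\partial_\eps|_{\eps=0}\E[f(\btheta^t)Z^\eps_t]=\tfrac1{\sqrt2}\E\big[f(\btheta^t)\int_0^t g^s\cdot\d\b^s\big]=\tfrac1{t\sqrt2}\,v^\top\E\big[f(\btheta^t)\int_0^t {\V^s}^\top\d\b^s\big]$. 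Equating the two sides and letting $v$ range over $\R^d$ gives \eqref{eq:BEL2}.

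I expect the main obstacle to be purely technical: justifying the two interchanges of $\partial_\eps$ with $\E$, which require uniform-in-$\eps$ $L^2$ bounds and stability estimates for the perturbed SDE $\btheta^{t,\eps}$ and $L^2$-convergence of the difference quotients of the stochastic exponential $Z^\eps_t$. These follow routinely from the global boundedness of $\nabla^2 U$, the consequent linear growth of $\nabla U$ (which yields all moments of $\btheta^t$, uniformly for small $\eps$), and the deterministic boundedness of $g$; the variation-of-parameters identity $D^t=\V^t v$ uses only the continuity of $s\mapsto\nabla^2U(\btheta^s)$ supplied by $U\in\cB$. Once these are in place, \eqref{eq:BEL1} and \eqref{eq:BEL2} follow by assembling the pieces above.
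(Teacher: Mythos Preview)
Your proof of \eqref{eq:BEL1} via flow differentiation matches the paper (which simply cites \cite[Eq.~(184)]{paper1}). For \eqref{eq:BEL2} you take a genuinely different route. The paper applies It\^o's formula to $(s,\btheta)\mapsto P_{t-s}f(\btheta)$ to obtain the martingale representation $f(\btheta^t)=P_tf(\x)+\sqrt{2}\int_0^t \nabla P_{t-s}f(\btheta^s)^\top\d\b^s$, multiplies by the martingale $\int_0^t\V^{s\top}\d\b^s$, takes expectations, and then invokes \eqref{eq:BEL1} with $P_{t-s}f\in\cA$ in place of $f$ to recognize $\E[\V^{s\top}\nabla P_{t-s}f(\btheta^s)]=\nabla P_s(P_{t-s}f)(\x)=\nabla P_tf(\x)$ for every $s\in[0,t]$, so the $s$-integral is $t\cdot\nabla P_tf(\x)$. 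This avoids Girsanov and differentiation in an auxiliary parameter altogether, needing only the semigroup regularity recorded in \eqref{eq:semigroupregularity}. Your Girsanov/Cameron--Martin argument is the classical Bismut derivation and works in principle, but one step is imprecise as written: since $g^s=t^{-1}\V^sv$ is a path functional of $\btheta^\cdot$ (through $\V^s$), the identity $\E[f(\btheta^{t,\eps})]=\E[f(\btheta^t)Z^\eps_t]$ holds only if $\btheta^{t,\eps}$ is defined self-consistently, with the perturbation $t^{-1}W^sv$ computed along the \emph{perturbed} path ($\d W^s=\nabla^2U(\btheta^{s,\eps})W^s\,\d s$), not along the unperturbed $\btheta^\cdot$ as you set it up. The discrepancy is $O(\eps^2)$, so the derivative at $\eps=0$ is unaffected and your variation-of-parameters computation $D^t=\V^tv$ remains correct; but the Girsanov step as stated needs this fix. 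The paper's route is shorter and sidesteps this subtlety entirely; yours gives the perturbation-theoretic interpretation more transparently and is closer to Bismut's original argument.
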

\begin{proof}
The first identity (\ref{eq:BEL1}) is the statement of \cite[Eq.\ (184)]{paper1}
(again specialized to this setting of dynamics with a fixed and non-adaptive
prior).

For the second identity (\ref{eq:BEL2}), we use from \cite[Proposition
A.2]{paper1} that for $f \in \cA$ and any fixed $t \geq 0$, 
$(s,\btheta) \mapsto P_{t-s}f(\btheta)$ is $C^1$ in $s \in [0,t]$ and
$C^2$ in $\btheta$, with $\partial_s P_{t-s}f(\btheta)={-}\L
P_{t-s}f(\btheta)$. Then It\^o's formula applied to $g(s,\btheta)=P_{t-s}f(\btheta)$ gives
\begin{align*}
f(\btheta^t)=g(t,\btheta^t)&=g(0,\btheta^0)+\int_0^t
\partial_s g(s,\btheta^s)\d s
+\int_0^t \nabla_{\btheta} g(s,\btheta^s)^\top \d\btheta^s
+\int_0^t \Tr \nabla_{\btheta}^2 g(s,\btheta^s)\d s\\
&=P_t f(\x)
+\int_0^t (\partial_s+\L)P_{t-s}f(\btheta^s)\d s
+\sqrt{2}\int_0^t \nabla P_{t-s}f(\btheta^s)^\top \d\b^s\\
&=P_t f(\x)+\sqrt{2}\int_0^t \nabla P_{t-s}f(\btheta^s)^\top \d\b^s.
\end{align*}
Since $\nabla^2 U$ is bounded, $\{\V^t\}_{t \in [0,T]}$ is bounded over finite
time horizons, so $\int_0^t \V^{s\top} \d\b^s$ is a martingale.
Multiplying both sides by this martingale and taking expectations gives
\[\E\bigg[ f(\btheta^t)\int_0^t {\V^s}^\top \d\b^s
\;\bigg|\;(\btheta^0,\V^0)=(\x,\I)\bigg]
=\sqrt{2}\int_0^t \E\big[{\V^s}^\top \nabla
P_{t-s}f(\btheta^s) \mid (\btheta^0,\V^0)=(\x,\I)]\d s.\]
Since $P_t f \in \cA$, we may apply (\ref{eq:BEL1}) with $P_{t-s}f$ in place
of $f$ to get
\[\int_0^t \E\big[{\V^s}^\top \nabla
P_{t-s}f(\btheta^s) \mid (\btheta^0,\V^0)=(\x,\I)]\d s
=\int_0^t \nabla P_s(P_{t-s}f)(\x)\d s
=t \cdot \nabla P_t f(\x).\]
Substituting above and rearranging shows (\ref{eq:BEL2}).
\end{proof}

\subsubsection{Interpretation of the DMFT correlation and
response}\label{sec:CRmatrixdef}

We remark that under Assumption \ref{assump:prior}(a), the log-posterior density
$\log \sP_g(\btheta \mid \X,\y)$ belongs to the function class $\cB$, and
$\sP_g(\btheta \mid \X,\y)$ is the unique stationary distribution of
(\ref{eq:langevinfixedprior}).
Fixing $\X,\y,\btheta^*$, consider the coordinate functions
\begin{equation}\label{eq:coordfuncs}
e_j(\btheta)=\theta_j,\qquad e_j^*(\btheta)=\theta_j^*,
\qquad x_i(\btheta)=\frac{\sqrt{\delta}}{\sigma^2}([\X\btheta]_i-y_i).
\end{equation}
(Here, $e_j^*$ is a constant function not depending on $\btheta$.)
We define their associated correlation and response matrices
\begin{equation}\label{eq:CRmatrices}
\begin{gathered}
\bC_\theta(t,s)=(C_{e_je_k}^{\btheta_0}(t,s))_{j,k=1}^d,
\quad \bC_\theta(t,*)=(C_{e_je_k^*}^{\btheta_0}(t,0))_{j,k=1}^d,
\quad \bR_\theta(t,s)=(R_{e_je_k}^{\btheta_0}(t,s))_{j,k=1}^d\\
\bC_\eta(t,s)=(C_{x_jx_k}^{\btheta_0}(t,s))_{j,k=1}^n,
\qquad \bR_\eta(t,s)=(R_{x_jx_k}^{\btheta_0}(t,s))_{j,k=1}^n
\end{gathered}
\end{equation}
where $C_{AB}^{\btheta_0}(t,s)$ and $R_{AB}^{\btheta_0}(t,s)$
are the correlation and response functions as defined
in Lemma \ref{lemma:langevin_fdt_equi} for these coordinate functions,
under the dynamics (\ref{eq:langevinfixedprior}) with fixed prior $g(\cdot)$
and the given initial condition $\btheta^0$ of Assumption \ref{assump:model}.

The following result is a direct application of \cite[Theorem 2.8]{paper1}.

\begin{theorem}[\cite{paper1}]\label{thm:dmft_response}
Suppose Assumptions \ref{assump:model} and \ref{assump:prior}(a) hold, and let
$C_\theta,C_\eta,R_\theta,R_\eta$ be the correlation and response functions of
the DMFT system in Theorem \ref{thm:dmft_approx}(a) approximating the dynamics
(\ref{eq:langevinfixedprior}). Then almost surely as $n,d \to \infty$,
\[\begin{gathered}
d^{-1}\Tr \bC_\theta(t,s) \to C_\theta(t,s),
\qquad d^{-1}\Tr \bC_\theta(t,*) \to C_\theta(t,*),
\qquad n^{-1}\Tr \bC_\eta(t,s) \to C_\eta(t,s)\\
d^{-1}\Tr \bR_\theta(t,s) \to R_\theta(t,s),
\qquad n^{-1}\Tr \bR_\eta(t,s) \to R_\eta(t,s).
\end{gathered}\]
\end{theorem}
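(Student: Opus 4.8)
The plan is to obtain this as a direct specialization of the joint high-dimensional limit established in the companion paper \cite[Theorem 2.8]{paper1}, which — within the Approximate Message Passing / iterative Gaussian conditioning framework of \cite{celentano2021high,gerbelot2024rigorous} — tracks the Langevin trajectory together with the correlation and response (Jacobian) matrices of (\ref{eq:CRmatrices}). The substantive work thus reduces to verifying that Assumptions \ref{assump:model} and \ref{assump:prior}(a) are a special case of the hypotheses of \cite{paper1}. The key points are that, under Assumption \ref{assump:prior}(a) and the sub-Gaussian design of Assumption \ref{assump:model}(a--b), the log-posterior $U(\btheta)=-\tfrac{1}{2\sigma^2}\|\y-\X\btheta\|_2^2+\sum_j\log g(\theta_j)$ lies almost surely (for all large $n,d$) in the class $\cB$ of (\ref{eq:classB}) — its Hessian $\nabla^2U=-\sigma^{-2}\X^\top\X+\diag((\log g)''(\theta_j))$ is globally bounded because $\|\X^\top\X\|_\op$ is bounded a.s., and $\nabla^3U=\diag((\log g)'''(\theta_j))$ is bounded and H\"older — and that the coordinate functions $e_j,e_j^*,x_i$ of (\ref{eq:coordfuncs}) belong to $\cA$. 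This is the same reduction already performed for Theorem \ref{thm:dmft_approx} in Appendix \ref{appendix:globalconditions}.

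For the correlation functions one can additionally give a short self-contained argument on top of Theorem \ref{thm:dmft_approx}(a). The $W_2$ convergence in (\ref{eq:dmftW2convergence}) already yields, almost surely, $\tfrac1d\sum_{j=1}^d\theta_j^t\theta_j^s\to\E[\theta^t\theta^s]=C_\theta(t,s)$, $\tfrac1d\sum_j\theta_j^*\theta_j^t\to C_\theta(t,*)$, and $\tfrac1n\sum_i x_i(\btheta^t)x_i(\btheta^s)\to C_\eta(t,s)$, since the relevant test functions have at most quadratic growth in the path norm. It then remains to replace these random empirical averages by the conditional expectations $d^{-1}\Tr\bC_\theta(t,s)=\tfrac1d\sum_j\E[\theta_j^t\theta_j^s\mid\X,\y,\btheta^0]$, etc. This is a concentration statement over the driving Brownian motion $\{\b^u\}_{u\le t}$: since $U\in\cB$, a Gr\"onwall estimate on $\d\V^u=[\nabla^2U(\btheta^u)]\V^u\,\d u$ bounds $\sup_{u\le t}\|\partial\btheta^u/\partial\b^r\|_\op$, so the map $\{\b^u\}\mapsto\tfrac1d\sum_j\theta_j^t\theta_j^s$ has Wiener-space gradient of order $d^{-1/2}$ times an $L^2$-bounded factor; a truncated Gaussian concentration inequality, together with uniform moment bounds for the finite-dimensional trajectory (analogous to the DMFT bound of Lemma \ref{lem:dmft_theta_uniform}), then yields the almost-sure convergence to the conditional mean.

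For the response functions the route is different, since the fluctuation--dissipation relation of Lemma \ref{lemma:langevin_fdt_equi}(b) holds only for a stationary initialization and so cannot be invoked from the out-of-equilibrium start $\btheta^0\sim g_0^{\otimes d}$. Instead I would apply the Bismut--Elworthy--Li identity (\ref{eq:BEL1}) of Lemma \ref{lemma:BEL} to rewrite $d^{-1}\Tr\bR_\theta(t,s)=\E\big[\tfrac1d\Tr\V_s^t\big]$ and, analogously, $n^{-1}\Tr\bR_\eta(t,s)=\tfrac{\delta}{\sigma^4}\E\big[\tfrac1n\Tr(\X^\top\X\,\V_s^t)\big]$, where $\V_s^{\cdot}$ is the Jacobian flow $\d\V_s^u=[\nabla^2U(\btheta^u)]\V_s^u\,\d u$ for $u\ge s$ with $\V_s^s=\I$. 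These normalized Jacobian traces are exactly the quantities that \cite[Theorem 2.8]{paper1} identifies, in the $n,d\to\infty$ limit, with the DMFT response functions $R_\theta(t,s)=\E[\partial\theta^t/\partial u^s]$ and $R_\eta(t,s)=\tfrac{\delta}{\sigma^2}\E[\partial\eta^t/\partial w^s]$ solving (\ref{def:response_theta}) and (\ref{def:response_g}) — the diagonal part of $\nabla^2U$ producing the $\partial_\theta^2\log g$ term, and the $\X^\top\X$ part producing, through the Gaussian-conditioning analysis, both the $-\delta/\sigma^2$ self-energy term and the memory integral.

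The main obstacle is that essentially all of the technical content — the AMP / iterative Gaussian conditioning analysis producing the joint high-dimensional limit of the trajectory together with the correlation and response matrices — is precisely \cite[Theorem 2.8]{paper1}; the work in the present paper is to check the hypothesis reduction above and to record the elementary consequences (the normalization of the diagonal traces and the upgrade to almost-sure convergence). A secondary delicate point is that the observables $e_j,x_i$ have linear growth, which obstructs a naive Lipschitz concentration bound and necessitates the truncation arguments indicated above.
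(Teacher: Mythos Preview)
Your proposal is correct and matches the paper's approach: the paper states this theorem without proof, noting only that it is ``a direct application of \cite[Theorem 2.8]{paper1}.'' Your additional sketches --- the concentration argument for the correlation functions via (\ref{eq:dmftW2convergence}) and the Bismut--Elworthy--Li reformulation for the response functions --- go beyond what the paper records, but the core reduction to \cite[Theorem 2.8]{paper1} is exactly what is done here.
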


\subsection{Posterior bounds and Wasserstein-2
convergence}\label{sec:posteriorbounds}

Fixing the prior $g(\cdot)$ and the data $(\X,\y)$, let us write for convenience
\begin{equation}\label{eq:qshorthand}
q(\btheta)=\sP_g(\btheta \mid \X,\y)
\propto \exp\bigg({-}\frac{1}{2\sigma^2}\|\y-\X\btheta\|_2^2
+\sum_{j=1}^d \log g(\theta_j)\bigg)
\end{equation}
for the posterior density. The Langevin diffusion (\ref{eq:langevinfixedprior})
with this fixed prior is then
\begin{equation}\label{eq:langevin_fixedq}
\d \btheta^t=\nabla \log q(\btheta^t)\d t+\sqrt{2}\,\d\b^t.
\end{equation}
We will use the notations
\begin{align*}
\langle f(\btheta) \rangle=\E_{\btheta \sim q}[f(\btheta)],
\qquad
P_tf(\x)=\langle f(\btheta^t) \rangle_{\x}=\E[f(\btheta^t)
\mid \btheta^0=\x]
\end{align*}
where the former is an average under the posterior law $q(\cdot)$ conditional on
$\X,\y$, and the latter is an average over $\{\btheta^t\}_{t \geq 0}$
solving (\ref{eq:langevin_fixedq}) conditional on $\X,\y$ and also
the initial condition $\btheta^0=\x$. We write as shorthand
\[P_t(\x)=\langle\btheta^t \rangle_\x=(P_t e_1,\ldots,P_t e_d)[\x] \in \R^d.\]
We reserve $\langle f(\btheta^t) \rangle$ for the expectation conditional on
$\X,\y$ but averaging also over $\btheta^0$.

For constants $C_0,C_\LSI>0$, define the $(\X,\btheta^*,\beps)$-dependent event
\begin{align}
\event(C_0,C_\LSI)&=\Big\{\|\X\|_\op \leq C_0,\;\|\btheta^*\|_2^2,\|\beps\|_2^2
\leq C_0 d,\;
\text{the LSI } (\ref{eq:LSI}) \text{ holds for } q(\btheta)\Big\}.
\end{align}
Note that under Assumptions \ref{assump:model} and \ref{assump:LSI}(a),
this event holds almost surely for all large $n,d$ for some
sufficiently large choices of constants $C_0,C_\LSI>0$.
All subsequent constants $C,C',c,c'>0$ in this section may change from
instance to instance, and are dimension-free and depend only on
\begin{equation}\label{eq:constantdependence}
C_0,C_{\text{LSI}} \text{ above},\;\delta,\sigma^2,g_* \text{ of Assumption \ref{assump:model}},
\; C,c_0,r_0 \text{ of Assumption \ref{assump:prior}(a)}, \text{ and }
\log g(0).
\end{equation}

We record the following elementary bounds for the posterior expectation
$\langle f(\btheta) \rangle=\E_{\btheta \sim q} f(\btheta)$.

\begin{lemma}\label{lemma:elementarybounds}
Suppose Assumption \ref{assump:prior}(a) holds. Then on
the event where $\|\X\|_\op \leq C_0$, there exists a constant $C>0$ for which
\begin{align}
\langle \|\btheta\|_2^2 \rangle &\leq
C(d+\|\y\|_2^2),\label{eq:posteriormeansq}\\
\langle \|\nabla \log q(\btheta)\|_2^2 \rangle &\leq C(d+\|\y\|_2^2),
\label{eq:posteriorlogqmeansq}\\
\|\nabla^2 \log q(\btheta)\|_\mathrm{op} &\leq C.\label{eq:negativeCD}
\end{align}
In particular, on $\event(C_0,C_\LSI)$, for a constant $C'>0$ we have
$\langle \|\btheta\|_2^2 \rangle \leq C'd$ and
$\langle \|\nabla \log q(\btheta)\|_2^2 \rangle \leq C'd$.
\end{lemma}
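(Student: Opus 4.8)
The plan is to establish the three displayed inequalities (\ref{eq:posteriormeansq})--(\ref{eq:negativeCD}) in turn, with the ``in particular'' claim then following immediately upon noting that on $\event(C_0,C_\LSI)$ one has $\|\y\|_2^2=\|\X\btheta^*+\beps\|_2^2\le 2C_0^2\|\btheta^*\|_2^2+2\|\beps\|_2^2\le Cd$. I would begin with (\ref{eq:negativeCD}), which is purely pointwise: a direct computation gives $\nabla^2\log q(\btheta)=-\sigma^{-2}\X^\top\X+\diag\big((\log g)''(\theta_j)\big)_{j=1}^d$, hence $\|\nabla^2\log q(\btheta)\|_\op\le\sigma^{-2}\|\X\|_\op^2+\sup_\theta|(\log g)''(\theta)|\le C$, using $\|\X\|_\op\le C_0$ and the uniform bound on $(\log g)''$ from Assumption \ref{assump:prior}(a).

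For (\ref{eq:posteriormeansq}), the one substantive ingredient is a scalar estimate of the form $\theta\,(\log g)'(\theta)\le C_1-\tfrac{c_0}{2}\theta^2$ valid for all $\theta\in\R$. I would obtain this by integrating the convexity-at-infinity bound $-(\log g)''(\theta)\ge c_0$ on $\{|\theta|\ge r_0\}$ against the linear-growth bound on $(\log g)'$, yielding $(\log g)'(\theta)\le C(1+r_0)-c_0(\theta-r_0)$ for $\theta\ge r_0$ and the mirror statement for $\theta\le-r_0$; multiplying by $\theta$, completing the square, and absorbing the compact region $|\theta|\le r_0$ (where $|\theta(\log g)'(\theta)|\le r_0C(1+r_0)$) into the constant. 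The same bookkeeping shows $-\log g(\theta)\ge\tfrac{c_0}{4}\theta^2-C$ for large $|\theta|$, so $q$ has a Gaussian-type tail, which legitimizes the integration-by-parts (equivalently, stationarity) identity $\langle\btheta^\top\nabla\log q(\btheta)\rangle=-d$. Substituting $\nabla\log q(\btheta)=\sigma^{-2}\X^\top(\y-\X\btheta)+\big((\log g)'(\theta_j)\big)_j$ and applying the scalar bound coordinatewise gives $-d\le\sigma^{-2}\langle\btheta^\top\X^\top\y\rangle-\sigma^{-2}\langle\|\X\btheta\|_2^2\rangle+C_1 d-\tfrac{c_0}{2}\langle\|\btheta\|_2^2\rangle$; bounding the cross term by $\sigma^{-2}\langle\btheta^\top\X^\top\y\rangle\le\tfrac{1}{2\sigma^2}\langle\|\X\btheta\|_2^2\rangle+\tfrac{1}{2\sigma^2}\|\y\|_2^2$ and discarding the nonnegative term $\tfrac{1}{2\sigma^2}\langle\|\X\btheta\|_2^2\rangle$ then yields $\tfrac{c_0}{2}\langle\|\btheta\|_2^2\rangle\le C(d+\|\y\|_2^2)$, which is (\ref{eq:posteriormeansq}). (Note this step uses no operator-norm control; that enters only in the other two parts.)

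For (\ref{eq:posteriorlogqmeansq}), I would use the crude pointwise bound $\|\nabla\log q(\btheta)\|_2^2\le 2\sigma^{-4}\|\X^\top(\y-\X\btheta)\|_2^2+2\sum_j\big((\log g)'(\theta_j)\big)^2\le C\|\X\|_\op^2\big(\|\y\|_2^2+\|\btheta\|_2^2\big)+C\big(d+\|\btheta\|_2^2\big)$, using $\|\X\|_\op\le C_0$ and $|(\log g)'(\theta)|\le C(1+|\theta|)$; taking $\langle\cdot\rangle$ and inserting (\ref{eq:posteriormeansq}) just proved gives the claim. (Alternatively one may use the Fisher-information identity $\langle\|\nabla\log q\|_2^2\rangle=-\langle\Tr\nabla^2\log q\rangle\le\sigma^{-2}\Tr\X^\top\X+Cd\le Cd$, which is slightly sharper, but the elementary route already suffices.) I do not anticipate a genuine obstacle: the only points requiring care are the uniform scalar estimate $\theta(\log g)'(\theta)\le C-\tfrac{c_0}{2}\theta^2$ (from splitting into $|\theta|\ge r_0$ and $|\theta|<r_0$) and the justification of the integration-by-parts identity via the Gaussian-type decay of $q$; everything else is Cauchy--Schwarz bookkeeping with the derivative bounds of Assumption \ref{assump:prior}(a).
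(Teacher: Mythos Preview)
Your proposal is correct. The treatment of (\ref{eq:negativeCD}) and (\ref{eq:posteriorlogqmeansq}) matches the paper's, but for (\ref{eq:posteriormeansq}) you take a genuinely different route. The paper argues via the partition function: it shows $\log g(\theta)\le C'-c'\theta^2$ so that the prior is sub-Gaussian, bounds $-\log Z\le C'(d+\|\y\|_2^2)$ by Jensen, and then uses the crude change-of-measure estimate $\langle\|\btheta\|_2^2\1\{\|\btheta\|_2^2\ge M\}\rangle\le Z^{-1}\E_g[\|\btheta\|_2^2\1\{\|\btheta\|_2^2\ge M\}]$ together with the prior tail to choose $M=C(d+\|\y\|_2^2)$. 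You instead use the stationarity identity $\langle\btheta^\top\nabla\log q(\btheta)\rangle=-d$ and the scalar drift bound $\theta(\log g)'(\theta)\le C_1-\tfrac{c_0}{2}\theta^2$, which is the Lyapunov-function argument in disguise. Your route is shorter and, as you note, avoids any use of $\|\X\|_\op$ in this step (the quadratic form $\|\X\btheta\|_2^2$ cancels against itself); it also directly yields the $\sigma^2$-dependence $\langle\|\btheta\|_2^2\rangle\le Cd+(C/\sigma^2)\|\y\|_2^2$ that the paper records separately in the remark following the lemma. The paper's truncation approach is slightly more robust in that it only needs a tail bound on $g$ rather than the pointwise drift estimate, but for the stated assumptions your argument is cleaner.
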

\begin{proof}
(\ref{eq:negativeCD}) is immediate from the form of $\log
q(\btheta)$, the bound $\|\X\|_\op \leq C_0$,
and Assumption \ref{assump:prior}(a).

For (\ref{eq:posteriormeansq}), write $\E_g,\P_g$ for the expectation
and probability over the prior $\theta \sim g$ and
$\theta_j \overset{iid}{\sim} g$. We note that under
Assumption \ref{assump:prior}(a), we have
\[\log g(\theta)=\log g(0)+\theta(\log g)'(0)+\int_0^\theta \int_0^x (\log
g)''(u)\d u\,\d x \leq C(1+|\theta|)-(c_0/2)(|\theta|-r_0)^2
\leq C'-c'\theta^2\]
for some constants $C,C',c'>0$ depending only on the constants of
Assumption \ref{assump:prior}(a) and on $\log g(0)$. Then
$g$ is subgaussian, and for some constants $C,c>0$
(c.f.\ \cite[Eq.\ (3.1)]{vershynin2018high})
\begin{equation}\label{eq:priortail}
\E_g\|\btheta\|_2^2 \leq Cd,
\qquad \P_g[\|\btheta\|_2^2-\E_g\|\btheta\|_2^2 \geq du]
\leq Ce^{-cdu} \text{ for all } u \geq 1.
\end{equation}
Write
\[q(\btheta)=\frac{1}{Z}\exp\bigg({-}\frac{\|\y-\X\btheta\|_2^2}{2\sigma^2}\bigg)
\prod_{j=1}^d g(\theta_j),
\qquad Z=\E_g
\bigg[\exp\bigg({-}\frac{\|\y-\X\btheta\|_2^2}{2\sigma^2}\bigg)\bigg].\]
We have by Jensen's inequality
${-}\log Z \leq \E_g[\|\y-\X\btheta\|_2^2/2\sigma^2]
\leq C(d+\|\y\|_2^2+\E_g \|\btheta\|_2^2)
\leq C'(d+\|\y\|_2^2)$. Then for any $M>0$, also bounding the
exponential from above by 1,
\[\Big\langle\|\btheta\|_2^2 \1\{\|\btheta\|_2^2 \geq M\}\Big\rangle
\leq \frac{1}{Z}\,\E_g\Big[\|\btheta\|_2^2 \1\{\|\btheta\|_2^2 \geq M\}\Big]
\leq e^{C'(d+\|\y\|_2^2)}\E_g\Big[\|\btheta\|_2^2 \1\{\|\btheta\|_2^2 \geq M\}\Big].\]
Integrating the tail bound (\ref{eq:priortail}) shows that this is less than
$d+\|\y\|_2^2$
for $M=C(d+\|\y\|_2^2)$ and a sufficiently large choice of constant
$C>0$. Thus
\[\langle \|\btheta\|_2^2 \rangle
\leq M+\Big \langle \|\btheta\|_2^2 \1\{\|\btheta\|_2^2 \geq M\} \Big\rangle
\leq C'(d+\|\y\|_2^2).\]
This shows (\ref{eq:posteriormeansq}). Since
$\nabla \log q(\btheta)$ is $C$-Lipschitz by (\ref{eq:negativeCD}), and
$\|\nabla \log q(0)\|_2^2 \leq 2\|\X^\top\y/\sigma^2\|_2^2
+2d \cdot (\log g)'(0)^2 \leq C(d+\|\y\|_2^2)$, the statement
(\ref{eq:posteriorlogqmeansq}) follows from (\ref{eq:posteriormeansq}).
\end{proof}

\begin{remark}
In a later proof, we will require that (\ref{eq:posteriormeansq}) holds in a
form
\begin{equation}\label{eq:posteriormeansqsigmasq}
\langle \|\btheta\|_2^2 \rangle \leq Cd+(C/\sigma^2)\|\y\|_2^2
\end{equation}
for all large noise variances $\sigma^2>0$, where $C>0$ is a constant not
depending on $\sigma^2$. This may be seen from the above arguments:
Writing now $C,C'>0$ for constants not depending on $\sigma^2$,
the above shows ${-}\log Z \leq (C'/\sigma^2)(d+\|\y\|_2^2)$,
and hence $\langle\|\btheta\|_2^2 \1\{\|\btheta\|_2^2 \geq M\}\rangle \leq
d+\|\y\|_2^2/\sigma^2$ for $M=Cd+(C/\sigma^2)\|\y\|_2^2$ with a sufficiently
large choice of constant $C>0$.
\end{remark}

\begin{lemma}\label{lem:W2contraction}
Suppose Assumption \ref{assump:prior}(a) holds.
Let $\{\btheta^t\}_{t \geq 0}$ be the solution to (\ref{eq:langevin_sde}) with
initial condition $\btheta^0 \sim q_0$, let $q_t(\btheta^t)$
be the law of $\btheta^t$, and let $W_2(\cdot)$ the Wasserstein-2 distance,
all conditional on $\X,\btheta^*,\beps$ (and averaging over $\btheta^0$).
Then on the event $\event(C_0,C_\LSI)$, there exists a constant $C>0$ such that
\begin{equation}\label{eq:W2decay}
W_2(q_t,q) \leq Ce^{-(2/C_\LSI)t}
\,W_2(q_0,q) \text{ for all } t \geq 0.
\end{equation}
\end{lemma}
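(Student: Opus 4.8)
The plan is to combine a short-time synchronous coupling with an entropy-dissipation argument driven by the LSI. Throughout, one works on the event $\event(C_0,C_\LSI)$ and may assume $W_2(q_0,q)<\infty$, since otherwise (\ref{eq:W2decay}) is vacuous. Write $V=-\log q$; by Assumption \ref{assump:prior}(a) and $\|\X\|_\op\le C_0$ one has the dimension-free lower bound
$\nabla^2 V=\sigma^{-2}\X^\top\X-\diag\big((\log g)''(\theta_j)\big)\succeq-K\,\I$
for a constant $K>0$ depending only on the quantities in (\ref{eq:constantdependence}), and $q\propto e^{-V}$ with $V\in\cB$ is the unique stationary law of (\ref{eq:langevin_fixedq}).

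First I would treat the fixed window $t\in[0,1]$ by a synchronous coupling: run (\ref{eq:langevin_fixedq}) from $\btheta^0\sim q_0$ and $\tilde\btheta^0\sim q$ with the same Brownian motion and an optimal $W_2$-coupling of $(\btheta^0,\tilde\btheta^0)$. The Brownian increments cancel in $\xi^t=\btheta^t-\tilde\btheta^t$, and $\nabla^2 V\succeq-K\I$ gives $\tfrac{\d}{\d t}\|\xi^t\|_2^2=2\,\xi^t\cdot\big(\nabla\log q(\btheta^t)-\nabla\log q(\tilde\btheta^t)\big)\le 2K\|\xi^t\|_2^2$, so by Gronwall and stationarity of $q$, $W_2(q_t,q)\le e^{Kt}W_2(q_0,q)$ for all $t\ge0$; in particular $W_2(q_1,q)\le e^K W_2(q_0,q)<\infty$.

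Next I would pass from $W_2$ to relative entropy and back. Since the Langevin flow is the Wasserstein gradient flow of $\mu\mapsto\DKL(\mu\|q)$ (\cite{jordan1998variational}) and $\nabla^2 V\succeq-K\I$, this functional is $(-K)$-geodesically convex; the corresponding $\mathrm{EVI}_{-K}$ regularization estimate then yields the dimension-free bound $\DKL(q_t\|q)\le\tfrac{K}{2(1-e^{-Kt})}\,W_2(q_0,q)^2$, hence $\DKL(q_1\|q)\le C_1 W_2(q_0,q)^2$ with $C_1$ depending only on $K$. For $t\ge1$, $q_t$ has a smooth positive density (additive noise, $C^2$ drift), $\tfrac{\d}{\d t}\DKL(q_t\|q)=-I(q_t\|q)$ for the relative Fisher information $I$, and applying the LSI (\ref{eq:LSI}) for $q$ to $f=\sqrt{q_t/q}$ gives $I(q_t\|q)\ge(4/C_\LSI)\DKL(q_t\|q)$, whence $\DKL(q_t\|q)\le e^{-4(t-1)/C_\LSI}\DKL(q_1\|q)$. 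Finally the Otto--Villani theorem turns the LSI (\ref{eq:LSI}) into $W_2(\mu,q)^2\le C_\LSI\DKL(\mu\|q)$. Chaining these, for $t\ge1$,
\[W_2(q_t,q)^2\le C_\LSI\DKL(q_t\|q)\le C_\LSI C_1\,e^{-4(t-1)/C_\LSI}\,W_2(q_0,q)^2,\]
so $W_2(q_t,q)\le C' e^{-(2/C_\LSI)t}W_2(q_0,q)$; for $t\in[0,1]$ the synchronous-coupling bound $W_2(q_t,q)\le e^K W_2(q_0,q)$ gives the same after enlarging $C'$ (since $e^{-(2/C_\LSI)t}\ge e^{-2/C_\LSI}$ there). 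Setting $C=C'$ proves (\ref{eq:W2decay}).

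The main obstacle — and the reason for the two-phase structure — is obtaining the prefactor $W_2(q_0,q)$ rather than $\sqrt{\DKL(q_0\|q)}$: the latter may be infinite, and estimating it crudely via moment bounds on $q_0$ and the Gaussian-type tails of $q$ from Lemma \ref{lemma:elementarybounds} would introduce a spurious dimension-dependent term of order $\sqrt d$. Routing through the $\mathrm{EVI}_{-K}$ regularization estimate at the fixed time $1$ is exactly what supplies a bound on $\DKL(q_1\|q)$ that is both dimension-free and vanishing as $q_0\to q$; the one point requiring care is that $K$, and hence $C_1$ and the final constant $C$, are independent of $n,d$, which is precisely the content of $|(\log g)''|\le C$ from Assumption \ref{assump:prior}(a) and $\|\X\|_\op\le C_0$ on $\event(C_0,C_\LSI)$.
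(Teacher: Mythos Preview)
Your proof is correct and follows essentially the same two-phase strategy as the paper: a synchronous coupling with Gr\"onwall for $t\in[0,1]$, then for $t\geq 1$ a Wasserstein-to-entropy regularization estimate under the curvature lower bound (the paper cites \cite[Lemma 4.2]{bobkov2001hypercontractivity} where you invoke the equivalent $\mathrm{EVI}_{-K}$ bound), followed by LSI entropy decay and the $T_2$/Otto--Villani inequality. Your identification of the main obstacle---obtaining the $W_2(q_0,q)$ prefactor rather than $\sqrt{\DKL(q_0\|q)}$, which forces the regularization step at a fixed positive time---is exactly right and matches the paper's reasoning.
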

\begin{proof}
For $t \in [0,1]$ we may apply a simple synchronous coupling and
Gr\"onwall argument: Let $\{\btheta^t\}_{t \geq 0}$ and
$\{\tilde\btheta^t\}_{t \geq 0}$ be the solutions of (\ref{eq:langevin_sde})
with initial conditions $\btheta^0 \sim q_0$ and $\tilde\btheta^0 \sim q$,
coupled by the same Brownian motion. Then
$\frac{\d}{\d t}\|(\btheta^t-\tilde\btheta^t)\|_2 \leq \|\tfrac{\d}{\d t}(\btheta^t-\tilde\btheta^t)\|_2
=\|\nabla \log q(\btheta^t)-\nabla \log q(\tilde \btheta^t)\|_2
\leq C\|\btheta^t-\tilde \btheta^t\|_2$ by definition
of the Langevin equation (\ref{eq:langevin_sde}) and by (\ref{eq:negativeCD}).
Hence
\begin{equation}\label{eq:thetadiffbound}
\|\btheta^t-\tilde\btheta^t\|_2
\leq e^{Ct}\|\btheta^0-\tilde\btheta^0\|_2.
\end{equation}
Letting $(\btheta^0,\tilde\btheta^0)$ be the coupling of $(q_0,q)$ for which
$\langle \|\btheta^0-\tilde\btheta^0\|_2^2 \rangle=W_2(q_0,q)^2$, we have that
$(\btheta^t,\tilde\btheta^t)$ is a coupling of $(q_t,q)$, so
\[W_2(q_t,q)^2 \leq \langle \|\btheta^t-\tilde\btheta^t\|_2^2 \rangle
\leq e^{2Ct}\langle \|\btheta^0-\tilde \btheta^0\|_2^2 \rangle
=e^{2Ct}W_2(q_0,q)^2.\]
Thus $W_2(q_t,q) \leq C'W_2(q_0,q)$ for all $t \in [0,1]$, which implies
(\ref{eq:W2decay}) for $t \in [0,1]$ and some $C>0$.

For $t \geq 1$, under the curvature-dimension lower bound
$-\nabla^2 \log q(\btheta) \succeq {-}L\operatorname{Id}$ for a constant 
$L>0$ that is implied by (\ref{eq:negativeCD}), we apply from \cite[Lemma
4.2]{bobkov2001hypercontractivity} that
\begin{equation}\label{eq:smoothing}
\DKL(q_1\|q) \leq \left(\frac{1}{4\alpha}+\frac{L}{2}\right)
W_2(q_0,q)^2, \qquad \alpha=\frac{e^{2L}-1}{2L}.
\end{equation}
Under the LSI condition of $\event(C_0,C_\LSI)$,
we have the exponential contraction of
relative entropy (c.f.\ \cite[Theorem 5.2.1]{bakry2014analysis})
\begin{equation}\label{eq:entropycontraction}
\DKL(q_t\|q) \leq e^{-2(t-1)/C_\LSI}\DKL(q_1\|q)
\text{ for all } t \geq 1.
\end{equation}
We have also the $T_2$-transportation inequality
(c.f.\ \cite[Theorem 9.6.1]{bakry2014analysis})
\begin{equation}\label{eq:T2}
W_2(q_t,q)^2 \leq C_\LSI\,\DKL(q_t\|q),
\end{equation}
and (\ref{eq:W2decay}) for $t \geq 1$ follows
follows from combining (\ref{eq:smoothing}), (\ref{eq:entropycontraction}), and
(\ref{eq:T2}).
\end{proof}

\subsection{Properties of the correlation and response}

In this section, on the event $\event(C_0,C_\LSI)$,
we now show approximate time-translation-invariance at large
times for the correlation and response matrices
$\bC_\theta,\bC_\eta,\bR_\theta,\bR_\eta$ defined in Section
\ref{sec:CRmatrixdef}. We may write these using our Markov semigroup
notation as
\[
\begin{gathered}
\bC_\theta(t,s)=\Big(\big\langle e_k(\btheta^s)P_{t-s}e_j(\btheta^s)
\big\rangle_{\btheta^0}
\Big)_{j,k=1}^d, \qquad
\bC_\eta(t,s)=\Big(\big\langle x_k(\btheta^s)P_{t-s}x_j(\btheta^s)
\big\rangle_{\btheta^0}
\Big)_{j,k=1}^n,\\
\bR_\theta(t,s)=\Big(\big\langle \nabla e_k(\btheta^s)^\top \nabla P_{t-s}
e_j(\btheta^s) \big\rangle_{\btheta^0}\Big)_{j,k=1}^d,
\qquad
\bR_\eta(t,s)=\Big(\langle
\nabla x_k(\btheta^s)^\top \nabla P_{t-s}x_j(\btheta^s) \big\rangle_{\btheta^0}
\Big)_{j,k=1}^n.
\end{gathered}\]

\begin{lemma}\label{lemma:TTI}
Suppose Assumption \ref{assump:prior}(a) holds.
Let $\bC_\theta,\bC_\eta,\bR_\theta,\bR_\eta$ be defined
for the dynamics (\ref{eq:langevinfixedprior}), and set
\[\begin{gathered}
\bC_\theta^\infty(\tau)=\Big(\big\langle e_k(\btheta)P_\tau e_j(\btheta) \rangle
\Big)_{j,k=1}^d,\qquad
\bC_\eta^\infty(\tau) = \Big(\big\langle x_k(\btheta)
P_\tau x_j(\btheta) \big\rangle \Big)_{j,k=1}^n,\\
\qquad \bR_\theta^\infty(\tau)=\Big(\big\langle \nabla e_k(\btheta)^\top \nabla P_\tau
e_j(\btheta)\big\rangle\Big)_{j,k=1}^d,
\qquad \bR_\eta^\infty(\tau) = \Big(\big\langle \nabla x_k(\btheta)^\top
\nabla P_\tau x_j(\btheta)\big\rangle\Big)_{j,k=1}^n
\end{gathered}\]
where $\langle \cdot \rangle$ is expectation under the posterior law
$q(\cdot)$.
Then on $\event(C_0,C_\LSI) \cap \{\|\btheta^0\|_2^2 \leq C_0d\}$,
there exist constants $C,c>0$ such that for all $t \geq s \geq 0$,
\begin{align}
|\Tr \bC_\theta(t,s)-\Tr \bC_\theta^\infty(t-s)|
&\leq Cde^{-cs}\label{eq:Cthetacompare}\\
|\Tr \bR_\theta(t,s)-\Tr \bR_\theta^\infty(t-s)|
&\leq Cde^{-cs}\label{eq:Rthetacompare}\\
|\Tr \bC_\eta(t,s)-\Tr \bC_\eta^\infty(t-s)|
&\leq Cde^{-cs}\label{eq:Cetacompare}\\
|\Tr \bR_\eta(t,s)-\Tr \bR_\eta^\infty(t-s)|
&\leq Cde^{-cs}\label{eq:Retacompare}
\end{align}
\end{lemma}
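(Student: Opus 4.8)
The plan is to recast, for each of the four matrix traces, the ``finite-time'' object and its ``equilibrium'' counterpart as expectations of a single functional under two different laws of the time-$s$ state --- the law $q_s$ of $\btheta^s$ under the Langevin diffusion (\ref{eq:langevin_fixedq}) started from $\btheta^0$, and the stationary posterior $q$ --- and then to transfer the $W_2$-closeness of $q_s$ to $q$ through that functional. Concretely, the Markov property at time $s$ gives $\Tr\bC_\theta(t,s)=\E_{\y\sim q_s}\langle\y,P_\tau(\y)\rangle$ where $\tau=t-s$ and $P_\tau(\y):=(P_\tau e_1,\ldots,P_\tau e_d)(\y)=\E[\btheta^\tau\mid\btheta^0=\y]$, while $\Tr\bC_\theta^\infty(\tau)=\E_{\y\sim q}\langle\y,P_\tau(\y)\rangle$ by stationarity of $q$; using the matrix Bismut--Elworthy--Li identity $\nabla P_\tau e_j(\y)=\E[(\V^\tau)^{\top}\mathbf e_j\mid\btheta^0=\y]$ from Lemma \ref{lemma:BEL}, the response traces become $\Tr\bR_\theta(t,s)=\E_{\y\sim q_s}\big[\Tr\E[\V^\tau\mid\btheta^0=\y]\big]$ and its equilibrium version, and the $\eta$-quantities reduce to the same estimates because $x_i(\btheta)=\tfrac{\sqrt\delta}{\sigma^2}([\X\btheta]_i-y_i)$ is affine in $\btheta$ with $\|\X\|_\op\le C_0$. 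So it suffices to show, for a generic such functional $\Phi_\tau:\R^d\to\R$, that $|\E_{q_s}[\Phi_\tau]-\E_q[\Phi_\tau]|\le Cde^{-cs}$ uniformly over $\tau\ge0$.

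For the closeness input, Lemma \ref{lem:W2contraction} and $\|\btheta^0\|_2^2\le C_0d$ give $W_2(q_s,q)\le Ce^{-(2/C_\LSI)s}W_2(q_0,q)\le C\sqrt d\,e^{-cs}$, once we bound $W_2(q_0,q)^2\le 2\|\btheta^0\|_2^2+2\langle\|\btheta\|_2^2\rangle\le Cd$ via Lemma \ref{lemma:elementarybounds}; the same lemma together with (\ref{eq:W2decay}) gives $\E_{q_s}\|\btheta\|_2^2\le Cd$ uniformly in $s$. I would then decompose $\Phi_\tau$ into a constant, an affine piece whose Lipschitz constant is $\|\langle\btheta\rangle\|_2=O(\sqrt d)$ --- transferred with error $\le\|\langle\btheta\rangle\|_2\,W_2(q_s,q)\le Cde^{-cs}$ --- and a ``fluctuation'' piece built from the mean-zero functions $\widetilde P_\tau e_j:=P_\tau(e_j-\langle e_j\rangle)$.

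For the fluctuation piece I would argue by cases on the lag $\tau=t-s$. For bounded lags $\tau\le\tau_0$, the semigroup-regularity bound (\ref{eq:semigroupregularity}) and $\|\V^\tau\|_\op\le e^{C\tau}$ give a uniform pointwise bound $\|\nabla\Phi_\tau(\y)\|_2\le C(\tau_0)(\sqrt d+\|\y\|_2)$, so the $W_2$-transfer (with the second-moment bounds above) yields error $\le C(\tau_0)de^{-cs}$. For large lags $\tau\ge s$, the Poincaré inequality implied by the LSI (spectral gap $\ge 2/C_\LSI$) gives $\|\widetilde P_\tau e_j\|_{L^2(q)}^2\le e^{-c\tau}\Var_q(\theta_j)$ and the pointwise bound $\|P_\tau(\y)-\langle\btheta\rangle\|_2\le Ce^{-c\tau}(\|\y\|_2+\sqrt d)$ from Lemma \ref{lem:W2contraction}, so the fluctuation piece is itself $\le Cde^{-c\tau}\le Cde^{-cs}$ under both $q$ and $q_s$, hence so is the difference; the response case is the same with the analogous Poincaré-based bound on $\E[\Tr\V^\tau\mid\btheta^0=\cdot]$. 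The remaining regime $\tau_0<\tau<s$ I would handle by absorbing a fixed number of Markov steps, writing $\widetilde P_\tau e_j=P_{\tau-1}\phi_j$ with $\phi_j=P_1(e_j-\langle e_j\rangle)$ mean-zero, $O(1)$-Lipschitz and small in $L^2(q)$, comparing the two resulting two-time correlations with the smoothed outer observable $\phi$, and using in addition the $L^2(q)$-smoothing estimate $\langle\|\nabla P_\tau f\|_2^2\rangle\le\tau^{-1}e^{-c\tau}\|f-\langle f\rangle\|_{L^2(q)}^2$ for mean-zero $f$ (from monotonicity of the Dirichlet form along the semigroup and Poincaré decay) to control the cross term.

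The main obstacle is precisely this intermediate-lag regime: the functionals $\langle\btheta,P_\tau(\btheta)\rangle$ and $\Tr\E[\V^\tau\mid\btheta^0=\cdot]$ have polynomial (respectively $d$-linear) growth, and their pointwise Lipschitz constants involve the semigroup Jacobian $DP_\tau$, whose operator norm is only dimension-freely bounded by $e^{C\tau}$ in the absence of global strong log-concavity of the posterior. Reconciling this exponential-in-$\tau$ growth of the naive Lipschitz bound with the exponential-in-$\tau$ decay coming from the Poincaré inequality, uniformly over all lags $\tau$, is the delicate point, and is where I expect the argument to require the $L^2(q)$-smoothing estimates and an extra Markov-step smoothing rather than a purely pointwise transfer.
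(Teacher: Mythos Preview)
Your overall strategy---couple the time-$s$ law $q_s$ to $q$ via an optimal $W_2$ pairing $(\btheta^s,\bvarphi^s)$ and compare the functionals---is exactly the paper's. For the correlation traces your ingredients suffice once you correct the regime split: you split at a fixed $\tau_0$ and at $\tau=s$, which manufactures an unnecessary ``intermediate'' window. The paper splits once, at $\tau=s/s_0$ for a large constant $s_0$. For $\tau\le s/s_0$ the synchronous-coupling bound $\|P_\tau(\btheta^s)-P_\tau(\bvarphi^s)\|\le e^{C\tau}\|\btheta^s-\bvarphi^s\|$ combined with $\langle\|\btheta^s-\bvarphi^s\|^2\rangle\le Cde^{-cs}$ gives error $Cde^{Cs/s_0-cs}\le Cde^{-c's}$; for $\tau>s/s_0$ the pointwise decay $\|P_\tau(\y)-\langle\btheta\rangle\|\le Ce^{-c\tau}(\|\y\|+\sqrt d)$ you already identify gives $Cde^{-c\tau}\le Cde^{-cs/s_0}$. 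No $L^2(q)$ detour is needed for $\bC_\theta,\bC_\eta$.

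For the response traces there is a genuine gap. You invoke only the first BEL identity (\ref{eq:BEL1}), under which $\Tr\d P_\tau(\y)=\langle\Tr\V^\tau\rangle_\y$ with only $\|\V^\tau\|_\op\le e^{C\tau}$ available pointwise. Your proposed fixes---a ``Poincar\'e-based bound on $\E[\Tr\V^\tau\mid\btheta^0=\cdot]$'' and an $L^2(q)$-smoothing estimate---control this functional only under the \emph{stationary} law $q$, and you give no mechanism to transfer that control to $q_s$ (change of measure via $\chi^2(q_s\|q)$ costs a factor exponential in $d$; an entropy-based transfer still needs a pointwise Lipschitz constant for $\Tr\d P_\tau$, which brings back the $e^{C\tau}$ growth). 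The paper's device is the \emph{second} BEL formula (\ref{eq:BEL2}), applied with $f=P_{\tau-1}e_j$: for $\tau\ge1$,
\[
\sum_{j=1}^d\partial_jP_\tau e_j(\x)=\frac{1}{\sqrt2}\,\Big\langle P_{\tau-1}(\btheta^1)^\top\int_0^1(\V^r)^\top\d\b^r\Big\rangle_{\x}.
\]
This replaces $\V^\tau$ by a stochastic integral over $[0,1]$ (bounded since $\|\V^r\|_F^2\le de^{2C}$ there) paired with the semigroup mean $P_{\tau-1}(\btheta^1)$, and the difference $\langle\|P_{\tau-1}(\btheta^1)-P_{\tau-1}(\tilde\btheta^1)\|^2\rangle_{\x,\tilde\x}$ under synchronous coupling from $(\x,\tilde\x)$ is then controlled by exactly the same two-regime split at $\tau=s/s_0$ as for $\bC_\theta$. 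Without this representation, (\ref{eq:Rthetacompare}) and (\ref{eq:Retacompare}) do not close.
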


\begin{proof}
Momentarily
let $q_t$ be the law of $\btheta^t$ conditional on $(\X,\y)$ and also on
a fixed initial condition $\btheta^0=\x$. For any fixed $t \geq 0$, denote by
$\bvarphi^t \sim q$ a random vector such that
$(\btheta^t,\bvarphi^t)$ is a coupling of $(q_t,q)$ for which
$\langle \|\btheta^t-\bvarphi^t\|_2^2 \rangle_\x=W_2(q_t,q)^2$, where
$W_2(\cdot)$ is the Wasserstein-2 distance conditional on $\X,\y$ and
$\btheta^0=\x$.
Then observe that for any $M$-Lipschitz function $f$, we have
\begin{equation}\label{eq:Lipschitzcouplingbound}
\langle \|f(\btheta^t)-f(\bvarphi^t)\|_2^2 \rangle_\x \leq
M^2\langle \|\btheta^t-\bvarphi^t\|_2^2\rangle_\x =M^2W_2(q_t,q)^2.
\end{equation}
Furthermore $W_2(q_t,q)^2 \leq Ce^{-ct}W_2(\delta_\x,q)^2
\leq 2Ce^{-ct}(\|\x\|_2^2+\langle \|\btheta\|_2^2 \rangle)$
for all $t \geq 0$ by Lemma \ref{lem:W2contraction}.
Then, applying (\ref{eq:Lipschitzcouplingbound})
with $f(\x)=\x$ and $f(\x)=\nabla \log q(\x)$, and applying
(\ref{eq:posteriormeansq}--\ref{eq:negativeCD}) on the event
$\event(C_0,C_\LSI)$, we have the basic estimates
\begin{equation}\label{eq:W2contraction}
\begin{gathered}
\langle \|\btheta^t-\bvarphi^t\|_2^2\rangle_\x \leq Ce^{-ct}(\|\x\|_2^2+d),
\quad \langle \|\nabla \log q(\btheta^t)-\nabla \log
q(\bvarphi^t)\|_2^2\rangle_\x
 \leq Ce^{-ct}(\|\x\|_2^2+d),\\
\langle \|\btheta^t\|_2^2\rangle_\x \leq C(\|\x\|_2^2+d),
\quad \langle \|\nabla \log q(\btheta^t)\|_2^2\rangle_\x \leq
C(\|\x\|_2^2+d)\\
\langle \|\bvarphi^t\|_2^2 \rangle=\langle \|\btheta\|_2^2 \rangle \leq Cd,
\quad \langle \|\nabla \log q(\bvarphi^t)\|_2^2 \rangle
=\langle \|\nabla \log q(\btheta)\|_2^2\rangle \leq Cd.
\end{gathered}
\end{equation}
We note that also
\begin{align}
\|P_t(\x)-P_t(\tilde\x)\|_2^2 &\leq e^{Ct}\|\x-\tilde\x\|_2^2,\label{eq:PtLipschitz}\\
\|P_t(\x)-P_t(\tilde\x)\|_2^2 &\leq Ce^{-ct}(\|\x\|_2^2+\|\tilde\x\|_2^2+d).\label{eq:Ptlongtime}
\end{align}
Indeed, (\ref{eq:PtLipschitz}) follows from (\ref{eq:thetadiffbound}) and
Jensen's inequality. Also by Jensen's inequality and (\ref{eq:W2contraction}),
\begin{align}\label{eq:mean_concentrate}
\|P_t(\x)-\langle \btheta \rangle\|_2^2=\|\langle \btheta^t
-\bvarphi^t \rangle_\x\|_2^2 \leq Ce^{-ct}(\|\x\|_2^2+d),
\end{align}
and applying this bound for
both $P_t(\x)$ and $P_t(\tilde\x)$ yields (\ref{eq:Ptlongtime}).

For (\ref{eq:Cthetacompare}), note that for any $s,\tau \geq 0$,
\begin{equation}\label{eq:Cthetasemigroupform}
\Tr \bC_\theta(s+\tau,s)
=\sum_{j=1}^d \langle e_j(\btheta^s)P_\tau e_j(\btheta^s) \rangle_{\btheta^0}.
\end{equation}
Now let $q_s$ be the law of $\btheta^s$ conditional on $(\X,\y)$
and the given initial condition $\btheta^0$ of Assumption \ref{assump:prior},
and let
$(\btheta^s,\bvarphi^s)$ be the optimal Wasserstein-2 coupling of $(q_s,q)$
as above. Then
\begin{align}
|\Tr \bC_\theta(s+\tau,s)-\Tr \bC_\theta^\infty(\tau)|
&\leq \sum_{j=1}^d \big\langle\big|e_j(\btheta^s)P_\tau e_j(\btheta^s)
-e_j(\bvarphi^s)P_\tau e_j(\bvarphi^s)\big|\big\rangle_{\btheta^0}\notag\\
&\leq \sum_{j=1}^d \big\langle\big|(\theta_j^s-\varphi_j^s)P_\tau
e_j(\btheta^s)\big|\big\rangle_{\btheta^0}
+\big\langle\big|\varphi_j^s(P_\tau e_j(\btheta^s)-P_\tau
e_j(\bvarphi^s))\big|\big\rangle_{\btheta^0}\notag\\
&\leq \underbrace{\big\langle
\|\btheta^s-\bvarphi^s\|_2^2\big\rangle_{\btheta^0}^{1/2}
\big\langle\|P_\tau(\btheta^s)\|_2^2\big\rangle_{\btheta^0}^{1/2}}_{\mathrm{I}}
+\underbrace{\big\langle\|\bvarphi^s\|_2^2\big\rangle^{1/2}
\big\langle\|P_\tau(\btheta^s)-P_\tau(\bvarphi^s)\|_2^2\big\rangle_{\btheta^0}^{1/2}}_{\mathrm{II}}
\label{eq:Cthetacomparedecomp}
\end{align}
where we recall our shorthand
$P_t(\x)=\langle\btheta^t \rangle_\x \in \R^d$.

We have $\mathrm{I} \leq Cde^{-cs}$ for all $s \geq 0$ by
(\ref{eq:W2contraction}) and 
$\langle \|P_\tau(\btheta^s)\|_2^2 \rangle_{\btheta^0} \leq \langle
\|\btheta^{s+\tau}\|_2^2 \rangle_{\btheta^0}$
which follows from Jensen's inequality. For $\mathrm{II}$,
by (\ref{eq:PtLipschitz}) and (\ref{eq:W2contraction}), we have
\[\big\langle\|P_\tau(\btheta^s)-P_\tau(\bvarphi^s)\|_2^2
\big\rangle_{\btheta^0}
\leq e^{C\tau}\langle \|\btheta^s-\bvarphi^s\|_2^2 \rangle_{\btheta^0}
\leq e^{C\tau} \cdot Cde^{-cs}.\]
Choosing a large enough constant $s_0>0$, for $\tau \leq s/s_0$, this gives
$\langle \|P_\tau(\btheta^s)-P_\tau(\bvarphi^s)\|_2^2 \rangle_{\btheta^0}
\leq C'de^{-c's}$. For $\tau>s/s_0$, applying instead
(\ref{eq:Ptlongtime}) and (\ref{eq:W2contraction}), we have
$\langle \|P_\tau(\btheta^s)-P_\tau(\bvarphi^s)\|_2^2 \rangle_{\btheta^0}
\leq Ce^{-c\tau}(\langle \|\btheta^s\|_2^2 \rangle_{\btheta^0}
+\langle \|\bvarphi^s\|_2^2
\rangle+d) \leq C'de^{-c's}$. Thus, for some $C,c>0$,
\[\big\langle\|P_\tau(\btheta^s)-P_\tau(\bvarphi^s)\|_2^2
\big\rangle_{\btheta^0}
\leq Cde^{-cs} \text{ for all } s,\tau \geq 0.\]
Thus also $\mathrm{II} \leq Cde^{-cs}$, and applying these bounds for
$\mathrm{I}$ and $\mathrm{II}$ to (\ref{eq:Cthetacomparedecomp})
shows (\ref{eq:Cthetacompare}).

For (\ref{eq:Rthetacompare}), note that
\begin{equation}\label{eq:Rthetaform}
\Tr \bR_\theta(s+\tau,s)=\sum_{j=1}^d \langle (\partial_j P_\tau
e_j)[\btheta^s]\rangle_{\btheta^0},
\qquad \Tr \bR_\theta^\infty(\tau)=\sum_{j=1}^d \langle (\partial_j P_\tau
e_j)[\btheta] \rangle.
\end{equation}
Let $\d P_t(\x) \in \R^{d\times d}$ be the Jacobian of the vector map
$\x \mapsto P_t(\x)$. By (\ref{eq:BEL1}) of Lemma \ref{lemma:BEL} applied
with $f=e_j$ for each $j=1,\ldots,d$, we have
\begin{equation}\label{eq:Ptjac1}
\d P_t(\x) = \langle \V^t \rangle_\x
\end{equation}
where (with slight extension of the notation)
we write $\langle \cdot \rangle_\x$ for
the average over $\{\btheta^t,\V^t\}_{t \geq 0}$ solving
(\ref{eq:matrixV_sde}) with initial condition $(\btheta^0,\V^0)=(\x,\I)$.
For $t \geq 1$, let us write also $\nabla P_t e_j(\x)=\nabla P_1 f(\x)$ with
$f=P_{t-1}e_j$. Noting that $f \in \cA$
by (\ref{eq:semigroupregularity}),
we may apply (\ref{eq:BEL2}) of Lemma \ref{lemma:BEL} with this $f$.
Doing so for each $j=1,\ldots,d$ gives
\begin{align}\label{eq:Ptjac2}
\d P_t(\x) = \frac{1}{\sqrt{2}}
\bigg\langle P_{t-1}(\btheta^1) \bigg(\int_0^1 (\V^s)^\top \d
\b^s\bigg)^\top\bigg\rangle_{\x} \text{ for } t \geq 1.
\end{align}
In particular,
\begin{equation}\label{eq:tracePtjac}
\sum_{j=1}^d (\partial_j P_\tau e_j)[\x]
=\langle \Tr \V^\tau \rangle_{\x}=\frac{1}{\sqrt{2}}
\bigg\langle P_{\tau-1}(\btheta^1)^\top \int_0^1 (\V^s)^\top \d
\b^s\bigg\rangle_{\x}
\end{equation}
with the second equality holding for $\tau \geq 1$.

Now let $\{\btheta^t,\V^t\}_{t \geq 0}$ and $\{\tilde\btheta^t,\tilde \V^t\}_{t
\geq 0}$ be the solutions to (\ref{eq:matrixV_sde})
with initial conditions $(\btheta^0,\V^0)=(\x,\I)$ and
$(\tilde \btheta^0,\tilde \V^0)=(\tilde\x,\I)$,
coupled by the same Brownian motion $\{\b^t\}_{t \geq 0}$,
and write $\langle \cdot \rangle_{\x,\tilde\x}$ for the associated
average over $\{\btheta^t,\V^t,\tilde\btheta^t,\tilde\V^t\}_{t \geq 0}$
conditional on these initial conditions.
By the form of (\ref{eq:matrixV_sde}) and by (\ref{eq:negativeCD}),
$\frac{\d}{\d t}\|\V^t\|_\op
\leq \|\nabla^2 \log q(\btheta^t) \cdot \V^t\|_\op
\leq C\|\V^t\|_\op$, so
\begin{align}\label{eq:V_op_bound}
\|\V^t\|_\op \leq e^{Ct}\|\V^0\|_\op=e^{Ct}.
\end{align}
Then also
\begin{align*}
\frac{\d}{\d t}\|\V^t-\tilde\V^t\|_F &\leq
\|[\nabla^2 \log q(\btheta^t)-\nabla^2 \log q(\tilde \btheta^t)]\V^t\|_F
+\|[\nabla^2 \log q(\tilde \btheta^t)](\V^t-\tilde \V^t)\|_F\\
&\leq \|\nabla^2 \log q(\btheta^t)-\nabla^2 \log q(\tilde \btheta^t)\|_F
\|\V^t\|_\op+\|\nabla^2 \log q(\tilde \btheta^t)\|_\op \|\V^t-\tilde\V^t\|_F.
\end{align*}
Applying $\nabla^2 \log q(\btheta^t)-\nabla^2 \log q(\tilde \btheta^t)
=\diag((\log q)''(\theta^t_j)-(\log q)''(\tilde \theta^t_j))$,
the bound $\|\nabla^2 \log q(\btheta)\|_\op \leq C$ from
(\ref{eq:negativeCD}), $|(\log g)'''(\theta)| \leq C$ under Assumption
\ref{assump:prior}(a), and (\ref{eq:thetadiffbound}),
\[\frac{\d}{\d t}\|\V^t-\tilde\V^t\|_F 
\leq C\|\btheta^t-\tilde\btheta^t\|_2\|\V^t\|_\op
+C\|\V^t-\tilde\V^t\|_F
\leq Ce^{Ct}\|\x-\tilde\x\|_2 \cdot e^{Ct}+C\|\V^t-\tilde\V^t\|_F.\]
Integrating this bound,
\[\|\V^t-\tilde\V^t\|_F \leq C\|\x-\tilde\x\|_2 \text{ for all } t \in [0,1].\]
So it follows from the first equality of (\ref{eq:tracePtjac})
that for $\tau \in [0,1]$,
\[\left|\sum_{j=1}^d \partial_j P_\tau e_j(\x)
-\partial_j P_\tau e_j(\tilde\x)\right|
=\Big|\big\langle \Tr (\V^\tau-\tilde \V^\tau) \big\rangle_{\x,\tilde\x}\Big|
\leq \sqrt{d}\,\big\langle \|\V^\tau-\tilde \V^\tau\|_F\big\rangle_{\x,\tilde\x}
\leq C\sqrt{d}\|\x-\tilde\x\|_2.\]
Hence by (\ref{eq:Rthetaform}) and (\ref{eq:W2contraction}), for $\tau \in [0,1]$,
\begin{align}\label{eq:response_bound_1}
|\Tr \bR_\theta(s+\tau,s)-\Tr \bR_\theta^\infty(\tau)|
\leq C\sqrt{d}\,\langle\|\btheta^s-\bvarphi^s\|_2 \rangle_{\btheta^0}
\leq C'de^{-cs}.
\end{align}
For $\tau \geq 1$, we apply instead the second equality of (\ref{eq:tracePtjac}) and Cauchy-Schwarz to obtain
\begin{align}\label{eq:response_bound_2}
\notag&\sqrt{2}\left|\sum_{j=1}^d \partial_j P_\tau e_j(\x)
-\partial_j P_\tau e_j(\tilde\x)\right|\\
\notag&\leq \bigg|\bigg\langle P_{\tau-1}(\btheta^1)^\top\int_0^1 (\V^s)^\top\d\b^s
-P_{\tau-1}(\tilde \btheta^1)^\top\int_0^1 (\tilde
\V^s)^\top\d\b^s\bigg\rangle_{\x,\tilde\x}\bigg|\\
\notag&\leq \Big\langle \Big\|P_{\tau-1}(\btheta^1)
-P_{\tau-1}(\tilde\btheta^1)\Big\|_2^2 \Big\rangle_{\x,\tilde\x}^{1/2}
\bigg\langle \bigg\|\int_0^1 (\V^s)^\top\d\b^s\bigg\|_2^2\bigg\rangle_{\x}^{1/2}+
\Big\langle \Big\|P_{\tau-1}(\tilde\btheta^1)\Big\|_2^2 \Big\rangle_{\tilde\x}^{1/2}
\bigg\langle \bigg\|\int_0^1
(\V^s-\tilde\V^s)^\top\d\b^s\bigg\|_2^2\bigg\rangle_{\x,\tilde\x}^{1/2}\\
\notag&=\Big\langle \Big\|P_{\tau-1}(\btheta^1)
-P_{\tau-1}(\tilde\btheta^1)\Big\|_2^2 \Big\rangle_{\x,\tilde\x}^{1/2}
\bigg\langle \int_0^1 \|\V^s\|_F^2\d s\bigg\rangle_{\x}^{1/2}+
\Big\langle \Big\|P_{\tau-1}(\tilde\btheta^1)\Big\|_2^2 \Big\rangle_{\tilde\x}^{1/2}
\bigg\langle \int_0^1 \|\V^s-\tilde\V^s\|_F^2\d s\bigg\rangle_{\x,\tilde\x}^{1/2}\\
&\leq C\sqrt{d}\Big\langle \Big\|P_{\tau-1}(\btheta^1)
-P_{\tau-1}(\tilde\btheta^1)\Big\|_2^2 \Big\rangle_{\x,\tilde\x}^{1/2}
+C\|\x-\tilde\x\|_2\Big\langle \Big\|P_{\tau-1}(\tilde\btheta^1)\Big\|_2^2
\Big\rangle_{\tilde\x}^{1/2}.
\end{align}
Note that $\langle \|P_{\tau-1}(\tilde \btheta^1)\|_2^2 \rangle_{\tilde \x} \leq \langle
\|\tilde \btheta^\tau\|_2^2 \rangle_{\tilde\x} \leq C(\|\tilde\x\|_2^2+d)$ by (\ref{eq:W2contraction}).
Applying (\ref{eq:PtLipschitz}--\ref{eq:Ptlongtime}),
(\ref{eq:thetadiffbound}), and (\ref{eq:W2contraction}),
\begin{align*}
\Big\langle\Big\|P_{\tau-1}(\btheta^1)-P_{\tau-1}(\tilde\btheta^1)\Big\|_2^2
\Big\rangle_{\x,\tilde\x}
&\leq e^{2C(\tau-1)}\langle \|\btheta^1-\tilde\btheta^1\|_2^2 \rangle_{\x,\tilde\x}
\leq Ce^{2C\tau}\|\x-\tilde\x\|_2^2 \text{ for all } \tau \geq 1,\\
\Big\langle \Big\|P_{\tau-1}(\btheta^1)-P_{\tau-1}(\tilde\btheta^1)\Big\|_2^2
\Big \rangle_{\x,\tilde\x}
&\leq Ce^{-c(\tau-1)}(\langle \|\btheta^1\|_2^2 \rangle_\x+\langle
\|\tilde\btheta^1\|_2^2 \rangle_{\tilde\x}+d)\\
&\leq C'e^{-c\tau}(\|\x\|_2^2+\|\tilde\x\|_2^2+d) \text{ for all } \tau \geq 2.
\end{align*}
Choosing a large enough constant $s_0>0$, if $\tau \in [1,s/s_0]$, then we may
apply the former bound, (\ref{eq:W2contraction}), and Cauchy-Schwarz
to (\ref{eq:response_bound_2}) to get
\begin{align}\label{eq:response_bound_3}
|\Tr \bR_\theta(s+\tau,s)-\Tr \bR_\theta^\infty(\tau)|
&\leq \bigg\langle\bigg|\sum_{j=1}^d \partial_j P_\tau e_j(\btheta^s)
-\partial_j P_\tau e_j(\bvarphi^s)\bigg|\bigg\rangle_{\btheta^0}\notag\\
&\leq Ce^{C\tau}\sqrt{d}\,\langle \|\btheta^s-\bvarphi^s\|_2 \rangle_{\btheta^0}
+C\,\Big\langle\|\btheta^s-\bvarphi^s\|_2(\|\bvarphi^s\|_2+\sqrt{d})\Big\rangle_{\btheta^0}\notag\\
&\leq C'd(e^{C\tau}+1)e^{-cs} \leq C''de^{-c's}.
\end{align}
If $\tau \geq s/s_0$, applying instead the latter bound to
(\ref{eq:response_bound_2}),
\begin{align}\label{eq:response_bound_4}
|\Tr \bR_\theta(s+\tau,s)-\Tr \bR_\theta^\infty(\tau)| &\leq
Ce^{-c\tau}\sqrt{d}\,\big(\langle
\|\btheta^s\|_2^2\rangle_{\btheta^0}+\langle\|\bvarphi^s\|_2^2\rangle+d\big)^{1/2}
+C\,\Big\langle \|\btheta^s-\bvarphi^s\|_2(\|\bvarphi^s\|_2+\sqrt{d})\Big
\rangle_{\btheta^0}\notag\\
&\leq C'de^{-c's}.
\end{align}
Combining these bounds for $\tau \in [0,1]$, $\tau \in [1,s/s_0]$, and
$\tau \geq s/s_0$ in (\ref{eq:response_bound_1}), (\ref{eq:response_bound_3}),
and (\ref{eq:response_bound_4}) shows (\ref{eq:Rthetacompare}).

The arguments for $\bC_\eta$ and $\bR_\eta$ in
(\ref{eq:Cetacompare}--\ref{eq:Retacompare}) are similar:
For (\ref{eq:Cetacompare}), recall the definitions (\ref{eq:coordfuncs})
and note that
\begin{align*}
&\Big|\Tr \bC_\eta(s+\tau,s) - \Tr \bC_\eta^\infty(\tau)\Big|\\
&\leq \sum_{i=1}^n \Big\langle\Big|x_i(\btheta^s)P_\tau
x_i(\btheta^s)-x_i(\bvarphi^s)P_\tau x_i(\bvarphi^s)\Big|\Big\rangle_{\btheta^0}\\
&\leq \sum_{i=1}^n \Big\langle\Big|\big(x_i(\btheta^s) - x_i(\bvarphi^s)\big)
P_\tau x_i(\btheta^s)\Big|\Big\rangle_{\btheta^0} + \Big\langle\Big|x_i(\bvarphi^s)\big(P_\tau x_i(\btheta^s) -
P_\tau x_i(\bvarphi^s)\big)\Big|\Big\rangle_{\btheta^0}\\
&\leq \frac{\delta}{\sigma^4}\Big\langle \pnorm{\X(\btheta^s - \bvarphi^s)}{2}^2\Big\rangle_{\btheta^0}^{1/2}
\Big\langle \pnorm{\X P_\tau (\btheta^s)-\y}{2}^2\Big\rangle_{\btheta^0}^{1/2}
+\frac{\delta}{\sigma^4}\Big\langle \pnorm{\X\bvarphi^s-\y}{2}^2\Big\rangle^{1/2}
\Big\langle \pnorm{\X P_\tau (\btheta^s) - \X P_\tau
(\bvarphi^s)}{2}^2\Big\rangle_{\btheta^0}^{1/2}.
\end{align*}
The desired result (\ref{eq:Cetacompare}) follows from
the conditions $\|\X\|_\op \leq C_0$,
$\|\y\|_2^2 \leq C_0d$, and the preceding bounds for
(\ref{eq:Cthetacomparedecomp}).

For (\ref{eq:Retacompare}), note that
\begin{equation}\label{eq:Retaform}
\Tr \bR_\eta(s+\tau, s) = \frac{\delta}{\sigma^4}\big\langle\Tr \X[\d P_\tau(\btheta^s)]\X^\top\big\rangle_{\btheta^0},
\qquad \Tr \bR_\eta^\infty(\tau) = \frac{\delta}{\sigma^4}\big\langle\Tr \X[\d
P_\tau(\btheta)]\X^\top\big\rangle.
\end{equation}
Let $\{\btheta^t,\V^t\}_{t \geq 0}$ and $\{\tilde\btheta^t,\tilde \V^t\}_{t\geq
0}$ be the solutions of (\ref{eq:matrixV_sde}) with initial conditions
$(\x,\I)$ and $(\tilde \x,\I)$. If $\tau \in [0,1]$, we apply
(\ref{eq:Ptjac1}) to obtain
\[|\Tr \X[\d P_\tau(\x)]\X^\top-\Tr \X[\d P_\tau(\tilde \x)]\X^\top|
\leq \big\langle \pnorm{\V^\tau - \tilde{\V}^\tau}{F}\big\rangle_{\x,\tilde\x}
\cdot \pnorm{\X^\top \X}{F}
\leq \sqrt{d}\,\pnorm{\X}{\op}^2 \cdot \big\langle\pnorm{\V^\tau -
\tilde{\V}^\tau}{F}\big\rangle_{\x,\tilde\x},\]
which leads to the bound (\ref{eq:response_bound_1}) up to a different constant
depending on the bound $C_0$ for $\pnorm{\X}{\op}$. If $\tau \geq 1$, we apply
(\ref{eq:Ptjac2}) to obtain 
\begin{align*}
&\sqrt{2}\,\Big|\Tr \X[\d P_\tau(\x)]\X^\top - \Tr \X[\d
P_\tau(\tilde\x)]\X^\top\Big|\\
&\leq \Big\langle\Big|\Big(P_{\tau-1}(\btheta^1) -
P_{\tau-1}(\tilde{\btheta}^1)\Big)^\top \X^\top \X \Big(\int_0^1 {\V^s}^\top \d
\b^s\Big)\Big|\Big\rangle_{\x,\tilde\x} + \Big\langle\Big|P_{\tau-1}(\tilde{\btheta}^1)^\top \X^\top \X
\Big(\int_0^1 (\V^s - \tilde{\V}^s)^\top \d
\b^s\Big)\Big|\Big\rangle_{\x,\tilde\x}\\
&\leq \pnorm{\X}{\op}^2 \bigg[\Big\langle\pnorm{P_{\tau-1}(\btheta^1) -
P_{\tau-1}(\tilde{\btheta}^1)}{}^2\Big\rangle_{\x,\tilde\x}^{1/2} \bigg\langle \int_0^1
\|\V^s\|_F^2 \d s\bigg\rangle_\x^{1/2} +
\Big\langle\pnorm{P_{\tau-1}(\tilde{\btheta}^1)}{}^2\Big\rangle_{\tilde\x}^{1/2}
\bigg\langle \int_0^1 \|\V^s - \tilde{\V}^s\|_F^2 \d s\bigg\rangle_{\x,\tilde\x}^{1/2}\bigg].
\end{align*}
This can be bounded in the same way as (\ref{eq:response_bound_2}),
(\ref{eq:response_bound_3}), and (\ref{eq:response_bound_4})
up to different constants depending on the bound $C_0$ for $\pnorm{\X}{\op}$.
This shows (\ref{eq:Retacompare}).
\end{proof}

\begin{lemma}\label{lem:CRexpdecay}
Suppose Assumption \ref{assump:prior}(a) holds.
Let $\{\btheta^t\}_{t \geq 0}$ be the solution to
(\ref{eq:langevinfixedprior}).
Then on the event $\event(C_0,C_\LSI) \cap \{\|\btheta^0\|_2^2 \leq C_0d\}$,
there exist constants $C,c>0$ such that for all $t \geq s \geq 0$,
\begin{align}
\big|\Tr \bC_\theta(t,s)-P_s(\btheta^0)^\top\langle\btheta \rangle\big|
&\leq Cde^{-c(t-s)}\label{eq:Cthetadecay}\\
|\Tr \bR_\theta(t,s)| &\leq
Cde^{-c(t-s)}\label{eq:Rthetadecay}\\
\bigg|\Tr \bC_\eta(t,s)-
\frac{\delta}{\sigma^4}(\X P_s(\btheta^0)-\y)^\top(\X\langle\btheta\rangle-\y)\bigg|
&\leq Cde^{-c(t-s)}\label{eq:Cetadecay}\\
|\Tr \bR_\eta(t,s)| &\leq
Cde^{-c(t-s)}\label{eq:Retadecay}
\end{align}
and furthermore
\begin{align}
\big|P_s(\btheta^0)^\top \langle\btheta
\rangle-\|\langle\btheta\rangle\|_2^2\big|
&\leq Cde^{-cs}\label{eq:Cthetainitdecay}\\
\big|(\X P_s(\btheta^0)-\y)^\top
(\X\langle\btheta\rangle-\y)-\|\X\langle\btheta\rangle-\y\|_2^2
\big| &\leq Cde^{-cs}\label{eq:Cetainitdecay}
\end{align}
\end{lemma}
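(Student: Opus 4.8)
The plan is to derive all six estimates from the contraction bounds already established in the proof of Lemma~\ref{lemma:TTI} — in particular the mean-concentration bound $\|P_\tau(\x)-\langle\btheta\rangle\|_2^2 \le Ce^{-c\tau}(\|\x\|_2^2+d)$ from (\ref{eq:mean_concentrate}) and the moment bounds (\ref{eq:W2contraction}) — together with the Bismut–Elworthy–Li representations (\ref{eq:Ptjac1})--(\ref{eq:Ptjac2}), by repeated Cauchy–Schwarz. Throughout I would work on $\event(C_0,C_\LSI)\cap\{\|\btheta^0\|_2^2\le C_0 d\}$, on which $\langle\|\btheta^s\|_2^2\rangle_{\btheta^0}\le Cd$ for all $s$, $\|\y\|_2^2\le Cd$, $\langle\|\btheta\|_2^2\rangle\le Cd$, and $\|\langle\btheta\rangle\|_2^2\le Cd$ by Lemma~\ref{lemma:elementarybounds} and the definition of the event.

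First I would handle the correlation estimates. Writing $\tau=t-s$, the semigroup forms give $\Tr\bC_\theta(s+\tau,s)=\langle(\btheta^s)^\top P_\tau(\btheta^s)\rangle_{\btheta^0}$, and, since each $x_i$ is affine, $\Tr\bC_\eta(s+\tau,s)=\frac{\delta}{\sigma^4}\langle(\X\btheta^s-\y)^\top(\X P_\tau(\btheta^s)-\y)\rangle_{\btheta^0}$. Replacing $P_\tau(\btheta^s)$ by its limit $\langle\btheta\rangle$ and applying Cauchy–Schwarz, bounding the first factor by the uniform moment bound and the second by $\langle\|P_\tau(\btheta^s)-\langle\btheta\rangle\|_2^2\rangle_{\btheta^0}^{1/2}\le (Ce^{-c\tau})^{1/2}(Cd)^{1/2}$ via (\ref{eq:mean_concentrate}) and (\ref{eq:W2contraction}), bounds the error by $Cde^{-c\tau/2}$; the residuals are exactly $P_s(\btheta^0)^\top\langle\btheta\rangle$ (using $\langle\btheta^s\rangle_{\btheta^0}=P_s(\btheta^0)$) and $\frac{\delta}{\sigma^4}(\X P_s(\btheta^0)-\y)^\top(\X\langle\btheta\rangle-\y)$, giving (\ref{eq:Cthetadecay}) and (\ref{eq:Cetadecay}). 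The initialization-decay bounds (\ref{eq:Cthetainitdecay})--(\ref{eq:Cetainitdecay}) are the same computation applied once more, e.g.\ $P_s(\btheta^0)^\top\langle\btheta\rangle-\|\langle\btheta\rangle\|_2^2=(P_s(\btheta^0)-\langle\btheta\rangle)^\top\langle\btheta\rangle$, which is at most $\|P_s(\btheta^0)-\langle\btheta\rangle\|_2\,\|\langle\btheta\rangle\|_2\le Cde^{-cs/2}$ by (\ref{eq:mean_concentrate}), and similarly with $\X$ inserted for (\ref{eq:Cetainitdecay}).

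For the response estimates (\ref{eq:Rthetadecay}) and (\ref{eq:Retadecay}) the key point — and the main obstacle — is that $\d P_\tau(\x)$ does not decay as an operator (the Jacobian flow obeys only $\|\V^t\|_\op\le e^{Ct}$), so one cannot work from (\ref{eq:Ptjac1}); instead one must pass to the stochastic representation (\ref{eq:Ptjac2}) to exploit a martingale cancellation. Using $\Tr\bR_\theta(s+\tau,s)=\langle\Tr\d P_\tau(\btheta^s)\rangle_{\btheta^0}$ and $\Tr\bR_\eta(s+\tau,s)=\frac{\delta}{\sigma^4}\langle\Tr[\d P_\tau(\btheta^s)]\X^\top\X\rangle_{\btheta^0}$, I would write for $\tau\ge1$ and $M\in\{\I,\X^\top\X\}$
\[
\Tr[\d P_\tau(\x)]\,M=\tfrac{1}{\sqrt2}\Big\langle P_{\tau-1}(\btheta^1)^\top M\textstyle\int_0^1(\V^s)^\top\d\b^s\Big\rangle_\x,
\]
and observe that $\int_0^1(\V^s)^\top\d\b^s$ is a mean-zero It\^o integral, so subtracting $\langle\btheta\rangle^\top M\int_0^1(\V^s)^\top\d\b^s$ (which has zero expectation) leaves $\tfrac{1}{\sqrt2}\langle(P_{\tau-1}(\btheta^1)-\langle\btheta\rangle)^\top M\int_0^1(\V^s)^\top\d\b^s\rangle_\x$. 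Cauchy–Schwarz together with the It\^o isometry $\langle\|\int_0^1(\V^s)^\top\d\b^s\|_2^2\rangle_\x=\langle\int_0^1\|\V^s\|_\fro^2\d s\rangle_\x\le Cd$ (since $\|\V^s\|_\op\le e^{Cs}$) and (\ref{eq:mean_concentrate}) then give $|\Tr[\d P_\tau(\x)]M|\le C\|M\|_\op\,e^{-c\tau/2}(\|\x\|_2^2+d)^{1/2}d^{1/2}$, where for $M=\X^\top\X$ one also uses $\|\X^\top\X\|_\op\le C_0^2$. Averaging over $\x=\btheta^s$ against $\langle\|\btheta^s\|_2^2\rangle_{\btheta^0}\le Cd$ yields (\ref{eq:Rthetadecay}) and (\ref{eq:Retadecay}) for $\tau\ge1$; for $\tau\in[0,1]$ the crude bound $\|\V^\tau\|_\op\le C$ with (\ref{eq:Ptjac1}), $|\Tr(AM)|\le\|A\|_\op\Tr M$ for $M\succeq0$, and $\Tr\X^\top\X=\|\X\|_\fro^2\le Cd$ suffice, completing the proof after adjusting constants.
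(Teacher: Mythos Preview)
Your proposal is correct and follows essentially the same approach as the paper: semigroup forms for $\bC_\theta,\bC_\eta$ combined with the mean-concentration estimate (\ref{eq:mean_concentrate}) and Cauchy--Schwarz for the correlation bounds, and the Bismut--Elworthy--Li representation (\ref{eq:Ptjac2}) with the martingale trick of subtracting $\langle\btheta\rangle^\top M\int_0^1(\V^s)^\top\d\b^s$ for the response bounds, with the crude Jacobian bound for $\tau\in[0,1]$. The paper's proof is organized identically.
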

\begin{proof}
For (\ref{eq:Cthetadecay}) and (\ref{eq:Cthetainitdecay}), observe that
\begin{align*}
\big|\Tr \bC_\theta(s+\tau,s)-P_s(\btheta^0)^\top \langle\btheta\rangle\big|
&=\big|\big\langle \btheta^{s\top}(P_\tau
\btheta^s-\langle\btheta\rangle)\big\rangle_{\btheta^0}\big|\\
&\leq \big\langle\|\btheta^s\|_2^2\big\rangle_{\btheta^0}^{1/2}
\big\langle\|P_\tau \btheta^s-\langle\btheta\rangle\|_2^2\big\rangle_{\btheta^0}^{1/2}
\leq Cde^{-c\tau},
\end{align*}
the last inequality applying (\ref{eq:W2contraction}) and
(\ref{eq:mean_concentrate}). Similarly
\[\big|P_s(\btheta^0)^\top \langle\btheta\rangle-\|\langle\btheta
\rangle\|_2^2\big|
=\big|(P_s\btheta^0-\langle\btheta \rangle)^\top \langle\btheta \rangle\big|
\leq \|\langle\btheta\rangle\|_2 \cdot \|P_s\btheta^0-\langle\btheta\rangle\|_2
\leq Cde^{-cs}.\]

For (\ref{eq:Rthetadecay}), recall from (\ref{eq:Rthetaform}) that
$\Tr \bR_\theta(s+\tau,s)=\sum_{j=1}^d \langle (\partial_j P_\tau
e_j)[\btheta^s]\rangle_{\btheta^0}$.
Then by the first equality of (\ref{eq:tracePtjac}) and (\ref{eq:V_op_bound}),
we have $|\Tr \bR_\theta(s+\tau,s)| \leq C d$ for any $\tau \in [0,1]$.
For $\tau \geq 1$, we apply instead the second equality of (\ref{eq:tracePtjac}) where
$\int_0^t {\V^s}^\top \d \b^s$ is a martingale. Then
$\langle \langle \btheta\rangle^\top \int_0^1 {\V^s}^\top \d \b^s\rangle_\x=0$
for any initial condition $\x \in \R^d$, so for any $\tau \geq 1$,
\begin{align*}
\left|\sum_{j=1}^d \partial_j P_\tau e_j(\x)\right|
&=\left|\bigg\langle\Big(P_{\tau-1}(\btheta^1)-\langle\btheta\rangle\Big)^\top
\int_0^1 {\V^s}^\top \d \b^s\bigg\rangle_\x\right|\\
&\leq \big\langle\pnorm{P_{\tau-1}(\btheta^1) -
\langle\btheta\rangle}{}^2\rangle_\x^{1/2}
\bigg\langle\int_0^1 \|\V^s\|_F^2 \d s\bigg\rangle_\x^{1/2}
\leq Ce^{-c\tau}\sqrt{d}(\|\x\|_2+\sqrt{d}),
\end{align*}
the last inequality
using the estimates (\ref{eq:mean_concentrate}) and (\ref{eq:V_op_bound}). Then
by (\ref{eq:W2contraction}) and Jensen's inequality,
\begin{align*}
|\Tr \bR_\theta(s+\tau,s)| \leq
\bigg\langle\bigg|\sum_{j=1}^d (\partial_j P_\tau
e_j)[\btheta^s]\bigg|\bigg\rangle_{\btheta^0}
\leq C'de^{-c'\tau}.
\end{align*}
Combining these cases $\tau \in [0,1]$ and $\tau \geq 1$ gives
(\ref{eq:Rthetadecay}).

The arguments for (\ref{eq:Cetadecay}), (\ref{eq:Retadecay}), and
(\ref{eq:Cetainitdecay}), are analogous to the above, and we omit these
for brevity.
\end{proof}

\subsection{The DMFT system is approximately-TTI}

We now prove Theorem \ref{thm:fixedalpha_dynamics}, that under the log-Sobolev
condition of Assumption \ref{assump:LSI}(a), the DMFT system of Theorem
\ref{thm:dmft_approx}(a) is approximately-TTI in the
sense of Definition \ref{def:regular}.

\begin{lemma}\label{lem:fixedalpha_C}
Under Assumptions \ref{assump:model}, \ref{assump:prior}(a), and
\ref{assump:LSI}(a), the DMFT
system prescribed by Theorem \ref{thm:dmft_approx}(a)
satisfies the conditions of Definition \ref{def:regular}(1)
with $\eps(t)=Ce^{-ct}$ for some constants $C,c>0$.
\end{lemma}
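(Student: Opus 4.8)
The plan is to deduce Definition \ref{def:regular}(1) from two ingredients: the large-time approximate time-translation-invariance of the \emph{matrix} correlation functions $\bC_\theta,\bC_\eta$ proved in Lemma \ref{lemma:TTI}, combined with the matrix-to-scalar reduction of Theorem \ref{thm:dmft_response}; and the spectral decomposition of the Langevin generator on $L^2(q)$ afforded by the posterior LSI, which yields the exponential-mixture forms (\ref{eq:cttiforms}). First I would work on the almost-sure event $\event(C_0,C_\LSI)\cap\{\|\btheta^0\|_2^2\le C_0d\}$. Lemma \ref{lemma:TTI} gives $|\Tr\bC_\theta(s+\tau,s)-\Tr\bC_\theta^\infty(\tau)|\le Cde^{-cs}$ and similarly for $\bC_\eta$; since $\bC_\theta^\infty(\tau)$ does not depend on $s$, the triangle inequality gives $|\Tr\bC_\theta(s_1+\tau,s_1)-\Tr\bC_\theta(s_2+\tau,s_2)|\le Cd(e^{-cs_1}+e^{-cs_2})$. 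Dividing by $d$ and letting $n,d\to\infty$ via Theorem \ref{thm:dmft_response} shows $s\mapsto C_\theta(s+\tau,s)$ is Cauchy; I would set $c_\theta^\tti(\tau):=\lim_{s\to\infty}C_\theta(s+\tau,s)$ and $c_\eta^\tti(\tau):=\lim_{s\to\infty}C_\eta(s+\tau,s)$, which yields (\ref{eq:Cthetaapproxinvariant})--(\ref{eq:Cetaapproxinvariant}) with $\eps(s)=Ce^{-cs}$; pushing the $s$-bound through the $n,d$-limit also gives $d^{-1}\Tr\bC_\theta^\infty(\tau)\to c_\theta^\tti(\tau)$ and $n^{-1}\Tr\bC_\eta^\infty(\tau)\to c_\eta^\tti(\tau)$ for each fixed $\tau\ge0$. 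For $c_\theta(*)$ I would run the same argument on $\Tr\bC_\theta(t,*)=P_t(\btheta^0)^\top\btheta^*$, using the exponential concentration $\|P_t(\btheta^0)-\langle\btheta\rangle\|_2^2\le Cde^{-ct}$ from the proof of Lemma \ref{lemma:TTI} together with $\|\btheta^*\|_2^2\le C_0d$, to get $c_\theta(*):=\lim_{t\to\infty}C_\theta(t,*)$ and (\ref{eq:Cthetastarlim}).

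Next I would establish the exponential-mixture representation. On $\event(C_0,C_\LSI)$, $q$ satisfies the LSI (\ref{eq:LSI}), hence a Poincar\'e inequality with constant $C_\LSI/2$, so the self-adjoint generator $\L$ on $L^2(q)$ has spectral gap at least $\iota:=2/C_\LSI$ on the orthogonal complement of the constants. Writing $e_j(\btheta)=\langle\theta_j\rangle+\bar e_j(\btheta)$ with $\bar e_j$ centered under $q$, stationarity gives $\langle e_j(\btheta)P_\tau e_j(\btheta)\rangle=\langle\theta_j\rangle^2+\langle\bar e_j,e^{\tau\L}\bar e_j\rangle_{L^2(q)}$, and the spectral theorem represents the last term as $\int_\iota^\infty e^{-a\tau}\,\d\nu_j(a)$ for a finite positive measure $\nu_j$ on $[\iota,\infty)$ of total mass $\Var_q(\theta_j)$. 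Summing and normalizing,
\[
d^{-1}\Tr\bC_\theta^\infty(\tau)=d^{-1}\|\langle\btheta\rangle\|_2^2+\int_\iota^\infty e^{-a\tau}\,\d\nu^{(d)}(a),
\]
where $\nu^{(d)}=d^{-1}\sum_j\nu_j$ has total mass $d^{-1}(\langle\|\btheta\|_2^2\rangle-\|\langle\btheta\rangle\|_2^2)\le C$ uniformly by Lemma \ref{lemma:elementarybounds}. The identical construction applied to the coordinate functions $x_i(\btheta)=\tfrac{\sqrt\delta}{\sigma^2}([\X\btheta]_i-y_i)$ of (\ref{eq:coordfuncs}), with the \emph{same} gap $\iota$ and total masses controlled by $\tfrac{\delta}{n\sigma^4}\E_q\|\X(\btheta-\langle\btheta\rangle)\|_2^2\le C$, gives the analogous representation for $n^{-1}\Tr\bC_\eta^\infty(\tau)$ with a measure $\nu_\eta^{(n)}$.

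For the passage to the limit, note $|d^{-1}\Tr\bC_\theta^\infty(\tau)-d^{-1}\|\langle\btheta\rangle\|_2^2|\le e^{-\iota\tau}\nu^{(d)}([\iota,\infty))\le Ce^{-\iota\tau}$; since $d^{-1}\Tr\bC_\theta^\infty(\tau)\to c_\theta^\tti(\tau)$ from the first step and $c_\theta^\tti(\tau)$ converges as $\tau\to\infty$, squeezing shows $c_\theta^\tti(\infty):=\lim_{n,d}d^{-1}\|\langle\btheta\rangle\|_2^2=\lim_{\tau\to\infty}c_\theta^\tti(\tau)\ge0$ exists, and likewise $c_\eta^\tti(\infty):=\lim_{n,d}\tfrac{\delta}{n\sigma^4}\|\X\langle\btheta\rangle-\y\|_2^2\ge0$. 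Then $\int_\iota^\infty e^{-a\tau}\,\d\nu^{(d)}(a)\to c_\theta^\tti(\tau)-c_\theta^\tti(\infty)$ for every $\tau\ge0$; as the $\nu^{(d)}$ are positive measures on $[\iota,\infty)$ of uniformly bounded mass whose total mass also converges (the $\tau=0$ case), no mass escapes to $+\infty$, and Helly selection plus uniqueness/continuity of the Laplace transform produce a finite positive measure $\mu_\theta$ on $[\iota,\infty)$ with $\int_\iota^\infty e^{-a\tau}\,\d\mu_\theta(a)=c_\theta^\tti(\tau)-c_\theta^\tti(\infty)$, i.e.\ the first identity of (\ref{eq:cttiforms}); symmetrically one obtains $\mu_\eta$. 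This gives all of Definition \ref{def:regular}(1) with $\iota=2/C_\LSI$ and $\eps(t)=Ce^{-ct}$.

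I expect the main obstacle to lie in the spectral part: justifying the representation $\langle\bar e_j,e^{\tau\L}\bar e_j\rangle_{L^2(q)}=\int_\iota^\infty e^{-a\tau}\,\d\nu_j(a)$ for the unbounded diffusion generator (needing essential self-adjointness of $\L$ on a suitable core, available since $\nabla^2\log q$ is globally bounded by (\ref{eq:negativeCD}), and the Poincar\'e consequence of the LSI to place the support in $[\iota,\infty)$), and then the $n,d\to\infty$ limit of the spectral measures, where escape of mass to infinity must be precluded --- handled by observing that the total masses $d^{-1}\sum_j\Var_q(\theta_j)$ themselves converge, by the first step evaluated at $\tau=0$, so the limiting Laplace transform extends continuously to $\tau=0$ and the limit measure is genuinely finite on $[\iota,\infty)$.
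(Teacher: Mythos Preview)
Your approach is essentially the paper's: Lemma~\ref{lemma:TTI} combined with Theorem~\ref{thm:dmft_response} to extract $c_\theta^\tti,c_\eta^\tti,c_\theta(*)$ with exponential error, followed by the spectral representation of $P_\tau$ on $L^2(q)$ (using the Poincar\'e gap $\iota$ implied by the LSI) to produce the exponential-mixture forms (\ref{eq:cttiforms}). The paper's handling of $c_\theta^\tti(\infty),c_\eta^\tti(\infty)$ routes through Lemma~\ref{lem:CRexpdecay} and the intermediate quantities $P_s(\btheta^0)^\top\langle\btheta\rangle$, but your direct spectral-gap bound $|d^{-1}\Tr\bC_\theta^\infty(\tau)-d^{-1}\|\langle\btheta\rangle\|_2^2|\le Ce^{-\iota\tau}$ is a clean shortcut.

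There is one genuine gap in your tightness step. You write that because ``total mass also converges (the $\tau=0$ case), no mass escapes to $+\infty$'', and hence ``the limiting Laplace transform extends continuously to $\tau=0$''. This inference is false as stated: the sequence $\nu^{(d)}=\delta_d$ has constant total mass $1$, yet its Laplace transforms converge to $0$ for $\tau>0$ and to $1$ at $\tau=0$, with all mass escaping. Convergence of total mass alone does not yield continuity of the limiting transform at $0$, nor tightness. The paper closes this by proving uniform equicontinuity of $\tau\mapsto d^{-1}\Tr\bC_\theta^\infty(\tau)$ via the short-time bound $\|P_t(\x)-\x\|_2^2\le C(1+\|\x\|_2^2)t$, and then deducing $(1-e^{-1})\mu_{\theta,d}([M,\infty))\le d^{-1}\Tr\bC_\theta^\infty(0)-d^{-1}\Tr\bC_\theta^\infty(1/M)$, which gives tightness directly. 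You can reach the same conclusion more cheaply from what you already have: your uniform-in-$\tau$ Cauchy bound $|C_\theta(s+\tau,s)-c_\theta^\tti(\tau)|\le Ce^{-cs}$ exhibits $c_\theta^\tti$ as a uniform limit of the continuous functions $\tau\mapsto C_\theta(s+\tau,s)$, hence $c_\theta^\tti$ is continuous on $[0,\infty)$; that continuity at $\tau=0$ is exactly what forces the subsequential vague limit of $\nu^{(d)}$ to retain full mass on $[\iota,\infty)$. Once you insert this sentence, the argument is complete.
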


\begin{proof}
We restrict to the almost sure event where the
convergence statements of Theorem \ref{thm:dmft_response} hold, and where
$\event(C_0,C_\LSI) \cap \{\|\btheta^0\|_2^2 \leq C_0d\}$ holds
for all large $n,d$.

Consider first the statements for $C_\theta(t,s)$. Applying
$\|\btheta^0\|_2^2 \leq C_0d$ and
(\ref{eq:Cthetadecay}) of Lemma \ref{lem:CRexpdecay},
for some constants $C,c>0$,
\begin{equation}\label{eq:Cthetalimsmalls}
\limsup_{n,d \to \infty} \big|d^{-1}\Tr \bC_\theta(t,s)
-d^{-1}P_s(\btheta^0)^\top \langle\btheta \rangle\big|
\leq Ce^{-ct} \text{ for all } s \leq t/2.
\end{equation}
By Theorem \ref{thm:dmft_response}, 
$\lim_{n,d \to \infty} d^{-1} \Tr \bC_\theta(t,s)=C_\theta(t,s)$ for all
$t \geq s \geq 0$. Then, for each $s \geq 0$ and $t \geq 2s$,
\[\limsup_{n,d \to \infty} d^{-1}
P_s(\btheta^0)^\top\langle\btheta\rangle
\leq C_\theta(t,s)+Ce^{-ct},
\qquad \liminf_{n,d \to \infty} d^{-1}
P_s(\btheta^0)^\top\langle\btheta \rangle
\geq C_\theta(t,s)-Ce^{-ct}.\]
Taking $t \to \infty$ on the right side of both statements
shows that for each $s \geq 0$, there exists a limit
\begin{equation}\label{eq:cthetainit}
\tilde c_\theta(s):=\lim_{n,d \to \infty} d^{-1}
P_s(\btheta^0)^\top \langle \btheta \rangle
=\lim_{t \to \infty} C_\theta(t,s).
\end{equation}
Next, (\ref{eq:Cthetainitdecay}) of Lemma \ref{lem:CRexpdecay} implies
for some $C,c>0$,
\begin{equation}\label{eq:tildecsbound}
\limsup_{n,d \to \infty} \big|d^{-1}
P_s(\btheta^0)^\top \langle\btheta\rangle
-d^{-1}\|\langle\btheta\rangle\|_2^2\big| \leq Ce^{-cs} \text{ for all } s \geq 0.
\end{equation}
Then
\[\limsup_{n,d \to \infty} d^{-1}\|\langle\btheta\rangle\|_2^2
\leq \tilde c_\theta(s)+Ce^{-cs},
\qquad \liminf_{n,d \to \infty} d^{-1}\|\langle\btheta\rangle\|_2^2
\geq \tilde c_\theta(s)-Ce^{-cs}.\]
Taking $s \to \infty$ on the right side of both statements shows that there
exists a limit
\begin{equation}\label{eq:cthetainfty}
c_\theta^\tti(\infty):=\lim_{n,d \to \infty} d^{-1}\|\langle\btheta\rangle\|_2^2
=\lim_{s \to \infty} \tilde c_\theta(s).
\end{equation}

Now consider $\bC_\theta^\infty(\tau)$ as defined in Lemma \ref{lemma:TTI}.
Let ${-}\L=\int_0^\infty a\,\d E_a$ be the spectral decomposition of ${-}\L$
as a positive, self-adjoint operator on $L^2(q)$
(c.f.\ \cite[Theorem A.4.2]{bakry2014analysis}), where $\{E_a\}_{a \geq 0}$ is a
family of orthogonal projections onto an increasing family of closed linear
subspaces of $L^2(q)$. In particular, $E_0f=\langle f(\btheta) \rangle$
is the projection onto the constant functions.
For each $\tau \geq 0$ and all $f,g \in L^2(q)$, we then have
\begin{equation}\label{eq:spectraldecomp}
\langle f(\btheta)P_\tau g(\btheta) \rangle=\int_0^\infty e^{-a \tau}\d \langle
f(\btheta)E_a g(\btheta) \rangle
\end{equation}
understood as a Stieltjes integral with respect to the bounded-variation
function $a \mapsto \langle f(\btheta)E_a g(\btheta) \rangle$
(c.f.\ \cite[Proposition 3.1.6(iii)]{bakry2014analysis}).
The LSI on the event $\event(C_0,C_\LSI)$ implies a
spectral gap, i.e.\ the spectrum of ${-}\L$ is
included in $\{0\} \cup [1/C_\LSI,\infty)$.
Thus, fixing any constant $\iota \in (0,1/C_\LSI)$, we have
\begin{align*}
d^{-1}\Tr \bC_\theta^\infty(\tau)
=d^{-1}\sum_{j=1}^d \langle e_j(\btheta)P_\tau e_j(\btheta) \rangle
&=d^{-1}\sum_{j=1}^d \langle e_j(\btheta)E_0e_j(\btheta) \rangle
+d^{-1}\sum_{j=1}^d \int_\iota^\infty e^{-a\tau} \d\langle e_j(\btheta)E_a
e_j(\btheta) \rangle\\
&=d^{-1}\|\langle \btheta\rangle\|_2^2
+d^{-1}\sum_{j=1}^d \int_\iota^\infty e^{-a\tau} \d\langle e_j(\btheta)E_a
e_j(\btheta) \rangle,
\end{align*}
the first equality applying (\ref{eq:spectraldecomp}), and
the second equality applying $\sum_j \langle e_j(\btheta)E_0e_j(\btheta)
\rangle=\sum_j \langle \theta_j\langle \theta_j\rangle\rangle
=\|\langle \btheta\rangle \|^2$.
Define $(n,d,\X,\y)$-dependent scalars $c_{\theta,d},m_{\theta,d}>0$
and a positive measure $\mu_{\theta,d}$ on $[\iota,\infty)$ by
\begin{equation}\label{eq:muddef}
c_{\theta,d}=d^{-1}\|\langle \btheta \rangle\|_2^2, \quad 
m_{\theta,d}=d^{-1}\sum_{j=1}^d \int_\iota^\infty \d\langle e_j(\btheta)E_a e_j(\btheta)
\rangle, \quad
\mu_{\theta,d}(S)=d^{-1}\sum_{j=1}^d \int_S \d\langle e_j(\btheta)E_a e_j(\btheta)
\rangle,
\end{equation}
noting that $a \mapsto d^{-1}\sum_{j=1}^d
\langle e_j(\btheta)E_a e_j(\btheta) \rangle$ is nondecreasing and hence
defines a valid distribution function for $\mu_{\theta,d}$. Then
\begin{equation}\label{eq:Cthetamuexpr}
d^{-1}\Tr \bC_\theta^\infty(\tau)
=c_{\theta,d}+\int_\iota^\infty e^{-a\tau} \mu_{\theta,d}(\d a),
\qquad m_{\theta,d}=\mu_{\theta,d}([\iota,\infty)).
\end{equation}

Applying (\ref{eq:Cthetamuexpr}) with $\tau=0$,
\begin{equation}\label{eq:cmtheta}
c_{\theta,d}+m_{\theta,d}=d^{-1}\Tr \bC_\theta^\infty(0)=d^{-1}\langle \|\btheta\|_2^2\rangle
\leq C.
\end{equation}
In particular, $\mu_{\theta,d}$ is finite and uniformly bounded in total
variation norm for all $(n,d)$.
We claim that $\tau \mapsto d^{-1}\Tr \bC_\theta^\infty(\tau)$ is uniformly
equicontinuous over all $(n,d)$: Observe that
\begin{align}
\Big|d^{-1}\Tr \bC_\theta^\infty(\tau)-d^{-1}\Tr \bC_\theta^\infty(\tau')\Big|
&=\Big|d^{-1}\big\langle \btheta^\top [P_\tau-P_{\tau'}](\btheta)\big\rangle\Big|\notag\\
&\leq d^{-1}\langle\|\btheta\|_2^2\rangle^{1/2} \cdot
\langle\|[P_\tau-P_{\tau'}](\btheta)\|_2^2\rangle^{1/2}\notag\\
&=d^{-1}\langle\|\btheta\|_2^2\rangle^{1/2} \cdot
\langle\|\btheta-P_{|\tau-\tau'|}(\btheta)\|_2^2\rangle^{1/2}.\label{eq:equicontinuous}
\end{align}
\cite[Theorem II.2.1]{kunita1984stochastic} implies
$\|P_t(\x)-\x\|_2^2 \leq C(1+\|\x\|_2^2)t$ for 
all $t \in [0,1]$ and a constant $C>0$.
This and (\ref{eq:W2contraction}) imply
that the right side of (\ref{eq:equicontinuous}) is at most
$C'|\tau-\tau'|$ for a constant $C'>0$ and all $|\tau-\tau'| \leq 1$,
so $\tau \mapsto d^{-1}\Tr \bC_\theta^\infty(\tau)$ is uniformly
equicontinuous as claimed. We note that for any $M>0$, by the relation
(\ref{eq:Cthetamuexpr}),
$c_{\theta,d}+\mu_{\theta,d}([0,M))+e^{-M\tau}\mu_{\theta,d}([M,\infty))
\geq d^{-1}\Tr \bC_\theta^\infty(\tau)$. Then setting $\tau=1/M$ and rearranging
yields
\[(1-e^{-1})\mu_{\theta,d}([M,\infty)) \leq c_{\theta,d}+m_{\theta,d}-d^{-1}\Tr
\bC_\theta^\infty(1/M)=d^{-1}\Tr \bC_\theta^\infty(0)-d^{-1}\Tr
\bC_\theta^\infty(1/M).\]
So this uniform equicontinuity implies 
that the measures $\mu_{\theta,d}$ are uniformly tight.

Then, there exists a subsequence $\{(n_k,d_k)\}_{k \geq 1}$ of $(n,d)$ along
which $\mu_{\theta,d} \Rightarrow \mu_\theta$ weakly, for some finite
positive measure $\mu_\theta$ on $[\iota,\infty)$. Recalling also that
$c_{\theta,d}=d^{-1}\|\langle\btheta\rangle\|_2^2 \to c_\theta^\tti(\infty)$ as $n,d \to \infty$
by the definition (\ref{eq:cthetainfty}), and setting
\begin{equation}\label{eq:cthetadef}
c_\theta^\tti(\tau)=c_\theta^\tti(\infty)+\int_\iota^\infty e^{-a\tau}\mu_\theta(\d a),
\end{equation}
this weak convergence applied to (\ref{eq:Cthetamuexpr}) implies
$\lim_{k \to \infty} d_k^{-1}\Tr \bC_\theta^\infty(\tau)=c_\theta^\tti(\tau)$.
Combining this with the convergence
$\lim_{k \to \infty} d_k^{-1}\Tr \bC_\theta(s+\tau,s)=C_\theta(s+\tau,s)$
by Theorem \ref{thm:dmft_response}, for any $s,\tau \geq 0$ we have
\begin{equation}\label{eq:Cthetattiapprox}
\Big|C_\theta(s+\tau,s)-c_\theta^\tti(\tau)\Big|
\leq \limsup_{k \to \infty} \Big|d_k^{-1}\Tr \bC_\theta(s+\tau,s)
-d_k^{-1}\Tr \bC_\theta^\infty(\tau)\Big| \leq Ce^{-cs},
\end{equation}
where the last inequality holds by (\ref{eq:Cthetacompare}).
Since $C_\theta(t,s)$ is non-random,
this implies that $c_\theta^\tti(\tau)$ is also non-random for every
$\tau \geq 0$, and thus also the measure $\mu_\theta$ is non-random. This shows (\ref{eq:Cthetaapproxinvariant}).

The statement (\ref{eq:Cetaapproxinvariant}) follows analogously:
By arguments parallel to (\ref{eq:cthetainit}) and (\ref{eq:cthetainfty}), applying
Theorem \ref{thm:dmft_response} and (\ref{eq:Cetadecay}) and
(\ref{eq:Cetainitdecay}) shows that there exist limits
\begin{align}
\tilde c_\eta(s)&:=\lim_{n,d \to \infty}
\frac{\delta}{n\sigma^4} (\X P_s(\btheta^0)-\y)^\top
(\X\langle \btheta \rangle-\y)=\lim_{t \to \infty} C_\eta(t,s),
\label{eq:cetainit}\\
c_\eta^\tti(\infty)&:=\lim_{n,d \to \infty} \frac{\delta}{n\sigma^4}
\|\X\langle \btheta \rangle-\y\|_2^2=\lim_{s \to \infty} \tilde c_\eta(s).
\label{eq:cetainfty}
\end{align}
Note that
\[n^{-1}\Tr \bC_\eta^\infty(\tau)
=n^{-1}\sum_{i=1}^n
\big\langle x_i(\btheta)P_\tau x_i(\btheta)\big\rangle
=\frac{\delta}{n\sigma^4}\|\X\langle \btheta \rangle-\y\|_2^2
+\frac{1}{n}\sum_{i=1}^n \int_\iota^\infty
e^{-a\tau}\d\langle x_i(\btheta)E_a x_i(\btheta) \rangle.\]
Defining
\begin{equation}\label{eq:cetamuexpr}
c_{\eta,n}=\frac{\delta}{n\sigma^4}\|\X\langle \btheta \rangle-\y\|_2^2,
\quad \mu_{\eta,n}(S)=\frac{1}{n}\sum_{i=1}^n \int_S
\d\langle x_i(\btheta)E_a x_i(\btheta) \rangle,
\quad m_{\eta,n}=\mu_{\eta,n}([\iota,\infty)),
\end{equation}
we have
\begin{equation}\label{eq:cmeta}
c_{\eta,n}+m_{\eta,n}=n^{-1}\Tr \bC_\eta^\infty(0)
=\frac{\delta}{n\sigma^4}\langle \|\X\btheta-\y\|_2^2 \rangle \leq C.
\end{equation}
So along some subsequence $\{(n_k,d_k)\}_{k \geq 1}$, we have
$c_{\eta,n} \to c_\eta^\tti(\infty)$, $\mu_{\eta,n} \Rightarrow \mu_\eta$
weakly for a finite positive measure $\mu_\eta$ on $[\iota,\infty)$, and
$\lim_{k \to \infty} n_k^{-1}\Tr \bC_\eta^\infty(\tau)
=c_\eta^\tti(\tau)$ for the quantity
\[c_\eta^\tti(\tau)=c_\eta^\tti(\infty)+\int_\iota^\infty e^{-a\tau}\mu_\eta(\d a).\]
By an argument parallel to (\ref{eq:Cthetattiapprox}) using Theorem \ref{thm:dmft_response} and (\ref{eq:Cetacompare}), this shows
$|C_\eta(s+\tau,s)-c_\eta^\tti(\tau)| \leq Ce^{-cs}$, establishing (\ref{eq:Cetaapproxinvariant}).

Finally, for (\ref{eq:Cthetastarlim}), observe that by Theorem
\ref{thm:dmft_response}, $\lim_{n,d \to \infty} d^{-1}P_s(\btheta^0)^\top
\btheta^*=C_\theta(s,*)$. Noting that
\[\limsup_{n,d \to \infty}
d^{-1}\big|(P_s(\btheta^0)-\langle\btheta\rangle)^\top \btheta^*\big|
\leq \limsup_{n,d \to \infty}
d^{-1}\|P_s \btheta^0-\langle\btheta \rangle\|_2 \cdot \|\btheta^*\|_2
\leq Ce^{-cs}\]
by (\ref{eq:mean_concentrate}), this implies the existence of the limit
\begin{equation}\label{eq:cthetastar}
c_\theta(*):=\lim_{n,d \to \infty} d^{-1}\langle\btheta \rangle^\top \btheta^*
=\lim_{s \to \infty} C_\theta(s,*),
\end{equation}
which satisfies $|C_\theta(s,*)-c_\theta(*)| \leq Ce^{-cs}$.
This shows (\ref{eq:Cthetastarlim}).
\end{proof}

\begin{lemma}\label{lem:fixedalpha_R}
Under Assumptions \ref{assump:model}, \ref{assump:prior}(a), and
\ref{assump:LSI}(a), the DMFT
system prescribed by Theorem \ref{thm:dmft_approx}(a)
satisfies the conditions of Definition \ref{def:regular}(2)
with $\eps(t)=Ce^{-ct}$ for some constants $C,c>0$.
\end{lemma}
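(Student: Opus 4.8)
The plan is to follow the template of the proof of Lemma \ref{lem:fixedalpha_C}, now passing from the equilibrium correlation matrices $\bC_\theta^\infty,\bC_\eta^\infty$ to the response matrices $\bR_\theta^\infty,\bR_\eta^\infty$ of Lemma \ref{lemma:TTI}. Working on the almost-sure event where the convergence statements of Theorem \ref{thm:dmft_response} hold and where $\event(C_0,C_\LSI)\cap\{\|\btheta^0\|_2^2\le C_0d\}$ holds for all large $n,d$, I recall from Lemma \ref{lem:fixedalpha_C} the measures $\mu_{\theta,d}$ of (\ref{eq:muddef}), their (subsequential) weak limit $\mu_\theta$, and the analogous $\mu_{\eta,n},\mu_\eta$. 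I define
\[
r_\theta^\tti(\tau)=\int_\iota^\infty a e^{-a\tau}\,\mu_\theta(\d a)={-}{c_\theta^\tti}'(\tau),
\qquad
r_\eta^\tti(\tau)=\int_\iota^\infty a e^{-a\tau}\,\mu_\eta(\d a)={-}{c_\eta^\tti}'(\tau),
\]
so that the fluctuation--dissipation relations (\ref{eq:crttiFDT}) hold by construction. Consistency of this choice with the equilibrium response is the spectral identity $d^{-1}\Tr\bR_\theta^\infty(\tau)=\int_\iota^\infty a e^{-a\tau}\mu_{\theta,d}(\d a)={-}\partial_\tau[d^{-1}\Tr\bC_\theta^\infty(\tau)]$, which follows from Lemma \ref{lemma:langevin_fdt_equi}(b) applied to the coordinate functions $e_j,x_i\in\cA\cap\cB$ (equivalently, from the representation (\ref{eq:spectraldecomp}) and integration by parts, using $\L e_j=\partial_j\log q$ and $\L x_i=\tfrac{\sqrt\delta}{\sigma^2}[\X\nabla\log q]_i$), and analogously for $\eta$ with $\mu_{\eta,n}$.

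The new ingredient beyond Lemma \ref{lem:fixedalpha_C} is a uniform second-moment bound on these spectral measures. By the Dirichlet-form identity,
$\int_\iota^\infty a^2\,\mu_{\theta,d}(\d a)=d^{-1}\sum_j\langle(\L e_j)^2\rangle=d^{-1}\langle\|\nabla\log q(\btheta)\|_2^2\rangle\le C$ on $\event(C_0,C_\LSI)$ by (\ref{eq:posteriorlogqmeansq}), and likewise $\int_\iota^\infty a^2\,\mu_{\eta,n}(\d a)\le \tfrac{\delta C_0^2}{\sigma^4 n}\langle\|\nabla\log q(\btheta)\|_2^2\rangle\le C$. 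This has two consequences. First, the probability measures $a\,\mu_{\theta,d}(\d a)$ (total mass $\int a\,\mu_{\theta,d}=d^{-1}\Tr\bR_\theta^\infty(0)=1$) are tight, so $\int a\,\mu_\theta(\d a)=1$, and since also $R_\theta(s,s)=\lim_{n,d}d^{-1}\Tr\bR_\theta(s,s)=1$ for all $s$, there is no discontinuity of $r_\theta^\tti$ at $\tau=0$; similarly $r_\eta^\tti(0)=\delta/\sigma^4=\lim_s R_\eta(s,s)$. Second, $\tau\mapsto d^{-1}\Tr\bR_\theta^\infty(\tau)$ is uniformly Lipschitz since $|\partial_\tau\!\int a e^{-a\tau}\mu_{\theta,d}|\le\int a^2\mu_{\theta,d}\le C$; combining this equicontinuity with the weak convergence $\mu_{\theta,d}\Rightarrow\mu_\theta$ (hence pointwise convergence of $d^{-1}\Tr\bR_\theta^\infty(\tau)$ to $r_\theta^\tti(\tau)$ on $[0,\infty)$), an Arzel\`a--Ascoli argument on compacts, and the uniform tail bound $\int_\iota^\infty a e^{-a\tau}\mu_{\theta,d}(\d a)\le Ce^{-c\tau}$ from the spectral gap, one gets $\sup_{\tau\ge0}|d_k^{-1}\Tr\bR_\theta^\infty(\tau)-r_\theta^\tti(\tau)|\to0$ along the convergent subsequence, and the same for $\eta$.

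Combining this with (\ref{eq:Rthetacompare}) of Lemma \ref{lemma:TTI}, namely $|d^{-1}\Tr\bR_\theta(s+\tau,s)-d^{-1}\Tr\bR_\theta^\infty(\tau)|\le Ce^{-cs}$ uniformly in $\tau$, and with Theorem \ref{thm:dmft_response} ($d^{-1}\Tr\bR_\theta(s+\tau,s)\to R_\theta(s+\tau,s)$), I obtain $|R_\theta(s+\tau,s)-r_\theta^\tti(\tau)|\le Ce^{-cs}$ for all $s,\tau\ge0$; since $R_\theta$ is deterministic this pins $r_\theta^\tti$ down uniquely, so (as in Lemma \ref{lem:fixedalpha_C}) the full sequence converges and $r_\theta^\tti$ is deterministic, and likewise for $R_\eta$. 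To conclude (\ref{eq:Rthetaapproxinvariant}), substitute $\tau=t-s$ in $\int_0^t|R_\theta(t,s)-r_\theta^\tti(t-s)|\d s$ and split at $\tau=t/2$: for $\tau\le t/2$ the integrand is at most $Ce^{-c(t-\tau)}\le Ce^{-ct/2}$ by the previous display, while for $\tau>t/2$ it is at most $Ce^{-c\tau}\le Ce^{-ct/2}$ using $|R_\theta(t,s)|\le Ce^{-c(t-s)}$ from (\ref{eq:Rthetadecay}) and the tail bound on $r_\theta^\tti$; hence the integral is at most $t\cdot Ce^{-ct/2}\le C'e^{-c't}$, and identically for $\eta$ via (\ref{eq:Retacompare}) and (\ref{eq:Retadecay}). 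The main obstacle is exactly the uniformity over all $\tau\ge0$ (in particular near $\tau=0$) of the comparison between $R_\theta$ and $r_\theta^\tti$: without the second-moment control, the weak limit $\mu_\theta$ could shed first-moment mass to infinity, producing a jump of $r_\theta^\tti$ at $0$ that is incompatible with the exponential rate; the posterior-LSI moment estimate (\ref{eq:posteriorlogqmeansq}) is precisely what closes this gap.
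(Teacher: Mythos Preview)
Your proof is correct, but it takes a more elaborate route than the paper's. Both arguments reduce to establishing $d_k^{-1}\Tr\bR_\theta^\infty(\tau)\to r_\theta^\tti(\tau)$ along the subsequence of Lemma~\ref{lem:fixedalpha_C}, and then combining this with (\ref{eq:Rthetacompare}), (\ref{eq:Rthetadecay}), and Theorem~\ref{thm:dmft_response}. The difference is in how that convergence is obtained. You introduce a second-moment bound $\int a^2\,\mu_{\theta,d}(\d a)=d^{-1}\langle\|\nabla\log q\|_2^2\rangle\le C$ via the Dirichlet-form identity, which gives equicontinuity of $\tau\mapsto d^{-1}\Tr\bR_\theta^\infty(\tau)$ and lets you invoke Arzel\`a--Ascoli for uniform convergence. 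The paper instead observes that both $d^{-1}\Tr\bC_\theta^\infty(\tau)$ and $c_\theta^\tti(\tau)$ are \emph{convex} in $\tau$ (being Laplace transforms of positive measures plus constants), so pointwise convergence already forces convergence of derivatives by \cite[Theorem~25.7]{rockafellar1997convex}; the FDT of Lemma~\ref{lemma:langevin_fdt_equi}(b) then identifies the derivative with $-d^{-1}\Tr\bR_\theta^\infty(\tau)$.

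Your approach does buy something: it pins down $r_\theta^\tti(0)=\int a\,\mu_\theta=1$ (and likewise $r_\eta^\tti(0)=\delta/\sigma^4$) and gives the pointwise tail bound $r_\theta^\tti(\tau)\le e^{-\iota\tau}\int a\,\mu_\theta$. But neither is needed for (\ref{eq:Rthetaapproxinvariant}): the integral $\int_0^{t/2}r_\theta^\tti(t-s)\,\d s=c_\theta^\tti(t/2)-c_\theta^\tti(t)\le e^{-\iota t/2}\mu_\theta([\iota,\infty))$ is controlled directly from the form (\ref{eq:cttiforms}), without any moment of $\mu_\theta$. So your closing remark that the second-moment estimate is ``precisely what closes this gap'' overstates its role --- convexity alone suffices, and the paper's argument is shorter for it.
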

\begin{proof}
We again restrict to the almost sure event where the convergence statements of
Theorem \ref{thm:dmft_response} hold,
and where $\event(C_0,C_\LSI) \cap \{\|\btheta^0\|_2^2 \leq C_0d\}$ holds
for all large $n,d$.

Consider first $R_\theta(t,s)$. By
(\ref{eq:Rthetadecay}) of Lemma \ref{lem:CRexpdecay} and the convergence
$R_\theta(t,s)=\lim_{n,d \to \infty} d^{-1}\bR_\theta(t,s)$ of Theorem
\ref{thm:dmft_response},
\begin{equation}\label{eq:Rthetaapproxsmalls}
|R_\theta(t,s)| \leq Ce^{-ct} \text{ for all } s \leq t/2.
\end{equation}
For $s \geq t/2$, note that
the forms of (\ref{eq:Cthetamuexpr}) and (\ref{eq:cthetadef}) imply that both
$d^{-1}\Tr \bC_\theta^\infty(\tau)$ and $c_\theta^\tti(\tau)$
are convex and differentiable in $\tau \geq 0$. Then, 
along the subsequence $\{(n_k,d_k)\}_{k \geq 1}$
of the preceding proof, the pointwise convergence
$\lim_{k \to \infty} d_k^{-1}\Tr
\bC_\theta^\infty(\tau)=c_\theta^\tti(\tau)$  implies also
$\lim_{k \to \infty} d_k^{-1} \partial_\tau \Tr \bC_\theta^\infty(\tau)
=\partial_\tau c_\theta^\tti(\tau)$
for each $\tau \geq 0$ (c.f.\ \cite[Theorem 25.7]{rockafellar1997convex}). By
the fluctuation-dissipation relation of Lemma \ref{lemma:langevin_fdt_equi}
applied with $A=B=e_j$ for each $j=1,\ldots,d$, we have
$\partial_\tau \Tr \bC_\theta^\infty(\tau)={-}\Tr \bR_\theta^\infty(\tau)$.
Then, defining $r_\theta^\tti(\tau)={-}\partial_\tau c_\theta^\tti(\tau)$,
this shows $\lim_{k \to \infty} d_k^{-1} \Tr \bR_\theta^\infty(\tau)
=r_\theta^\tti(\tau)$. Combining with
$\lim_{k \to \infty} d_k^{-1} \Tr \bR_\theta(s+\tau,s)=R_\theta(s+\tau,s)$
from Theorem \ref{thm:dmft_response}, for any $s,\tau \geq 0$ we have that
\[\big|R_\theta(s+\tau,s)-r_\theta^\tti(\tau)\big|
\leq \limsup_{k \to \infty}
\Big|d_k^{-1}\Tr \bR_\theta(s+\tau,s)-d_k^{-1}\Tr \bR_\theta^\infty(\tau)\Big|
\leq Ce^{-cs},\]
where the last inequality applies (\ref{eq:Rthetacompare}).
In particular, for any $t \geq 0$,
\begin{equation}\label{eq:Rthetaapproxlarges}
|R_\theta(t,s)-r_\theta^\tti(t-s)| \leq Ce^{-c't}
\text{ for all } s \in [t/2,t].
\end{equation}
Together, (\ref{eq:Rthetaapproxsmalls}) and (\ref{eq:Rthetaapproxlarges})
imply (\ref{eq:Rthetaapproxinvariant}).
The statement (\ref{eq:Retaapproxinvariant}) follows analogously, and we omit
this for brevity.
\end{proof}

\begin{proof}[Proof of Theorem \ref{thm:fixedalpha_dynamics}]
This follows from Lemmas \ref{lem:fixedalpha_C} and \ref{lem:fixedalpha_R}.
\end{proof}

\subsection{Limit MSE and free energy}

We now show Corollary \ref{cor:fixedalpha_dynamics} on the asymptotic
values of the mean-squared-errors and the free energy.

\begin{proposition}\label{prop:mseconcentration}
Suppose Assumptions \ref{assump:model}, \ref{assump:prior}(a), and
\ref{assump:LSI}(a) hold. Let $\YMSE_*$ and the marginal likelihood
$\sP_g(\y \mid \X)$ be as defined in Corollary \ref{cor:fixedalpha_dynamics}.
Let $\E[\cdot\mid \X]$ denote the expectation with respect to $\theta_j^*
\overset{iid}{\sim} g_*$ and $\eps_i \overset{iid}{\sim} \N(0,\sigma^2)$
conditioning on $\X$. Then almost surely,
\begin{equation}\label{eq:mseconcentration}
\lim_{n,d \to \infty}
d^{-1}\log \sP_g(\y \mid \X)-d^{-1}\E[\log \sP_g(\y \mid \X) \mid \X]=0,
\qquad \lim_{n,d \to \infty} \YMSE_*-\E[\YMSE_* \mid \X]=0.
\end{equation}
\end{proposition}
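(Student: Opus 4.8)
Both limits in (\ref{eq:mseconcentration}) are instances of Gaussian-type concentration for the product law of $(\btheta^*,\beps)$ conditioned on $\X$. Since $g_*$ satisfies the log-Sobolev inequality (\ref{eq:LSIprior}) and each $\eps_i\sim\N(0,\sigma^2)$ does as well, tensorization gives that the law of $(\theta_1^*,\dots,\theta_d^*,\eps_1,\dots,\eps_n)$ satisfies a log-Sobolev inequality with a dimension-free constant $C_*=\max(C_\LSI,2\sigma^2)$; by the Herbst argument, any $L$-Lipschitz $h$ of these variables obeys $\P(|h-\E[h\mid\X]|>t\mid\X)\le 2\exp(-ct^2/L^2)$ with $c$ depending only on $C_*$. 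I would work throughout on the almost-sure event $\event(\X)\cap\{\|\X\|_\op\le C_0\}$, and show that (after suitable localization) each quantity is a Lipschitz function of $(\btheta^*,\beps)$ with the right order of Lipschitz constant; summing the resulting tail bounds over $n,d$ and applying Borel--Cantelli and Fubini then yields the claimed almost-sure convergence.

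For the free energy, compute $\nabla_\y\log\sP_g(\y\mid\X)={-}\sigma^{-2}(\y-\X\langle\btheta\rangle)$, so that $f(\btheta^*,\beps):=d^{-1}\log\sP_g(\X\btheta^*+\beps\mid\X)$ has $\nabla_{(\btheta^*,\beps)}f=\tfrac1d(\X^\top,\I)\nabla_\y\log\sP_g$. On $\{\|\y\|_2^2\le Cd\}$ the posterior bound (\ref{eq:posteriormeansq}) of Lemma~\ref{lemma:elementarybounds} together with $\|\X\|_\op\le C_0$ gives $\|\nabla_\y\log\sP_g\|_2\le C\sqrt d$, hence $\|\nabla_{(\btheta^*,\beps)}f\|_2\le C/\sqrt d$; and $\|\y\|_2^2\le Cd$ holds on the ball $B=\{\|(\btheta^*,\beps)\|_2^2\le R(n+d)\}$ for a suitable $R$. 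I would therefore extend $f|_B$ to a globally $(C/\sqrt d)$-Lipschitz function $\tilde f$ (e.g.\ by the McShane--Whitney extension) agreeing with $f$ on $B$, apply the concentration inequality to $\tilde f$, and use that $(\btheta^*,\beps)\in B$ with probability $1-e^{-cn}$ by sub-Gaussianity of $g_*$ and $\N(0,\sigma^2)$, together with the crude a priori bound $|f|,|\tilde f|\le C(1+\|(\btheta^*,\beps)\|_2^2/d)$ obtained from the fundamental theorem of calculus, to conclude $\E[\tilde f\mid\X]-\E[f\mid\X]\to0$ and then $f-\E[f\mid\X]\to0$ almost surely. This is the first limit.

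For $\YMSE_*$, the posterior log-Sobolev inequality on $\event(\X)$ implies a Poincar\'e inequality, hence $\|\Cov(\btheta)\|_\op\le C_\LSI$ for \emph{every} $\y\in\R^n$ on $\event(\X)$. Differentiating the posterior mean, $\partial_\y\langle\btheta\rangle=\sigma^{-2}\Cov(\btheta)\X^\top$, so $\y\mapsto\X\langle\btheta\rangle$ is globally $(C_0^2C_\LSI/\sigma^2)$-Lipschitz, and consequently $\Psi(\btheta^*,\beps):=\X\btheta^*-\X\langle\btheta\rangle$ (with $\langle\cdot\rangle$ taken at $\y=\X\btheta^*+\beps$) is globally $L_1$-Lipschitz with a dimension-free $L_1$. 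Thus $\sqrt{\YMSE_*}=n^{-1/2}\|\Psi\|_2$ is globally $(L_1/\sqrt n)$-Lipschitz, so it concentrates around $\E[\sqrt{\YMSE_*}\mid\X]$ with fluctuations $O(n^{-1/2})$ and conditional variance $O(1/n)$; no truncation is needed. Since $\sqrt{\YMSE_*}\le C$ on $\event(\X)\cap\{\|\btheta^*\|_2^2\le C_0 d\}$ (again using (\ref{eq:posteriormeansq})), which holds almost surely for all large $n,d$, writing $s=\sqrt{\YMSE_*}$, $\bar s=\E[s\mid\X]$, and expanding $s^2-\E[s^2\mid\X]=(s-\bar s)(s+\bar s)-\Var(s\mid\X)$ shows $\YMSE_*-\E[\YMSE_*\mid\X]\to0$ almost surely.

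The main obstacle is the free-energy statement: its gradient in $(\btheta^*,\beps)$ grows linearly in $\|(\btheta^*,\beps)\|_2$, because the posterior second-moment bound (\ref{eq:posteriormeansq}) itself grows with $\|\y\|_2^2$, so Lipschitz concentration cannot be applied directly and one must carry out the localization-and-extension argument, verifying that replacing $f$ by $\tilde f$ perturbs neither the function on the typical event nor its expectation. The $\YMSE_*$ statement is comparatively routine once the uniform bound on the posterior covariance furnished by the posterior LSI is in hand.
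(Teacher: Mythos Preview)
Your proposal is correct. For the free energy, your argument is essentially identical to the paper's: both compute the gradient $\nabla_\y\log\sP_g(\y\mid\X)=-\sigma^{-2}(\y-\X\langle\btheta\rangle)$, localize to a ball $\{\|\btheta^*\|_2^2+\|\beps\|_2^2\le C_1 d\}$ where (\ref{eq:posteriormeansq}) yields an $O(d^{-1/2})$ Lipschitz bound, extend to a globally Lipschitz function, apply LSI concentration for the product law of $(\btheta^*,\beps)$, and control the expectation shift using crude a priori growth bounds on $Z$ and the exponentially small probability of $B^c$.

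For $\YMSE_*$, your route differs from the paper's in a useful way. The paper works directly with $n\YMSE_*=\|\X\btheta^*+\beps-\X\langle\btheta\rangle\|_2^2$, differentiates the squared norm (producing cross terms that are then controlled via Cauchy--Schwarz on the posterior covariance $\kappa_2$ and the Poincar\'e inequality), and applies the same localize-extend-concentrate scheme as for the free energy. You instead exploit that Assumption~\ref{assump:LSI}(a) gives the posterior LSI for \emph{all} $\y\in\R^n$ on $\event(\X)$, so $\|\Cov(\btheta)\|_\op\le C_\LSI/2$ globally in $\y$; then $\Psi(\btheta^*,\beps)=\X\btheta^*-\X\langle\btheta\rangle$ is globally Lipschitz with a dimension-free constant, and $\sqrt{\YMSE_*}=n^{-1/2}\|\Psi\|_2$ is globally $O(n^{-1/2})$-Lipschitz without any truncation. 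This is slightly cleaner: it sidesteps the localization step and the differentiation of a squared norm. The paper's approach has the advantage of treating both quantities uniformly with one mechanism, at the cost of carrying the truncation through twice.
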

\begin{proof}
We condition on $\X$ throughout, and restrict to the $\X$-dependent event
\[\{\|\X\|_\op \leq C_0 \text{ and } (\ref{eq:LSI}) \text{ holds}\}.\]
Note that by assumption, this event holds a.s.\ for all large $n,d$ and
does not depend on $\btheta^*,\beps$.

For the first statement, let us consider
\begin{align*}
Z(\btheta^*,\beps)=\log \int
\exp\bigg({-}\frac{1}{2\sigma^2}\|\X\btheta^*+\beps-\X\btheta\|_2^2
+\sum_{j=1}^d \log g(\theta_j)\bigg)\d\btheta
\end{align*}
(which coincides with $\log P_g(\y \mid \X)$ up to an additive constant)
as a function of $(\btheta^*,\beps)$. Then
\begin{align*}
\nabla_{\btheta^*} Z(\btheta^*,\beps)
={-}\frac{1}{\sigma^2} \X^\top(\X\btheta^*+\beps-\X\langle \btheta \rangle),
\qquad \nabla_{\beps} Z(\btheta^*,\beps)
={-}\frac{1}{\sigma^2} (\X\btheta^*+\beps-\X\langle \btheta \rangle).
\end{align*}
Under Assumption \ref{assump:model}, note that
$\btheta^*$ and $\beps$ have independent
subgaussian entries, so there are constants $C_1,c>0$ such that
(c.f.\ \cite[Eq.\ (3.1)]{vershynin2018high})
\begin{equation}\label{eq:thetaepsnormbound}
\P[\|\btheta^*\|_2^2+\|\beps\|_2^2>C_1d] \leq e^{-cd}.
\end{equation}
When $\|\btheta^*\|_2^2+\|\beps\|_2^2 \leq C_1d$, we have the bound
$\langle \|\btheta\|_2^2 \rangle \leq Cd$ from (\ref{eq:posteriormeansq}).
Applying this and $\|\X\|_\op \leq C_0$,
\[\|\nabla_{(\btheta^*,\beps)} Z(\btheta^*,\beps)\|_2
\1\{\|\btheta^*\|_2^2+\|\beps\|_2^2 \leq C_1d\} \leq L\sqrt{d}\]
for a constant $L>0$. Thus $Z(\btheta^*,\beps)$ is $L\sqrt{d}$-Lipschitz on
$\{\|\btheta^*\|_2^2+\|\beps\|_2^2 \leq C_1d\}$, so its Lipschitz extension
\[\tilde Z(\btheta^*,\beps)=\inf_{\x \in \R^{d+n}:\|\x\|_2^2 \leq C_1d}
Z(\x)+L\sqrt{d}\|\x-(\btheta^*,\beps)\|_2\]
is globally $L\sqrt{d}$-Lipschitz on $\R^{d+n}$
and $\tilde Z(\btheta^*,\beps)=Z(\btheta^*,\beps)$ over
$\{\|\btheta^*\|_2^2+\|\beps\|_2^2 \leq C_1d\}$.
Under Assumption \ref{assump:model}, the joint distribution of
$(\btheta^*,\beps)$ satisfies a log-Sobolev inequality by tensorization,
implying the Lipschitz concentration
\begin{equation}\label{eq:concentrationII}
\P[|\tilde Z(\btheta^*,\beps)-\E[\tilde Z(\btheta^*,\beps) \mid \X]|
\geq td \mid \X] \leq 2e^{-t^2d/(2L^2)}.
\end{equation}
We may bound
\begin{align*}
&|\E[\tilde Z(\btheta^*,\beps) \mid \X]-\E[Z(\btheta^*,\beps) \mid \X]|\\
&\leq \E\Big[\1\{\|\btheta^*\|_2^2+\|\beps\|_2^2 \geq C_1d\}
\Big(|\tilde Z(\btheta^*,\beps)|+|Z(\btheta^*,\beps)|\Big)\;\Big|\;\X\Big]\\
&\leq \P[\|\btheta^*\|_2^2+\|\beps\|_2^2 \geq C_1d \mid \X]^{1/2}
\Big((\E[\tilde Z(\btheta^*,\beps) \mid \X]^2)^{1/2}
+(\E[Z(\btheta^*,\beps) \mid \X]^2)^{1/2}\Big)
\end{align*}
Applying the upper bound $Z(\btheta^*,\beps) \leq \log \int \exp(\sum_{j=1}^d
\log g(\theta_j))\d\btheta=0$, Jensen's inequality lower bound
$Z(\btheta^*,\beps) \geq
\E_g[-\frac{1}{2\sigma^2}\|\X\btheta^*+\beps-\X\btheta\|_2^2]$
where $\E_g[\cdot]$ is the expectation over $\theta_j \overset{iid}{\sim} g$,
and $|\tilde Z(\btheta^*,\beps)-Z(0)|
=|\tilde Z(\btheta^*,\beps)-\tilde Z(0)|
\leq L\sqrt{d}(\|\btheta^*\|_2^2+\|\beps\|_2^2)^{1/2}$, we obtain
\[|\E[\tilde Z(\btheta^*,\beps) \mid \X]-\E[Z(\btheta^*,\beps) \mid \X]|
\leq \P[\|\btheta^*\|_2^2+\|\beps\|_2^2 \geq C_1d \mid \X]^{1/2} \cdot Cd
\leq e^{-c'd}\]
for all large $n,d$, the last inequality applying (\ref{eq:thetaepsnormbound}).
Thus (\ref{eq:concentrationII}) and (\ref{eq:thetaepsnormbound}) imply
\[\P[|Z(\btheta^*,\beps)-\E[Z(\btheta^*,\beps) \mid \X]| \geq td+e^{-c'd} \mid \X]
\leq 2e^{-t^2d/(2L^2)}+e^{-cd},\]
implying the first statement of (\ref{eq:mseconcentration}) by the 
Borel-Cantelli lemma.

For the second statement, let us write
\[n\YMSE_*(\btheta^*,\beps)=\|\X\btheta^*+\beps-\X\langle \btheta
\rangle\|_2^2\]
viewed also as a function of $(\btheta^*,\beps)$. Writing $\kappa_2(\cdot)$ for
the covariance associated to the posterior mean $\langle \cdot \rangle$,
differentiating in $(\btheta^*,\beps)$ gives, for any unit vectors $\u \in \R^d$
and $\v \in \R^n$,
\begin{align*}
\u^\top \nabla_{\btheta^*}[n\YMSE_*]
&=2(\X\btheta^*+\beps-\X\langle\btheta \rangle)^\top \X\u
-\frac{2}{\sigma^2}\kappa_2\Big(\btheta^\top\X^\top\X\u,
(\X\btheta^*+\beps-\X\langle\btheta \rangle)^\top \X\btheta\Big),\\
\v^\top \nabla_{\beps}[n\YMSE_*]
&=2(\X\btheta^*+\beps-\X\langle\btheta \rangle)^\top \v
-\frac{2}{\sigma^2}\kappa_2\Big(\btheta^\top\X^\top\v,
(\X\btheta^*+\beps-\X\langle\btheta \rangle)^\top \X\btheta\Big).
\end{align*}
The Poincar\'e inequality implied by the assumed LSI for
$\sP_g(\btheta \mid \X,\y)$ shows, for any vector $\x \in \R^d$,
\[\kappa_2(\x^\top\btheta,\u^\top\btheta) \leq C\|\x\|_2^2.\]
On the event $\{\|\btheta^*\|_2^2+\|\beps\|_2^2 \leq C_1d\}$,
applying this Poincar\'e bound, Cauchy-Schwarz for $\kappa_2(\cdot)$,
and $\|\X\|_\op \leq C_0$ and $\langle \|\btheta\|_2^2 \rangle \leq Cd$ from
(\ref{eq:posteriormeansq}), we obtain
$|\u^\top \nabla_{\btheta^*}[n\YMSE_*]| \leq C\sqrt{d}$ and
$|\v^\top \nabla_{\beps}[n\YMSE_*]| \leq C\sqrt{d}$ for any unit vectors
$\u,\v$, and hence
\[\|\nabla_{\btheta^*,\beps}[n\YMSE_*]\|_2
\1\{\|\btheta^*\|_2^2+\|\beps\|_2^2 \leq C_1d\} \leq L\sqrt{d}\]
for some constant $L>0$. So $n\YMSE_*$ is 
$L\sqrt{d}$-Lipschitz in $(\btheta^*,\beps)$
on $\{\|\btheta^*\|_2^2+\|\beps\|_2^2 \leq C_1d\}$.
For any $(\btheta^*,\beps)$, we also have the bound
$|n\YMSE_*(\btheta^*,\beps)| \leq C(\|\btheta^*\|_2^2+\|\beps\|_2^2)^{1/2}$
by (\ref{eq:posteriormeansq}), so the second statement of
(\ref{eq:mseconcentration}) follows from the same Lipschitz extension and
concentration argument as above.
\end{proof}

\begin{proof}[Proof of Corollary \ref{cor:fixedalpha_dynamics}(a)]
We restrict to the almost sure event where $\event(C_0,C_\LSI)$ holds
for all large $n,d$.
Observe that by (\ref{eq:muddef}) and (\ref{eq:cmtheta}),
\[\MSE=d^{-1}\langle \|\btheta-\langle \btheta \rangle\|_2^2 \rangle
=d^{-1}\big(\langle\|\btheta\|_2^2\rangle-\|\langle\btheta\rangle\|_2^2\big)
=m_d=\mu_d([\iota,\infty)),\]
so $\lim_{n,d \to \infty}
\MSE=\mu_\theta([\iota,\infty))=c_\theta^\tti(0)-c_\theta^\tti(\infty)$
by (\ref{eq:cthetadef}). Also
\[\MSE_*=d^{-1}\|\btheta^*-\langle \btheta \rangle\|_2^2
=d^{-1}\big(\|\btheta^*\|_2^2
-2\langle \btheta \rangle^\top \btheta^*+\|\langle \btheta \rangle\|_2^2\big),\]
so
$\lim_{n,d \to \infty} \MSE_*=\E[{\theta^*}^2]
-2c_\theta(*)+c_\theta^\tti(\infty)$ by Assumption \ref{assump:model} and the
definitions (\ref{eq:cthetainfty}) and (\ref{eq:cthetastar}).
Thus $\MSE \to \mse$ and $\MSE_* \to \mse_*$ for the quantities
$\mse,\mse_*$ defined in (\ref{eq:mmse}).

Similarly
\[\YMSE=n^{-1}\big\langle\|\X\btheta-\X\langle \btheta \rangle\|_2^2 \big\rangle
=n^{-1}\big(\langle\|\X\btheta-\y\|_2^2\rangle-\|\X\langle\btheta\rangle-\y\|_2^2\big).\]
Then by (\ref{eq:cetamuexpr}) and (\ref{eq:cmeta}),
$\lim_{n,d \to \infty} n^{-1}\|\X\langle \btheta \rangle-\y\|_2^2
=\frac{\sigma^4}{\delta}c_\eta^\tti(\infty)$ and
$ \YMSE=\frac{\sigma^4}{\delta}\mu_{\eta,n}([\iota,\infty))
\rightarrow\frac{\sigma^4}{\delta}(c_\eta^\tti(0)-c_\eta^\tti(\infty))=\ymse$
as defined in (\ref{eq:mmse}). For $\YMSE_*$,
writing $\E[\cdot \mid \X]$ for the expectation over $(\btheta^*,\beps)$ as in
Proposition \ref{prop:mseconcentration}, observe first that
\begin{align*}
n^{-1}\E[\|\X\langle \btheta \rangle-\y\|_2^2 \mid \X]
&=n^{-1}\E[\|\X\langle \btheta \rangle-\X\btheta^*\|_2^2 \mid \X]
-2n^{-1}\E[\beps^\top(\X\langle \btheta \rangle-\X\btheta^*)+\sigma^2 \mid \X],
\end{align*}
and Gaussian integration-by-parts gives
\begin{align*}
\E[\beps^\top(\X\langle \btheta \rangle-\X\btheta^*) \mid \X]
&=\E[\beps^\top \X\langle \btheta \rangle \mid \X]
=\E[\langle \|\X\btheta-\X\btheta^*\|_2^2 \rangle \mid \X]
-\E[\|\X\langle \btheta \rangle-\X\btheta^*\|_2^2 \mid \X]\\
&=\E[\langle \|\X\btheta-\X\langle \btheta \rangle\|_2^2 \rangle \mid \X]
=n\,\E[\YMSE \mid \X].
\end{align*}
Thus
\begin{equation}\label{eq:YMSErelation}
\E[\YMSE_* \mid \X]=n^{-1}\E[\|\X\langle \btheta \rangle-\X\btheta^*\|_2^2 \mid
\X]=n^{-1}\E[\|\X\langle \btheta \rangle-\y\|_2^2 \mid \X]
+2\E[\YMSE \mid \X]-\sigma^2.
\end{equation}
We remark that $n^{-1}\|\X\langle \btheta \rangle-\y\|_2^2$ and $\YMSE$ are
bounded for all large $n,d$ on the event $\event(C_0,C_\LSI)$, by the
bound for $\langle \|\btheta\|_2^2 \rangle \leq C$ from (\ref{eq:posteriormeansq}).
Thus, applying $\YMSE \to \ymse$ and
$n^{-1}\|\X\langle\btheta \rangle-\y\|_2^2 \to
\frac{\sigma^4}{\delta}c_\eta^\tti(\infty)$ as argued above and dominated
convergence, the right side of (\ref{eq:YMSErelation}) converges to
$\ymse_*=\frac{\sigma^4}{\delta}(2c_\eta^\tti(0)-c_\eta^\tti(\infty))-\sigma^2$
as defined in (\ref{eq:mmse}). Then the concentration of $\YMSE_*$ established
in Proposition \ref{prop:mseconcentration} combined with (\ref{eq:YMSErelation})
show $\lim_{n,d \to \infty} \YMSE_*=\ymse_*$.

To show the last statement (\ref{eq:fixedalpha_equilibriumlaw}), conditional on
$\X,\btheta^*,\beps$ and averaging over the initial condition $\btheta^0 \sim
q_0=g_0^{\otimes d}$, let $q_t$ be the conditional law of $\btheta^t$.
Consider a coupling of a posterior sample
$\btheta \sim q$ with $\btheta^t \sim q_t$ such that
$\langle \|\btheta^t-\btheta\|_2^2 \rangle=W_2(q_t,q)^2$, where $\langle \cdot
\rangle$ denotes the expectation under this coupling and $W_2(\cdot)$ is the
Wasserstein-2 distance, both conditional on $\X,\btheta^*,\beps$.
For a given realization of $(\btheta^t,\btheta)$ from this coupling,
considering the
coordinatewise coupling of $\frac{1}{d}\sum_{j=1}^d
\delta_{(\theta_j^*,\theta_j^t)}$ with
$\frac{1}{d}\sum_{j=1}^d \delta_{(\theta_j^*,\theta_j)}$ shows
\[W_2\left(\frac{1}{d}\sum_{j=1}^d \delta_{(\theta_j^*,\theta_j^t)},
\frac{1}{d}\sum_{j=1}^d \delta_{(\theta_j^*,\theta_j)} \right)^2
\leq \frac{1}{d}\sum_{j=1}^d (\theta_j^t-\theta_j)^2
=\frac{1}{d}\|\btheta^t-\btheta\|_2^2.\]
Then
\[\Bigg\langle
W_2\Bigg(\frac{1}{d}\sum_{j=1}^d \delta_{(\theta_j^*,\theta_j^t)},
\frac{1}{d}\sum_{j=1}^d \delta_{(\theta_j^*,\theta_j)}\Bigg)^2 \Bigg\rangle
\leq \frac{1}{d}\big\langle \|\btheta^t-\btheta\|_2^2 \big\rangle
=\frac{1}{d} W_2(q_t,q)^2.\]
Applying Lemmas \ref{lemma:elementarybounds} and \ref{lem:W2contraction},
$W_2(q_t,q)^2 \leq Ce^{-ct}(\langle
\|\btheta\|_2^2\rangle+\langle \|\btheta^0\|_2^2 \rangle) \leq C'de^{-ct}$
on the event $\event(C_0,C_\LSI)$,
for some constants $C,C',c>0$. So on this event,
\begin{equation}\label{eq:equilibriumlaw_tmp1}
\limsup_{n,d \to \infty}
\Bigg\langle
W_2\Bigg(\frac{1}{d}\sum_{j=1}^d \delta_{(\theta_j^*,\theta_j^t)},
\frac{1}{d}\sum_{j=1}^d \delta_{(\theta_j^*,\theta_j)}\Bigg)^2
\Bigg\rangle \leq C'e^{-ct}.
\end{equation}

Now by Theorem \ref{thm:dmft_approx}(a), for each fixed $t \geq 0$,
almost surely with respect to the randomness of both
$\X,\btheta^*,\beps$ and
$\btheta^0,\{\b^t\}_{t \geq 0}$ defining $\{\btheta^t\}_{t \geq 0}$, we have
\begin{equation}\label{eq:W2asconvergence}
\lim_{n,d \to \infty} W_2\Bigg(\frac{1}{d}\sum_{j=1}^d
\delta_{(\theta_j^*,\theta_j^t)},\;\sP(\theta^*,\theta^t)\Bigg)^2=0
\end{equation}
where $\sP(\theta^*,\theta^t)$ here is the law of $(\theta^*,\theta^t)$ in
the DMFT system. To take an expectation over the randomness of
$\btheta^0$ and $\{\b^t\}_{t \geq 0}$, note that
from the definition (\ref{eq:langevin_fixedq}), we have
\[\btheta^t=\btheta^0+\int_0^t \nabla_{\btheta} \log q(\btheta^s)\,\d s
+\sqrt{2}\,\b^t
=\btheta^0+\int_0^t \Big[\frac{1}{\sigma^2}\X^\top(\y-\X\btheta^s)
+(\log g)'(\btheta^s)\Big]\d s+\sqrt{2}\,\b^t,\]
where $(\log g)'$ is applied entrywise.
Then on $\event(C_0,C_\LSI)$,
by the Lipschitz continuity of $(\log g)'(\theta)$, this implies for a constant
$C>0$ that
\[d^{-1/2}\|\btheta^t\|_2 \leq \int_0^t
Cd^{-1/2}\|\btheta^s\|_2\,\d s
+Ct+d^{-1/2}\|\btheta^0\|_2+\sqrt{2}\,d^{-1/2}\|\b^t\|_2.\]
Then for any $T>0$, Gronwall's inequality gives, for a constant $C>0$,
\begin{equation}\label{eq:Gronwallargument}
\sup_{t \in [0,T]} d^{-1/2}\|\btheta^t\|_2 \leq
Ce^{CT}\Big(T+d^{-1/2}\|\btheta^0\|_2+d^{-1/2}
\sup_{t \in [0,T]} \|\b^t\|_2\Big)
\end{equation}
For any $p>1$, applying
\[\Big(\sup_{t \in [0,T]} d^{-1}\|\b^t\|_2^2\Big)^p \leq
\sup_{t \in [0,T]} d^{-1}\sum_{j=1}^d |b_j^t|^{2p}
\leq d^{-1}\sum_{j=1}^d \sup_{t \in [0,T]} |b_j^t|^{2p}\]
and Doob's $L^p$-maximal inequality, we have
that $\langle (\sup_{t \in [0,T]} d^{-1}\|\b^t\|_2^2)^p \rangle$ is bounded by a
$(T,p)$-dependent constant. Similarly $\langle (d^{-1}\|\btheta^0\|_2)^p
\rangle$ is bounded by a $(T,p)$-dependent constant, so
\[\Big\langle \Big(\sup_{t \in [0,T]} d^{-1}\|\btheta^t\|_2^2\Big)^p
\Big\rangle \leq C_{T,p}\]
for a constant $C_{T,p}>0$, where $\langle \cdot \rangle$ averages over
$\btheta^0$ and $\{\b^t\}_{t \geq 0}$.
Since $W_2(\frac{1}{d}\sum_{j=1}^d
\delta_{(\theta_j^*,\theta_j^t)},\sP(\theta^*,\theta^t))^2 \leq
C(d^{-1}\|\btheta^*\|_2^2+d^{-1}\|\btheta^t\|_2^2+\E(\theta^*)^2+\E(\theta^t)^2)$,
this implies on the event $\event(C_0,C_\LSI)$ that
\begin{equation}\label{eq:W2UI}
\bigg\langle \sup_{t \in [0,T]} W_2\bigg(\frac{1}{d}\sum_{j=1}^d
\delta_{(\theta_j^*,\theta_j^t)},\;\sP(\theta^*,\theta^t)\bigg)^{2p} \bigg\rangle
\leq C_{T,p}'
\end{equation}
for a different constant $C_{T,p}'>0$. In particular, for any fixed $t \geq 0$
and $p>1$, the squared Wasserstein-2
distance in (\ref{eq:W2asconvergence}) is uniformly bounded in $L^p$ and hence
uniformly integrable with respect to $\langle \cdot \rangle$ for all large
$n,d$, so dominated convergence implies, almost surely,
\begin{equation}\label{eq:equilibriumlaw_tmp2}
\lim_{n,d \to \infty} \Bigg\langle W_2\Bigg(\frac{1}{d}\sum_{j=1}^d
\delta_{(\theta_j^*,\theta_j^t)},\;\sP(\theta^*,\theta^t)\Bigg)^2\Bigg\rangle=0.
\end{equation}

Combining (\ref{eq:equilibriumlaw_tmp1}) and
(\ref{eq:equilibriumlaw_tmp2}) shows that
for any fixed $t \geq 0$, almost surely,
\[\limsup_{n,d \to \infty}
\Bigg\langle W_2\Bigg(\frac{1}{d}\sum_{j=1}^d \delta_{(\theta_j^*,\theta_j)},
\sP_{g_*,\omega_*;g,\omega}\Bigg)^2 \Bigg\rangle \leq C\Big(e^{-ct}
+W_2(\sP(\theta^*,\theta^t),\; \sP_{g_*,\omega_*;g,\omega})^2\Big).\]
By Theorem \ref{thm:dmft_equilibrium}, we have
\[\lim_{t \to \infty} W_2(\sP(\theta^*,\theta^t),\;
\sP_{g_*,\omega_*;g,\omega})=0\]
so taking the limit $t \to \infty$ shows (\ref{eq:fixedalpha_equilibriumlaw}).
\end{proof}

To show Corollary \ref{cor:fixedalpha_dynamics}(b) on the asymptotic free
energy, we will apply an I-MMSE argument, together with the following
proposition which guarantees continuity of $\mse,\mse_*$ in the noise
variance $\sigma^2$.
In the later proof of Theorem \ref{thm:adaptivealpha_dynamics}, we will require 
also continuity in the prior parameter $\alpha$; thus we establish both
statements here.

\begin{lemma}\label{lemma:mmse_lipschitz}
Suppose Assumptions \ref{assump:model} and \ref{assump:prior}(b) hold.
Fix any open subset $O \subset \R^K$, and suppose also that Assumption
\ref{assump:LSI} holds for each $g \in \{g(\cdot,\alpha):\alpha \in O\}$,
where the constant $C_\LSI>0$ is uniform over $\alpha \in O$.
Consider any noise variance $\tilde \sigma^2 \geq \sigma^2$, and define
$\mse(\tilde \sigma^2,\alpha),\mse_*(\tilde\sigma^2,\alpha)$ by
(\ref{eq:mmse}) via the (approximately-TTI) DMFT limit of the Langevin dynamics
(\ref{eq:langevinfixedprior}) with a fixed prior $g(\cdot,\alpha)$
in the linear model (\ref{eq:linearmodel}) with noise variance $\tilde\sigma^2$.

Then over any compact interval
$I \subset [\sigma^2,\infty)$ and compact subset $S \subset O$,
$\mse(\tilde\sigma^2,\alpha),\mse_*(\tilde\sigma^2,\alpha)$ are Lipschitz functions of
$(\tilde\sigma^2,\alpha) \in I \times S$.
\end{lemma}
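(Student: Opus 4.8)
The plan is to prove a dimension-free Lipschitz estimate for the finite-$(n,d)$ mean-squared-error functionals and then transfer it to the limits $\mse,\mse_*$ using the almost-sure convergence of Corollary~\ref{cor:fixedalpha_dynamics}(a). Fix the compact interval $I\subset[\sigma^2,\infty)$ and compact $S\subset O$. For $(\tilde\sigma^2,\alpha)\in I\times O$ write $\langle\cdot\rangle_{\tilde\sigma^2,\alpha}$ for the posterior expectation under $q_{\tilde\sigma^2,\alpha}(\btheta)\propto\exp(-\tfrac{1}{2\tilde\sigma^2}\|\y-\X\btheta\|_2^2+\sum_j\log g(\theta_j,\alpha))$, and $\MSE(\tilde\sigma^2,\alpha)=d^{-1}(\langle\|\btheta\|_2^2\rangle-\|\langle\btheta\rangle\|_2^2)$, $\MSE_*(\tilde\sigma^2,\alpha)=d^{-1}\|\btheta^*-\langle\btheta\rangle\|_2^2$ for the corresponding quantities of (\ref{eq:MMSE}). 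On the good event $\event$ of Section~\ref{sec:posteriorbounds} --- which by the hypothesis and Assumption~\ref{assump:LSI}(b) may be taken to also carry the LSI (\ref{eq:LSI}) for $q_{\tilde\sigma^2,\alpha}$ with a single constant $C_\LSI$ for all $(\tilde\sigma^2,\alpha)\in I\times O$ --- Lemma~\ref{lemma:elementarybounds} in the form (\ref{eq:posteriormeansqsigmasq}) together with the growth conditions of Assumption~\ref{assump:prior}(b) give $\langle\|\btheta\|_2^2\rangle_{\tilde\sigma^2,\alpha}\le Cd$, $\langle\|\y-\X\btheta\|_2^2\rangle_{\tilde\sigma^2,\alpha}\le Cd$, $\|\langle\btheta\rangle_{\tilde\sigma^2,\alpha}\|_2^2\le Cd$, $\|\btheta^*\|_2^2\le Cd$, with a constant $C$ uniform over $(\tilde\sigma^2,\alpha)\in I\times O$ and over large $n,d$; and the LSI gives the Poincar\'e inequality $\Var_{\tilde\sigma^2,\alpha}(f)\le C_\LSI\,\E_{\tilde\sigma^2,\alpha}\|\nabla f\|_2^2$ with uniform $C_\LSI$.

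First I would differentiate posterior expectations in $(\tilde\sigma^2,\alpha)$. Setting $h_{\tilde\sigma^2}(\btheta)=\tfrac{1}{2\tilde\sigma^4}\|\y-\X\btheta\|_2^2$ and $k_{\alpha,m}(\btheta)=\sum_{j=1}^d\partial_{\alpha_m}\log g(\theta_j,\alpha)$, the smoothness of the log-density, the sub-Gaussian tail of $g(\cdot,\alpha)$ (as in the proof of Lemma~\ref{lemma:elementarybounds}), and the growth bounds (\ref{eq:logggradientbound}) justify, by dominated convergence exactly as in the computation of $\nabla\widehat F$ in Lemma~\ref{lemma:gradalphaF}(a), the formulas
\[
\partial_{\tilde\sigma^2}\langle f\rangle_{\tilde\sigma^2,\alpha}=\Cov_{\tilde\sigma^2,\alpha}(f,h_{\tilde\sigma^2}),
\qquad
\partial_{\alpha_m}\langle f\rangle_{\tilde\sigma^2,\alpha}=\Cov_{\tilde\sigma^2,\alpha}(f,k_{\alpha,m})
\]
for observables $f$ of at most quadratic growth. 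I would apply these with $f=\|\btheta\|_2^2$ and with $f=u^\top\btheta$ for unit vectors $u$ (for the terms $\|\langle\btheta\rangle\|_2^2$ and $\btheta^{*\top}\langle\btheta\rangle$, via the chain rule), and bound each covariance by Cauchy--Schwarz and Poincar\'e: $\Var(\|\btheta\|_2^2)\le 4C_\LSI\langle\|\btheta\|_2^2\rangle\le Cd$, $\Var(h_{\tilde\sigma^2})\le C_\LSI\tilde\sigma^{-8}\|\X\|_\op^2\langle\|\y-\X\btheta\|_2^2\rangle\le Cd$ (using $\tilde\sigma^2\ge\sigma^2$), $\Var(k_{\alpha,m})\le C_\LSI\,d\sup_{\R\times S}(\partial_\theta\partial_{\alpha_m}\log g)^2\le Cd$ by (\ref{eq:localconditions}), and $\Var(u^\top\btheta)\le C_\LSI$. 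Combined with $\|\langle\btheta\rangle\|_2,\|\btheta^*\|_2\le C\sqrt d$, these give $|\partial_{\tilde\sigma^2}\MSE|,|\partial_{\alpha_m}\MSE|,|\partial_{\tilde\sigma^2}\MSE_*|,|\partial_{\alpha_m}\MSE_*|\le L$ for a constant $L$ independent of $n,d$ and of $(\tilde\sigma^2,\alpha)\in I\times O$. Hence, on $\event$, for all large $n,d$ the random functions $\MSE$ and $\MSE_*$ are $L$-Lipschitz and uniformly bounded on $I\times O$.

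Finally I would pass to the limit. Fix a countable dense $D\subset I\times O$. Since Assumption~\ref{assump:prior}(a) holds for each $g(\cdot,\alpha)$, $\alpha\in O$ (by Assumption~\ref{assump:prior}(b)) and Assumption~\ref{assump:LSI}(a) holds for noise variance $\tilde\sigma^2\ge\sigma^2$ (by the hypothesis with \ref{assump:LSI}(b)), Corollary~\ref{cor:fixedalpha_dynamics}(a) applies at each point of $D$; by a countable union there is an almost-sure event on which $\event$ holds for all large $n,d$ and $\MSE\to\mse$, $\MSE_*\to\mse_*$ pointwise on $D$. On this event the uniform $L$-Lipschitz bound forces $|\mse(p)-\mse(q)|=\lim|\MSE(p)-\MSE(q)|\le L\|p-q\|_2$ for $p,q\in D$, so $\mse|_D$ is $L$-Lipschitz and extends uniquely to an $L$-Lipschitz function $g^\infty$ on $I\times\overline O\supset I\times S$; moreover $\MSE\to g^\infty$ uniformly on compacts by Arzel\`a--Ascoli. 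For each $p\in I\times O$, $g^\infty(p)=\lim\MSE(p)$ on this event, while Corollary~\ref{cor:fixedalpha_dynamics}(a) gives $\MSE(p)\to\mse(p)$ on an event of probability one; the two deterministic limits must coincide, so $\mse=g^\infty$ is $L$-Lipschitz on $I\times S$, and likewise for $\mse_*$. (That the DMFT solution is approximately-TTI, so $\mse,\mse_*$ are well-defined here, follows from Theorem~\ref{thm:fixedalpha_dynamics}.) I expect the main obstacle to be organizational rather than computational: because $\mse,\mse_*$ are defined only through an $n,d\to\infty$ limit they cannot be differentiated directly, so the crux is obtaining the Lipschitz estimate at finite $(n,d)$ with a genuinely dimension-free constant --- this is precisely where uniformity over $(\tilde\sigma^2,\alpha)\in I\times O$ of the Poincar\'e constant, of the posterior moment bounds in the form (\ref{eq:posteriormeansqsigmasq}), and of the derivative bounds on $\log g$ are all used --- and then transferring it across the limit without a circular continuity argument.
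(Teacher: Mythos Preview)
Your overall strategy---differentiate the finite-$(n,d)$ quantities $\MSE,\MSE_*$ in $(\tilde\sigma^2,\alpha)$, bound the resulting covariances via the Poincar\'e inequality with dimension-free constants, then pass to the limit on a countable dense set---is sound and genuinely different from the paper's route. The paper instead couples the two models via $\y=\X\btheta^*+s\z$ and $\tilde\y=\X\btheta^*+\tilde s\z$ with a common $\z\sim\N(0,\I)$, runs the Langevin diffusion for $q$ started from $q_0=\tilde q$, and uses a Vempala--Wibisono style differential inequality for $\DKL(q_t\|\tilde q)$ together with the LSI to obtain the Wasserstein bound $W_2(q,\tilde q)^2\le Cd\big((s^2-\tilde s^2)^2+\|\alpha-\tilde\alpha\|_2^2\big)$; the Lipschitz estimates for $\MSE,\MSE_*$ are then read off from this $W_2$ bound. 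Your argument is more elementary and avoids the Langevin/KL machinery; the paper's has the advantage that the intermediate $W_2$ estimate (labeled (\ref{eq:W2locallylipschitz})) is later reused, for instance in (\ref{eq:W2barPlipschitz}) in the proof of Theorem~\ref{thm:adaptivealpha_dynamics}.

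There is one genuine slip in your execution of the $\tilde\sigma^2$-derivative. The quantity $\mse(\tilde\sigma^2,\alpha)$ is the limit for the linear model in which the \emph{data} $\y$ is generated with noise variance $\tilde\sigma^2$, so to compare two values of $\tilde\sigma^2$ you must couple the data, say via $\y=\X\btheta^*+\tilde\sigma\z$ for fixed $\z\sim\N(0,\I)$ as the paper does. Your formula $\partial_{\tilde\sigma^2}\langle f\rangle=\Cov(f,h_{\tilde\sigma^2})$ with $h_{\tilde\sigma^2}=\tfrac{1}{2\tilde\sigma^4}\|\y-\X\btheta\|_2^2$ accounts only for the dependence of the likelihood on $\tilde\sigma^2$, not for the dependence of $\y$ itself. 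Differentiating the full exponent $-\tfrac{1}{2\tilde\sigma^2}\|\X(\btheta^*-\btheta)+\tilde\sigma\z\|_2^2$ with $\z$ held fixed produces an additional term proportional to $\tilde\sigma^{-3}\z^\top\X(\btheta^*-\btheta)$. The fix is painless: this extra piece has posterior variance bounded by $C_\LSI\tilde\sigma^{-6}\|\X^\top\z\|_2^2\le Cd$ on $\event\cap\{\|\z\|_2^2\le C_0d\}$, so your Cauchy--Schwarz/Poincar\'e bounds go through unchanged once you add it to $h_{\tilde\sigma^2}$. A related cosmetic point: the uniformity you invoke for the constants of Lemma~\ref{lemma:elementarybounds} and (\ref{eq:localconditions}) holds only over compact $S\subset O$, not over all of $O$ (unless $O$ is bounded), so your ``uniform over $I\times O$'' should read $I\times S$; this does not affect the conclusion, which is stated on $I\times S$ anyway.
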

\begin{proof}
Consider noise/prior parameters $(s^2,\alpha)$ and 
$(\tilde s^2,\tilde \alpha)$, where $s^2,\tilde s^2 \geq \sigma^2$.
Let us couple the linear models with noise
variances $s^2$ and $\tilde s^2$ by $\y=\X\btheta^*+s\z$
and $\tilde\y=\X\btheta^*+\tilde s\z$, where $\z \sim \N(0,\I)$.
Fixing $\X,\btheta^*,\z$, let us denote
\[U(\btheta)={-}\frac{1}{2s^2}\|\X\btheta^*+s \z
-\X\btheta\|_2^2+\sum_{j=1}^d \log g(\theta_j,\alpha)\]
so that $q(\btheta) \propto e^{U(\btheta)}$ is the posterior law given $(\X,\y)$
under parameters $(s^2,\alpha)$. Denote similarly $\tilde U(\btheta)$ 
with $(\tilde s^2,\tilde \alpha)$ in place of $(s^2,\alpha)$,
and $\tilde q(\btheta)\propto e^{\tilde U(\btheta)}$
as the posterior law given $(\X,\tilde\y)$.
We condition on $\X,\btheta^*,\z$ and restrict to the event
\[\event'(C_0,C_\LSI)=\{\|\X\|_\op \leq C_0,\,\|\btheta^*\|_2^2,\|\z\|_2^2 \leq C_0d,
\;(\ref{eq:LSI}) \text{ holds for both } q \text{ and } \tilde q\},\]
which by assumption holds a.s.\ for all large $n,d$. We first derive a bound on
the Wasserstein-2 distance between $q$ and $\tilde q$, conditional on
$\X,\btheta^*,\z$.

Let $\{\btheta^t\}_{t \geq 0}$ be the
Langevin diffusion (\ref{eq:langevin_fixedq}) with fixed prior $g(\cdot,\alpha)$
and stationary distribution $q(\btheta)$, initialized as $\btheta^0 \sim q_0$
where $q_0$ has finite second moment and finite entropy. Let us write
$\langle f(\btheta^t) \rangle$ for the expectation over $\btheta^0$ and
$\{\b^t\}_{t \geq 0}$ defining (\ref{eq:langevin_fixedq}),
conditional on $\X,\btheta^*,\z$.
We apply the following argument of
\cite{vempala2019rapid} to bound the KL-divergence $\DKL(q_t\|\tilde q)$
conditional on $\X,\btheta^*,\z$:
Differentiating this KL-divergence in time,
\begin{align*}
\frac{\d}{\d t} \DKL(q_t\|\tilde q)
&=\frac{\d}{\d t} \int q_t(\log q_t-\log \tilde q)\\
&=\int \Big(\frac{\d}{\d t} q_t\Big)(\log q_t-\log \tilde q)
+\underbrace{\int \frac{q_t}{q_t}\Big(\frac{\d}{\d t} q_t\Big)}_{=0}
=\int \Big(\frac{\d}{\d t} q_t\Big)(\log q_t-\tilde U+\log \tilde Z).
\end{align*}
The law of $\btheta^t$ conditional on $\X,\btheta^*,\z$
admits a density $q_t$ which is described by the Fokker-Planck equation
\[\frac{\d}{\d t} q_t=\nabla \cdot [q_t\nabla(\log q_t-U)]\]
with initial condition $q_t|_{t=0}=q_0$.
Then, applying this Fokker-Planck equation and
integrating by parts, we obtain
\begin{align*}
\frac{\d}{\d t} \DKL(q_t\|\tilde q)
&={-}\int q_t\,\nabla(\log q_t-U)^\top \nabla(\log q_t-\tilde U)\\
&={-}\int q_t\,\|\nabla(\log q_t-\tilde U)\|_2^2
-\int q_t\,\nabla(\tilde U-U)^\top \nabla(\log q_t-\tilde U)\\
&\leq {-}(1/2)\int q_t\,\|\nabla(\log q_t-\tilde U)\|_2^2
+(1/2)\int q_t\|\nabla(\tilde U-U)\|_2^2,
\end{align*}
the last step applying Cauchy-Schwarz for the second term. By the LSI for
$\tilde q$, the first term (the relative Fisher information) is lower bounded as
\[\int q_t\,\|\nabla(\log q_t-\tilde U)\|_2^2
=\int q_t\,\Big\|\nabla \log \frac{q_t}{\tilde q}\Big\|_2^2
\geq \frac{1}{2C_\LSI}\,\DKL(q_t\|\tilde q).\]
Thus
\[\frac{\d}{\d t} \DKL(q_t\|\tilde q)
\leq {-}\frac{1}{4C_\LSI}\,\DKL(q_t\|\tilde q)
+\frac{1}{2}\,\underbrace{\langle \|\nabla \tilde U(\btheta^t)-\nabla
U(\btheta^t)\|_2^2 \rangle}_{:=\Delta(t)}.\]
Integrating this inequality shows,
for some constants $C,c>0$ depending only on $C_\LSI$ and for any $T>0$,
\begin{equation}\label{eq:KLcomparison}
\DKL(q_T \| \tilde q) \leq C\Big(\sup_{t \in [0,T]} \Delta(t)
+e^{-cT}\DKL(q_0\|\tilde q)\Big).
\end{equation}

We now specialize (\ref{eq:KLcomparison}) to the initialization $q_0=\tilde q$,
and bound $\Delta(t)$. We have
\[\Delta(t)
 \leq \Bigg\langle \bigg\|\frac{1}{s^2}\X^\top(\X\btheta^*+s
\z-\X\btheta^t)-\frac{1}{\tilde s^2}\X^\top(\X\btheta^*+\tilde s\z
-\X\btheta^t)\bigg\|_2^2+\sum_{j=1}^d \Big(\partial_\theta
\log g(\theta_j^t,\alpha)-\partial_\theta
\log g(\theta_j^t,\tilde \alpha)\Big)^2\Bigg \rangle.\]
Let $C,C',C''>0$ be constants depending on the compact sets $S,I$ of the lemma
statement and changing
from instance to instance. For $\alpha,\tilde\alpha \in S$ and
$s^2,\tilde s^2 \in I$,
\[|s^{-2}-\tilde s^{-2}| \leq C|s^2-\tilde s^2|,
\quad |s^{-1}-\tilde s^{-1}| \leq C|s^2-\tilde s^2|,
\quad |\partial_\theta \log g(\theta;\alpha)-\partial_\theta \log g(\theta;\tilde\alpha)|
\leq C\|\alpha-\tilde\alpha\|_2,\]
the last inequality holding by Assumption \ref{assump:prior}(b). Thus
\[\Delta(t) \leq
C\Big[\|\X\|_\op^4(\|\btheta^*\|_2^2+\langle \|\btheta^t\|_2^2\rangle)
+\|\X\|_\op^2\|\z\|_2^2\Big](s^2-\tilde s^2)^2
+Cd\|\alpha-\tilde\alpha\|_2^2.\]
On the event $\event'(C_0,C_\LSI)$, we have
$\langle \|\btheta^t\|_2^2 \rangle \leq
C(\langle \|\btheta^0\|_2^2 \rangle+d)$ by (\ref{eq:W2contraction}),
and $\langle \|\btheta^0\|_2^2 \rangle \leq Cd$ under the initialization
$q_0=\tilde q$ which holds also by (\ref{eq:W2contraction}).
Applying these bounds together
with $\|\X\|_\op \leq C$, $\|\btheta^*\|_2^2 \leq Cd$, and
$\|\z\|_2^2 \leq Cd$ by definition of $\event'(C_0,C_\LSI)$, we have 
\[\sup_{t \geq 0}
\Delta(t) \leq C'd(s^2-\tilde s^2)^2+C'd\|\alpha-\tilde\alpha\|_2^2.\]
Applying this and $q_0=\tilde q$ to (\ref{eq:KLcomparison}),
we have on the event $\event'(C_0,C_\LSI)$ that
\[\sup_{t \geq 0} \DKL(q_t\|\tilde q)
\leq Cd(s^2-\tilde s^2)^2+Cd\|\alpha-\tilde\alpha\|_2^2.\]
By lower-semicontinuity of KL-divergence and the $T_2$-transportation
inequality for $\tilde q$ implied by the LSI
(c.f.\ \cite[Theorem 9.6.1]{bakry2014analysis}),
\begin{equation}\label{eq:W2locallylipschitz}
W_2(q,\tilde q)^2 \leq C\DKL(q\|\tilde q)
\leq C\liminf_{t \to \infty} \DKL(q_t\|\tilde q)
\leq C'd(s^2-\tilde s^2)^2+C'd\|\alpha-\tilde\alpha\|_2^2.
\end{equation}
This gives our desired bound on the Wasserstein-2
distance between $q$ and $\tilde q$.

Now let $\langle f(\btheta) \rangle_q$ and $\langle f(\btheta) \rangle_{\tilde
q}$ be the posterior expectations under $q$ (given $\y$) and $\tilde q$ (given
$\tilde \y$). Then by Jensen's inequality,
\[\|\langle \btheta \rangle_q-\langle \btheta \rangle_{\tilde q}\|_2 \leq
W_2(q,\tilde q).\]
Applying $|\|\x\|_2^2-\|\y\|_2^2| \leq \|\x-\y\|_2 \cdot \|\x+\y\|_2$ and
Cauchy-Schwarz, also
\[|\langle \|\btheta\|_2^2 \rangle_q-\langle \|\btheta\|_2^2 \rangle_{\tilde q}|
\leq W_2(q,\tilde q) \cdot \sqrt{2\langle \|\btheta\|_2^2 \rangle_q
+2\langle \|\btheta\|_2^2 \rangle_{\tilde q}}
\leq C\sqrt{d}\,W_2(q,\tilde q)\]
where the last inequality applies
$\langle \|\btheta\|_2 \rangle_q \leq
\langle \|\btheta\|_2^2 \rangle_q^{1/2} \leq C\sqrt{d}$
on $\event(C_0,C_\LSI)$ by (\ref{eq:W2contraction}),
and similarly for $\tilde q$. Then, denoting by $\MSE(s^2,\alpha)$
and $\MSE(\tilde s^2,\tilde \alpha)$ the values of $\MSE$ as defined in
Corollary \ref{cor:fixedalpha_dynamics} under $q$ and $\tilde q$, we have
\begin{align*}
|\MSE(s^2,\alpha)-\MSE(\tilde s^2,\tilde\alpha)|
&=\big|d^{-1}\langle \|\btheta-\langle \btheta\rangle_q\|_2^2\rangle_q
-d^{-1}\langle \|\btheta-\langle \btheta\rangle_{\tilde q}\|_2^2\rangle_{\tilde
q}\big|\\
&\leq d^{-1}\big|\langle \|\btheta\|_2^2 \rangle_q
-\langle \|\btheta\|_2^2 \rangle_{\tilde q}\big|
+d^{-1}\big|\|\langle \btheta \rangle_q\|_2^2
-\|\langle \btheta \rangle_{\tilde q}\|_2^2\big|\\
&\leq \frac{C'W_2(q,\tilde q)}{\sqrt{d}}
\leq C''|s^2-\tilde s^2|+C''\|\alpha-\tilde \alpha\|_2.
\end{align*}
Similarly
\begin{align*}
|\MSE_*(s^2,\alpha)-\MSE_*(\tilde s^2,\tilde\alpha)|
&=\big|d^{-1}\|\btheta^*-\langle \btheta\rangle_q\|_2^2
-d^{-1}\|\btheta^*-\langle \btheta\rangle_{\tilde q}\|_2^2\big|\\
&\leq 2d^{-1}\|\btheta^*\|_2\|\langle \btheta\rangle_q-
\langle \btheta\rangle_{\tilde q}\|_2
+d^{-1}\big|\|\langle \btheta \rangle_q\|_2^2
-\|\langle \btheta \rangle_{\tilde q}\|_2^2\big|\\
&\leq \frac{C'W_2(q,\tilde q)}{\sqrt{d}}
\leq C''|s^2-\tilde s^2|+C''\|\alpha-\tilde \alpha\|_2.
\end{align*}
Since $\event'(C_0,C_\LSI)$ holds a.s.\ for all large $n,d$,
and Corollary \ref{cor:fixedalpha_dynamics}(a)
already proven shows $\lim_{n,d \to \infty} \MSE=\mse$ and
$\lim_{n,d \to \infty} \MSE_*=\mse_*$ a.s.\ at both $(s^2,\alpha)$ and $(\tilde
s^2,\tilde \alpha)$, this implies
\[|\mse(s^2,\alpha)-\mse(\tilde s^2,\tilde\alpha)|,
|\mse_*(s^2,\alpha)-\mse_*(\tilde s^2,\tilde\alpha)|
\leq C|s^2-\tilde s^2|+C\|\alpha-\tilde \alpha\|_2,\]
so $\mse(s^2,\alpha)$ and $\mse_*(s^2,\alpha)$ are locally Lipschitz as desired.
\end{proof}

\begin{proof}[Proof of Corollary \ref{cor:fixedalpha_dynamics}(b)]
We apply Corollary \ref{cor:fixedalpha_dynamics}(a) and an
I-MMSE relation for mismatched Gaussian channels. 
Write $\E[\cdot \mid \X]$ for the expectation over $(\btheta^*,\beps)$ conditional on $\X$
as in Proposition
\ref{prop:mseconcentration}. Let
\[I(\y,\btheta^*)
=\E\left[\log \frac{\sP(\y \mid \btheta^*,\X)}{\sP_{g_*}(\y \mid
\X)}\;\bigg|\;\X\right]
={-}\E[\log \sP_{g_*}(\y \mid \X) \mid \X]
-\frac{n}{2}(1+\log 2\pi\sigma^2)\]
be the signal-observation mutual information in the linear model
(\ref{eq:linearmodel}) conditional on $\X$, where
$\sP(\y \mid \btheta^*,\X)$ is the Gaussian likelihood of $\y$ and
$\sP_{g_*}(\y \mid \X)$ is the marginal likelihood
(\ref{eq:marginallikelihood}) under the true prior $g_*$. Then
\begin{align}
\E[\log \sP_g(\y \mid \X) \mid \X]&={-}\DKL(\sP_{g_*}(\y \mid \X) \| \sP_g(\y
\mid \X))
+\E[\log \sP_{g_*}(\y \mid \X) \mid \X]\notag\\
&={-}\DKL(\sP_{g_*}(\y \mid \X) \| \sP_g(\y \mid \X))
-I(\y,\btheta^*)-\frac{n}{2}(1+\log 2\pi\sigma^2)
\label{eq:freeenergyIMMSE}
\end{align}
where here and throughout the proof, $\DKL(\cdot)$ denotes the
KL-divergence also conditional on $\X$.

Let us denote the inverse noise variance by $s^{-1}=\sigma^2$ and write
\begin{equation}\label{eq:Esgdef}
E(s,g)=\E[\YMSE_* \mid \X]
=n^{-1}\E[\|\X\langle \btheta \rangle-\X\btheta^*\|^2 \mid \X]
\end{equation}
for the expected $\YMSE_*$ in the linear model (\ref{eq:linearmodel})
with assumed prior $g$ and noise variance
$s^{-1}$. We clarify that this means $\langle \cdot \rangle$ in
(\ref{eq:Esgdef}) is the posterior average under the law
\[\sP_g(\btheta \mid \X,\y) \propto \exp\bigg({-}\frac{s}{2}
\|\y-\X\btheta\|_2^2+\sum_{j=1}^d \log g(\theta_j)\bigg)\]
and $\E[\cdot\mid \X]$ is
the expectation over $(\btheta^*,\beps)$ where 
$\beps$ also has variance $s^{-1}$. We write also
$I[s],\DKL[s]$ for the above quantities $I(\y,\btheta^*)$ and
$\DKL(\sP_{g_*}(\y \mid \X) \|\sP_g(\y \mid \X))$ in this model with noise
variance $s^{-1}$. Then
\cite[Theorem 2]{guo2005mutual} and \cite[Eq.\ (24)]{verdu2010mismatched} show
the I-MMSE relations
\[\frac{\d}{\d s}I[s]=\frac{n}{2}E(s,g_*), \qquad
\frac{\d}{\d s}\DKL[s]=\frac{n}{2}\big(E(s,g)-E(s,g_*)\big).\]
For any fixed $n,d$ and $\X$,
in the limit $s \to 0$, it is direct to check
that $I[s] \to 0$ and $\DKL[s] \to 0$. Thus, for
$I(\y,\btheta^*) \equiv I[\sigma^{-2}]$ and
$\DKL(\sP_{g_*}(\y \mid \X) \| \sP_g(\y \mid \X)) \equiv
\DKL[\sigma^{-2}]$ in the original model with noise variance $\sigma^2$,
integrating these I-MMSE relations shows
\begin{equation}\label{eq:IMMSEcombined}
\DKL(\sP_{g_*}(\y \mid \X) \|\sP_g(\y \mid \X))+I(\y,\btheta^*)
=\frac{n}{2}\int_0^{\sigma^{-2}} E(s,g)\d s.
\end{equation}
Assumption \ref{assump:LSI}(b) ensures that the posterior LSI (\ref{eq:LSI})
holds a.s.\ in the
model with any noise variance $s^{-1} \in [\sigma^2,\infty)$. Then
applying Corollary \ref{cor:fixedalpha_dynamics}(a) already shown and the
concentration of $\YMSE_*$ in Proposition \ref{prop:mseconcentration},
we have $E(s,g) \to \ymse_*(s,g)$ a.s.\ for each $s^{-1} \in (\sigma^2,\infty)$,
where $\ymse_*(s,g)$ is defined by (\ref{eq:mmse}) via the DMFT limit of the
Langevin dynamics (\ref{eq:langevinfixedprior}) with fixed prior $g(\cdot)$ in
the linear model with noise variance $s^{-1}$. To apply dominated convergence,
we note that on the event $\|\X\|_\op \leq C_0$,
by the extension (\ref{eq:posteriormeansqsigmasq}) of (\ref{eq:posteriormeansq}), we have $E(s,g) \leq C$ for a constant $C>0$
uniformly over all $s \in [0,\sigma^{-2}]$ and all $n,d$.
Then, since $\|\X\|_\op \leq C_0$ holds a.s.\ for all large $n,d$,
taking the limit $n,d \to \infty$ and
applying the bounded convergence theorem to (\ref{eq:IMMSEcombined})
shows that almost surely,
\begin{equation}\label{eq:IMMSE}
\lim_{n,d \to \infty}
\frac{1}{d}\Big(\DKL(\sP_{g_*}(\y \mid \X) \|\sP_g(\y \mid
\X))+I(\y,\btheta^*)\Big)
=\frac{\delta}{2}\int_0^{\sigma^{-2}} \ymse_*(s,g)\d s.
\end{equation}

Let us now fix the assumed prior $g(\cdot)$, write $\ymse_*(s) \equiv
\ymse_*(s,g)$, and
let $(\mse(s),\mse_*(s),\omega(s),\omega_*(s))$ denote the fixed points
(\ref{eq:static_fixedpoint}) corresponding to $\ymse_*(s)$.
Recall the marginal density $\sP_{g,\omega}(y)$ of the
scalar channel model (\ref{eq:scalarchannelmarginal}), and define
\begin{align}
&f(\omega,\omega_*,s)
={-}\E_{g_*,\omega_*} \log \sP_{g,\omega}(y)
-\frac{1}{2}\left(2\delta+\log \frac{2\pi}{\omega}
-\delta \log \frac{\delta s}{\omega}
+(1-\delta)\frac{\omega}{\omega_*}
+\frac{\omega}{s}\Big(\frac{\omega}{\omega_*}-2\Big)\right)\label{eq:fomega}\\
&=\underbrace{\frac{\omega}{2}\E\,\theta^{*2}
-\E\log \int \exp\Big(\omega
\theta(\theta^*+\omega_*^{-1/2}z)-\frac{\omega}{2}\theta^2\Big)g(\theta)\d\theta}_{:=\mathrm{I}}
-\underbrace{\frac{1}{2}\Big(2\delta-\delta\log\frac{\delta
s}{\omega}-\frac{\delta\omega}{\omega_*}+\frac{\omega}{s}\Big(\frac{\omega}{\omega_*}-2\Big)\Big)}_{:=\mathrm{II}}.\notag
\end{align}
Here, the expectations in the second line are over
$\theta^* \sim g_*$ and $z \sim \N(0,1)$, and we have
applied the explicit form of $\sP_{g,\omega}(y)$ and evaluated
$\E_{g_*,\omega_*}$ under the true model $y=\theta^*+\omega_*^{-1/2}z$ with
some some algebraic simplification. We now claim that
\begin{equation}\label{eq:IMMSEalt}
\frac{\delta}{2}\int_0^s \ymse_*(t)\d t=f(\omega(s),\omega_*(s),s)
\end{equation}
for all $s \in (0,\sigma^{-2})$. To show this, it suffices to check
$\lim_{s \to 0} f(\omega(s),\omega_*(s),s)=0$ and $\frac{\d}{\d
s}f(\omega(s),\omega_*(s),s)=\frac{\delta}{2}\ymse_*(s)$,
which we may do as follows:
\begin{itemize}
\item Let $\MSE(s),\MSE_*(s)$ denote the values of $\MSE,\MSE_*$ in a linear
model with noise variance $s^{-1}$. On the event $\|\X\|_\op \leq C_0$, the
bound (\ref{eq:posteriormeansqsigmasq}) implies that $\MSE(s),\MSE_*(s) \leq
C(1+s\|\y\|_2^2/d)$ for a constant $C>0$ (independent of $s$)
and for all $s^{-1} \in (\sigma^2,\infty)$.
Taking the almost sure limit as $n,d \to \infty$ shows that $\mse(s),\mse_*(s)
\leq C$. In particular, in the limit $s \to 0$, we have that
$\mse(s),\mse_*(s)$ remain bounded, so
$\omega(s),\omega_*(s) \sim \delta s$ by the fixed point relation
(\ref{eq:static_fixedpoint}). Then
$\omega(s) \to 0$, $\omega_*(s) \to 0$, $\omega(s)/s \to \delta$, and
$\omega(s)/\omega_*(s) \to 1$ as $s \to 0$. Applying this to (\ref{eq:fomega})
shows
\[\lim_{s \to 0} f(\omega(s),\omega_*(s),s)=0.\]
\item Differentiating the term $\mathrm{I}$ of (\ref{eq:fomega})
in $\omega,\omega_*$ and applying
Gaussian integration-by-parts with respect to $z \sim \N(0,1)$, we may check
that
\begin{align*}
\partial_\omega \mathrm{I}
&=\frac{1}{2}\E \langle (\theta^*-\theta)^2 \rangle_{g,\omega}
-\frac{\omega}{\omega_*}\E \langle (\theta-\langle \theta \rangle_{g,\omega})^2
\rangle_{g,\omega},\\
\partial_{\omega_*} \mathrm{I}
&=\frac{\omega^2}{2\omega_*^2}\E \langle (\theta-\langle \theta
\rangle_{g,\omega})^2\rangle_{g,\omega}.
\end{align*}
Then at the fixed points $(\omega,\omega_*)=(\omega(s),\omega_*(s))$, we have
\begin{align*}
\partial_\omega \mathrm{I}|_{(\omega,\omega_*)=(\omega(s),\omega_*(s))}
&=\frac{1}{2}(\mse(s)+\mse_*(s))-\frac{\omega(s)}{\omega_*(s)}\mse(s)\\
\partial_{\omega_*} \mathrm{I}|_{(\omega,\omega_*)=(\omega(s),\omega_*(s))}
&=\frac{\omega(s)^2}{2\omega_*(s)^2}\mse(s).
\end{align*}
Applying $\mse(s)=\delta/\omega(s)-\sigma^2$ and $\mse_*(s)=\delta/\omega_*(s)-\sigma^2$ by
(\ref{eq:static_fixedpoint}) and comparing with the derivatives of the second
term $\mathrm{II}$ of (\ref{eq:fomega}), this verifies
\begin{equation}\label{eq:omegastationarity}
\partial_\omega f(\omega(s),\omega_*(s),s)=0,
\qquad \partial_{\omega_*} f(\omega(s),\omega_*(s),s)=0.
\end{equation}
Furthermore, direct calculation shows that at
$(\omega,\omega_*)=(\omega(s),\omega_*(s))$,
\[\partial_s f(\omega(s),\omega_*(s),s)=\frac{\delta\sigma^2}{2}
+\frac{\omega(s)\sigma^4}{2}\Big(\frac{\omega(s)}{\omega_*(s)}-2\Big)
=\frac{\delta}{2}\ymse_*(s),\]
the second equality using (\ref{eq:ymmseomega}).
Lemma \ref{lemma:mmse_lipschitz} implies that
$\mse(s),\mse_*(s),\omega(s),\omega_*(s)$ are locally Lipschitz,
and hence absolutely continuous, over $s \in (0,\sigma^{-2})$. Then
also $s \mapsto f(\omega(s),\omega_*(s),s)$ is absolutely continuous, and
we may differentiate by the chain rule to get
\begin{align*}
\frac{\d}{\d s} f(\omega(s),\omega_*(s),s)
&=\partial_\omega f(\omega(s),\omega_*(s),s)
\cdot \omega'(s)+\partial_{\omega_*} f(\omega(s),\omega_*(s),s)
\cdot \omega_*'(s)+\partial_s f(\omega(s),\omega_*(s),s)\\
&=\partial_s f(\omega(s),\omega_*(s),s)=\frac{\delta}{2}\,\ymse_*(s).
\end{align*}
\end{itemize}
Combining the above arguments verifies the claim (\ref{eq:IMMSEalt}).

Applying (\ref{eq:IMMSE}) and (\ref{eq:IMMSEalt}) to (\ref{eq:freeenergyIMMSE})
and writing
$(\omega,\omega_*)=(\omega(\sigma^{-2}),\omega_*(\sigma^{-2}))$ for the
fixed points at the original noise variance $\sigma^2$, this shows
\[\lim_{n,d \to \infty} d^{-1}\E[\log \sP_g(\y \mid \X) \mid \X]
=-f(\omega,\omega_*,\sigma^{-2})
-\frac{\delta}{2}(1+\log 2\pi\sigma^2).\]
Applying concentration of $d^{-1}\log \sP_g(\y \mid \X)$ with respect to
$\E[\cdot \mid \X]$
which is established in Propostion \ref{prop:mseconcentration}, and
substituting the form of $f$ in (\ref{eq:fomega}), this
shows Corollary \ref{cor:fixedalpha_dynamics}(b).
\end{proof}

\section{Analysis of empirical Bayes Langevin dynamics}\label{sec:adaptiveprior}

In this section, we prove Theorem \ref{thm:adaptivealpha_dynamics} on the
adaptive empirical Bayes dynamics with time-varying prior parameter
$\widehat\alpha^t$, and discuss further the examples of Section
\ref{sec:examples}.

\subsection{General analysis under uniform LSI}

We introduce a few notational shorthands: Conditional on
$\X,\btheta^*,\beps$, let
\[q_\alpha(\btheta) \equiv \sP_{g(\cdot,\alpha)}(\btheta \mid \X,\y)\]
be the posterior law under the prior parameter $\alpha$.
We write $\langle \cdot \rangle_\alpha$ for its posterior expectation. For
$\btheta\in\R^d$, define
\begin{equation}\label{eq:barqalpha}
\bar \sP_{\btheta}=\frac{1}{d}\sum_{j=1}^d \delta_{(\theta_j^*,\theta_j)},
\qquad \bar \sP_\alpha=\langle \bar \sP_{\btheta} \rangle_\alpha.
\end{equation}
Thus $\bar \sP_\alpha$ is a $(\X,\btheta^*,\beps)$-dependent joint law over variables
$(\theta^*,\theta)$ which satisfies
\begin{equation}\label{eq:barqalphaproperty}
\E_{(\theta^*,\theta) \sim \bar \sP_\alpha} f(\theta^*,\theta)
=\frac{1}{d}\sum_{j=1}^d \langle f(\theta_j^*,\theta_j) \rangle_\alpha
=\frac{1}{d}\sum_{j=1}^d \int f(\theta_j^*,\theta_j) q_\alpha(\btheta)\d\btheta.
\end{equation}
We write $\theta \sim \bar \sP_\alpha$ as shorthand for the $\theta$-marginal of
$(\theta^*,\theta) \sim \bar \sP_\alpha$.

We note that under Assumptions \ref{assump:prior}(b) and
\ref{assump:compactalpha}, all constants in (\ref{eq:constantdependence}) are
uniform over $g \in \{g(\cdot,\alpha):\alpha \in O\}$ for the bounded domain $O$
of Assumption \ref{assump:compactalpha},
where a uniform bound for $|\log g(0,\alpha)|$ follows from
$|\log g(0,\alpha)| \leq |\log g(0,0)|
+\|\nabla_\alpha(\log g(0,0))\|_2 \cdot \|\alpha\|_2
+C\|\alpha\|_2^2$ as
implied by (\ref{eq:logggradientbound}) of Assumption \ref{assump:prior}(b).
Hence the bounds of Section \ref{sec:posteriorbounds} hold uniformly over
$\alpha \in O$. In particular, from (\ref{eq:posteriormeansq}),
\begin{equation}\label{eq:posteriormeansquniform}
\sup_{\alpha \in O} \langle \|\btheta\|_2^2 \rangle_\alpha
\leq C(d+\|\y\|_2^2)
\end{equation}
on an event $\{\|\X\|_\op \leq C_0\}$ that holds a.s.\ for all large $n,d$.

We first prove Lemma \ref{lemma:gradalphaF} on the derivatives of
$F,\widehat F$ and uniform convergence of
$\widehat F,\nabla \widehat F$ over $S \subset O$.

\begin{proof}[Proof of Lemma \ref{lemma:gradalphaF}]
For (a), differentiating
\[\widehat F(\alpha)={-}\frac{1}{d}\log
\int \Big(\frac{1}{2\pi\sigma^2}\Big)^{n/2}
\exp\Big({-}\frac{1}{2\sigma^2}\|\y-\X\btheta\|_2^2
+\sum_{j=1}^d \log g(\theta_j,\alpha)\Big)\d\btheta\]
and applying the property (\ref{eq:barqalphaproperty}), we have
\begin{equation}\label{eq:hatFderivcalculation}
\nabla \widehat F(\alpha)={-}\frac{1}{d}\sum_{j=1}^d \langle \nabla_\alpha
\log g(\theta_j,\alpha) \rangle_\alpha
=-\E_{\theta \sim \bar \sP_\alpha} \nabla_\alpha \log g(\theta,\alpha).
\end{equation}
For the form of $\nabla F(\alpha)$, define analogously to (\ref{eq:fomega})
\begin{equation}\label{eq:fomegaalpha}
f(\omega,\omega_*,\alpha)
={-}\E_{g_*,\omega_*}\log \sP_{g(\cdot,\alpha),\omega}(y)
-\frac{1}{2}\left(2\delta+\log \frac{2\pi}{\omega}
-\delta \log \frac{\delta}{\omega\sigma^2}
+(1-\delta)\frac{\omega}{\omega_*}
+\omega\sigma^2\Big(\frac{\omega}{\omega_*}-2\Big)\right)
\end{equation}
where the dependence on $\alpha$ is in $\sP_{g(\cdot,\alpha),\omega}(y)$.
For any
$\alpha \in O$, let $\omega(\alpha),\omega_*(\alpha)$ be the fixed points
$\omega,\omega_*$ defined by (\ref{eq:mmse}) via the DMFT system for the
dynamics (\ref{eq:langevinfixedprior}) with fixed
prior $g \equiv g(\cdot,\alpha)$. (This DMFT system is approximately-TTI 
for each $\alpha \in O$ by Assumption \ref{assump:compactalpha} and Theorem
\ref{thm:fixedalpha_dynamics}, hence $\omega(\alpha),\omega_*(\alpha)$ are
well-defined.) Then
\begin{equation}\label{eq:Ffrelation}
F(\alpha)=f(\omega(\alpha),\omega_*(\alpha),\alpha)
+\frac{\delta}{2}(1+\log 2\pi\sigma^2).
\end{equation}
By the same calculations as (\ref{eq:omegastationarity}),
at the fixed points $(\omega(\alpha),\omega_*(\alpha))$, we have
$\partial_\omega f(\omega(\alpha),\omega_*(\alpha),\alpha)=0$ and
$\partial_{\omega_*} f(\omega(\alpha),\omega_*(\alpha),\alpha)=0$. 
By Lemma \ref{lemma:mmse_lipschitz}, $\omega(\alpha),\omega_*(\alpha)$
are locally Lipschitz and hence absolutely continuous over $\alpha \in O$. Then
$F(\alpha)$ is also absolutely continuous over $\alpha \in O$, and
differentiating by the chain rule gives
\begin{align*}
\nabla F(\alpha)&=\nabla_\alpha
f(\omega,\omega_*,\alpha)\Big|_{(\omega,\omega_*)=(\omega(\alpha),\omega_*(\alpha))}\\
&={-}\nabla_\alpha\bigg[\E_{g_*,\omega_*}\log \sP_{g(\cdot,\alpha),\omega}
(y)\bigg]\bigg|_{(\omega,\omega_*)=(\omega(\alpha),\omega_*(\alpha))}\\
&={-}\nabla_\alpha\bigg[\E_{g_*,\omega_*} \log\int
\Big(\frac{\omega}{2\pi}\Big)^{1/2}
\exp\Big({-}\frac{\omega}{2}(y-\theta)^2+\log g(\theta,\alpha)\Big)\d\theta
\bigg]
\bigg|_{(\omega,\omega_*)=(\omega(\alpha),\omega_*(\alpha))}
\end{align*}
By definition $\sP_\alpha$ is the joint law of $(\theta^*,\theta)$ under the
generative process where $(\theta^*,y)$ are drawn from the Gaussian convolution
model defining this expectation $\E_{g_*,\omega_*}$,
and where $\theta \sim \sP_{g(\cdot,\alpha),\omega}(\theta
\mid y)$. Hence, evaluating $\nabla_\alpha$ above gives
\[\nabla F(\alpha)={-}\E_{\theta \sim \sP_\alpha} \nabla_\alpha \log g(\theta,\alpha).\]

For (b), let $S \subset O$ be any compact subset of the domain $O$ in
Assumption \ref{assump:compactalpha}, 
and let $Q$ be a countable dense subset of $O$.
Define
\[\event(C_0,C_\LSI)=\{
\|\X\|_\op \leq C_0,\;\text{(\ref{eq:LSI}) holds for } q_\alpha(\btheta) \equiv
\sP_{g(\cdot,\alpha)}(\btheta \mid \X,\y) \text{ for every } \alpha \in O\}.\]
Assumptions \ref{assump:model} and
\ref{assump:compactalpha} ensure for some $C_0,C_\LSI>0$ that
$\event(C_0,C_\LSI)$ holds a.s.\ for all large $n,d$,
where this event depends only on $\X$ and not on $\btheta^*,\beps$.

We restrict to the almost-sure event where the convergence statements of
Corollary \ref{cor:fixedalpha_dynamics} and Proposition
\ref{prop:mseconcentration} hold for every $\alpha \in Q$, and where
$\event(C_0,C_\LSI)$ holds for all large $n,d$.
Note that Corollary \ref{cor:fixedalpha_dynamics} shows
$\widehat F(\alpha) \to F(\alpha)$ for each $\alpha \in Q$. 
To strengthen this to uniform convergence over $S$, note that
Assumption \ref{assump:prior}(b) implies
$\partial_\theta \nabla_\alpha \log g(\theta,\alpha)$ is uniformly
bounded over $(\theta,\alpha) \in \R \times S$, so
\[\|\nabla_\alpha \log g(\theta,\alpha)\|_2 \leq 
\|\nabla_\alpha \log g(0,\alpha)\|_2+C|\theta|.\]
Then, since $\nabla_\alpha \log g(0,\alpha)$ is bounded
over $\alpha \in S$ by compactness of $S$, and $\sup_{\alpha \in S}
\langle \|\btheta\|_2^2 \rangle_\alpha \leq Cd$
by (\ref{eq:posteriormeansquniform}), we have
\begin{align}
\sup_{\alpha \in S}
\|\E_{\theta \sim \bar \sP_\alpha} \nabla_\alpha \log g(\theta,\alpha)\|_2
&\leq \sup_{\alpha \in S}
\frac{1}{d}\sum_{j=1}^d \langle\|\nabla_\alpha \log g(\theta_j,\alpha)\|_2
\rangle_\alpha\notag\\
&\leq \sup_{\alpha \in S}
\|\nabla_\alpha \log g(0,\alpha)\|_2
+\frac{C}{d}\sum_{j=1}^d \langle |\theta_j| \rangle_\alpha \leq C'.\label{eq:nablaFboundO}
\end{align}
This shows $\nabla \widehat F(\alpha)$ is bounded over any compact
subset $S \subset O$. Then for any compact $S \subset O$,
the functions $\widehat F(\alpha)$ for all $n,d$ are equicontinuous in a
neighborhood of each point
$\alpha \in S$, and hence are uniformly equicontinuous over $S$ since a finite
number of such neighborhoods cover $S$. 
Then by Arzela-Ascoli, the
convergence $\widehat F(\alpha) \to F(\alpha)$ for each $\alpha \in Q$ implies
uniform convergence over $\alpha \in S$.

We next show the pointwise convergence $\nabla \widehat F(\alpha) \to \nabla
F(\alpha)$ for each $\alpha \in Q$. Recalling our definition of
$\bar \sP_{\btheta}$ in (\ref{eq:barqalpha}), and applying
Jensen's inequality and the convexity $W_2(\lambda\sP+(1-\lambda)\sP',\sQ)^2
\leq \lambda W_2(\sP,\sQ)^2+(1-\lambda)W_2(\sP',\sQ)^2$ of the squared
Wasserstein-2 distance,
\[W_2(\bar \sP_\alpha,\sP_\alpha)^2
\leq \langle W_2(\bar \sP_{\btheta},\sP_\alpha)^2 \rangle_\alpha.\]
For each $\alpha \in Q$, 
the right side converges to 0 as $n,d \to \infty$
by the statement (\ref{eq:fixedalpha_equilibriumlaw}) of
Corollary \ref{cor:fixedalpha_dynamics}(a). Thus
$\lim_{n,d \to \infty} W_2(\bar \sP_\alpha,\sP_\alpha)=0$.
Assumption \ref{assump:prior}(b) ensures that $\nabla_\alpha
\log g(\theta,\alpha)$ is Lipschitz in $\theta$,
so this Wasserstein-2 convergence implies
\[\lim_{n,d \to \infty} \nabla \widehat F(\alpha)
=\lim_{n,d \to \infty} \E_{\theta \sim \bar \sP_\alpha} \nabla_\alpha \log
g(\theta,\alpha)=\E_{\theta \sim \sP_\alpha} \nabla_\alpha \log
g(\theta,\alpha)=\nabla F(\alpha)\]
for each $\alpha \in Q$, as claimed.

To extend this to uniform convergence over any compact subset $S \subset O$,
we differentiate (\ref{eq:hatFderivcalculation}) a second time.
Writing $\Var_\alpha,\Cov_\alpha$ for the
variance and covariance under $\langle \cdot \rangle_\alpha$,
\begin{equation}\label{eq:hessF}
\nabla^2 \widehat F(\alpha)=
-\frac{1}{d}\bigg\langle\sum_{j=1}^d \nabla_\alpha^2 \log
g(\theta_j,\alpha)\bigg \rangle_\alpha
-\frac{1}{d}\Cov_\alpha\bigg[\sum_{j=1}^d \nabla_\alpha \log
g(\theta_j,\alpha)\bigg].
\end{equation}
The first term is uniformly bounded over $\alpha \in S$, by
the same argument as showing boundedness of $\nabla \widehat F(\alpha)$ above.
For the second term, on the event $\event(C_0,C_\LSI)$,
for every unit vector $v \in \R^K$ and $\alpha \in S$,
\[\Var_\alpha\bigg[\sum_{j=1}^d v^\top \nabla_\alpha \log
g(\theta_j,\alpha)\bigg]
\leq (C_\LSI/2)\bigg\langle
\sum_{j=1}^d \Big(v^\top \partial_\theta \nabla_\alpha \log
g(\theta_j,\alpha)\Big)^2\bigg \rangle_\alpha\]
by the Poincar\'e inequality for $q_\alpha$ implied by its LSI. Since
$\partial_\theta \nabla_\alpha \log g(\theta,\alpha)$ is
bounded over $\alpha \in S$, the second term of (\ref{eq:hessF}) is also
bounded on $\event(C_0,C_\LSI)$.
Thus $\nabla^2 \widehat F(\alpha)$ is uniformly bounded over $\alpha
\in S$ for all large $n,d$. This implies as above that for any compact $S
\subset O$, the functions $\nabla \widehat F(\alpha)$ for all large $n,d$ are
uniformly equicontinuous on $S$, so
$\nabla \widehat F(\alpha) \to \nabla F(\alpha)$ uniformly over $\alpha \in S$.
This shows part (b).

For part (c), note that if $g^*=g(\cdot,\alpha^*)$, then
\[\E[\widehat F(\alpha) \mid \X]-\E[\widehat F(\alpha^*) \mid \X]
=d^{-1}\DKL(\sP_{g(\cdot,\alpha^*)}(\y \mid \X) \|
\sP_{g(\cdot,\alpha)}(\y \mid \X)) \geq 0,\]
where here $\DKL(\cdot)$ is the KL-divergence conditional on $\X$. Thus
$\alpha^*$ is a minimizer of $\alpha \mapsto \E[\widehat F(\alpha) \mid \X]$
over $\R^K$. Applying the convergence $\widehat F(\alpha)-\E[\widehat
F(\alpha) \mid \X] \to 0$ for each $\alpha \in Q$ from Proposition
\ref{prop:mseconcentration}, we have also
$\E[\widehat F(\alpha) \mid \X] \to F(\alpha)$ for each $\alpha \in Q$. Note that
\[\nabla_\alpha \E[\widehat F(\alpha) \mid \X]
={-}\E[\E_{\theta \sim \bar \sP_\alpha}
\nabla_\alpha \log g(\theta,\alpha) \mid \X],\]
and that $\sup_{\alpha \in S} \E[\langle \|\btheta\|_2^2 \rangle_\alpha \mid \X]
\leq \E[\sup_{\alpha \in S}\langle \|\btheta\|_2^2 \rangle_\alpha \mid \X]
\leq Cd$ on $\event(C_0,C_\LSI)$, by (\ref{eq:posteriormeansquniform}).
Then the argument (\ref{eq:nablaFboundO}) shows also that
$\nabla_\alpha \E[\widehat F(\alpha) \mid \X]$
is uniformly bounded and equicontinuous over $\alpha \in S$, hence
\[\lim_{n,d \to \infty} \sup_{\alpha \in S} |\E[\widehat
F(\alpha) \mid \X]-F(\alpha)|=0.\]
Since $\alpha^*$ is a minimizer of
$\E[\widehat F(\alpha) \mid \X]$, this implies that
$F(\alpha) \geq F(\alpha^*)$ for every $\alpha \in S$. Since this holds for
every compact subset $S \subset O$, this shows part (c).
\end{proof}

We proceed to prove Theorem \ref{thm:adaptivealpha_dynamics}.
Let $\{\btheta^t,\widehat\alpha^t\}_{t \geq 0}$ be the solution of the
adaptive Langevin equations (\ref{eq:langevin_sde}--\ref{eq:gflow}).
Let $\{\alpha^t\}_{t \geq 0}$ be
the (deterministic) $\alpha$-component of the DMFT limit of
$\{\widehat\alpha^t\}_{t \geq 0}$ prescribed by Theorem
\ref{thm:dmft_approx}(b), and consider the SDE
\begin{equation}\label{eq:tildethetaSDE}
\d\tilde\btheta^t=\nabla_{\tilde\btheta}\bigg({-}\frac{1}{2\sigma^2}\|\y-\X\tilde\btheta^t\|_2^2
+\sum_{j=1}^d \log g(\tilde \theta_j^t,\alpha^t)\bigg)\d t+\sqrt{2}\,\d\b^t
\end{equation}
which replaces $\widehat\alpha^t$ by $\alpha^t$. We couple
$\{\tilde\btheta^t\}_{t \geq 0}$ to
$\{\btheta^t,\widehat\alpha^t\}_{t \geq 0}$ via
the same initial
conditions $\tilde\btheta^0=\btheta^0$ and $\alpha^0$ of Assumption
\ref{assump:model}, and via the same Brownian motion $\{\b^t\}_{t \geq 0}$.

We write $q_t$ for the density of $\tilde\btheta^t$ conditional on
$\X,\btheta^*,\beps$ and averaging over $\tilde\btheta^0$,
where $q_0=g_0^{\otimes d}$ is the initial density of
$\tilde\btheta^0=\btheta^0$. In parallel to (\ref{eq:barqalpha}), we denote
\begin{equation}\label{eq:barqt}
\bar \sP_{\tilde\btheta^t}=\frac{1}{d}\sum_{j=1}^d \delta_{(\theta_j^*,\tilde
\theta_j^t)},
\qquad \bar \sP_t=\langle \bar \sP_{\tilde\btheta^t} \rangle
\end{equation}
where $\langle \cdot \rangle$ is the average with respect to
$\tilde\btheta^t \sim q_t$, i.e.\ the average over $\tilde\btheta^0$ and
$\{\b^t\}_{t \geq 0}$. We write
$\tilde \theta^t \sim \bar \sP_t$ for the $\tilde \theta^t$-marginal
of a sample $(\theta^*,\tilde \theta^t) \sim \bar \sP_t$.

\begin{lemma}\label{lemma:Vdecreasing}
Under Assumptions \ref{assump:model} and \ref{assump:prior}(b),
there exists a unique solution
$\{\tilde\btheta^t\}_{t \geq 0}$ to (\ref{eq:tildethetaSDE}).
Letting $q_t$ be the above conditional density of $\tilde\btheta^t$, and
letting $V(q,\alpha)$ be the Gibbs free energy (\ref{eq:gibbsenergy}),
almost surely
\begin{equation}\label{eq:Vdecreasing}
\limsup_{n,d \to \infty}
\sup_{t \in [0,T]} \frac{\d}{\d t} \Big(V(q_t,\alpha^t)+R(\alpha^t)\Big) \leq 0.
\end{equation}
\end{lemma}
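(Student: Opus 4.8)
The plan is to first dispatch existence and uniqueness of $\{\tilde\btheta^t\}_{t\ge0}$, then differentiate $\Phi(t):=V(q_t,\alpha^t)+R(\alpha^t)$ directly (using the Fokker--Planck equation for $q_t$), and exhibit $\tfrac{\d}{\d t}\Phi(t)$ as a sum of a nonpositive ``$q$-dissipation'' term and an ``$\alpha$-term'' that vanishes in the DMFT limit. For the first point, under Assumption \ref{assump:prior}(b), since $\{\alpha^t\}_{t\ge0}\subset S$ for the compact set $S$ of Assumption \ref{assump:compactalpha} and $t\mapsto\alpha^t$ is continuous, the drift $\btheta\mapsto\nabla_{\btheta}\big({-}\tfrac1{2\sigma^2}\|\y-\X\btheta\|_2^2+\sum_j\log g(\theta_j,\alpha^t)\big)$ is Lipschitz uniformly in $t\in[0,T]$, so (\ref{eq:tildethetaSDE}) has a unique strong solution; moreover, for $t>0$ the density $q_t$ is smooth and positive, with finite relative entropy and Fisher information with respect to $q_{\alpha^t}\equiv\sP_{g(\cdot,\alpha^t)}(\cdot\mid\X,\y)$ (by the smoothing of the Langevin semigroup and the curvature-at-infinity bound of Assumption \ref{assump:prior}(b), cf.\ Lemma \ref{lemma:elementarybounds}).

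For the differentiation, I would write $V(q,\alpha)={-}\tfrac1d\int U(\btheta,\alpha)q(\btheta)\,\d\btheta+\tfrac1d\int q\log q+\tfrac{n}{2d}\log2\pi\sigma^2$ with $U(\btheta,\alpha)={-}\tfrac1{2\sigma^2}\|\y-\X\btheta\|_2^2+\sum_j\log g(\theta_j,\alpha)$, and use that $q_t$ solves $\partial_t q_t=\nabla\cdot[q_t\nabla(\log q_t-U(\cdot,\alpha^t))]$. A direct computation with integration by parts (justified for $t>0$ by the above regularity and the moment bounds of Lemma \ref{lemma:elementarybounds}) gives for $t>0$
\[\frac{\d}{\d t}\Phi(t)={-}\frac1d\int q_t(\btheta)\,\big\|\nabla(\log q_t-U(\cdot,\alpha^t))(\btheta)\big\|_2^2\,\d\btheta+\big(\partial_\alpha V(q_t,\alpha^t)+\nabla R(\alpha^t)\big)^\top\dot\alpha^t,\]
where the first term is $\le0$ (equivalently, $V(q,\alpha^t)=\tfrac1d\DKL(q\,\|\,q_{\alpha^t})+\widehat F(\alpha^t)$ and relative entropy is nonincreasing along the Fokker--Planck flow). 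Here $\partial_\alpha V(q_t,\alpha^t)={-}\tfrac1d\big\langle\sum_j\nabla_\alpha\log g(\tilde\theta_j^t,\alpha^t)\big\rangle={-}\E_{\tilde\theta^t\sim\bar\sP_t}\nabla_\alpha\log g(\tilde\theta^t,\alpha^t)=:{-}w^t_{n,d}$, while by (\ref{def:dmft_langevin_alpha}), $\dot\alpha^t=\cG(\alpha^t,\sP(\theta^t))=v^t-\nabla R(\alpha^t)$ with $v^t:=\E_{\theta\sim\sP(\theta^t)}\nabla_\alpha\log g(\theta,\alpha^t)$. Therefore the $\alpha$-term equals ${-}\|\dot\alpha^t\|_2^2+(v^t-w^t_{n,d})^\top\dot\alpha^t\le\|v^t-w^t_{n,d}\|_2\,\|\dot\alpha^t\|_2\le C_T\,\|v^t-w^t_{n,d}\|_2$, where $C_T:=\sup_{t\in[0,T]}\|\dot\alpha^t\|_2<\infty$ because $\alpha^t\in S$ is bounded and $\sup_{t\in[0,T]}\E_{\sP(\theta^t)}|\theta|<\infty$ (the DMFT path-law lies in $\cP_2(\R\times C([0,T],\R))$). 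Combining, $\sup_{t\in[0,T]}\frac{\d}{\d t}\Phi(t)\le C_T\sup_{t\in[0,T]}\|v^t-w^t_{n,d}\|_2$, and it remains to show $\sup_{t\in[0,T]}\|w^t_{n,d}-v^t\|_2\to0$ almost surely.

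This last step is the crux. I would invoke the DMFT empirical-measure convergence for the diffusion (\ref{eq:tildethetaSDE}) — which is the adaptive dynamics (\ref{eq:langevin_sde})--(\ref{eq:gflow}) with $\widehat\alpha^t$ replaced by its deterministic DMFT limit $\alpha^t$, and hence has the same limit law $\sP(\theta^*,\{\theta^t\}_{t\in[0,T]})$ — in the form $\tfrac1d\sum_{j=1}^d\delta_{(\theta_j^*,\{\tilde\theta_j^t\}_{t\in[0,T]})}\overset{W_2}{\to}\sP(\theta^*,\{\theta^t\}_{t\in[0,T]})$ almost surely in $\cP_2(\R\times C([0,T],\R))$. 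Since the evaluation map $(\theta^*,\omega)\mapsto(\theta^*,\omega(t))$ is $1$-Lipschitz for the sup-norm on $C([0,T],\R)$, this gives $\sup_{t\in[0,T]}W_2(\bar\sP_{\tilde\btheta^t},\sP(\theta^*,\theta^t))\le W_2(\cdots)\to0$ a.s.; and since $\theta\mapsto\nabla_\alpha\log g(\theta,\alpha^t)$ is Lipschitz uniformly over $\alpha^t\in S$ (Assumption \ref{assump:prior}(b)), $\sup_{t\in[0,T]}\big\|\E_{\tilde\theta^t\sim\bar\sP_{\tilde\btheta^t}}\nabla_\alpha\log g(\tilde\theta^t,\alpha^t)-v^t\big\|_2\to0$ a.s., jointly over all randomness. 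To pass from $\bar\sP_{\tilde\btheta^t}$ to $\bar\sP_t=\langle\bar\sP_{\tilde\btheta^t}\rangle$, I would invoke the Gr\"onwall/Doob moment bound underlying (\ref{eq:Gronwallargument})--(\ref{eq:W2UI}), namely $\big\langle(\sup_{t\in[0,T]}d^{-1}\|\tilde\btheta^t\|_2^2)^p\big\rangle\le C_{T,p}$ for $p>1$, which together with the at-most-linear growth of $\nabla_\alpha\log g$ in $\theta$ makes $\sup_{t\in[0,T]}\big\|\E_{\tilde\theta^t\sim\bar\sP_{\tilde\btheta^t}}\nabla_\alpha\log g(\tilde\theta^t,\alpha^t)-v^t\big\|_2$ uniformly integrable with respect to $\langle\cdot\rangle$; then by Fubini and a uniform-integrability argument as in the proof of Corollary \ref{cor:fixedalpha_dynamics}(a), $\sup_{t\in[0,T]}\|w^t_{n,d}-v^t\|_2\le\big\langle\sup_{t\in[0,T]}\|\E_{\bar\sP_{\tilde\btheta^t}}\nabla_\alpha\log g(\cdot,\alpha^t)-v^t\|_2\big\rangle\to0$ a.s. The obstacles I anticipate are (i) justifying the chain-rule/de~Bruijn computation for $\tfrac{\d}{\d t}\Phi(t)$ near $t=0$, where $q_0=g_0^{\otimes d}$ need not have finite Fisher information (harmless since we bound a $\limsup$, but it should be flagged), and (ii) asserting the DMFT convergence for the frozen-trajectory diffusion (\ref{eq:tildethetaSDE}), which follows by re-running the analysis of \cite{paper1} with $\{\alpha^t\}$ prescribed as a deterministic external field.
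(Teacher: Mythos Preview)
Your overall strategy---differentiate $\Phi(t)=V(q_t,\alpha^t)+R(\alpha^t)$ via Fokker--Planck, split into a nonpositive Fisher-information term and an $\alpha$-term that vanishes in the DMFT limit---is exactly the paper's. Your algebra for the $\alpha$-term agrees with the paper's decomposition (\ref{eq:Gibbstimederiv}).

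Two points of divergence deserve comment. First, a minor but real issue: you invoke Assumption~\ref{assump:compactalpha} for the compact set $S$ containing $\{\alpha^t\}_{t\ge0}$, but the lemma is stated (and used, e.g.\ in the proof of Example~\ref{ex:logconcave} and Proposition~\ref{prop:alphaconfined}) \emph{without} that assumption---indeed, the lemma is a tool for \emph{verifying} confinement. The paper instead uses only continuity of $t\mapsto\alpha^t$ on $[0,T]$ to get a $T$-dependent compact ball $S_T$, and applies Assumption~\ref{assump:prior}(b) locally on $S_T$. Your argument is easily repaired this way.

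Second, and more substantive: for the convergence $\sup_{t\in[0,T]}W_2(\bar\sP_t,\sP(\theta^*,\theta^t))\to0$, you propose to invoke a DMFT limit directly for the frozen-$\alpha^t$ diffusion (\ref{eq:tildethetaSDE}), acknowledging this as obstacle (ii). The paper avoids re-running the DMFT analysis: it instead couples $\tilde\btheta^t$ to the \emph{adaptive} process $\btheta^t$ (for which Theorem~\ref{thm:dmft_approx}(b) already gives the DMFT limit) via the same Brownian motion, and shows by a Gr\"onwall argument that $\sup_{t\in[0,T]}d^{-1/2}\|\tilde\btheta^t-\btheta^t\|_2\to0$ a.s.\ because $\sup_{t\in[0,T]}\|\widehat\alpha^t-\alpha^t\|_2\to0$. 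Then $W_2(\bar\sP_{\tilde\btheta^t},\bar\sP_{\btheta^t})\to0$ uniformly in $t$, and the existing DMFT result for $\btheta^t$ finishes the job. This sidesteps your obstacle (ii) entirely and keeps the proof self-contained given Theorem~\ref{thm:dmft_approx}(b). Your route would work if the companion paper's framework directly accommodates a prescribed deterministic time-varying $\alpha^t$, but the paper's coupling trick is the cleaner way to leverage what is already proved.
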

\begin{proof}
Fixing $C_0>0$ large enough, let
\[\event(C_0)=\{\|\X\|_\op \leq C_0,\;
\|\btheta^*\|_2^2,\|\beps\|_2^2 \leq C_0d\}.\]
We restrict to the event where the almost-sure convergence statements of
Theorem \ref{thm:dmft_approx}(b) hold and where
$\event(C_0)$ holds for all large $n,d$.

Since $\{\alpha^t\}_{t \geq 0}$ is continuous, for each $T>0$, there exists a
compact ball $S_T$ for which $\alpha^t \in S_T$ for all $t \in [0,T]$.
By Assumption \ref{assump:prior}(b), $(\theta,\alpha)
\mapsto \partial_\theta \log g(\theta,\alpha)$ restricted to $\alpha \in S_T$
is Lipschitz. Then the drift of (\ref{eq:tildethetaSDE}) is Lipschitz over
each time horizon $[0,T]$, so (\ref{eq:tildethetaSDE}) 
admits a unique solution $\{\tilde\btheta^t\}_{t \in [0,T]}$
(c.f.\ \cite[Theorem II.1.2]{kunita1984stochastic}) over $t \in [0,T]$ for
every $T \geq 0$, and hence also over all $t \geq 0$. We note that
\[\frac{\d}{\d t}(\tilde \btheta^t-\btheta^t)
=\frac{1}{\sigma^2}\X^\top \X(\btheta^t-\tilde\btheta^t)
+\Big[\partial_\theta \log g(\tilde \theta_j^t,\alpha^t)-
\partial_\theta \log g(\theta_j^t,\widehat\alpha^t)\Big]_{j=1}^d.\]
Applying again the Lipschitz property of
$(\theta,\alpha) \mapsto \partial_\theta
\log g(\theta,\alpha)$ over $\alpha \in S_T$
and the bound $\|\X\|_\op \leq C_0$, there is a constant $C>0$ depending on
$C_0,T$ such that
\[\bigg\|\frac{1}{\sqrt{d}}\frac{\d}{\d t}(\tilde \btheta^t-\btheta^t)\bigg\|_2
\leq \frac{C}{\sqrt{d}}
\|\tilde \btheta^t-\btheta^t\|_2+C\|\alpha^t-\widehat\alpha^t\|_2.\]
Since $\sup_{t \in [0,T]} \|\alpha^t-\widehat\alpha^t\|_2 \to 0$
by Theorem \ref{thm:dmft_approx}(b), a Gronwall argument implies
\begin{equation}\label{eq:tildethetaerror}
\lim_{n,d \to \infty}
\sup_{t \in [0,T]} \frac{1}{\sqrt{d}}\|\tilde\btheta^t-\btheta^t\|_2=0.
\end{equation}

By the DMFT equation (\ref{def:dmft_langevin_alpha}),
the evolution of $\alpha^t$ is given by
\begin{equation}\label{eq:alphaflowEB}
\frac{\d}{\d t} \alpha^t=\E_{\theta^t \sim \sP(\theta^t)}
\nabla_\alpha \log g(\theta^t,\alpha^t)-\nabla R(\alpha^t)
\end{equation}
where $\sP(\theta^t)$ is the law of the DMFT variable $\theta^t$.
The law $q_t$ of $\tilde\btheta^t$ satisfies the Fokker-Planck equation
\begin{equation}\label{eq:FokkerPlanck}
\frac{\d}{\d t} q_t(\tilde\btheta)
=\nabla_{\tilde\btheta} \cdot \bigg[q_t(\tilde\btheta)\nabla_{\tilde\btheta}\bigg(
\frac{1}{2\sigma^2}\|\y-\X\tilde\btheta\|_2^2-\sum_{j=1}^d
\log g(\tilde \theta_j,\alpha^t)+\log q_t(\tilde\btheta)\bigg)\bigg].
\end{equation}
Then, using (\ref{eq:alphaflowEB}) and (\ref{eq:FokkerPlanck}) to differentiate
$V(q_t,\alpha^t)+R(\alpha^t)$,
\begin{align}
&\frac{\d}{\d t} \Big(V(q_t,\alpha^t)+R(\alpha^t)\Big)
={-}\frac{1}{d}\underbrace{\int
\bigg\|\nabla_{\tilde\btheta}\Big(\frac{1}{2\sigma^2}\|\y-\X\tilde\btheta\|_2^2-\sum_{j=1}^d
\log g(\tilde\theta_j,\alpha^t)+\log q_t(\tilde\btheta) \Big)
\bigg\|_2^2 q_t(\tilde\btheta)\d\tilde\btheta}_{:=\FI_t}\label{eq:defFI}\\
&\hspace{0.5in}-\Big(\E_{\theta^t \sim \sP(\theta^t)}\nabla_\alpha \log
g(\theta^t,\alpha^t)-\nabla R(\alpha^t)\Big)^\top \Big(\int \frac{1}{d}\sum_{j=1}^d \nabla_\alpha
\log g(\tilde\theta_j,\alpha^t)q_t(\tilde\btheta)\d\tilde\btheta-\nabla R(\alpha^t)\Big).\notag
\end{align}
Here, the first term $\FI_t$ (the relative Fisher information) arises
from differentiation in $q_t$ and integration-by-parts in $\tilde\btheta$,
while the second term arises from differentiation in $\alpha^t$. 
Recalling the notation (\ref{eq:barqt}),
\[\int \frac{1}{d}\sum_{j=1}^d \nabla_\alpha
\log g(\tilde\theta_j,\alpha^t)q_t(\tilde\btheta)\d\tilde\btheta
=\E_{\tilde \theta^t \sim \bar \sP_t} \nabla_\alpha \log g(\tilde \theta^t,\alpha^t)\]
so we may write the above as 
\begin{equation}\label{eq:Gibbstimederiv}
\begin{aligned}
&\frac{\d}{\d t} \Big(V(q_t,\alpha^t) + R(\alpha^t)\Big)\\
&={-}\frac{1}{d}\,\FI_t
-\Big\|\E_{\tilde \theta^t \sim \bar \sP_t} \nabla_\alpha \log
g(\tilde \theta^t,\alpha^t)-\nabla R(\alpha^t)\Big\|^2\\
&\hspace{0.5in}
+\underbrace{\Big(\E_{\tilde \theta^t \sim \bar \sP_t}
\nabla_\alpha \log g(\tilde \theta^t,\alpha^t)
-\E_{\theta^t \sim \sP(\theta^t)}
\nabla_\alpha \log g(\theta^t,\alpha^t)
\Big)^\top \Big(\E_{\tilde \theta^t \sim \bar \sP_t} \nabla_\alpha \log
g(\tilde \theta^t,\alpha^t)-\nabla R(\alpha^t)\Big)}_{:=\Delta_t}.
\end{aligned}
\end{equation}
By the convexity 
$W_2(\lambda\sP+(1-\lambda)\sP',\sQ)^2 \leq 
\lambda W_2(\sP,\sQ)^2+(1-\lambda)W_2(\sP',\sQ)^2$ and Jensen's inequality,
\begin{equation}\label{eq:barqbound}
\sup_{t \in [0,T]} W_2(\bar \sP_t,\sP(\theta^*,\theta^t))^2
\leq \sup_{t \in [0,T]} \langle W_2(\bar \sP_{\tilde
\btheta^t},\sP(\theta^*,\theta^t))^2 \rangle
\leq \Big\langle \sup_{t \in [0,T]} W_2(\bar \sP_{\tilde
\btheta^t},\sP(\theta^*,\theta^t))^2\Big\rangle,
\end{equation}
where $\langle \cdot \rangle$ is the average over $\tilde \btheta^t \sim q_t$,
and $\sP(\theta^*,\theta^t)$ is the joint law of the DMFT variables
$(\theta^*,\theta^t)$.
By Theorem \ref{thm:dmft_approx}(b) and (\ref{eq:tildethetaerror}), for any
fixed $T>0$ we have
\begin{equation}\label{eq:W1asconvergence}
\sup_{t \in [0,T]}
W_2(\bar \sP_{\tilde \btheta^t},\sP(\theta^*,\theta^t))^2
\leq \sup_{t \in [0,T]} 2W_2(\bar \sP_{\tilde \btheta^t},\bar \sP_{\btheta^t})^2
+2W_2(\bar \sP_{\btheta^t},\sP(\theta^*,\theta^t))^2 \to 0
\end{equation}
almost surely as $n,d \to \infty$. The same arguments as leading to
(\ref{eq:W2UI}) show that
$\sup_{t \in [0,T]} W_2(\bar \sP_{\tilde \btheta^t},\sP(\theta^*,\theta^t))^2$
is uniformly integrable with respect to $\langle \cdot \rangle$ for all large
$n,d$. Then applying (\ref{eq:W1asconvergence})
and dominated convergence to bound the right side of (\ref{eq:barqbound}), we
get
\begin{equation}\label{eq:barqconvergence}
\lim_{n,d \to \infty} \sup_{t \in [0,T]}
W_2(\bar \sP_t,\sP(\theta^*,\theta^t))^2=0.
\end{equation}
Finally, applying that
$(\theta,\alpha) \mapsto \nabla_\alpha \log g(\theta,\alpha)$ is uniformly
Lipschitz over $\alpha \in S_T$ by Assumption \ref{assump:prior}(b),
this Wasserstein-2 convergence implies
\[\lim_{n,d \to \infty}
\sup_{t \in [0,T]} \Big|\E_{\theta \sim \bar \sP_t}
\nabla_\alpha \log g(\theta,\alpha^t)
-\E_{\theta \sim \sP(\theta^t)}
\nabla_\alpha \log g(\theta,\alpha^t)\Big|=0,\]
hence $\lim_{n,d \to \infty} \sup_{t \in [0,T]} |\Delta_t|=0$ for the quantity
$\Delta_t$ of (\ref{eq:Gibbstimederiv}).
As the first two terms of (\ref{eq:Gibbstimederiv}) are non-positive, this shows
(\ref{eq:Vdecreasing}).
\end{proof}

\begin{proof}[Proof of Theorem \ref{thm:adaptivealpha_dynamics}]
Let $S \subset O \subset \R^K$ be the domains of
Assumption \ref{assump:compactalpha}.
Fixing sufficiently large constants $C_0,C_\LSI>0$, define
\[\event(C_0,C_\LSI)=\{\|\X\|_\op \leq C_0,\;
\|\btheta^*\|_2^2,\|\beps\|_2^2 \leq C_0d,\;
\text{and (\ref{eq:LSI}) holds for } q_\alpha \text{ for every } \alpha \in O\}.\]
We restrict to the event where the almost-sure convergence statements of Theorem
\ref{thm:dmft_approx}(b) and Lemma \ref{lemma:gradalphaF} hold, and where
$\event(C_0,C_\LSI)$ holds for all large $n,d$. Throughout, $C,C',c>0$
denote constants that may depend on $C_0,C_\LSI$ and change
from instance to instance.

On the event $\event(C_0,C_\LSI)$, we first note that by It\^o's formula,
\[\d\|\tilde\btheta^t\|_2^2
=2(\tilde\btheta^t)^\top\Big[\Big(\frac{1}{\sigma^2}\X^\top(\y-\X\tilde\btheta^t)
+\Big(\partial_\theta \log g(\tilde\theta_j^t,\alpha^t)\Big)_{j=1}^d\Big)\d t+\sqrt{2}\,\d\b^t\Big]+(2d)\d t,\]
and hence
\[\frac{\d}{\d t} \langle d^{-1}\|\tilde\btheta^t\|_2^2 \rangle
\leq C(1+\langle d^{-1/2}\|\tilde\btheta^t\|_2 \rangle)+2\bigg\langle
d^{-1}\sum_{j=1}^d \tilde \theta_j^t
\cdot \partial_\theta \log g(\tilde \theta_j^t,\alpha^t)\bigg\rangle\]
for a constant $C>0$. Under the convexity-at-infinity condition of
Assumption \ref{assump:prior}(b), there exist constants $C,c>0$ for which
$\theta \cdot \partial_\theta \log g(\theta,\alpha^t)
\leq C|\theta|-c\theta^2$ for all $\theta \in \R$ and $\alpha^t \in S$. Applying
this and Cauchy-Schwarz to the above, we have for some constants $C',c'>0$ that
$\frac{\d}{\d t} \langle d^{-1}\|\tilde\btheta^t\|_2^2 \rangle
\leq C'-c'\langle d^{-1}\|\tilde\btheta^t\|_2^2 \rangle$. This implies on
$\event(C_0,C_\LSI)$ that
\begin{equation}\label{eq:tildethetabound}
d^{-1}\langle \|\tilde\btheta^t\|_2^2 \rangle \leq C
\end{equation}
for a constant $C>0$ and all $t \geq 0$. The arguments leading to
(\ref{eq:W2UI}) show that for any fixed $t \geq 0$,
$d^{-1}\|\tilde\btheta^t\|_2^2$ is uniformly integrable with respect to
$\langle \cdot \rangle$ for all large $n,d$. Since $\event(C_0,C_\LSI)$ holds
a.s.\ for all large $n,d$, and $\lim_{n,d \to \infty}
d^{-1}\|\tilde \btheta^t\|_2^2=(\theta^t)^2$ a.s.\ by Theorem
\ref{thm:dmft_approx}(b) and (\ref{eq:tildethetaerror})
where $\theta^t$ here is the $\theta$-component of the limiting DMFT system,
this implies also
\begin{equation}\label{eq:DMFTthetabounded}
\E(\theta^t)^2 \leq C
\end{equation}
for all $t \geq 0$. Furthermore, for any $s \leq t$,
applying
\[\tilde\btheta^t-\tilde\btheta^s=\int_s^t \Big[\frac{1}{\sigma^2}
\X^\top(\y-\X\tilde\btheta^r)+\Big(\partial_\theta \log g(\tilde
\theta_j^r,\alpha^r)\Big)_{j=1}^d\Big]\d r+\sqrt{2}(\b^t-\b^s)\]
and uniform Lipschitz continuity of $\theta \mapsto \partial_\theta \log
g(\theta,\alpha^r)$ for $\alpha^r \in S$,
we have on $\event(C_0,C_\LSI)$ that
\[d^{-1/2}\|\tilde \btheta^t-\tilde \btheta^s\|_2
\leq \int_s^t Cd^{-1/2}\|\tilde \btheta^r-\tilde \btheta^s\|_2\,\d r
+C(t-s)(1+d^{-1/2}\|\tilde\btheta^s\|_2)+\sqrt{2}d^{-1/2}\|\b^t-\b^s\|_2.\]
Then by Gronwall's inequality,
\[d^{-1/2}\|\tilde \btheta^t-\tilde \btheta^s\|_2
\leq Ce^{C(t-s)}\Big(C(t-s)(1+d^{-1/2}\|\tilde\btheta^s\|_2)
+d^{-1/2}\sup_{r \in [s,t]}
\|\b^r-\b^s\|_2\Big).\]
Then applying (\ref{eq:tildethetabound}) and Doob's maximal inequality shows
\begin{equation}\label{eq:tildethetacontinuity}
d^{-1}\langle \|\tilde \btheta^t-\tilde \btheta^s\|_2^2 \rangle
\leq C(t-s) \text{ for all } s \leq t \text{ with } t-s \leq 1.
\end{equation}

We now show that for a constant $C>0$,
\begin{equation}\label{eq:intgradFsquared}
\int_0^\infty \|\nabla F(\alpha^t)+\nabla R(\alpha^t)\|_2^2 \d t<C.
\end{equation}
We remind the reader that $q_t$ is the law of $\tilde\btheta^t$ (conditioned on
$\X,\btheta^*,\beps$) and $q_{\alpha^t}$ is the posterior law of $\btheta$ under
the prior $g \equiv g(\cdot,\alpha^t)$.
On $\event(C_0,C_\LSI)$, the LSI for $q_{\alpha^t}$ and its implied
$T_2$-transportation inequality
(c.f.\ \cite[Theorem 9.6.1]{bakry2014analysis}) imply
for the Fisher information term $\FI_t$ of (\ref{eq:defFI}) that
\[\FI_t \geq C_{\LSI}^{-1} \DKL(q_t\| q_{\alpha^t}) \geq 
C_{\LSI}^{-2} W_2(q_t,q_{\alpha^t})^2\]
for all $t \geq 0$. The average marginal
distribution of coordinates of $\tilde \btheta^t \sim q_t$ is the
$\tilde\theta^t$-marginal of $\bar \sP_t$ defined
in (\ref{eq:barqt}),
and that of $\btheta \sim q_{\alpha^t}$ is the $\theta$-marginal of
$\bar \sP_{\alpha^t}$ as defined in (\ref{eq:barqalpha}).
Considering the coordinatewise coupling of $\bar \sP_t,\bar \sP_{\alpha^t}$,
we see that
$W_2(\bar \sP_t, \bar \sP_{\alpha^t})^2 \leq d^{-1}W_2(q_t,q_{\alpha^t})^2$, so
\begin{equation}\label{eq:FIW2qbar}
d^{-1}\FI_t \geq C_{\LSI}^{-2}W_2(\bar \sP_t,\bar \sP_{\alpha^t})^2.
\end{equation}
Applying this and the uniform Lipschitz continuity of
$\theta \mapsto \nabla_\alpha \log g(\theta,\alpha)$ over $\alpha \in O$
guaranteed by Assumption \ref{assump:prior}(b),
\[\Big\|\E_{\tilde \theta^t \sim \bar \sP_t} \nabla_\alpha \log g(\tilde
\theta^t,\alpha^t)
-\E_{\theta \sim \bar \sP_{\alpha^t}} \nabla_\alpha \log g(\theta,\alpha^t)
\Big\|_2^2
\leq C'\,W_2(\bar \sP_t, \bar \sP_{\alpha^t})^2
\leq Cd^{-1}\FI_t.\]
Then applying this as a lower bound for $d^{-1}\FI_t$ in
(\ref{eq:Gibbstimederiv}), and applying also $C^{-1}(a-b)^2+b^2 \geq c_0a^2$
for a constant $c_0>0$ and all $a,b \in \R$, we get from
(\ref{eq:Gibbstimederiv}) that
\[\frac{\d}{\d t} \Big(V(q_t,\alpha^t)+R(\alpha^t)\Big)
\leq {-}c_0\Big\|\E_{\theta \sim \bar \sP_{\alpha^t}} \nabla_\alpha \log
g(\theta,\alpha^t)-\nabla R(\alpha^t)\Big\|^2+\Delta_t.\]
Now note from Lemma \ref{lemma:gradalphaF} that
\[\E_{\theta \sim \bar \sP_{\alpha^t}} \nabla_\alpha \log
g(\theta,\alpha^t)={-}\nabla \widehat F(\alpha^t).\]
Applying $\sup_{t \in [0,T]} \Delta_t \to 0$ and the uniform convergence
$\nabla \widehat F(\alpha) \to \nabla F(\alpha)$ over $\alpha \in S$
from Lemma \ref{lemma:gradalphaF}, this shows a strengthening of
(\ref{eq:Vdecreasing}): for any $t\in[0,T]$,
\[\limsup_{n,d \to \infty} 
\frac{\d}{\d t} \Big(V(q_t,\alpha^t)+R(\alpha^t)\Big)
\leq {-}c_0\|\nabla F(\alpha^t)+\nabla
R(\alpha^t)\|_2^2.\]
Then for any $T>0$,
\[c_0\int_0^T \|\nabla F(\alpha^t)+\nabla R(\alpha^t)\|_2^2\,\d t
\leq \limsup_{n,d \to \infty}
V(q_0,\alpha^0)+R(\alpha^0)-V(q_T,\alpha^T)-R(\alpha^T).\]
Note that by the definition of $V(q,\alpha)$ in (\ref{eq:gibbsenergy})
and the conditions of finite moments and finite entropy for $g_0$ in Assumption
\ref{assump:model},
$V(q_0,\alpha^0)=V(g_0^{\otimes d},\alpha^0)$ is bounded above by a
constant on $\event(C_0,C_\LSI)$ for all large $n,d$. Also
by the definition (\ref{eq:gibbsenergy}),
\[V(q_T,\alpha^T) \geq \frac{1}{d}\DKL(q_T\|g(\cdot,\alpha^T)^{\otimes d})
+\frac{n}{2d}\log 2\pi\sigma^2
\geq \frac{n}{2d}\log 2\pi\sigma^2\]
which is bounded below by a constant for all $T$ and all large $n,d$.
Then, applying also $R(\alpha^0) \leq C$ and $R(\alpha^T) \geq 0$ and
taking the limit $n,d \to \infty$ followed by
$T \to \infty$, we obtain the claimed bound (\ref{eq:intgradFsquared}).

Consider the set
\[\Crit=\{\alpha \in S:\nabla F(\alpha)+\nabla R(\alpha)=0\}.\]
Suppose by contradiction that $\{\alpha^t\}_{t \geq 0}$ has a limit
point $\alpha^\infty \in S$ that does not belong to $\Crit$. 
Lemma \ref{lemma:gradalphaF} implies that $\nabla F(\alpha)+\nabla R(\alpha)$
is continuous over $\alpha \in O$, so
$\|\nabla F(\alpha)+\nabla R(\alpha)\|_2>\delta$ for all $\alpha \in
B_\delta(\alpha^\infty):=\{\alpha:\|\alpha-\alpha^\infty\|_2<\delta\}$
and some $\delta>0$. However, Assumption \ref{assump:prior}(b) and the DMFT
equation (\ref{def:dmft_langevin_alpha}) imply
\begin{equation}\label{eq:dtalphabound}
\bigg\|\frac{\d}{\d t}\alpha^t\bigg\|_2
\leq \E_{\theta \sim \sP(\theta^t)}\|\nabla_\alpha \log g(\theta,\alpha^t)\|_2
+\|\nabla R(\alpha^t)\|_2
\leq C(1+\E|\theta^t|+\|\alpha^t\|_2) \leq C'
\end{equation}
for some constants $C,C'>0$ and all $t \geq 0$, where the last inequality
applies (\ref{eq:DMFTthetabounded}) and the assumption $\alpha^t \in S$.
Then for each $t_0 \geq 0$ such that
\begin{equation}\label{eq:alphat0}
\alpha^{t_0} \in B_{\delta/2}(\alpha^\infty)
\end{equation}
we must have
$\alpha^t \in B_\delta(\alpha^\infty)$ for all $t \in [t_0-c\delta,t_0+c\delta]$
and some constant $c>0$. Then $\int_{t_0-c\delta}^{t_0+c\delta} \|\nabla
F(\alpha^t)+\nabla R(\alpha^t)\|_2^2\,\d t \geq 2c\delta^3$. The condition (\ref{eq:alphat0}) must
hold for infinitely many times $t_0$ because $\alpha^\infty$ is a limit point of
$\{\alpha^t\}_{t \geq 0}$,
but this contradicts (\ref{eq:intgradFsquared}). Thus we must have
$\alpha^\infty \in \Crit$. Since this holds for
every limit point $\alpha^\infty$ of $\{\alpha^t\}_{t \geq 0}$, and $S$ is
compact, this implies $\lim_{t \to \infty} \dist(\alpha^t,\Crit)=0$.
If furthermore all points
of $\Crit$ are isolated, then the limit point $\alpha^\infty$
of $\{\alpha^t\}_{t \geq 0}$ must be unique, and
\[\lim_{t \to \infty} \alpha^t=\alpha^\infty.\]

For the remaining statements (\ref{eq:posteriorconverges}), fix any
$\eps>0$. Choosing $T(\eps)$ such that
$\|\alpha^t-\alpha^\infty\|_2<\eps/2$ for all $t>T(\eps)$, we then have
$\limsup_{n,d \to \infty} \|\widehat\alpha^t-\alpha^\infty\|_2<\eps$
by Theorem \ref{thm:dmft_approx}(b), showing the first statement of
(\ref{eq:posteriorconverges}).
For the second statement of (\ref{eq:posteriorconverges}), we note from
(\ref{eq:Gibbstimederiv}) that
\[\frac{\d}{\d t}\Big(V(q_t,\alpha^t)+R(\alpha^t)\Big) \leq -\frac{1}{d}\FI_t
+\Delta_t.\]
Then, by the same arguments as above, for some constant $C>0$ and every $T>0$,
\[\limsup_{n,d \to \infty} \int_0^T d^{-1}\FI_t
\leq \limsup_{n,d \to \infty}
V(q_0,\alpha^0)+R(\alpha^0)-V(q_T,\alpha^T)-R(\alpha^T) \leq C.\]
Recalling (\ref{eq:FIW2qbar}), this implies
\begin{equation}\label{eq:integratedW2tmp}
\limsup_{n,d \to \infty} \int_0^T W_2(\bar \sP_t,\bar \sP_{\alpha^t})^2\d t
\leq C.
\end{equation}
For each fixed $t \geq 0$, we have
\begin{equation}\label{eq:barPapproximatesDMFT}
\lim_{n,d \to \infty} W_2(\bar \sP_t,\sP(\theta^*,\theta^t))^2=0
\end{equation}
by (\ref{eq:barqconvergence}). We have also by Jensen's inequality
for the squared Wasserstein-2 distance
and (\ref{eq:fixedalpha_equilibriumlaw}) of Corollary
\ref{cor:fixedalpha_dynamics}(a),
\begin{equation}\label{eq:barPconvergence}
\limsup_{n,d \to \infty} W_2(\bar \sP_{\alpha^t},\sP_{\alpha^t})^2
\leq \limsup_{n,d \to \infty} \big\langle
W_2(\bar \sP_{\btheta},\sP_{\alpha^t})^2 \big\rangle_{\alpha^t}=0
\end{equation}
where $\langle \cdot \rangle_{\alpha^t}$ is the average over $\btheta \sim
q_{\alpha^t}$ defining $\bar\sP_{\btheta}$.
Then, combining (\ref{eq:barPapproximatesDMFT})
and (\ref{eq:barPconvergence}), we have that
$\lim_{n,d \to \infty} W_2(\bar \sP_t,\bar \sP_{\alpha^t})
=W_2(\sP(\theta^*,\theta^t),\sP_{\alpha^t})$.
Applying this and Fatou's lemma to (\ref{eq:integratedW2tmp}), we obtain the bound
$\int_0^T W_2(\sP(\theta^*,\theta^t),\sP_{\alpha^t})^2 \d t
\leq C$. Since $T>0$ is arbitrary, taking $T \to \infty$ gives
\begin{equation}\label{eq:integratedW2bound}
\int_0^\infty W_2(\sP(\theta^*,\theta^t),\sP_{\alpha^t})^2 \d t \leq C.
\end{equation}

For any $s \leq t$, considering the coordinatewise coupling gives
$W_2(\bar \sP_s,\bar \sP_t)^2
\leq d^{-1} \langle \|\tilde \btheta^s-\tilde \btheta^t\|_2^2 \rangle 
\leq C(t-s)$, where the second inequality holds for a constant $C>0$ and all
$t-s \in [0,1]$ by (\ref{eq:tildethetacontinuity}). Also
\begin{equation}\label{eq:W2barPlipschitz}
W_2(\bar \sP_{\alpha^s},\bar \sP_{\alpha^t})^2
\leq d^{-1}W_2(q_{\alpha^s},q_{\alpha^t})^2 \leq C\|\alpha^t-\alpha^s\|_2^2
\leq C'(t-s)^2
\end{equation}
by the Wasserstein-2 Lipschitz continuity of $q_\alpha$
over $\alpha \in S$ shown in (\ref{eq:W2locallylipschitz}), and 
the bound (\ref{eq:dtalphabound}) for $\d\alpha^t/\d t$. Then taking the limit
$n,d \to \infty$ using (\ref{eq:barPapproximatesDMFT}) and
(\ref{eq:barPconvergence}), this shows
\[|W_2(\sP(\theta^*,\theta^t),\sP_{\alpha^t})^2
-W_2(\sP(\theta^*,\theta^s),\sP_{\alpha^s})^2| \leq C(t-s)\]
for all $t-s \in [0,1]$. Then $t \mapsto
W_2(\sP(\theta^*,\theta^t),\sP_{\alpha^t})^2$ is Lipschitz, so
(\ref{eq:integratedW2bound}) implies
\begin{equation}\label{eq:W2Pthetatalphat}
\lim_{t \to \infty}
W_2(\sP(\theta^*,\theta^t),\sP_{\alpha^t})^2=0.\end{equation}
We have similarly to (\ref{eq:W2barPlipschitz}) that
$W_2(\bar \sP_{\alpha^t},\bar \sP_{\alpha^\infty})^2
\leq d^{-1}W_2(q_{\alpha^t},q_{\alpha^\infty})^2
\leq C\|\alpha^t-\alpha^\infty\|_2^2$. Hence by (\ref{eq:barPconvergence}), also
$W_2(\sP_{\alpha^t},\sP_{\alpha^\infty})^2 \leq C\|\alpha^t-\alpha^\infty\|_2^2$,
so
\begin{equation}\label{eq:W2alphatalphainfty}
\lim_{t \to \infty} W_2(\sP_{\alpha^t},\sP_{\alpha^\infty})^2=0.
\end{equation}
Combining (\ref{eq:W2Pthetatalphat}) and (\ref{eq:W2alphatalphainfty}) show that
for any $\eps>0$, there exists $T(\eps)>0$ such that
$W_2(\sP(\theta^*,\theta^t),\sP_{\alpha^\infty})<\eps$ for all $t \geq T(\eps)$.
The second statement of (\ref{eq:posteriorconverges}) follows from this and
the almost sure convergence $\lim_{n,d \to \infty} W_2(\frac{1}{d}\sum_j
\delta_{(\theta_j^*,\theta_j^t)},\sP(\theta^*,\theta^t))=0$ ensured
by Theorem \ref{thm:dmft_approx}(b).
\end{proof}

\subsection{Analysis of examples}\label{sec:exampledetails}

\begin{proof}[Analysis of Examples \ref{ex:gaussianmean} and
\ref{ex:logconcave}]

We prove the claims in Example \ref{ex:logconcave} that Assumptions
\ref{assump:prior}(b) and \ref{assump:compactalpha} hold,
and that $\Crit$ consists of the unique point $\alpha=\alpha^*$. (Then these
claims hold also in Example \ref{ex:gaussianmean} for the Gaussian prior,
which is a special case.)

Assumption \ref{assump:prior}(b) is immediate from the
given conditions for $f(x)$. For Assumption \ref{assump:compactalpha}, let us
first show that there exists a compact interval $S \subset \R$ for which
$\{\alpha^t\}_{t \geq 0}$ is confined to $S$ (for all $t \geq 0$):
By Lemma \ref{lemma:Vdecreasing} (which does not require Assumption
\ref{assump:compactalpha}), for each fixed $t \geq 0$, almost surely
\begin{equation}\label{eq:Vdiffbound}
\limsup_{n,d \to \infty} V(q_t,\alpha^t)-V(q_0,\alpha^0) \leq 0.
\end{equation}
By the Gibbs variational principle (\ref{eq:gibbsvariational})
and the lower bound ${-}\log g(\theta,\alpha)=f(\theta-\alpha)
\geq f(0)+\frac{c_0}{2}(\theta-\alpha)^2$,
\begin{align*}
V(q_t,\alpha^t) &\geq \widehat F(\alpha^t)\\
&={-}\frac{1}{d}\log \int 
(2\pi\sigma^2)^{-n/2}
\exp\Big({-}\frac{1}{2\sigma^2}\|\y-\X\btheta\|_2^2
+\sum_{j=1}^d \log g(\theta_j,\alpha^t)\Big)\d\btheta\\
&\geq {-}\frac{1}{d}\log \int (2\pi\sigma^2)^{-n/2}
\exp\Big({-}\frac{1}{2\sigma^2}\|\y-\X\btheta\|_2^2
-f(0)d-\sum_{j=1}^d \frac{c_0}{2}(\theta_j-\alpha^t)^2\Big)\d\btheta
\end{align*}
Applying $\|\X\|_\op \leq C$ a.s.\ for all large $n,d$,
it is readily checked by explicit evaluation of this integral over
$\btheta$ that
\[V(q_t,\alpha^t) \geq C+\frac{c_0}{2}(\alpha^t)^2
-\frac{1}{2d}\Big(\frac{\X^\top \y}{\sigma^2}+c_0\alpha^t\1\Big)^\top
\Big(\frac{\X^\top\X}{\sigma^2}+c_0\I\Big)^{-1}
\Big(\frac{\X^\top \y}{\sigma^2}+c_0\alpha^t\1\Big)\]
a.s.\ for all large $n,d$ and a constant $C \in \R$ depending on
$\sigma^2,\delta,f(0),c_0,\alpha^*$,
where here $\1$ denotes the all-1's vector in $\R^d$. We have
\[\lim_{n,d \to \infty}
\frac{1}{d}\,\1^\top\Big(\frac{\X^\top \X}{\sigma^2}+c_0\I\Big)^{-1}\1
=\sigma^2 G({-}\sigma^2 c_0) < \frac{1}{c_0}\]
strictly, where $G(z)=\lim d^{-1}\Tr (\X^\top \X-z\I)^{-1}$ denotes
the Stieltjes transform of the Marcenko-Pastur spectral limit of $\X^\top \X$
\cite[Theorem 2.5]{bloemendal2014isotropic}. 
Applying this to lower-bound the quadratic term in $\alpha^t$ above,
and applying Cauchy-Schwarz to lower-bound the linear term, we get
\[V(q_t,\alpha^t) \geq C'+c'(\alpha^t)^2\]
for some constants $C' \in \R$ and $c'>0$.
Now applying this and $V(q_0,\alpha^0)=V(g_0^{\otimes d},\alpha^0) \leq C$
to (\ref{eq:Vdiffbound}), we deduce that $(\alpha^t)^2$ is uniformly
bounded over all $t \geq 0$, i.e.\ there exists a compact interval $S$ 
for which $\alpha^t \in S$ for all $t \geq 0$, as claimed. By enlarging $S$, we
may assume without loss of generality $\alpha^* \in S$.
Then, taking $O$ to be any neighborhood of $S$, the
remaining LSI condition of Assumption \ref{assump:compactalpha} holds by
the strong convexity of $f(x)$ and Proposition \ref{prop:LSI}.

We now show that $F(\alpha)$ is strictly convex on $O$, by
showing convexity of the original negative log-likelihood $\widehat F(\alpha)$:
Fixing sufficiently large and small constants $C_0,c>0$,
let us restrict to the event
\[\event=\{\|\X\|_\op \leq C_0,\; \|\X\1\|_2 \geq c\sqrt{d}\}\]
which holds a.s.\ for all large $n,d$.
Recalling the form of $\nabla^2 \widehat F(\alpha)$ from (\ref{eq:hessF})
and applying this with ${-}\log g(\theta,\alpha)=f(\theta-\alpha)$,
\[\widehat F''(\alpha)=
\frac{1}{d}\bigg\langle\sum_{j=1}^d f''(\theta_j-\alpha) \bigg \rangle_\alpha
-\frac{1}{d}\Var_\alpha\bigg[\sum_{j=1}^d f'(\theta_j-\alpha)\bigg]\]
where $\langle \cdot \rangle_\alpha$ is the average under the posterior law
corresponding to $g(\cdot,\alpha)$, and $\Var_\alpha$ is its posterior
variance. Since $f(x)$ is strictly convex, the posterior density of $\theta$ is
strictly log-concave for each fixed $\alpha$. Then, denoting
\[\v_\alpha(\btheta)=\Big(f''(\theta_j-\alpha)\Big)_{j=1}^d \in \R^d,
\qquad \D_\alpha(\btheta)=\diag\Big(f''(\theta_j-\alpha)\Big)_{j=1}^d \in \R^{d
\times d},\]
the Brascamp-Lieb inequality \cite[Theorem 4.9.1]{bakry2014analysis} implies
\[\Var_\alpha\bigg[\sum_{j=1}^d f'(\theta_j-\alpha)\bigg]
\leq \bigg\langle \v_\alpha(\btheta)^\top \bigg(\D_\alpha(\btheta)
+\frac{\X^\top\X}{\sigma^2}\bigg)^{-1}\v_\alpha(\btheta)\bigg\rangle_\alpha\]
Observing also that $\sum_{j=1}^d f''(\theta_j-\alpha)
=\v_\alpha(\btheta)^\top\D_\alpha(\btheta)^{-1}\v_\alpha(\btheta)$,
this shows
\[\widehat F''(\alpha)
\geq \frac{1}{d}\,\bigg\langle \v_\alpha(\btheta)^\top 
\bigg[\D_\alpha(\btheta)^{-1}
-\bigg(\D_\alpha(\btheta)+\frac{\X^\top\X}{\sigma^2}\bigg)^{-1}\bigg]
\v_\alpha(\btheta)\bigg\rangle_\alpha.\]
Applying the Woodbury matrix identity
and $0 \preceq \sigma^2\I+\X\D_\alpha(\btheta)^{-1}\X^\top \preceq C'\,\I$
on the event $\event$ for some constant $C'>0$,
\begin{align*}
\widehat F''(\alpha)
&\geq \frac{1}{d}\,\bigg\langle \v_\alpha(\btheta)^\top 
\bigg[\D_\alpha(\btheta)^{-1}\X^\top
\bigg(\sigma^2\,\I+\X\D_\alpha(\btheta)^{-1}\X^\top\bigg)^{-1}
\X\D_\alpha(\btheta)^{-1}\bigg]\v_\alpha(\btheta)\bigg\rangle_\alpha\\
&\geq \frac{1}{C'd} \bigg\langle \v_\alpha(\btheta)^\top 
\D_\alpha(\btheta)^{-1}\X^\top \X\D_\alpha(\btheta)^{-1}\v_\alpha(\btheta)
\bigg\rangle_\alpha
=\frac{1}{C'd}\1^\top\X^\top\X\1 \geq c',
\end{align*}
the last inequality holding for some $c'>0$ on $\event$. Thus, on $\event$,
$\widehat F(\alpha)-(c'/2)\alpha^2$ is convex over $\alpha \in \R$.
Since $\event$ holds a.s.\ for all large $n,d$ and $F(\alpha)$ is the
almost-sure pointwise limit of $\widehat F(\alpha)$, this implies that
$F(\alpha)-(c'/2)\alpha^2$ is also convex \cite[Theorem
10.8]{rockafellar1997convex}, so $F(\alpha)$ is strongly convex as claimed.
Lemma \ref{lemma:gradalphaF}(c) implies that $\nabla F(\alpha^*)=0$, i.e.\
$\alpha^*$ is a point of $\Crit$, so by this convexity it is the unique point of
$\Crit$.
\end{proof}

\begin{proposition}\label{prop:alphaconfined}
In the setting of Theorem \ref{thm:adaptivealpha_dynamics}, suppose
$R(\alpha)$ is given by (\ref{eq:ralpha}--\ref{eq:Ralpha}) with
$\|\alpha^0\|_2 \leq D$. Then there exists a constant $C(g_*,g_0,\alpha^0)>0$
depending only on $(g_*,g_0,\alpha^0)$ such that
the DMFT process $\{\alpha^t\}_{t \geq 0}$ satisfies
\[\|\alpha^t\|_2 \leq D+C(g_*,g_0,\alpha^0)\Big(\frac{1+\delta}{\sigma^2}+1\Big)
\text{ for all } t \geq 0.\]
\end{proposition}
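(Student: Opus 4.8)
The plan is to leverage the asymptotic monotonicity of the Gibbs free energy $V(q_t,\alpha^t)+R(\alpha^t)$ from Lemma~\ref{lemma:Vdecreasing}, converting it into an a priori bound via the coercivity of the regularizer. Fix an arbitrary $T\geq 0$; since the bound to be proved does not depend on $T$, it suffices to bound $\|\alpha^T\|_2$. Here $\{\tilde\btheta^t\}_{t\geq 0}$ is the process~(\ref{eq:tildethetaSDE}) driven by the deterministic DMFT curve $\{\alpha^t\}$, with initial density $q_0=g_0^{\otimes d}$. Integrating~(\ref{eq:Vdecreasing}) over $[0,T]$, and using $R(\alpha^0)=0$ (since $\|\alpha^0\|_2\leq D$) while $R(\alpha^T)$ is deterministic, one obtains almost surely
\[R(\alpha^T)\;\leq\;\limsup_{n,d\to\infty}\big[V(q_0,\alpha^0)-V(q_T,\alpha^T)\big].\]
Because $r(x)\geq x-D$ for $x\geq D+1$ and $R\geq 0$, any bound $R(\alpha^T)\leq M$ forces $\|\alpha^T\|_2\leq D+M+1$, so the proposition reduces to bounding the right-hand side above by $C(g_*,g_0,\alpha^0)\big(\tfrac{1+\delta}{\sigma^2}+1\big)$.

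For the lower bound on $V(q_T,\alpha^T)$ I would use the Gibbs variational principle~(\ref{eq:gibbsvariational}), $V(q_T,\alpha^T)\geq \widehat F(\alpha^T)$, together with the elementary estimate (valid since $g(\cdot,\alpha^T)$ is a probability density)
\[\widehat F(\alpha^T)\;\geq\;\tfrac{n}{2d}\log 2\pi\sigma^2+\tfrac{1}{2\sigma^2 d}\min_{\btheta\in\R^d}\|\y-\X\btheta\|_2^2,\]
and the identity $\min_\btheta\|\y-\X\btheta\|_2^2=\|(\I-\Pi)\y\|_2^2=\|(\I-\Pi)\beps\|_2^2$, where $\Pi$ is the orthogonal projection onto the column space of $\X$ (using $\X\btheta^*\in\mathrm{range}(\X)$). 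For the upper bound on $V(q_0,\alpha^0)=V(g_0^{\otimes d},\alpha^0)$, one evaluates~(\ref{eq:gibbsenergy}) directly: with $\mu_0=\E_{\theta\sim g_0}[\theta]$, $v_0=\Var_{\theta\sim g_0}[\theta]$,
\[V(g_0^{\otimes d},\alpha^0)=\tfrac{n}{2d}\log 2\pi\sigma^2+\tfrac{1}{2\sigma^2 d}\Big(\|\X(\btheta^*-\mu_0\1)+\beps\|_2^2+v_0\|\X\|_{\mathrm F}^2\Big)-\E_{\theta\sim g_0}[\log g(\theta,\alpha^0)]+\E_{\theta\sim g_0}[\log g_0(\theta)].\]
The crucial point is a cancellation in the difference $V(q_0,\alpha^0)-V(q_T,\alpha^T)$: the two $\tfrac{n}{2d}\log 2\pi\sigma^2$ contributions cancel, and since $\X(\btheta^*-\mu_0\1)\in\mathrm{range}(\X)$ while $(\I-\Pi)\beps\perp\mathrm{range}(\X)$, Pythagoras gives
\[\|\X(\btheta^*-\mu_0\1)+\beps\|_2^2-\|(\I-\Pi)\beps\|_2^2=\big\|\X(\btheta^*-\mu_0\1)+\Pi\beps\big\|_2^2\leq 2\|\X\|_{\op}^2\|\btheta^*-\mu_0\1\|_2^2+2\|\Pi\beps\|_2^2.\]
This is exactly what removes the spurious factor of $\delta$: the noise term $\|\beps\|_2^2\approx\delta d\sigma^2$ in $V(q_0,\alpha^0)$ is absorbed into the least-squares floor of $\widehat F(\alpha^T)$, and the survivor $\|\Pi\beps\|_2^2$ has rank $\leq\min(n,d)\leq d$, hence $\|\Pi\beps\|_2^2\lesssim d\sigma^2$ rather than $\delta d\sigma^2$.

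It remains to control the surviving terms by routine concentration, valid under Assumption~\ref{assump:model}(a--c). Almost surely for all large $n,d$ one has $\|\X\|_{\op}^2\leq C(1+\delta)$, hence $\|\X\|_{\mathrm F}^2\leq \min(n,d)\|\X\|_{\op}^2\leq Cd(1+\delta)$; $\|\btheta^*-\mu_0\1\|_2^2\leq Cd$ by the strong law, with $C$ depending only on $g_*,g_0$ ($g_*$ having finite moments by its log-Sobolev inequality); and, conditioning on $\X$, $\|\Pi\beps\|_2^2=\sigma^2\chi^2_{\mathrm{rank}(\X)}$ with $\mathrm{rank}(\X)\leq d$, so that $\|\Pi\beps\|_2^2\leq 3\sigma^2 d$ by a standard $\chi^2$ tail bound; finally $\E_{g_0}[\log g(\theta,\alpha^0)]$ and $\E_{g_0}[\log g_0(\theta)]$ are finite constants depending on $(g_0,\alpha^0)$, the former bounded using the growth bound~(\ref{eq:logggradientbound}) on $\nabla_{(\theta,\alpha)}\log g$ together with the finite moment generating function of $g_0$ from Assumption~\ref{assump:model}(d). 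Assembling these gives $\limsup_{n,d}[V(q_0,\alpha^0)-V(q_T,\alpha^T)]\leq C(g_*,g_0,\alpha^0)\big(\tfrac{1+\delta}{\sigma^2}+1\big)$, and the conclusion follows with a relabeled constant (absorbing the additive $+1$ using $\tfrac{1+\delta}{\sigma^2}+1\geq 1$). I expect the main difficulties to be (i) spotting and using the rank truncation that cancels the $O(\delta)$ part of the noise floor — without it one only gets $\|\alpha^t\|_2\lesssim D+\delta+\cdots$, useless for the large-$\delta$ regime of Proposition~\ref{prop:largedelta} — and (ii) phrasing every inequality at the level of $\limsup/\liminf$ over $n,d$, since Lemma~\ref{lemma:Vdecreasing} gives monotone decay only asymptotically; this is harmless because each random quantity entering the final bound converges almost surely.
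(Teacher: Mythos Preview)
Your proposal is correct and follows essentially the same approach as the paper: both integrate the monotonicity from Lemma~\ref{lemma:Vdecreasing}, bound $V(q_0,\alpha^0)$ explicitly, lower-bound $V(q_T,\alpha^T)$, and---crucially---use the orthogonal projection onto $\mathrm{range}(\X)$ so that only $\|\Pi_\X\beps\|_2^2\lesssim\min(n,d)\sigma^2$ survives rather than $\|\beps\|_2^2\approx n\sigma^2$, then conclude via $R(\alpha)\geq\|\alpha\|_2-D$ for $\|\alpha\|_2\geq D+1$. The only cosmetic difference is that the paper keeps the quadratic term $\tfrac{1}{2\sigma^2 d}\int\|\y-\X\btheta\|^2 q_T(\d\btheta)$ in the lower bound for $V(q_T,\alpha^T)$ (via $\DKL(q_T\|g(\cdot,\alpha^T)^{\otimes d})\geq 0$) and cancels the $(\I-\Pi_\X)$ part directly against the same contribution in $V(q_0,\alpha^0)$, whereas you pass through $V(q_T,\alpha^T)\geq\widehat F(\alpha^T)$ and then minimize over $\btheta$; both routes land on the same bound.
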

\begin{proof}
By Lemma \ref{lemma:Vdecreasing}, for each fixed $t \geq 0$, almost surely
\begin{equation}\label{eq:VRdiffbound}
\limsup_{n,d \to \infty} \Big(V(q_t,\alpha^t)+R(\alpha^t)\Big)
-\Big(V(q_0,\alpha^0)+R(\alpha^0)\Big) \leq 0.
\end{equation}
By definition of $V(q,\alpha)$ in (\ref{eq:gibbsenergy}), we have
\begin{align*}
V(q_0,\alpha^0)=V(g_0^{\otimes d},\alpha^0)
&=\frac{1}{d}\int \frac{1}{2\sigma^2}\|\y-\X\btheta\|^2
\prod_{j=1}^d g_0(\theta_j)\d\theta_j
+\DKL(g_0\|g(\cdot,\alpha^0))+\frac{n}{2d}\log 2\pi\sigma^2,\\
V(q_t,\alpha^t)&=\frac{1}{d}\int \frac{1}{2\sigma^2}\|\y-\X\btheta\|^2
q_t(\btheta)\d\btheta
+\frac{1}{d}\DKL(q_t\|g(\cdot,\alpha^t)^{\otimes d})
+\frac{n}{2d}\log 2\pi\sigma^2.
\end{align*}
Let $\Pi_\X \in \R^{n \times n}$ be the orthogonal projection onto the
column span of $\X$. Then, applying the above forms with
$\DKL(q_t\|g(\cdot,\alpha^t)^{\otimes d})
\geq 0$ and noting that
$\|(\I-\Pi_\X)(\y-\X\btheta)\|^2=\|(\I-\Pi_\X)\y\|^2$ which
does not depend on $\btheta$, we have
\begin{align*}
&V(q_0,\alpha^0)-V(q_t,\alpha^t)\\
&\leq \frac{1}{d}\int \frac{1}{2\sigma^2}\|\y-\X\btheta\|^2
\prod_{j=1}^d g_0(\theta_j)\d\theta_j
-\frac{1}{d}\int \frac{1}{2\sigma^2}\|\y-\X\btheta\|^2
q_t(\btheta)\d\btheta+\DKL(g_0\|g(\cdot,\alpha^0))\\
&=\frac{1}{d}\int \frac{1}{2\sigma^2}\|\Pi_\X(\y-\X\btheta)\|^2
\prod_{j=1}^d g_0(\theta_j)\d\theta_j
-\frac{1}{d}\int \frac{1}{2\sigma^2}\|\Pi_\X(\y-\X\btheta)\|^2
q_t(\btheta)\d\btheta+\DKL(g_0\|g(\cdot,\alpha^0))\\
&\leq \frac{1}{d}\int \frac{1}{2\sigma^2}\|\Pi_\X(\y-\X\btheta)\|^2
\prod_{j=1}^d g_0(\theta_j)\d\theta_j+\DKL(g_0\|g(\cdot,\alpha^0)).
\end{align*}
Let us apply
\[\|\Pi_\X(\y-\X\btheta)\|^2 \leq 2\|\Pi_\X\beps\|^2+
2\|\X(\btheta^*-\btheta)\|_2^2,\]
$\|\X\|_\op^2 \leq C(1+\delta)$, and $\|\Pi_\X \beps\|^2 \leq
C\min(n,d)\sigma^2$ for a universal constant $C>0$ a.s.\ for all large $n,d$.
Then, for a constant $C(g_*,g_0,\alpha^0)>0$ depending only on
$g_*,g_0,\alpha^0$,
\[\limsup_{n,d \to \infty}
V(q_0,\alpha^0)-V(q_t,\alpha^t)
\leq C(g_*,g_0,\alpha^0)\Big(\frac{1+\delta}{\sigma^2}+1\Big).\]
Applying this to (\ref{eq:VRdiffbound}) and noting that $R(\alpha^0)=0$ because
$\|\alpha^0\| \leq D$, for every $t \geq 0$ we get
\[R(\alpha^t) \leq C(g_*,g_0,\alpha^0)\Big(\frac{1+\delta}{\sigma^2}+1\Big).\]
The lemma follows from this bound and the condition
$R(\alpha) \geq \|\alpha\|-D$ whenever $\|\alpha\| \geq D+1$.
\end{proof}

\begin{proof}[Proof of Proposition \ref{prop:largedelta}]
Fix any $s^2=\sigma^2/\delta>0$. Throughout this proof, constants may depend on
$s^2$ but not on $\delta$.
Proposition \ref{prop:alphaconfined} implies that
there exists a constant radius $D'>0$ (depending on $s^2$ but not on $\delta$)
such that for any $\delta>1$,
\[\alpha^t \in \ball(D') \text{ for all } t \geq 0.\]
Set $S=\overline{\ball(D')}$ and
$O=\ball(D'+1)$. Then for each fixed $\alpha \in O$,
Assumption \ref{assump:prior}(b) implies
\begin{equation}\label{eq:loggunifbounds}
C \geq
{-}\partial_\theta^2 \log g(\theta,\alpha) \geq \begin{cases} c_0 & \text{ for
} |\theta| \geq r_0\\
{-}C & \text{ for all } \theta \in \R \end{cases}
\end{equation}
for some $C,r_0,c_0>0$ uniformly over $\alpha \in O$.
By this bound (\ref{eq:loggunifbounds}) and Proposition \ref{prop:LSI}(b),
for some sufficiently large $\delta_0=\delta_0(s^2)>0$, $\sigma^2=\delta s^2$,
and all $\delta \geq \delta_0$,
the LSI (\ref{eq:LSI}) must hold for $g=g(\cdot,\alpha)$ and each
$\alpha \in O$. This verifies Assumption \ref{assump:compactalpha}.

Throughout the remainder of the proof, let $C,C',c,c'>0$
denote constants not depending on $\delta$ that may change from instance to
instance. We compare the optimization landscape of $F(\alpha)$ with that
of $G_{s^2}(\alpha)$ over $O$. Let
$\mse(\alpha)$, $\mse_*(\alpha)$, $\omega(\alpha)$, $\omega_*(\alpha)$ be as
defined by (\ref{eq:mmse}) and (\ref{eq:static_fixedpoint}) for the prior
$g=g(\cdot,\alpha)$. We first bound
$\mse(\alpha)$, $\mse_*(\alpha)$, $\omega(\alpha)$, $\omega_*(\alpha)$:
Write as shorthand $\langle \cdot \rangle=
\langle \cdot \rangle_{g(\cdot,\alpha),\omega(\alpha)}$ for the posterior
expectation in the scalar channel model (\ref{eq:scalarchannel}). We have
\[\langle (y-\theta) \rangle^2
=\frac{1}{Z}\int (y-\theta)^2 e^{-\frac{\omega(\alpha)}{2}(y-\theta)^2}
g(\theta,\alpha) \d\theta,
\qquad Z=\int e^{-\frac{\omega(\alpha)}{2}(y-\theta)^2}g(\theta,\alpha)\d\theta.\]
We separate the integrals over the sets
$\{\theta:e^{-\frac{\omega(\alpha)}{2}(y-\theta)^2} \leq Z\}$
and $\{\theta:e^{-\frac{\omega(\alpha)}{2}(y-\theta)^2}>Z\}$, and on the latter 
set apply the upper bound $(y-\theta)^2 \leq {-}\frac{2}{\omega(\alpha)}\log Z$.
This gives
\[\langle (y-\theta) \rangle^2
\leq \int (y-\theta)^2 \1\{e^{-\frac{\omega(\alpha)}{2}(y-\theta)^2} \leq Z\}
g(\theta,\alpha)\d\theta-\tfrac{2}{\omega(\alpha)}\log Z
\leq 2\int (y-\theta)^2 g(\theta,\alpha)\d\theta,\]
the last inequality applying Jensen's inequality to bound $\log Z \geq
\int -\frac{\omega(\alpha)}{2}(y-\theta)^2 g(\theta,\alpha)\d\theta$.
It is clear from the lower bounds of (\ref{eq:loggunifbounds}) and the
boundedness of $\log g(0,\alpha)$ and $\partial_\theta \log g(0,\alpha)$ over
$\alpha \in \overline{O}$ that $\int \theta^2 g(\theta,\alpha)\d\theta<C$ for
some constant $C>0$, for all $\alpha \in O$. Thus this inequality shows
\[\langle (y-\theta) \rangle^2 \leq C(1+y^2),\]
which implies also
\[\langle (\langle \theta \rangle-\theta) \rangle^2
\leq \langle (y-\theta) \rangle^2 \leq C(1+y^2),
\quad \langle \theta \rangle^2
\leq 2y^2+2(y-\langle \theta \rangle)^2
\leq 2y^2+2\langle (y-\theta)^2 \rangle
\leq C'(1+y^2).\]
Taking expectations over $y=\theta^*+\omega_*(\alpha)^{-1/2}z$ with
$\theta^* \sim g_*$ and $z \sim \N(0,1)$, we get $\mse(\alpha),\mse_*(\alpha)
\leq C(1+\omega_*(\alpha)^{-1})$. Then applying 
$\omega_*(\alpha)^{-1}=(\sigma^2+\mse_*(\alpha))/\delta
\leq s^2+C(1+\omega_*(\alpha)^{-1})/\delta$,
for all $\delta>\delta_0$ sufficiently large,
this implies $\omega_*(\alpha)^{-1} \leq C'$. This in turn shows
by $\mse(\alpha),\mse_*(\alpha) \leq C(1+\omega_*(\alpha)^{-1})$ that
\begin{equation}\label{eq:msebounds}
\mse(\alpha),\mse_*(\alpha) \leq C.
\end{equation}
Let $o_\delta(1)$ denote a quantity that converges to 0 uniformly over
$\alpha \in O$ as $\delta \to \infty$ (fixing $s^2=\sigma^2/\delta$).
Then, applying (\ref{eq:msebounds}) to the fixed point equations
$\omega(\alpha)=\delta/(\sigma^2+\mse(\alpha))$ and
$\omega_*(\alpha)=\delta/(\sigma^2+\mse_*(\alpha))$, we have
\begin{equation}\label{eq:omegabounds}
\omega(\alpha)^{-1}=s^2+o_\delta(1),\qquad \omega_*(\alpha)^{-1}
=s^2+o_\delta(1).
\end{equation}

We recall from Lemma \ref{lemma:mmse_lipschitz} that
$\omega(\alpha),\omega_*(\alpha)$ must be
continuous functions of $\alpha \in O$. We now argue via the implicit function
theorem that for all $\delta>\delta_0$ sufficiently large, these are in
fact continuously-differentiable over $\alpha \in O$. For this,
fix any $\alpha \in O$ and consider the map
\begin{equation}\label{eq:fixedpointmap}
f_\alpha(\omega,\omega_*)=
\begin{pmatrix}
\omega^{-1}-\delta^{-1}(\sigma^2+\E_{g_*,\omega_*}[\langle (\theta-\langle
\theta \rangle_{g(\cdot,\alpha),\omega})^2\rangle_{g(\cdot,\alpha),\omega}]) \\
\omega_*^{-1}-\delta^{-1}(\sigma^2+\E_{g_*,\omega_*}[(\theta^*-
\langle \theta \rangle_{g(\cdot,\alpha),\omega})^2]\end{pmatrix}.
\end{equation}
Thus (\ref{eq:mmse}) and (\ref{eq:static_fixedpoint}) imply that
$0=f_\alpha(\omega(\alpha),\omega_*(\alpha))$. Let us momentarily write as
shorthand $\E=\E_{g_*,\omega_*}$ and $\langle \cdot \rangle=\langle \cdot
\rangle_{g(\cdot,\alpha),\omega}$. Expressing
$y=\theta^*+\omega_*^{-1/2}z$,
$\E$ may be understood as the expectation over $\theta^* \sim g_*$ and $z \sim
\N(0,1)$. The expected posterior average $\E \langle \cdot \rangle$
is given explicitly by
\[\E \langle f(\theta) \rangle
=\E \frac{\int f(\theta) e^{H_\alpha(\theta,\omega,\omega_*)}\d\theta}
{\int e^{H_\alpha(\theta,\omega,\omega_*)}\d\theta},
\qquad H_\alpha(\theta,\omega,\omega_*)
=\omega(\theta^*+\omega_*^{-1/2}z)\theta-\frac{\omega}{2}\theta^2
+\log g(\theta,\alpha),\]
and the derivatives in
$(\omega,\omega_*)$ may be computed via differentiation of $H_\alpha$.
Let us denote by $\kappa_j(\cdot)$ the $j^\text{th}$ mixed cumulant associated
to the posterior mean $\langle \cdot \rangle=\langle \cdot
\rangle_{g(\cdot,\alpha),\omega}$, i.e.\
\[\kappa_1(f(\theta))=\langle f(\theta) \rangle,
\qquad \kappa_2(f(\theta),g(\theta))=\langle f(\theta)g(\theta) \rangle
-\langle f(\theta) \rangle\langle g(\theta) \rangle,\]
etc. Then 
$\E[\langle (\theta-\langle \theta \rangle)^2\rangle]
=\E[\kappa_2(\theta,\theta)]$ and
$\E[(\theta^*-\langle \theta \rangle)^2]
=\E[(\theta^*-\kappa_1(\theta))^2]$, and differentiating in
$(\omega,\omega_*)$ gives
\begin{align*}
\partial_\omega \E[\langle (\theta-\langle
\theta \rangle)^2\rangle]
&=\E[\kappa_3(\theta,\theta,\partial_\omega
H_\alpha(\theta,\omega,\omega_*))],\\
\partial_{\omega_*} \E[\langle (\theta-\langle
\theta \rangle)^2\rangle]
&=\E[\kappa_3(\theta,\theta,\partial_{\omega_*}
H_\alpha(\theta,\omega,\omega_*))],\\
\partial_\omega \E[(\theta^*-\langle \theta \rangle)^2]
&=\E[{-2}(\theta^*-\kappa_1(\theta))\kappa_2(\theta,
\partial_\omega H_\alpha(\theta,\omega,\omega_*))]\\
\partial_{\omega_*} \E[(\theta^*-\langle \theta \rangle)^2]
&=\E[{-2}(\theta^*-\kappa_1(\theta))\kappa_2(\theta,
\partial_{\omega_*} H_\alpha(\theta,\omega,\omega_*))]
\end{align*}
We note that each absolute moment $\E_{g_*,\omega_*(\alpha)}[\langle |\theta|^k
\rangle_{g(\cdot,\alpha),\omega(\alpha)}]$ is bounded by a constant over
$\alpha \in O$, by continuity of this quantity in $\alpha$ and compactness of
$\overline{O}$. Then it is direct to check that each of the above four
derivatives evaluated at $(\omega,\omega_*)=(\omega(\alpha),\omega_*(\alpha))$
is also bounded by a constant over $\alpha \in O$. This implies that
the derivative of the map $f_\alpha(\omega,\omega_*)$ in
(\ref{eq:fixedpointmap}) satisfies
\begin{equation}\label{eq:domegaf}
\d_{\omega,\omega_*}
f_\alpha(\omega,\omega_*)\Big|_{(\omega,\omega_*)=(\omega(\alpha),\omega_\ast(\alpha))}
=\begin{pmatrix} {-}\omega^{-2}+o_\delta(1) &
o_\delta(1) \\ o_\delta(1) & -\omega_*^{-2}+o_\delta(1) \end{pmatrix}
\bigg|_{(\omega,\omega_*)=(\omega(\alpha),\omega_\ast(\alpha))}
=-s^2\,\I+o_\delta(1),
\end{equation}
where the last equality applies (\ref{eq:omegabounds}).
In particular, for $\delta>\delta_0$ sufficiently large, this derivative is
invertible. Since $f_\alpha(\omega,\omega_*)$ is continuously-differentiable
in $(\omega,\omega_*,\alpha)$ (where differentiability
in $\alpha$ is ensured by Assumption \ref{assump:prior}(b) for
$\log g(\theta,\alpha)$), the implicit function theorem implies that for
each $\alpha_0 \in O$, there exists a unique continuously-differentiable
extension of the root $(\omega(\alpha_0),\omega_*(\alpha_0))$ of
$0=f_{\alpha_0}(\omega(\alpha_0),\omega_*(\alpha_0))$ to a solution of
$0=f_\alpha(\omega,\omega_*)$ in an open neighborhood of $\alpha_0$.
This extension must then coincide with $\omega(\alpha),\omega_*(\alpha)$,
because Lemma \ref{lemma:mmse_lipschitz} ensures that
$\omega(\alpha),\omega_*(\alpha)$ are continuous in $\alpha$.
Thus $\omega(\alpha),\omega_*(\alpha)$ are continuously-differentiable
in $\alpha \in O$, as claimed. The
implicit function theorem shows also that their first derivatives are given by 
\[\begin{pmatrix} \nabla_\alpha \omega^\top \\ \nabla_\alpha \omega_*^\top
\end{pmatrix}={-}[\d_{\omega,\omega_*} f_\alpha]^{-1} \d_\alpha
f_\alpha\Big|_{(\omega,\omega_*)=(\omega(\alpha),\omega_*(\alpha))}.\]
We may check as above that the $\alpha$-derivatives
\begin{align*}
\partial_{\alpha_j} \E[\langle (\theta-\langle
\theta \rangle)^2\rangle]
&=\E[\kappa_3(\theta,\theta,\partial_{\alpha_j}
H_\alpha(\theta,\omega,\omega_*))],\\
\partial_{\alpha_j} \E[(\theta^*-\langle \theta \rangle)^2]
&=\E[{-2}(\theta^*-\kappa_1(\theta))\kappa_2(\theta,
\partial_{\alpha_j} H_\alpha(\theta,\omega,\omega_*))]
\end{align*}
evaluated at $(\omega,\omega_*)=(\omega(\alpha),\omega_*(\alpha))$
are also both bounded by a constant over $\alpha \in \overline{O}$. By the
definition of $f_\alpha$, this implies
$\d_\alpha f_\alpha|_{(\omega,\omega_*)=(\omega(\alpha),\omega_*(\alpha))}
=o_\delta(1)$, so together with (\ref{eq:domegaf}), this shows also
\begin{equation}\label{eq:gradomegabounds}
\nabla_\alpha \omega(\alpha)=o_\delta(1),
\quad \nabla_\alpha \omega_*(\alpha)=o_\delta(1).
\end{equation}

Recall from Lemma \ref{lemma:gradalphaF} that
\begin{equation}\label{eq:nablaFrecap}
\nabla F(\alpha)={-}\E_{\theta \sim \sP_\alpha}
\nabla_\alpha \log g(\theta,\alpha)
=-\E_{g_*,\omega_*(\alpha)}\langle \nabla_\alpha \log g(\theta,\alpha)
\rangle_{g(\cdot,\alpha),\omega(\alpha)}.
\end{equation}
Applying continuity of $(\omega,\omega_*) \mapsto
\E_{g_*,\omega_*}\langle \nabla_\alpha \log g(\theta,\alpha)
\rangle_{g(\cdot,\alpha),\omega}$ and the approximations
$\omega(\alpha)^{-1},\omega_*(\alpha)^{-1}=s^2+o_\delta(1)$ shown above,
we have
\begin{equation}\label{eq:gradFcompare}
\nabla F(\alpha)={-}\E_{g_*,s^{-2}}\langle \nabla_\alpha \log g(\theta,\alpha)
\rangle_{g(\cdot,\alpha),s^{-2}}+o_\delta(1)
=\nabla G_{s^2}(\alpha)+o_\delta(1),
\end{equation}
where $G_{s^2}(\alpha)={-}\E_{g_*,s^{-2}}[\log \sP_{g(\cdot,\alpha),s^{-2}}(y)]$
is the negative population log-likelihood (\ref{eq:Galpha}) in the scalar
channel model with fixed noise variance $s^2$. 
[Note that fixing an arbitrary point $\alpha_0 \in O$ and integrating this
gradient approximation over $\alpha \in O$, this also implies
\[F(\alpha)=G(\alpha)+(F(\alpha_0)-G(\alpha_0))+o_\delta(1),\]
i.e.\ $F$ approximately coincides with $G$ up to an additive shift.]
Furthermore, the above continuous-differentiability
of $\omega(\alpha),\omega_*(\alpha)$ and
(\ref{eq:nablaFrecap}) imply $F(\alpha)$ is
twice continuously-differentiable over $\alpha \in O$, and differentiating
$\nabla F(\alpha)$ by the chain rule gives
\begin{align}
\partial_{\alpha_i}\partial_{\alpha_j}
F(\alpha)&=-\partial_\omega\Big(\E_{g_*,\omega_*(\alpha)}\langle
\partial_{\alpha_i} \log g(\theta,\alpha)
\rangle_{g(\cdot,\alpha),\omega(\alpha)}\Big)
\cdot \partial_{\alpha_j} \omega(\alpha)\notag\\
&\hspace{1in}-\partial_{\omega_*} \Big(\E_{g_*,\omega_*(\alpha)}\langle
\partial_{\alpha_i} \log g(\theta,\alpha)
\rangle_{g(\cdot,\alpha),\omega(\alpha)}\Big)
\cdot \partial_{\alpha_j} \omega_*(\alpha)\notag\\
&\hspace{1in}-\partial_{\alpha_j} \Big(\E_{g_*,\omega_*(\alpha)}\langle
\partial_{\alpha_i} \log g(\theta,\alpha)
\rangle_{g(\cdot,\alpha),\omega(\alpha)}\Big).\label{eq:HessFform}
\end{align}
Writing again $\E=\E_{g_*,\omega_*}$, 
$\langle \cdot \rangle=\langle \cdot \rangle_{g(\cdot,\alpha),\omega}$,
and $\kappa_j$ for the cumulants with respect to $\langle \cdot \rangle$,
we have
\begin{align*}
\partial_\omega \E \langle \partial_{\alpha_i} \log g(\theta,\alpha) \rangle
&=\E[\kappa_2(\partial_{\alpha_i} \log g(\theta,\alpha),
\partial_\omega H_\alpha(\theta,\omega,\omega_*))]\\
\partial_{\omega_*}
\E \langle \partial_{\alpha_i} \log g(\theta,\alpha) \rangle
&=\E[\kappa_2(\partial_{\alpha_i} \log g(\theta,\alpha),
\partial_{\omega_*} H_\alpha(\theta,\omega,\omega_*))],
\end{align*}
and these are bounded at $(\omega,\omega_*)=(\omega(\alpha),\omega_*(\alpha))$
over all $\alpha \in O$. Furthermore
\[\partial_{\alpha_j} \E \langle
\partial_{\alpha_i} \log g(\theta,\alpha) \rangle
=\E \langle
\partial_{\alpha_i} \partial_{\alpha_j} \log g(\theta,\alpha)
\rangle+\E[\kappa_2(\partial_{\alpha_i} \log g(\theta,\alpha),
\partial_{\alpha_j} \log g(\theta,\alpha))].\]
Applying these and the bounds (\ref{eq:gradomegabounds}) to
(\ref{eq:HessFform}),
\begin{align*}
&\partial_{\alpha_i}\partial_{\alpha_j} F(\alpha)\\
&=-\E_{g_*,\omega_*(\alpha)} \langle
\partial_{\alpha_i} \partial_{\alpha_j} \log g(\theta,\alpha)
\rangle_{g(\cdot,\alpha),\omega(\alpha)}
-\E_{g_*,\omega_*(\alpha)} \Cov_{\langle g(\cdot,\alpha),\omega(\alpha) \rangle}
(\partial_{\alpha_i} \log g(\theta,\alpha),
\partial_{\alpha_j} \log g(\theta,\alpha))+o_\delta(1)\\
&=\underbrace{-\E_{g_*,s^{-2}} \langle
\partial_{\alpha_i} \partial_{\alpha_j} \log g(\theta,\alpha)
\rangle_{g(\cdot,\alpha),s^{-2}}
-\E_{g_*,s^{-2}} \Cov_{\langle g(\cdot,\alpha),s^{-2} \rangle}
(\partial_{\alpha_i} \log g(\theta,\alpha),
\partial_{\alpha_j} \log g(\theta,\alpha))}_{=\partial_{\alpha_i}
\partial_{\alpha_j} G_{s^2}(\alpha)}+o_\delta(1)\\
\end{align*}
Thus we have shown
\begin{equation}\label{eq:hessFapprox}
\nabla^2 F(\alpha)=\nabla^2 G_{s^2}(\alpha)+o_\delta(1)
\end{equation}
where again $o_\delta(1)$ converges to 0 uniformly over $\alpha \in O$ as
$\delta \to \infty$.

The approximation (\ref{eq:gradFcompare}) implies that
$\nabla F+\nabla R$ converges uniformly to
$\nabla G_{s^2}+\nabla R$ over $\alpha \in O$,
as $\delta \to \infty$. Then for all $\delta>\delta_0$ sufficiently large and
for some
function $\iota:[\delta_0,\infty) \to (0,\infty)$ satisfying
$\iota(\delta) \to 0$ as $\delta \to \infty$, each
point of $\Crit \cap \,\ball(D)=\{\alpha \in \ball(D):\nabla F(\alpha)=0\}$
must fall within a ball of radius $\iota(\delta)$ around a point of
$\Crit_G=\{\alpha \in \ball(D):\nabla G_{s^2}(\alpha)=0\}$.
The approximation (\ref{eq:hessFapprox}) further
implies that for each such ball around a point $\alpha_0 \in \Crit_G$,
$\nabla^2 F$ converges uniformly to
$\nabla^2 G_{s^2}$ on this ball, as $\delta \to \infty$. If
$\nabla^2 G_{s^2}(\alpha_0)$ is non-singular, then for all $\delta>\delta_0$
sufficiently large, an argument via the topological degree shows that
there must be exactly one point of $\Crit$ in this ball (having the same index
as $\alpha_0$ as a critical point of $G_{s^2}$) --- see e.g.\ \cite[Lemma
5]{mei2018landscape}. This shows statements (1) and (2) of the proposition.

As a direct consequence of these statements, if
$\alpha^*$ is the unique point of $\Crit_G$ and $\nabla^2 G_{s^2}(\alpha^*)$
is non-singular, then there is a unique point of $\Crit \cap \,\ball(D)$.
If furthermore $g_*(\theta)=g(\theta,\alpha^*)$, then this point of
$\Crit \cap \,\ball(D)$ must be $\alpha^*$ itself, since
$\nabla F(\alpha^*)=0$ by Lemma \ref{lemma:gradalphaF}(c).
\end{proof}

\begin{proof}[Analysis of Example \ref{ex:gaussianmeanmixture}]
We verify Assumption \ref{assump:prior}(b)
for Example \ref{ex:gaussianmeanmixture} of the
Gaussian mixture model with varying means. Let $\iota \in \{1,\ldots,K\}$
denote the mixture component of $\theta$, and let
$\langle f(\iota,\theta) \rangle=\E[f(\iota,\theta) \mid \theta]$
denote the posterior average over $\iota$ given $\theta \sim
\N(\alpha_\iota,\omega_0^{-1})$ and prior $\P[\iota=k]=p_k$.
Let $\kappa_2(\cdot)$ denote the
covariance associated to $\langle \cdot \rangle$. Then, since
\[\log g(\theta,\alpha)=\log \sum_{k=1}^K p_k \sqrt{\frac{\omega_k}{2\pi}}
\exp\Big({-}\frac{\omega_k}{2}(\theta-\alpha_k)^2\Big),\]
the derivatives of $\log g(\theta,\alpha)$ up to order 2 are given by
\begin{equation}\label{eq:loggderivmeanmix}
\begin{aligned}
\partial_\theta \log g(\theta,\alpha)
&=\langle \omega_\iota(\alpha_\iota-\theta)\rangle,
\quad \partial_\theta^2 \log g(\theta,\alpha)
=\kappa_2\big(\omega_\iota(\alpha_\iota-\theta),\omega_\iota(\alpha_\iota-\theta)\big)-\langle
\omega_\iota \rangle\\
\partial_{\alpha_i} \log g(\theta,\alpha)
&=\omega_i(\theta-\alpha_i)\langle \1_{\iota=i} \rangle,
\quad \partial_{\alpha_i}\partial_\theta \log g(\theta,\alpha)
=\omega_i(\theta-\alpha_i)\kappa_2\big(\1_{\iota=i},\omega_\iota(\alpha_\iota-\theta)\big)+\omega_i\langle \1_{\iota=i} \rangle,\\
\partial_{\alpha_i}\partial_{\alpha_j} \log g(\theta,\alpha)
&=\omega_i\omega_j(\theta-\alpha_i)(\theta-\alpha_j)\kappa_2(\1_{\iota=i},\1_{\iota=j})-\1_{i=j}\omega_i\langle \1_{\iota=i} \rangle\\
\end{aligned}
\end{equation}
In particular, $|\partial_\theta \log g(\theta,\alpha)| \leq
C(1+|\theta|+|\alpha_i|)$ and $|\partial_{\alpha_i} \log g(\theta,\alpha)| \leq
C(1+|\theta|+|\alpha_i|)$, showing (\ref{eq:logggradientbound}).

To bound the high-order derivatives of $\log g(\theta,\alpha)$
locally over $\alpha \in \R^K$, 
let $k_{\max} \in \{1,\ldots,K\}$ be the (unique)
index corresponding to the smallest
value of $\omega_k$. For any compact subset $S \subset \R^K$, there exist
constants $B(S),c_0(S)>0$ depending on the fixed values $\{p_1,\ldots,p_K\}$,
$\{\omega_1,\ldots,\omega_K\}$ and $S$ such that for all $\alpha \in S$, we have
\[\frac{\omega_k}{2}(\theta-\alpha_k)^2 \geq \frac{\omega_{k_{\max}}}{2}
(\theta-\alpha_{k_{\max}})^2
+c_0(S)\theta^2 \text{ for any } \theta>B(S) \text{ and all } k \neq k_{\max}.\]
This implies there exists a constant $C(S)>0$ for which
\[\langle \1_{\iota \neq k_{\max}} \rangle \leq C(S)e^{-c_0(S)\theta^2}
\text{ for all } \theta>B(S) \text{ and } \alpha \in S.\]
Let $\iota'$ denote an independent copy of $\iota$ under its posterior law
given $\theta$. Then for any $\theta>B$, any $\alpha \in S$,
and any $k \in \{1,\ldots,K\}$,
the posterior variance of $\1_{\iota=k}$ is bounded as
\[\langle |\1_{\iota=k}-\langle \1_{\iota=k} \rangle|^2 \rangle
\leq \langle |\1_{\iota=k}-\1_{\iota'=k}|^2 \rangle
\leq 4 \langle \1_{\iota \neq k_{\max} \text{ or }
\iota' \neq k_{\max}} \rangle \leq C'(S)e^{-c_0(S)\theta^2},\]
and similarly
\[\langle |\omega_\iota(\theta-\alpha_\iota)-\langle
\omega_\iota(\theta-\alpha_\iota) \rangle|^2 \rangle
\leq C'(S)(1+\theta^2)e^{-c_0(S)\theta^2}.\]
Applying these bounds and H\"older's inequality, all posterior covariances in
(\ref{eq:loggderivmeanmix}) are exponentially small in
$\theta^2$ for $\theta>B(S)$, implying that all derivatives of order 2 in
(\ref{eq:loggderivmeanmix}) are bounded over $\alpha \in S$ and $\theta>B(S)$.
Similarly they are bounded over $\alpha \in S$ and $\theta<-B(S)$, and hence
also bounded uniformly over $\alpha \in S$ and $\theta \in \R$ since we may
bound the cumulants trivially by a constant $C(S)$ for $\theta \in [-B(S),B(S)]$.
The same argument bounds all mixed cumulants of $\1_{\iota=k}$ and
$\omega_\iota(\alpha_\iota-\theta)$ of orders 3 and 4, and hence also all
partial derivatives of $\log g(\theta,\alpha)$ of orders 3 and 4
over $\alpha \in S$ and $\theta \in \R$. These
arguments show also that as $\theta \to \pm \infty$, uniformly over $\alpha \in
S$,
$\kappa_2(\omega_\iota(\alpha_\iota-\theta),\omega_\iota(\alpha_\iota-\theta))
\to 0$ and $\langle \omega_\iota \rangle \to \omega_{k_{\max}}$, so
$\partial_\theta^2[-\log g(\theta,\alpha)] \to \omega_{k_{\max}}>0$, verifying
all statements of Assumption \ref{assump:prior}(b).
\end{proof}

\begin{proof}[Analysis of Example \ref{ex:gaussianweightmixture}]
We verify Assumption \ref{assump:prior}(b)
in Example \ref{ex:gaussianweightmixture} for the
Gaussian mixture model with fixed mixture means/variances and varying weights.
Again let $\iota \in \{0,\ldots,K\}$ denote the mixture component of $\theta$,
and let $\langle f(\iota,\theta) \rangle=\E[f(\iota,\theta) \mid \theta]$
denote the posterior average over $\iota$ given $\theta \sim
\N(\mu_\iota,\omega_\iota^{-1})$ and prior
$\P[\iota=k]=e^{\alpha_k}/(e^{\alpha_0}+\ldots+e^{\alpha_K})$.
Let $\kappa_2(\cdot)$ denote the
covariance associated to $\langle \cdot \rangle$, and
in addition, let $\langle \cdot \rangle_{\text{prior}}$ and
$\kappa_2^{\text{prior}}$ denote the mean and covariance
over $\iota$ drawn from the prior
$\P[\iota=k]=e^{\alpha_k}/(e^{\alpha_0}+\ldots+e^{\alpha_K})$.
Then, since
\[\log g(\theta,\alpha)=\log\sum_{k=0}^K e^{\alpha_k}
\sqrt{\frac{\omega_k}{2\pi}}\exp\Big({-}\frac{\omega_k}{2}(\theta-\mu_k)^2\Big)
-\log\sum_{k=0}^K e^{\alpha_k},\]
the derivatives of $\log g(\theta,\alpha)$ up to order 2 are given by
\begin{equation}\label{eq:gaussianmixturederivs}
\begin{gathered}
\partial_{\alpha_i} \log g(\theta,\alpha)=\langle \1_{\iota=i} \rangle
-\langle \1_{\iota=i}\rangle_{\text{prior}},
\quad \partial_{\alpha_i} \partial_{\alpha_j}
\log g(\theta,\alpha)=\kappa_2(\1_{\iota=i},\1_{\iota=j})
-\kappa_2^{\text{prior}}(\1_{\iota=i},\1_{\iota=j}),\\
\partial_\theta \log g(\theta,\alpha)
=\langle \omega_\iota(\mu_\iota-\theta) \rangle,
\quad \partial_{\alpha_i}\partial_\theta
\log g(\theta,\alpha)
=\kappa_2\big(\1_{\iota=i},\omega_\iota(\mu_\iota-\theta)\big),\\
\partial_\theta^2 \log g(\theta,\alpha)
=\kappa_2\big(\omega_\iota(\mu_\iota-\theta),
\omega_\iota(\mu_\iota-\theta)\big)-\langle \omega_\iota \rangle.
\end{gathered}
\end{equation}
In particular, this shows $\sum_k \partial_{\alpha_k} \log
g(\theta,\alpha)=1-1=0$,
so $\nabla_\alpha \log g(\theta,\alpha)$ always belongs to the subspace
$E=\{\alpha \in \R^{K+1}:\alpha_0+\ldots+\alpha_K=0\}$. Also $\nabla R(\alpha)=r'(\|\alpha\|_2) \cdot \frac{\alpha}{\|\alpha\|_2} \in E$ if $\alpha \in E$.
Furthermore,
$|\partial_{\alpha_i} \log g(\theta,\alpha)| \leq C$ and
$|\partial_\theta \log g(\theta,\alpha)| \leq C(1+|\theta|)$, showing
(\ref{eq:logggradientbound}).

To bound the higher-order derivatives of $\log g(\theta,\alpha)$
locally over $\alpha \in E$,
let $k_{\max} \in \{0,\ldots,K\}$ be the
index corresponding to the smallest $\omega_k$, and among these the largest
$\mu_k$ (if there are multiple $\omega_k$'s equal to the smallest value).
Then for some constants $B,c_0>0$ depending only on the fixed values
$\{\mu_0,\ldots,\mu_K\}$ and $\{\omega_0,\ldots,\omega_K\}$, we have
\[\frac{\omega_k}{2}(\theta-\mu_k)^2
\geq \frac{\omega_{k_{\max}}}{2}
(\theta-\mu_{k_{\max}})^2+c_0\theta \text{ for any } \theta>B \text{ and all }
k \neq k_{\max}.\]
This implies, for any compact subset $S \subset E$, there is a constant $C(S)>0$
for which
\[\langle \1_{\iota \neq k_{\max}} \rangle
\leq C(S)e^{-c_0\theta} \text{ for all } \theta>B \text{ and } \alpha \in S.\]
Then the same arguments as in the preceding example show
\[\langle |\omega_\iota-\langle \omega_\iota \rangle|^2 \rangle
\leq C'(S)e^{-c_0\theta}, \quad
\langle |\omega_\iota(\mu_\iota-\theta)-\langle \omega_\iota(\mu_\iota-\theta)
\rangle|^2 \rangle
\leq C'(S)(1+\theta)^2e^{-c_0\theta},\]
implying via Cauchy-Schwarz that each order-2 derivative in
(\ref{eq:gaussianmixturederivs}) is bounded over
$\alpha \in S$ and $\theta>B$. Similarly it is bounded over
$\alpha \in S$ and $\theta<-B$, hence also for all $\alpha \in S$
and $\theta \in \R$. The same argument applies to bound the
mixed cumulants of $\omega_\iota$ and
$\omega_\iota(\mu_\iota-\theta)$ of orders 3 and 4, and thus the partial
derivatives of $\log g(\theta,\alpha)$ of orders 3 and 4.
This shows also
$\lim_{\theta \to \infty} \partial_\theta^2 [-\log g(\theta,\alpha)]
=\omega_{k_{\max}}>0$ uniformly over $\alpha \in S$, and a similar statement
holds for $\theta \to {-}\infty$,
establishing all conditions of Assumption \ref{assump:prior}(b).
\end{proof}

\appendix

\section{Proof of Theorem \ref{thm:dmft_approx}}\label{appendix:globalconditions}

Theorem \ref{thm:dmft_approx}(a) follows immediately from \cite[Theorem
2.5]{paper1}, upon identifying $s(\theta,\alpha)$ of \cite[Theorem 2.5]{paper1}
as $(\log g)'(\theta)$ (with no dependence on $\alpha$)
and $\cG(\alpha,\sP)=0$. The required conditions of
\cite[Assumption 2.2]{paper1} for $s(\cdot)$ hold by Assumption
\ref{assump:prior}(a), and the conditions of
\cite[Assumption 2.3]{paper1} for $\cG(\cdot)$ are vacuous.

For Theorem \ref{thm:dmft_approx}(b), consider first the following
global version of Assumption \ref{assump:prior}(b):
\begin{assumption}\label{assump:priorglobal}
$\log g(\theta,\alpha)$ and $R(\alpha)$ are thrice continuously-differentiable and satisfy
(\ref{eq:logggradientbound}), and the conditions (\ref{eq:localconditions}) hold for constants
$C,r_0,c_0>0$ globally over all $\alpha \in \R^K$. 
\end{assumption}
Under Assumption \ref{assump:priorglobal},
Theorem \ref{thm:dmft_approx}(b) again follows from
\cite[Theorem 2.5]{paper1} upon identifying $s(\theta,\alpha)=\partial_\theta \log
g(\theta,\alpha)$ and $\cG(\alpha,\sP)=\E_{\theta \sim \sP}[\nabla_\alpha \log
g(\theta,\alpha)]-\nabla R(\alpha)$, where all conditions of
\cite[Assumptions 2.2 and 2.3]{paper1} may be checked from these
conditions of Assumption \ref{assump:priorglobal}.

To show Theorem \ref{thm:dmft_approx}(b) under the weaker local conditions
of Assumption \ref{assump:prior}(b), we may apply the following truncation
argument: Note first that
twice continuous-differentiability of $\log g(\theta,\alpha)$ and $R(\alpha)$
imply that $\nabla_{(\theta,\alpha)} \log g(\theta,\alpha)$
and $\nabla R(\alpha)$ are locally Lipschitz. Together with the global
linear growth conditions of (\ref{eq:logggradientbound}), this implies that
there exists a unique (non-explosive) solution
$\{(\btheta^t,\widehat\alpha^t)\}_{t \geq 0}$
to the joint diffusion (\ref{eq:langevin_sde}--\ref{eq:gflow}) for all times
(c.f.\ \cite[Theorem 12.1]{rogers2000diffusions}). Furthermore, since
\begin{align*}
\btheta^t&=\btheta^0+\int_0^t \Big({-}\frac{1}{2\sigma^2}\X^\top(\X\btheta^s-\y)
+\big(\partial_\theta \log g(\theta_j^s,\widehat\alpha^s)\big)_{j=1}^d\Big)\d s
+\sqrt{2}\,\b^t\\
\widehat\alpha^t&=\widehat\alpha^0+\int_0^t
\Big(\frac{1}{d}\sum_{j=1}^d \nabla_\alpha \log
g(\theta_j^s,\widehat\alpha^s)-\nabla_\alpha R(\widehat\alpha^s)\Big)\d s,
\end{align*}
under the growth conditions (\ref{eq:logggradientbound}), this solution satisfies
the bounds
\begin{align*}
\|\btheta^t\|_2 &\leq \|\btheta^0\|_2+C\int_0^t \Big(\|\X\|_\op^2\|\btheta^s\|_2
+\|\X\|_\op\|\y\|_2+\sqrt{d}+\|\btheta^s\|_2+\sqrt{d}\|\widehat\alpha^s\|_2\Big)\d
s+\sqrt{2}\,\|\b^t\|_2\\
\|\widehat\alpha^t\|_2 &\leq \|\widehat\alpha^0\|_2+C\int_0^t
\Big(1+\|\widehat\alpha^s\|_2+\|\btheta^s\|_2/\sqrt{d}\Big)\d s
\end{align*}
Fixing any $T>0$, by the conditions of Assumption \ref{assump:prior}
and a standard maximal inequality for Brownian motion (see \cite[Lemma
4.7]{paper1}) there exists a constant $C_0>0$ large enough such that the event
\[\event=\Big\{\|\X\|_\op \leq C_0,\,\|\y\|_2 \leq C_0\sqrt{d},\,
\|\btheta^0\|_2 \leq C_0\sqrt{d},\,\|\widehat\alpha^0\|_2 \leq C_0,\,
\sup_{t \in [0,T]} \|\b^t\|_2 \leq C_0\sqrt{d}\Big\}\]
holds a.s.\ for all large $n,d$. Then by a Gronwall argument, for a constant
$M=M(T,C_0)>0$,
\[\sup_{t \in [0,T]} \frac{\|\btheta^t\|_2}{\sqrt{d}}+\|\widehat\alpha^t\|_2
<M\]
holds on $\event$. Applying the conditions of Assumption \ref{assump:prior}(b) with $S=\{\alpha:\|\alpha\|_2 \leq M\}$, there exist functions $g_M:\R \times \R^K \to \R$ and $R_M:\R^K \to \R$
such that $g_M(\theta,\alpha)=g(\theta,\alpha)$
and $R_M(\alpha)=R(\alpha)$ for all $\|\alpha\| \leq M$,
and $g_M$ and $R_M$ satisfy Assumption \ref{assump:priorglobal}. Let
$\{(\btheta_M^t,\widehat\alpha_M^t)\}_{t \geq 0}$ be the solution of
(\ref{eq:langevin_sde}--\ref{eq:gflow}) defined with $g_M(\cdot)$ and
$R_M(\cdot)$ in place of $g(\cdot)$ and $R(\cdot)$, and let $\boldeta_M^t=\X\btheta_M^t$.
Then as argued above, Theorem \ref{thm:dmft_approx}(b) holds for
$\{(\btheta_M^t,\boldeta_M^t,\widehat\alpha_M^t)\}_{t \geq 0}$, showing that
a.s.\ as $n,d \to \infty$,
\begin{equation}\label{eq:truncated_dmft}
\begin{aligned}
\frac{1}{d}\sum_{j=1}^d \delta_{\theta_j^*,\{\theta_{M,j}^t\}_{t \in [0,T]}}
&\overset{W_2}{\to} \sP(\theta^*,\{\theta_M^t\}_{t \in [0,T]})\\
\frac{1}{n}\sum_{i=1}^n \delta_{\eta_i^*,\eps_i,\{\eta_{M,i}^t\}_{t \in [0,T]}}
&\overset{W_2}{\to} \sP(\eta^*,\eps,\{\eta_M^t\}_{t \in [0,T]})\\
\{\widehat \alpha^t\}_{t \in [0,T]} &\to \{\alpha_M^t\}_{t \in [0,T]}
\end{aligned}
\end{equation}
for limiting processes defined by the DMFT equations
(\ref{def:dmft_covuw}--\ref{def:covarianceresponse}) also with
$g_M(\cdot)$ and $R_M(\cdot)$ in place of $g(\cdot)$ and $R(\cdot)$. Since 
$\{(\btheta^t,\boldeta^t,\widehat\alpha^t)\}_{t \in [0,T]}
=\{(\btheta_M^t,\boldeta_M^t,\widehat\alpha_M^t)\}_{t \in [0,T]}$ a.s.\ for all large $n,d$, this implies that (\ref{eq:truncated_dmft}) holds also with
$\{(\btheta^t,\boldeta^t,\widehat\alpha^t)\}_{t \in [0,T]}$ in place of
$\{(\btheta_M^t,\boldeta_M^t,\widehat\alpha_M^t)\}_{t \in [0,T]}$. Furthermore,
the deterministic limit process $\{\alpha_M^t\}_{t \in [0,T]}$ must satisfy
$\|\alpha_M^t\| \leq M$ for all $t \in [0,T]$, so the solution up to time $T$
of the DMFT equations (\ref{def:dmft_covuw}--\ref{def:covarianceresponse}) with
$g_M(\cdot)$ and $R_M(\cdot)$ is also a solution of these equations with
$g(\cdot)$ and $R(\cdot)$. This proves Theorem \ref{thm:dmft_approx}(b) under Assumption \ref{assump:prior}(b).

\section{Correlation and response functions for a Gaussian prior}\label{appendix:gaussian}

For illustration, we check Definition \ref{def:regular} explicitly 
for the dynamics (\ref{eq:langevinfixedprior}) with a Gaussian prior
\[g(\theta)=\sqrt{\frac{\lambda}{2\pi}}\exp\Big(
{-}\frac{\lambda\theta^2}{2}\Big).\]
Then (\ref{eq:langevinfixedprior}) is the Ornstein-Uhlenbeck process
\begin{equation}\label{eq:langevin_OU}
\d\btheta^t=\Big[{-}\Big(\frac{\X^\top\X}{\sigma^2}+\lambda\I\Big)\btheta^t+
\frac{\X^\top \y}{\sigma^2}\Big]\d t+\sqrt{2}\,\d\b^t.
\end{equation}

\begin{lemma}\label{lemma:OUCR}
Under Assumption \ref{assump:model}, let
\[\mu=\lim_{n,d \to \infty} \frac{1}{d}\sum_{i=1}^d
\delta_{\lambda_i(\X^\top\X/\sigma^2)}\]
be the almost-sure limit of the
empirical eigenvalue distribution of $\X^\top \X/\sigma^2$.
Then for the dynamics (\ref{eq:langevin_OU}) with a fixed Gaussian prior,
the corresponding DMFT system prescribed by Theorem \ref{thm:dmft_approx}(a)
has the correlation and response functions
\begin{align*}
C_\theta(t,s)&=\int \bigg[\E(\theta^0)^2 \cdot e^{-(\lambda+x)(t+s)}
+\frac{\E(\theta^*)^2 x^2+x}{(\lambda+x)^2}
(1-e^{-(\lambda+x)t})(1-e^{-(\lambda+x)s})\\
&\hspace{1.5in}+\frac{1}{\lambda+x}
\Big(e^{-(\lambda+x)|t-s|}-
e^{-(\lambda+x)(t+s)}\Big)\bigg]\mu(\d x)\\
C_\theta(t,*)&=\int \frac{\E(\theta^*)^2 x}{\lambda+x}(1-e^{-(\lambda+x)t})
\mu(\d x)\\
R_\theta(t,s)&=\int e^{-(\lambda+x)(t-s)}\mu(\d x)\\
C_\eta(t,s)&=\int\bigg[\E(\theta^0)^2 x e^{-(\lambda+x)(t+s)}
+(\E(\theta^*)^2 x+1)\Big(\frac{x}{\lambda+x}(1-e^{-(\lambda+x)t})-1\Big)
\Big(\frac{x}{\lambda+x}(1-e^{-(\lambda+x)s})-1\Big)\\
&\hspace{1in}+(\delta-1)+\frac{x}{\lambda+x}\Big(e^{-(\lambda+x)|t-s|}-
e^{-(\lambda+x)(t+s)}\Big)\bigg]\mu(\d x),\\
R_\eta(t,s)&=\int x e^{-(\lambda+x)(t-s)}\mu(\d x).
\end{align*}
\end{lemma}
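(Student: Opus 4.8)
The plan is to solve the Ornstein–Uhlenbeck SDE (\ref{eq:langevin_OU}) in closed form and then feed this solution into the DMFT fixed-point equations (\ref{def:dmft_covuw}--\ref{def:covarianceresponse}), verifying that the claimed formulas are the (unique) solution. First I would diagonalize. Since $g(\cdot)$ is Gaussian, the matrix $\M:=\X^\top\X/\sigma^2+\lambda\I$ is symmetric positive-definite, and (\ref{eq:langevin_OU}) reads $\d\btheta^t=({-}\M\btheta^t+\X^\top\y/\sigma^2)\d t+\sqrt{2}\,\d\b^t$, which has the explicit Gaussian solution
\[
\btheta^t=e^{-\M t}\btheta^0+\int_0^t e^{-\M(t-s)}\frac{\X^\top\y}{\sigma^2}\,\d s
+\sqrt{2}\int_0^t e^{-\M(t-s)}\d\b^s.
\]
Working in the eigenbasis of $\X^\top\X/\sigma^2$ (eigenvalues $x_i$, so $\M$ has eigenvalues $\lambda+x_i$), each coordinate decouples into a scalar OU process. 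Using $\y=\X\btheta^*+\beps$, independence of $\btheta^0,\btheta^*,\beps,\b$, and $\E[\beps\beps^\top]=\sigma^2\I$, I would compute $\E[\btheta^t(\btheta^s)^\top]$, $\E[\btheta^t(\btheta^*)^\top]$, and the Jacobian $\d P_t(\x)=e^{-\M t}$ (from Lemma \ref{lemma:BEL}, or directly), then take normalized traces $d^{-1}\Tr(\cdot)$ and invoke Theorem \ref{thm:dmft_response}: the empirical eigenvalue distribution of $\X^\top\X/\sigma^2$ converges to $\mu$ (a scaled Marčenko–Pastur law under Assumption \ref{assump:model}), and $d^{-1}\sum_i \theta_i^{*2}\to\E(\theta^*)^2$ etc. by LLN. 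This directly produces the stated $C_\theta,C_\theta(\cdot,*),R_\theta$. For the $\eta$-quantities I would use $\boldeta^t=\X\btheta^t$ and the coordinate functions $x_i(\btheta)=\frac{\sqrt\delta}{\sigma^2}([\X\btheta]_i-y_i)$ of (\ref{eq:coordfuncs}), so that $\bC_\eta,\bR_\eta$ are built from $\X[\d P_\tau(\btheta)]\X^\top$ and $\X(\btheta^t-\btheta^*)$-type quantities; here the extra ``$+(\delta-1)$'' and ``$-1$'' terms arise from the $n/d=\delta$ normalization and from the component of $\beps$ (and of $\y-\X\langle\btheta\rangle$) orthogonal to the column span of $\X$, whose limiting trace contributes $\delta-1$ when $\delta>1$ and correspondingly in the general case.

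An alternative, and cleaner, route is to bypass the finite-$(n,d)$ computation entirely and verify the formulas purely at the level of the DMFT equations. That is, I would substitute the claimed $C_\theta,C_\eta,R_\theta,R_\eta$ into (\ref{def:covarianceresponse}) and check self-consistency: with a Gaussian prior, $\partial_\theta\log g(\theta,\alpha)={-}\lambda\theta$ and $\partial_\theta^2\log g={-}\lambda$, so the DMFT $\theta$-equation (\ref{def:dmft_langevin_cont_theta}) becomes \emph{linear} in $\theta^t$, and likewise the response equation (\ref{def:response_theta}) and the $\eta$-equations (\ref{def:dmft_langevin_cont_eta})--(\ref{def:response_g}) are linear. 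One then guesses that $C_\theta,R_\theta$ (and $C_\eta,R_\eta$) admit a representation as an integral against $\mu$ of a ``single-eigenvalue'' OU correlation/response, plugs this ansatz in, and reduces the self-consistency system to a family of scalar identities indexed by $x\in\operatorname{supp}\mu$, each of which is an elementary computation for a scalar OU process with friction $\lambda+x$. Since Theorem \ref{thm:dmft_approx}(a) together with the uniqueness statement from \cite[Theorem 2.4]{paper1} guarantees the DMFT solution is unique in the relevant growth class, exhibiting one solution of this form suffices.

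I expect the main obstacle to be bookkeeping in the $\eta$-equations rather than anything conceptual: correctly tracking the two coupled Gaussian processes $(w^*,\{w^t\})$ and $\{u^t\}$ whose covariances are defined self-consistently through $C_\theta,C_\eta,R_\eta$, and in particular getting the constant terms $\delta-1$ and the ``$-1$'' factors inside the product in $C_\eta(t,s)$ exactly right. The cleanest way to pin these down is to note that $C_\eta(t,s)=\frac{\delta}{\sigma^4}\E[(\eta^t+w^*-\eps)(\eta^s+w^*-\eps)]$ and that $\eta^t+w^*-\eps$ should match $\frac{\sqrt\delta}{\sigma^2}([\X\btheta^t]_i-y_i)$ in distribution (coordinatewise, after the $n\to\infty$ limit), so one can read off all the constants from the finite-$(n,d)$ computation of $\frac1n\|\X\btheta^t-\y\|^2$ done above and simply confirm consistency with the DMFT definitions. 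I would also record that these explicit formulas manifestly satisfy Definition \ref{def:regular}: as $s\to\infty$ the $e^{-(\lambda+x)(t+s)}$ and $(1-e^{-(\lambda+x)s})$-cross terms decay uniformly (since $\lambda+x\ge\lambda>0$ on $\operatorname{supp}\mu$), the TTI limits $c_\theta^\tti(\tau)=\int\frac{1}{\lambda+x}e^{-(\lambda+x)\tau}\mu(\d x)+c_\theta^\tti(\infty)$ with $c_\theta^\tti(\infty)=\int\frac{\E(\theta^*)^2x^2+x}{(\lambda+x)^2}\mu(\d x)$ are exactly of the form (\ref{eq:cttiforms}) with $\iota=\lambda$, and the fluctuation–dissipation relations (\ref{eq:crttiFDT}) hold term-by-term, so this Gaussian case furnishes the advertised worked example for Definition \ref{def:regular}.
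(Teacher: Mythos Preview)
Your first route---solve the OU SDE explicitly, compute $\E[\btheta^t(\btheta^s)^\top]$, $\E[\btheta^t(\btheta^*)^\top]$, and $\d P_t(\x)=e^{-\M t}$, then take normalized traces and invoke Theorem~\ref{thm:dmft_response} together with the spectral limit $\mu$---is exactly the paper's proof, down to the handling of the $\eta$-quantities via $\X\btheta^t-\y$ and the appearance of the $\delta-1$ term from the $n$-versus-$d$ normalization of the $\beps$-contribution. Your alternative route of verifying the DMFT fixed-point equations directly (linearity of (\ref{def:dmft_langevin_cont_theta})--(\ref{def:response_g}) under a Gaussian prior, then uniqueness from \cite[Theorem~2.4]{paper1}) is a genuinely different argument the paper does not pursue; it is in principle self-contained at the scalar level, though as you note the bookkeeping for $C_\eta$ is easier to get right by cross-checking against the finite-$(n,d)$ trace, at which point one has essentially done the paper's computation anyway.
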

\begin{proof}
Setting $\bLambda=\frac{\X^\top\X}{\sigma^2}+\lambda\I$,
the dynamics (\ref{eq:langevin_OU}) have the explicit solution
\begin{align}
\btheta^t&=e^{-\bLambda t}\btheta^0
+\bLambda^{-1}\Big(\I-e^{-\bLambda t}\Big)\frac{\X^\top\y}{\sigma^2}
+\int_0^t e^{-\bLambda(t-s)}\sqrt{2}\,\d\b^s\notag\\
&=e^{-\bLambda t}\btheta^0
+\bLambda^{-1}\Big(\I-e^{-\bLambda
t}\Big)\Big(\frac{\X^\top\X}{\sigma^2}\btheta^*+\frac{\X^\top\beps}{\sigma^2}\Big)+\int_0^t
e^{-\bLambda(t-s)}\sqrt{2}\,\d\b^s\label{eq:OUexplicit}
\end{align}

Recall the definitions of $e_j(\btheta)$ and $x_i(\btheta)$ from
(\ref{eq:coordfuncs}) and the associated correlation and response matrices
(\ref{eq:CRmatrices}).
Under Assumption \ref{assump:model}, applying the explicit form
(\ref{eq:OUexplicit}) and independence of
$\X,\btheta^0,\btheta^*,\beps$, it is direct to check that almost surely,
\begin{align*}
\lim_{n,d \to \infty}\frac{1}{d}\Tr \bC_\theta(t,s)
&=\lim_{n,d \to \infty} \frac{1}{d}\langle {\btheta^t}^\top \btheta^s \rangle\\
&=\lim_{n,d \to \infty} \frac{1}{d}\Tr \bigg(
\E(\theta^0)^2 \cdot e^{-\bLambda(t+s)}
+\E(\theta^*)^2 \cdot \Big(\frac{\X^\top
\X}{\sigma^2}\Big)(\I-e^{-\bLambda t})\bLambda^{-2}(\I-e^{-\bLambda s})
\Big(\frac{\X^\top \X}{\sigma^2}\Big)\\
&\hspace{0.5in}+\int_0^{t \wedge s} 2e^{-\bLambda(t+s-2r)}\d r\bigg)
+\frac{1}{d}\Tr\bigg(\E\eps^2 \cdot \frac{\X}{\sigma^2}
(\I-e^{-\bLambda t})\bLambda^{-2}(\I-e^{-\bLambda s})\frac{\X^\top}{\sigma^2}
\bigg)\\
&=\int \bigg[\E(\theta^0)^2 \cdot e^{-(\lambda+x)(t+s)}
+\frac{\E(\theta^*)^2 x^2+x}{(\lambda+x)^2}
(1-e^{-(\lambda+x)t})(1-e^{-(\lambda+x)s})\\
&\hspace{0.5in}+\frac{1}{\lambda+x}
\Big(e^{-(\lambda+x)|t-s|}-
e^{-(\lambda+x)(t+s)}\Big)\bigg]\mu(\d x)
\end{align*}
and
\begin{align*}
\lim_{n,d \to \infty} \frac{1}{d}\Tr \bC_\theta(t,*)
=\lim_{n,d \to \infty} \frac{1}{d}\langle {\btheta^t}^\top \btheta^* \rangle
&=\lim_{n,d \to \infty} \frac{1}{d}\Tr\Big(\E(\theta^*)^2
\bLambda^{-1}(\I-e^{-\bLambda t})\frac{\X^\top\X}{\sigma^2}\Big)\\
&=\int \frac{\E(\theta^*)^2 x}{\lambda+x}(1-e^{-(\lambda+x)t})\mu(\d x).
\end{align*}
Furthermore, the above form (\ref{eq:OUexplicit}) for $\btheta^t$ implies
\begin{equation}\label{eq:OUsemigroup}
P_t(\btheta)=e^{-\bLambda t}\btheta+
+\bLambda^{-1}\Big(\I-e^{-\bLambda
t}\Big)\Big(\frac{\X^\top\X}{\sigma^2}\btheta^*+\frac{\X^\top\beps}{\sigma^2}\Big).
\end{equation}
Then $\nabla P_te_j(\btheta)=\nabla[\e_j^\top P_t(\btheta)]=e^{-\bLambda t}\e_j$ is a constant function not depending on
$\btheta$, and
\[\lim_{n,d \to \infty} \frac{1}{d}\Tr \bR_\theta(t,s)
=\lim_{n,d \to \infty} \frac{1}{d}\sum_{j=1}^d [\nabla e_j^\top \nabla P_{t-s}e_j](\btheta^s)
=\lim_{n,d \to \infty} \frac{1}{d}\Tr e^{-\bLambda (t-s)}=\int
e^{-(\lambda+x)(t-s)}\mu(\d x).\]
By Theorem \ref{thm:dmft_response},
this shows the forms of $C_\theta(t,s)$, $C_\theta(t,*)$, and $R_\theta(t,s)$.

From (\ref{eq:OUexplicit}) and (\ref{eq:OUsemigroup}), we have also
\begin{align*}
\X\btheta^t-\y&=\X e^{-\bLambda t}\btheta^0
+\X\Big(\bLambda^{-1}\Big(\I-e^{-\bLambda t}\Big)\frac{\X^\top\X}{\sigma^2}
-\I\Big)\btheta^*
+\Big(\X\bLambda^{-1}\Big(\I-e^{-\bLambda t}\Big)
\frac{\X^\top}{\sigma^2}-\I_n\Big)\beps\\
&\hspace{1in}+\int_0^t \X e^{-\bLambda(t-s)}\sqrt{2}\,\d\b^s
\end{align*}
and $\nabla P_t x_i(\btheta)=(\sqrt{\delta}/\sigma)
\nabla[\e_i^\top \X^\top P_t(\btheta)]
=(\sqrt{\delta}/\sigma)e^{-\bLambda t}\X^\top
\e_i$. Then
\begin{align*}
&\lim_{n,d \to \infty} \frac{1}{n}\Tr \bC_\eta(t,s)
=\lim_{n,d \to \infty} \frac{1}{n}
\cdot \frac{\delta}{\sigma^2}(\X\btheta^t-\y)^\top(\X\btheta^s-\y)\\
&=\lim_{n,d \to \infty} \frac{1}{d\sigma^2}\Tr\bigg(
\E(\theta^0)^2 \cdot e^{-\bLambda t}\X^\top \X e^{-\bLambda s}\\
&\hspace{1in}+\E(\theta^*)^2 \cdot \Big(\bLambda^{-1}\Big(\I-e^{-\bLambda
t}\Big)\frac{\X^\top\X}{\sigma^2}-\I\Big)\X^\top \X
\Big(\bLambda^{-1}\Big(\I-e^{-\bLambda
s}\Big)\frac{\X^\top\X}{\sigma^2}-\I\Big) \\
&\hspace{1in}
+\int_0^{t \wedge s} 2\,e^{-\bLambda(t-r)}\X^\top \X e^{-\bLambda(s-r)}
\d r\bigg)\\
&\hspace{0.5in}+\frac{1}{d\sigma^2}\Tr\bigg(
\E\eps^2 \cdot \Big(\X\bLambda^{-1}\Big(\I-e^{-\bLambda t}\Big)
\frac{\X^\top}{\sigma^2}-\I_n\Big)^\top
\Big(\X\bLambda^{-1}\Big(\I-e^{-\bLambda s}\Big)
\frac{\X^\top}{\sigma^2}-\I_n\Big)\bigg)
\\
&=\int\bigg[\E(\theta^0)^2 x e^{-(\lambda+x)(t+s)}
+(\E(\theta^*)^2 x+1)\Big(\frac{x}{\lambda+x}(1-e^{-(\lambda+x)t})-1\Big)
\Big(\frac{x}{\lambda+x}(1-e^{-(\lambda+x)s})-1\Big)\\
&\hspace{1in}+(\delta-1)+\frac{x}{\lambda+x}\Big(e^{-(\lambda+x)|t-s|}-
e^{-(\lambda+x)(t+s)}\Big)\bigg]\mu(\d x),
\end{align*}
and
\begin{align*}
\lim_{n,d \to \infty} \frac{1}{n}\Tr \bR_\eta(t,s)
&=\lim_{n,d \to \infty} \frac{1}{n}
\sum_{i=1}^n [\nabla x_i^\top \nabla P_{t-s}x_i](\btheta^s)
=\lim_{n,d \to \infty} \frac{1}{n} \cdot \frac{\delta}{\sigma^2}
\sum_{i=1}^n \e_i^\top \X e^{-\bLambda(t-s)}\X^\top \e_i\\
&=\lim_{n,d \to \infty} \frac{1}{d\sigma^2}\Tr 
\X e^{-\bLambda(t-s)}\X^\top 
=\int x e^{-(\lambda+x)(t-s)}\mu(\d x).
\end{align*}
By Theorem \ref{thm:dmft_response}, this shows the forms of $C_\eta(t,s)$
and $R_\eta(t,s)$.
\end{proof}

From Lemma \ref{lemma:OUCR} it is apparent that the approximations
(\ref{eq:Cthetaapproxinvariant}--\ref{eq:Cthetastarlim}) and
(\ref{eq:Rthetaapproxinvariant}--\ref{eq:Retaapproxinvariant}) hold with
$\eps(t)=Ce^{-ct}$ and
\[\begin{gathered}
c_\theta^\init(s)={-}\int
\frac{\E(\theta^*)^2x^2+x}{(\lambda+x)^2}e^{-(\lambda+x)s}\mu(\d x),
\qquad
c_\theta^\tti(\infty)=\int \frac{\E(\theta^*)^2x^2+x}{(\lambda+x)^2}\mu(\d x)\\
c_\theta^\tti(\tau)=c_\theta^\tti(\infty)
+\int \frac{1}{\lambda+x}e^{-(\lambda+x)\tau}\mu(\d x),
\qquad r_\theta^\tti(\tau)=\int e^{-(\lambda+x)\tau} \mu(\d x),
\qquad c_\theta(*)=\int \frac{\E(\theta^*)^2 x}{\lambda+x}\mu(\d x)\\
c_\eta^\init(s)=\int \frac{(\E(\theta^*)^2 x+1)\lambda x}{(\lambda+x)^2}
e^{-(\lambda+x)s}\mu(\d x),
\qquad c_\eta^\tti(\infty)=\int \frac{(\E(\theta^*)^2
x+1)\lambda^2}{(\lambda+x)^2}\mu(\d x)+\delta-1\\
c_\eta^\tti(\tau)=c_\eta^\tti(\infty)+
\int \frac{x}{\lambda+x}e^{-(\lambda+x)\tau}\mu(\d x),
\qquad r_\eta^\tti(\tau)=\int xe^{-(\lambda+x)\tau}\mu(\d x).
\end{gathered}\]
These functions $c_\theta^\tti,c_\eta^\tti$ have the forms
(\ref{eq:cttiforms}) for the positive, finite measures
$\mu_\theta(\d a)=a^{-1}\mu(\d(a-\lambda))$
and $\mu_\eta(\d a)=[(a-\lambda)/a]\mu(\d(a-\lambda))$ supported on
$a \in [\lambda,\infty)$. Furthermore, these functions
$c_\theta^\tti,c_\eta^\tti,r_\theta^\tti,r_\eta^\tti$ satisfy
the fluctuation-dissipation relations (\ref{eq:crttiFDT}), verifying all
conditions of Definition \ref{def:regular}.

\section{Sufficient conditions for a log-Sobolev inequality}\label{appendix:LSI}

We prove Proposition \ref{prop:LSI} on a log-Sobolev inequality for the 
posterior law.

\begin{lemma}\label{lemma:univariateLSI}
Under Assumption \ref{assump:prior}(a), the prior density $g(\cdot)$ satisfies
the LSI (\ref{eq:LSIprior}). Furthermore, consider the law
\begin{equation}\label{eq:univariatetilt}
\sP(\theta)=\frac{g(\theta)e^{{-}\frac{a}{2}\theta^2+b\theta}}{Z},
\qquad Z=\int g(\theta)e^{{-}\frac{a}{2}\theta^2+b\theta}\d\theta.
\end{equation}
For any $a>0$ and $b \in \R$, this law $\sP(\theta)$ also satisfies the LSI
(\ref{eq:LSIprior}). Both statements hold with the
constant $C_\LSI=(4/c_0)\exp(8r_0^2(c_0+C)^2/(\pi c_0))$ where $C,c_0,r_0$ are
the constants of Assumption \ref{assump:prior}(a).
\end{lemma}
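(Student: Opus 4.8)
The plan is to prove Lemma~\ref{lemma:univariateLSI} in two stages, first establishing the LSI for a generic tilted measure of the form (\ref{eq:univariatetilt}) and then deducing the claim for $g(\cdot)$ itself as the special case $a \to 0$, $b=0$ (or by a direct limiting argument). The strategy is the classical combination of the Bakry--\'Emery criterion away from the origin with a bounded-perturbation (Holley--Stroock) argument near the origin. Concretely, write the potential of $\sP$ as $V(\theta) = -\log g(\theta) + \tfrac{a}{2}\theta^2 - b\theta + \log Z$, so that $V''(\theta) = -(\log g)''(\theta) + a$. By Assumption~\ref{assump:prior}(a) we have $-(\log g)''(\theta) \geq c_0$ for $|\theta| \geq r_0$, hence $V''(\theta) \geq c_0 + a > c_0$ on $\{|\theta| \geq r_0\}$, while on the compact interval $[-r_0, r_0]$ we only know $V''(\theta) \geq -(c_0 + C) $ (using $|(\log g)''| \leq C$ and $a>0$). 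So $\sP$ is strongly log-concave outside a bounded region and merely has bounded curvature inside it.

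First I would invoke a perturbation lemma for the LSI in one dimension: if a probability measure $\sP_0$ with potential $V_0$ satisfies a log-Sobolev inequality with constant $C_0$, and $\sP$ has potential $V = V_0 + \psi$ with $\mathrm{osc}(\psi) := \sup \psi - \inf \psi < \infty$, then $\sP$ satisfies an LSI with constant $C_0 e^{\mathrm{osc}(\psi)}$ (Holley--Stroock). I would take $\sP_0$ to be the measure with the \emph{convexified} potential $V_0(\theta) = V(\theta) + \psi(\theta)$, where $\psi$ is supported on $[-r_0, r_0]$ and chosen so that $V_0'' \geq c_0$ everywhere; one can take $\psi$ to be a smooth function with $\psi'' = (c_0 + C) \cdot \mathbf{1}$-type behavior on $[-r_0,r_0]$, which forces $\mathrm{osc}(\psi) \leq$ something controlled by $(c_0 + C) r_0^2$. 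Then $\sP_0$ is $c_0$-strongly log-concave, so by Bakry--\'Emery it satisfies the LSI with constant $2/c_0$ (in the convention (\ref{eq:LSIprior}), which carries an extra factor; one must be careful whether the stated LSI uses $\Ent[f^2] \leq C_\LSI \E[(f')^2]$ versus the "$2$-in-the-denominator" convention — the constant $4/c_0$ in the lemma suggests the convention where Bakry--\'Emery gives $4/c_0$ for a $c_0$-log-concave measure, i.e.\ they are using $\Ent[f^2] \le C_\LSI \E[(f')^2]$ with $C_\LSI = 2/\rho$ for curvature $\rho$... I would recheck: with $V'' \geq \rho$, the standard LSI is $\Ent_\sP[f^2] \leq \tfrac{2}{\rho}\E_\sP[\|\nabla f\|^2]$, so here $\rho = c_0$ gives $2/c_0$, and then the Holley--Stroock factor $e^{\mathrm{osc}(\psi)}$ with $\mathrm{osc}(\psi)$ bounded by $8 r_0^2(c_0+C)^2/(\pi c_0)$ would need to produce the stated $C_\LSI = (4/c_0)e^{8r_0^2(c_0+C)^2/(\pi c_0)}$; the discrepancy of a factor $2$ is absorbed by a slightly lossy choice of $\psi$ or of the base curvature, so I would calibrate $\psi$ accordingly).

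The main computational step, and the one requiring care, is the explicit bound $\mathrm{osc}(\psi) \leq 8r_0^2(c_0+C)^2/(\pi c_0)$. Here $\psi$ must be chosen to (i) vanish outside $[-r_0,r_0]$, (ii) be $C^2$ so that $V_0 = V + \psi$ is still $C^2$ and $V_0'' \geq c_0$, and (iii) have small oscillation. The natural choice is $\psi$ with $\psi'' $ equal to a smooth bump of height $\approx c_0 + C$ and width $\approx r_0$, normalized so that $\int \psi'' = 0$ and $\int \theta \psi''(\theta)\,d\theta = 0$ (so that $\psi$ and $\psi'$ vanish at $\pm r_0$); the factor $\pi$ and the precise form point to using a sinusoidal profile such as $\psi''(\theta) = (c_0+C)\cos(\pi\theta/r_0)$ on $[-r_0,r_0]$, for which $\psi(\theta) = -(c_0+C)(r_0/\pi)^2 \cos(\pi\theta/r_0) + \text{affine}$, giving $\mathrm{osc}(\psi) \leq 2(c_0+C)(r_0/\pi)^2 \cdot (\text{factor})$, and then the "$(c_0+C)^2/c_0$" appears because one needs $\psi'' + V'' \ge c_0$ which on $[-r_0,r_0]$ where $V'' \ge -(c_0+C)$ forces the amplitude to scale like $(c_0+C)$, and an additional $(c_0+C)/c_0$ enters if one first needs the base measure to be $c_0$-log-concave rather than just log-concave. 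I would carry out this elementary one-dimensional calculus carefully to land exactly on the stated constant. Finally, the statement for $g(\cdot)$ itself follows by applying the tilted result with $a \downarrow 0$, $b = 0$: the LSI constant is uniform in $a > 0$, so it passes to the limit $\sP \Rightarrow g$ by lower semicontinuity of entropy and Fisher information along the weak limit (or simply because the same perturbation argument applies verbatim with $a = 0$, since $V'' = -(\log g)'' \geq c_0$ outside $[-r_0,r_0]$ without needing $a>0$). I expect the bookkeeping of constants in the oscillation bound to be the only real obstacle; the structural argument (Bakry--\'Emery plus Holley--Stroock) is completely standard.
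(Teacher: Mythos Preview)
Your approach is correct in outline but differs from the paper's in the choice of perturbation lemma, and your specific $\psi$ construction does not work as written. The paper does \emph{not} use Holley--Stroock. It decomposes $\log g(\theta) = \ell_-(\theta) + \ell_+(\theta)$ via $\ell_-'' = \min((\log g)'', -c_0)$ and $\ell_+'' = \max((\log g)''+c_0, 0)$, so that $\ell_-$ (and its quadratic tilt $\tilde\ell_- = \ell_- - \tfrac{a}{2}\theta^2 + b\theta - \log Z$) is automatically $c_0$-strongly concave, while $\ell_+$ is a \emph{Lipschitz} but generally unbounded perturbation: since $\ell_+'' = 0$ outside $[-r_0,r_0]$ and $0 \le \ell_+'' \le c_0+C$ inside, one has $|\ell_+'| \le r_0(c_0+C)$, though $\ell_+$ itself grows linearly. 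The paper then invokes \cite[Lemma~2.1]{bardet2018functional}, which gives an LSI with constant $(4/\rho)\exp(8L^2/(\pi\rho))$ for an $L$-Lipschitz perturbation of a $\rho$-strongly-log-concave density; with $\rho=c_0$ and $L=r_0(c_0+C)$ this is exactly the stated $C_\LSI$, with no delicate construction and no bookkeeping of factors of $2$.

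Your Holley--Stroock route requires a \emph{bounded} $\psi$ with $V+\psi$ strongly convex, and the sinusoidal profile $\psi''(\theta)=(c_0+C)\cos(\pi\theta/r_0)$ fails: it is negative on $(r_0/2,r_0]$, where $V''$ is only guaranteed to exceed $-C+a$, so $V_0'' = V'' + \psi''$ can fall well below $c_0$ there. More structurally, any $C^2$ function $\psi$ supported in $[-r_0,r_0]$ with $\psi'(\pm r_0)=0$ must satisfy $\int_{-r_0}^{r_0}\psi''=0$, hence $\psi''$ cannot stay above the positive threshold $c_0+C-a$ throughout. A valid repair is to extend the support into $\{|\theta|>r_0\}$ (where $V''\ge c_0+a$), allow $\psi''$ slightly negative there, and target $V_0''\ge c_0/2$ rather than $c_0$; this yields \emph{some} LSI constant depending only on $c_0,C,r_0$, but it will not reproduce the precise form $(4/c_0)\exp(8r_0^2(c_0+C)^2/(\pi c_0))$, which is specific to the Lipschitz-perturbation lemma the paper cites.
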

\begin{proof}
Applying $x=\min(x,{-c_0})+\max(x+c_0,0)$, define
\begin{align*}
\ell_-(\theta)
&=\log g(0)+(\log g)'(0) \cdot \theta+\int_0^\theta \int_0^x
\min\Big((\log g)''(u),-c_0\Big)\d u\,\d x\\
\ell_+(\theta)&=\int_0^\theta \int_0^x \max\Big((\log g)''(u)+c_0,0\Big)
\d u\,\d x
\end{align*}
so that $\log g(\theta)=\ell_-(\theta)+\ell_+(\theta)$. Then set
\begin{align*}
\tilde \ell_-(\theta)&={-}\log Z-a\theta^2+b\theta+\ell_-(\theta)
\end{align*}
so that $\log \sP(\theta)=\tilde \ell_-(\theta)+\ell_+(\theta)$.
By definition we have
$\ell_-''(\theta)=\min((\log g)''(\theta),-c_0) \leq -c_0$ and also
$\tilde \ell_-''(\theta)=-a+\ell_-''(\theta) \leq -c_0$.
We have $(\log g)''(u)+c_0 \leq 0$
for all $|u|>r_0$ and $(\log g)''(u)+c_0 \leq c_0+C$ for all $|u| \leq r_0$.
Hence $|\ell_+'(\theta)| \leq r_0(c_0+C)$. Thus both $\log g(\theta)$ and
$\log \sP(\theta)$ are sums of a $c_0$-strongly-log-concave
potential $\ell_-(\theta)$ or $\tilde \ell_-(\theta)$ and a
$r_0(c_0+C)$-Lipschitz perturbation $\ell_+(\theta)$. Then
\cite[Lemma 2.1]{bardet2018functional} shows that both laws satisfy a LSI
with constant $C_\LSI=(4/c_0)\exp(8r_0^2(c_0+C)^2/(\pi c_0))$.
\end{proof}

\begin{proof}[Proof of Proposition \ref{prop:LSI}]
Under condition (a), the posterior density is strongly
log-concave, satisfying
\[{-}\nabla^2 \log \sP_g(\btheta \mid \X,\y)
=\frac{1}{\sigma^2}\X^\top \X-\diag\Big((\log g)''(\theta_j)\Big)_{j=1}^d
\succeq c_0\I.\]
Hence (\ref{eq:LSI}) with $C_{\LSI}=2/c_0$ follows from the Bakry-Emery
criterion. Clearly this holds for any noise variance $\sigma^2>0$,
verifying Assumption \ref{assump:LSI}.

The proof under condition (b) is an adaptation of the argument of
\cite{bauerschmidt2019very}; see also \cite[Theorem 3.4]{fan2023gradient}
and \cite{montanari2024provably} for similar specializations to
the linear model. Under the conditions for $\X$ of
Assumption \ref{assump:model}, by the Bai-Yin law
(\cite[Theorem 3.1]{yin1988limit}), for any $\eps>0$ the event
\[\event(\X)=\Big\{(\sqrt{\delta}-1)_+^2-\eps \leq \lambda_{\min}(\X^\top \X) 
\leq \lambda_{\max}(\X^\top \X) \leq (\sqrt{\delta}+1)^2+\eps\Big\}\]
hold a.s.\ for all large $n,d$ (where $\delta=\lim n/d$). Thus, choosing some
sufficiently small $\eps>0$ and setting
\[\kappa=(\sqrt{\delta}-1)_+^2-2\eps,
\quad
\tau^2=\sigma^2\Big([(\sqrt{\delta}+1)^2-(\sqrt{\delta}-1)_+^2+3\eps]^{-1}-\eps
\Big),
\quad \bSigma=\sigma^2(\X^\top\X-\kappa\,\I)^{-1}-\tau^2\,\I,\]
we have $\X^\top\X-\kappa\,\I \succeq \eps\,\I$,
$(\X^\top\X-\kappa\,\I)^{-1} \succeq (\frac{\tau^2}{\sigma^2}+\eps)\,\I$, and
hence $\bSigma \succeq \eps\sigma^2$ on $\event(\X)$.
Since $\sigma^2(\X^\top\X-\kappa\,\I)^{-1}=\bSigma+\tau^2\I$,
we have the Gaussian convolution identity
\[e^{-\frac{1}{2\sigma^2}\btheta^\top (\X^\top \X-\kappa\I)\btheta}
\propto \int e^{-\frac{1}{2\tau^2}\|\btheta-\bvarphi\|_2^2}
e^{-\frac{1}{2}\bvarphi^\top \bSigma^{-1} \bvarphi}\d\bvarphi.\]
Then the posterior density $\sP_g(\btheta \mid \X,\y)$
satisfies
\begin{align*}
\sP_g(\btheta \mid \X,\y) &\propto \exp\Big({-}\frac{1}{2\sigma^2}\|\y-\X\btheta\|^2\Big)
\prod_{j=1}^d g(\theta_j)\\
&\propto
\exp\Big({-}\frac{1}{2\sigma^2}\btheta^\top(\X^\top\X-\kappa\I)\btheta\Big)
\prod_{j=1}^d
g(\theta_j)\exp\Big({-}\frac{\kappa}{2\sigma^2}\theta_j^2+\frac{\x_j^\top\y}{\sigma^2}\theta_j\Big)\\
&\propto \int e^{-\frac{1}{2}\bvarphi^\top \bSigma^{-1}\bvarphi}
\prod_{j=1}^d g(\theta_j)\exp\Big({-}\frac{\kappa}{2\sigma^2}\theta_j^2
-\frac{1}{2\tau^2}(\theta_j-\varphi_j)^2+\frac{\x_j^\top
\y}{\sigma^2}\theta_j\Big)\d\bvarphi.
\end{align*}
Defining
\begin{align*}
q_{\varphi_j}(\theta_j)&=\frac{1}{Z_j(\varphi_j)}
g(\theta_j)\exp\Big({-}\frac{\kappa}{2\sigma^2}\theta_j^2
-\frac{1}{2\tau^2}(\theta_j-\varphi_j)^2+\frac{\x_j^\top
\y}{\sigma^2}\theta_j\Big),\\
Z_j(\varphi_j)&=\int g(\theta_j)\exp\Big({-}\frac{\kappa}{2\sigma^2}\theta_j^2
-\frac{1}{2\tau^2}(\theta_j-\varphi_j)^2+\frac{\x_j^\top
\y}{\sigma^2}\theta_j\Big)\d\theta_j,\\
\mu(\bvarphi)&=\frac{
e^{-\frac{1}{2}\bvarphi^\top\bSigma^{-1}\bvarphi}\prod_{j=1}^d Z_j(\varphi_j)}
{\int e^{-\frac{1}{2}{\bvarphi'}^\top\bSigma^{-1}\bvarphi'}
\prod_{j=1}^d Z_j(\varphi_j')\d\bvarphi'}
\end{align*}
this gives the mixture-of-products representation
\[\sP_g(\btheta \mid \X,\y)=\int \underbrace{\prod_{j=1}^d
q_{\varphi_j}(\theta_j)}_{:=q_{\bvarphi}(\btheta)}
\mu(\bvarphi) \d\bvarphi.\]
Then for any $f \in C^1(\R^d)$,
\begin{equation}\label{eq:entdecomp}
\Ent[f(\btheta)^2 \mid \X,\y]=\E_{\bvarphi \sim \mu} \Ent_{\btheta \sim
q_{\bvarphi}}f(\btheta)^2+\Ent_{\bvarphi \sim \mu} \E_{\btheta \sim
q_{\bvarphi}}f(\btheta)^2.
\end{equation}

For the first term of (\ref{eq:entdecomp}),
note that inside the exponential defining $q_{\varphi_j}(\theta_j)$ we have
$\kappa \geq -2\eps$ and $\tau^2 \leq \sigma^2((1+3\eps)^{-1}-\eps)$, so
the coefficient of $\theta_j^2$ is negative for sufficiently
small $\eps>0$. Then
by Lemma \ref{lemma:univariateLSI}, $q_{\varphi_j}(\theta_j)$ satisfies
the univariate LSI (\ref{eq:LSIprior}) with constant
$C_\LSI:=(4/c_0)\exp(8r_0^2(c_0+C)^2/(\pi c_0))$. So the product law
$q_{\bvarphi}$ satisfies the LSI with the same constant by tensorization, and
\begin{equation}\label{eq:entdecompbound1}
\E_{\bvarphi \sim \mu} \Ent_{\btheta \sim q_{\bvarphi}}f(\btheta)^2
\leq C_\LSI\,\E_{\bvarphi \sim \mu} \E_{\btheta \sim q_{\bvarphi}}
\|\nabla f(\btheta)\|_2^2=C_\LSI\,\E_{\btheta \sim q} \|\nabla f(\btheta)\|_2^2.
\end{equation}

For the second term of (\ref{eq:entdecomp}), note that
\begin{align*}
{-}\nabla_{\bvarphi}^2 \log \mu(\bvarphi)
=\bSigma^{-1}-\diag\Big((\log Z_j)''(\varphi_j)\Big)_{j=1}^d
=\bSigma^{-1}
+\diag\Big(\frac{1}{\tau^2}-\frac{1}{\tau^4}\Var_{\theta_j \sim
q_{\varphi_j}}[\theta_j]\Big)_{j=1}^d.
\end{align*}
The LSI for $q_{\varphi_j}$ implies
$\Var_{\theta_j \sim q_{\varphi_j}}[\theta_j] \leq (C_\LSI/2)$ by its implied
Poincar\'e inequality. Applying
$(\sqrt\delta+1)^2-(\sqrt\delta-1)_+^2=4\sqrt\delta\1\{\delta>1\}
+(\sqrt\delta+1)^2\1\{\delta \leq 1\}$
and the given condition (b) for $\sigma^2$,
we see that for a sufficiently small choice of $\eps>0$,
we have $\tau^2 \geq C_\LSI/[2(1-\eps)]$. Then this gives
${-}\nabla_{\bvarphi}^2 \log \mu(\bvarphi) \succeq (\eps/\tau^2)\I$.
Then by the Bakry-Emery criterion, $\mu$ satisfies the LSI $\Ent_{\bvarphi \sim
\mu} f(\bvarphi)^2 \leq (2\tau^2/\eps)\E_{\bvarphi \sim \mu}\|\nabla
f(\bvarphi)\|_2^2$, hence
\[\Ent_{\bvarphi \sim \mu} \E_{\btheta \sim q_{\bvarphi}} f(\btheta)^2 \leq \frac{2\tau^2}{\eps}\,
\E_{\bvarphi \sim \mu} \|\nabla_{\bvarphi} (\E_{\btheta \sim q_{\bvarphi}} f(\btheta)^2)^{1/2}\|_2^2.\]
Denote by $q_{\bvarphi^{-j}}$ the product of components of $q_{\bvarphi}$
other than the $j^\text{th}$. We compute
\[\partial_{\varphi_j} (\E_{\btheta \sim q_{\bvarphi}} f(\btheta)^2)^{1/2}
=\frac{\partial_{\varphi_j}\E_{\btheta \sim q_{\bvarphi}} f(\btheta)^2}
{2(\E_{\btheta \sim q_{\bvarphi}} f(\btheta)^2)^{1/2}}
=\frac{\E_{\btheta^{-j} \sim q_{\bvarphi^{-j}}} \partial_{\varphi_j}\E_{\theta_j
\sim q_{\varphi_j}}f(\btheta)^2}{2(\E_{\btheta \sim q_{\bvarphi}} f(\btheta)^2)^{1/2}}
=\frac{\E_{\btheta^{-j} \sim q_{\bvarphi^{-j}}}
\Cov_{\theta_j \sim q_{\varphi_j}}[f(\btheta)^2,\theta_j]}
{2\tau^2(\E_{\btheta \sim q_{\bvarphi}} f(\btheta)^2)^{1/2}}.\]
We apply \cite[Proposition 2.2]{ledoux2001logarithmic}:
For any law $\nu$ on $\R^d$ satisfying a LSI $\Ent_\nu f^2 \leq C_\LSI\,\E_\nu
\|\nabla f\|_2^2$, and for any smooth functions $f,g:\R^d \to \R$,
\[\Cov_\nu[f^2,g] \leq C \sup_{\btheta \in \R^d} \|\nabla g(\btheta)\|_2
\cdot (\E_\nu f^2)^{1/2}(\E_\nu \|\nabla f\|_2^2)^{1/2}\]
where $C$ depends only on the LSI constant $C_\LSI$ of $\nu$.
Applying this to the univariate law $\nu=q_{\varphi_j}$, followed by
Cauchy-Schwarz,
\[\E_{\btheta^{-j} \sim q_{\bvarphi^{-j}}} \Cov_{\theta_j \sim
q_{\varphi_j}}[f(\btheta)^2,\theta_j]
\leq C_1(\E_{\btheta \sim q_{\bvarphi}}f(\btheta)^2)^{1/2}(\E_{\btheta \sim
q_{\bvarphi}}(\partial_{\theta_j} f(\btheta))^2)^{1/2}\]
where $C_1$ depends only on the LSI constant $C_\LSI$ for $q_{\varphi_j}$. 
Summing over $j=1,\ldots,d$ gives
\begin{align*}
\|\nabla_{\bvarphi} (\E_{\btheta \sim q_{\bvarphi}}f(\btheta)^2)^{1/2}\|_2^2
&=\sum_{j=1}^d\big[\partial_{\varphi_j} (\E_{\btheta \sim q_{\bvarphi}}
f(\btheta)^2))^{1/2}\big]^2 \\
&\leq \sum_{j=1}^d \Big(\frac{C_1}{2\tau^2}\Big)^2
\E_{\btheta \sim q_{\bvarphi}}(\partial_{\theta_j} f(\btheta))^2
=\Big(\frac{C_1}{2\tau^2}\Big)^2 \E_{\btheta \sim
q_{\bvarphi}} \|\nabla f(\btheta)\|_2^2.
\end{align*}
Thus
\begin{equation}\label{eq:entdecompbound2}
\Ent_{\bvarphi \sim \mu} \E_{\btheta \sim q_{\bvarphi}} f(\btheta)^2
\leq \frac{2\tau^2}{\eps}
\Big(\frac{C_1}{2\tau^2}\Big)^2
\E_{\btheta \sim \mu} \E_{\btheta \sim q_{\bvarphi}} \|\nabla f(\btheta)\|_2^2
=\frac{C_1^2}{2\eps\tau^2} \E[\|\nabla f(\btheta)\|_2^2 \mid \X,\y].
\end{equation}
Applying (\ref{eq:entdecompbound1}) and (\ref{eq:entdecompbound2}) to
(\ref{eq:entdecomp}) completes the proof of (\ref{eq:LSI}), on the above event
$\event(\X)$. Since the given
condition (b) also holds for all $\tilde \sigma^2 \geq \sigma^2$ when it holds
for $\sigma^2$, this verifies Assumption \ref{assump:LSI}.

Finally, under condition (c), we note that on the above event $\event(\X)$ we
have
\[{-}\nabla^2 \log \sP_g(\btheta \mid \X,\y)
=\frac{1}{\sigma^2}\X^\top \X-\diag\Big((\log g)''(\theta_j)\Big)_{j=1}^d
\succeq \eps \I\]
for all $\sigma^2 \leq [(\sqrt{\delta}-1)^2-\eps]/(C+\eps)$,
so (\ref{eq:LSI}) holds by the Bakry-Emery criterion. Choosing $\eps>0$
small enough, under condition (c) we have
$[(\sqrt{\delta}-1)^2-\eps]/(C+\eps)>4C_0\sqrt{\delta}$, so that
(\ref{eq:LSI}) holds with $C_\LSI=2/\eps$ for
$\sigma^2>[(\sqrt{\delta}-1)^2-\eps]/(C+\eps)$ by the analysis of condition (b).
Thus, on $\event(\X)$, (\ref{eq:LSI}) holds with a uniform constant $C_\LSI>0$
for all $\sigma^2>0$, again verifying Assumption \ref{assump:LSI}.
\end{proof}

\section{Auxiliary lemmas}

\begin{lemma}[Coupling of Gaussian processes]\label{lem:couple_general_cov}
Let $\{K(t,s)\}_{t,s\in[0,T]}$ and $\{\tilde{K}(t,s)\}_{t,s\in[0,T]}$ be two
positive semidefinite covariance kernels such that for some $\eps>0$ and $C_0>0$
\begin{align}\label{eq:cov_point_diff}
\sup_{t,s\in[0,T]} |K(t,s) - \tilde{K}(t,s)| \leq \eps,
\end{align}
and
\begin{align}\label{eq:GP_regularity}
\sup_{t,s\in[0,T]} K(t,t) + K(s,s) - 2K(t,s) \leq C_0|t-s|.
\end{align}
Then there exists a coupling of the two mean-zero
Gaussian processes $\{u^t\}_{t\in[0,T]}$ and
$\{\tilde{u}^t\}_{t\in[0,T]}$ with covariances $\E[u^tu^s]=K(t,s)$ and
$\E[\tilde u^t\tilde u^s]=\tilde K(t,s)$ such that
\begin{align*}
\sup_{t\in[0,T]} \E[(u^t - \tilde{u}^t)^2]  \leq (6C_0+3)\sqrt{T\eps}+15\eps.
\end{align*}
\end{lemma}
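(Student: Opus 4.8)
The plan is to reduce to a finite grid, couple the two Gaussian vectors on the grid by a synchronous (shared–randomness) coupling, and control the interpolation error off the grid using the $L^2$–continuity estimate (\ref{eq:GP_regularity}). Fix an integer $N\ge 1$ to be chosen at the end (of order $\sqrt{T/\eps}$), set $t_i=iT/N$ for $i=0,\dots,N$ and $\Delta=T/N$, and let $\Sigma=(K(t_i,t_j))_{i,j=0}^N$ and $\widetilde\Sigma=(\widetilde K(t_i,t_j))_{i,j=0}^N$ be the (positive semidefinite) covariance matrices of the two processes restricted to the grid. Drawing $Z\sim\N(0,I_{N+1})$ and setting $(u^{t_0},\dots,u^{t_N})=\Sigma^{1/2}Z$ and $(\widetilde u^{t_0},\dots,\widetilde u^{t_N})=\widetilde\Sigma^{1/2}Z$ gives a coupling of the two grid vectors with the correct marginal laws. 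One then extends this to a coupling of the full processes $\{u^t\}_{t\in[0,T]}$ and $\{\widetilde u^t\}_{t\in[0,T]}$ by sampling them conditionally independently given their respective grid values, which does not change their marginal laws; the measure-theoretic details of this extension are standard given the $L^2$–continuity (\ref{eq:GP_regularity}).

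The heart of the argument is a \emph{per-coordinate} bound on the grid coupling error. Writing $D=\Sigma^{1/2}-\widetilde\Sigma^{1/2}$ (a symmetric matrix), one has $\E[(u^{t_i}-\widetilde u^{t_i})^2]=e_i^\top D^2 e_i=\|De_i\|_2^2\le\|D\|_{\op}^2$, and then
\begin{equation*}
\|\Sigma^{1/2}-\widetilde\Sigma^{1/2}\|_{\op}^2\le\|\Sigma-\widetilde\Sigma\|_{\op}\le(N+1)\eps ,
\end{equation*}
where the first inequality is the operator-H\"older estimate $\|A^{1/2}-B^{1/2}\|_{\op}\le\|A-B\|_{\op}^{1/2}$ valid for positive semidefinite $A,B$ (a standard consequence of the operator monotonicity of the square root), and the last inequality uses (\ref{eq:cov_point_diff}) together with $\|M\|_{\op}\le\max_i\sum_j|M_{ij}|$ for symmetric $M$. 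Thus $\max_i\E[(u^{t_i}-\widetilde u^{t_i})^2]\le(N+1)\eps$. This operator-norm (rather than trace-norm) control of the square root is the crucial point, and the main obstacle of the proof: the naive route using an optimal $W_2$-coupling of the two grid Gaussians and the Powers–St\o rmer inequality only bounds the \emph{sum} $\sum_i\E[(u^{t_i}-\widetilde u^{t_i})^2]$ by $\Tr|\Sigma-\widetilde\Sigma|$, which can be as large as $(N+1)^{3/2}\eps$ and, after optimizing in $N$, produces a strictly weaker power of $\eps$ than $\tfrac12$.

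For a general $t\in[0,T]$, pick the adjacent grid point $t_i$ (so $|t-t_i|\le\Delta$) and use $(a+b+c)^2\le 3(a^2+b^2+c^2)$ to write
\begin{equation*}
\E[(u^t-\widetilde u^t)^2]\le 3\,\E[(u^t-u^{t_i})^2]+3\,\E[(u^{t_i}-\widetilde u^{t_i})^2]+3\,\E[(\widetilde u^{t_i}-\widetilde u^t)^2].
\end{equation*}
Since the coupling preserves marginals, the first term equals $K(t,t)+K(t_i,t_i)-2K(t,t_i)\le C_0\Delta$ by (\ref{eq:GP_regularity}); the third equals $\widetilde K(t,t)+\widetilde K(t_i,t_i)-2\widetilde K(t,t_i)$, which is at most $C_0\Delta+4\eps$ by (\ref{eq:GP_regularity}) and (\ref{eq:cov_point_diff}); and the middle term is at most $(N+1)\eps$ by the previous step. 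Hence $\sup_{t\in[0,T]}\E[(u^t-\widetilde u^t)^2]\le 6C_0T/N+3(N+1)\eps+12\eps$. Choosing $N=\lceil\sqrt{T/\eps}\,\rceil$ — which is always $\ge 1$, so no degenerate case arises — balances the first two terms and gives a bound of the form $(6C_0+3)\sqrt{T\eps}+O(\eps)$; careful tracking of the constants in this final optimization yields the stated $(6C_0+3)\sqrt{T\eps}+15\eps$. Beyond invoking the right matrix inequality, the only remaining work is this routine bookkeeping of constants.
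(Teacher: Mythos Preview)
Your proposal is correct and follows essentially the same approach as the paper: discretize to a grid, couple the grid Gaussian vectors synchronously via $\Sigma^{1/2}Z$ and $\widetilde\Sigma^{1/2}Z$, invoke the operator inequality $\|\Sigma^{1/2}-\widetilde\Sigma^{1/2}\|_{\op}^2\le\|\Sigma-\widetilde\Sigma\|_{\op}$ (which the paper cites as \cite[Theorem X.1.3]{bhatia2013matrix}), bound the off-grid error using (\ref{eq:GP_regularity}), and balance the grid spacing. The only cosmetic difference is that the paper parametrizes by a continuous spacing $\gamma=\sqrt{T\eps}$ rather than an integer $N=\lceil\sqrt{T/\eps}\rceil$, which is what makes the constant come out to exactly $15\eps$ without a ceiling correction.
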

\begin{proof}
Fix $\gamma>0$, and let $\floor{t}=\max\{i\gamma: i\in \Z_+, i\gamma
\leq t\}$ where $\Z_+=\{0,1,2,\ldots\}$. Let $v^t = u^{\floor{t}}$
and $\tilde v^t=\tilde u^{\floor{t}}$ so that
$\E[v^tv^s] = K(\floor{t},\floor{s})$ and
$\E[\tilde v^t\tilde v^s] = \tilde K(\floor{t},\floor{s})$.
Then by (\ref{eq:GP_regularity}) and (\ref{eq:cov_point_diff}),
\begin{align*}
\sup_{t\in[0,T]} \E(u^t - v^t)^2 \leq C_0\gamma,
\qquad \sup_{t\in[0,T]} \E(\tilde u^t - \tilde v^t)^2 \leq C_0\gamma+4\eps.
\end{align*}
Let $X=(v^0,v^{\gamma},v^{2\gamma},\ldots,v^{\floor{T}}) \in \R^N$, where here
$N \leq T/\gamma+1$, and similarly let
$\tilde X = (\tilde v^0,\tilde v^{\gamma},\tilde v^{2\gamma},\ldots,\tilde
v^{\floor{T}}) \in \R^N$. Let $\Sigma,\tilde \Sigma \in \R^{N\times N}$ be the
covariance matrices of $X,\tilde X$, so
$\Sigma_{ij} = K(i\gamma, j\gamma)$ and
$\tilde\Sigma_{ij} = \tilde{K}(i\gamma, j\gamma)$. Coupling $X$ and $\tilde X$
by $X=\Sigma^{1/2}Z$ and $\tilde X=\tilde \Sigma^{1/2}Z$ where $Z \sim \N(0,I)$,
for each $i=1,\ldots,N$,
\[\E(X_i-\tilde X_i)^2=\e_i^\top(\Sigma^{1/2}-\tilde\Sigma^{1/2})^2\e_i
\leq \|\Sigma^{1/2}-\tilde\Sigma^{1/2}\|_\op^2
\stackrel{(*)}{\leq} \|\Sigma - \tilde\Sigma\|_\op \leq N\eps.\]
Here $(*)$ follows from \cite[Theorem X.1.3]{bhatia2013matrix},
and the last inequality applies (\ref{eq:cov_point_diff}). Then we have
\begin{align*}
\sup_{t\in [0,T]} \E (u^t - \tilde{u}^t)^2 &\leq 3 \Big[\sup_{t\in [0,T]} \E
(u^t - v^t)^2 + \sup_{t\in [0,T]} \E (v^t - \tilde{v}^t)^2 + \sup_{t\in [0,T]}
\E (\tilde{u}^t - \tilde{v}^t)^2\Big]\\
&\leq 6C_0 \gamma + 12\eps+3\max_{i=1}^N \E(X_i-\tilde X_i)^2
\leq 6C_0 \gamma + 12\eps+3(T/\gamma+1)\eps.
\end{align*}
The conclusion follows by choosing $\gamma=\sqrt{T\eps}$.
\end{proof}


\bibliographystyle{unsrt}
\bibliography{eb_dmft}
\end{document}